\providecommand\@enum@widestlabel{7}
\newcommand{\simplex}{%
	\begin{tikzpicture}%
		\draw [thick] (0,0) -- (1.5ex,0) -- (0,1.8ex) -- (0,0);%
	\end{tikzpicture}%
}
\newcommand{\inverted}{%
	\begin{tikzpicture}%
		\draw [thick]  (0,0) -- (1.2ex,1.5ex);%
		\draw  [thick] (0,0) -- (2ex,0);%
		\draw [thick] (2ex,0) -- (1.2ex,1.5ex);%
	\end{tikzpicture}%
}
\newcommand{\invertsimplex}{%
	\begin{tikzpicture} 
\draw[thick] (0,0)--(1.5ex,0)--(1.5ex,1.5ex)--cycle;	
\end{tikzpicture} 
\,
}
\newcommand{\polytope}{%
	\begin{tikzpicture}%
		\draw [thick] (0,0) -- (0,1.8ex);%
		\draw  [thick] (0,0) -- (1.5ex,0) -- (1.5ex,1ex) -- (0,1.8ex);%
	\end{tikzpicture}%
\, 
}
\newcommand{\invertpoly}{%
	\begin{tikzpicture}%
		\draw  [thick] (0,0) -- (2ex,0) -- (2ex,1.5ex) -- (1.2ex,1.5ex) -- (0,0);%
	\end{tikzpicture}%
}
\newtheorem{lemma}{Lemma}[section]
\newtheorem{theorem}[lemma]{Theorem}
\newtheorem{corollary}[lemma]{Corollary}
\newtheorem{proposition}[lemma]{Proposition}
\theoremstyle{definition}
\newtheorem{definition}[lemma]{Definition}
\newtheorem{remark}[lemma]{Remark}
\newtheorem{example}[lemma]{Example}
\renewcommand{\theequation}%
{\arabic{section}.\arabic{lemma}.\arabic{equation}}
\newcommand{\CC}{\ensuremath{\mathbb{C}}} 
\newcommand{\NN}{\ensuremath{\mathbb{N}}} 
\newcommand{\PP}{\ensuremath{\mathbb{P}}} 
\newcommand{\QQ}{\ensuremath{\mathbb{Q}}} 
\newcommand{\RR}{\ensuremath{\mathbb{R}}} 
\newcommand{\ZZ}{\ensuremath{\mathbb{Z}}} 
\newcommand{\sI}{\ensuremath{\kern -1pt \mathscr{I}\kern -2pt}} 
\newcommand{\sJ}{\ensuremath{\kern -2pt \mathscr{J}\kern -2pt}} 
\newcommand{\sO}{\ensuremath{\mathscr{O}}}
\newcommand{\bb}{\ensuremath{\mathfrak{b}}}
\renewcommand{\geq}{\geqslant}
\renewcommand{\leq}{\leqslant}
\DeclareMathOperator{\mult}{mult}
\DeclareMathOperator{\Eff}{\overline{Eff}}
\DeclareMathOperator{\Mov}{Mov}
\DeclareMathOperator{\Sym}{Sym}
\DeclareMathOperator{\vol}{vol}
\newcommand{\deq}{\ensuremath{\stackrel{\textrm{def}}{=}}}
\newcommand{\Bplus}{\ensuremath{\textbf{\textup{B}}_{+} }}
\newcommand{\Bstable}{\ensuremath{\textbf{\textup{B}} }}
\newcommand{\nob}[2]{\ensuremath{\Delta_{#1}(#2)}}
\newcommand{\inob}[2]{\ensuremath{{\Delta}_{#1}(#2)}}
\newcommand{\sinob}[2]{\ensuremath{{\Delta}^S_{#1}(#2)}}
\definecolor{shadecolor}{gray}{0.875}
\let\cal\mathcal
\let\frak\mathfrak
\let\bb\mathbb
\begin{document}

\title{Infinitesimal successive minima, partial jets and convex geometry}

\author{Mihai Fulger}
\address{University of Connecticut, Department of Mathematics, Storrs CT 06269, USA}
\address{Institute of Mathematics of the Romanian Academy, Bucharest 010702, Romania}
\email{mihai.fulger@uconn.edu}

\author{Victor Lozovanu}
\address{Dipartimento di Matematica, Universit\`a Degli Studi Di Genova, Genova 16146, Italy}
\email{victor.lozovanu@unige.it}

\begin{abstract}
	We introduce two sets of invariants for a line bundle at a point: infinitesimal successive minima and asymptotic partial jet separation. They are inspired by the local analogue of Ambro--Ito, and by the jet-theoretic interpretation of the Seshadri constant respectively. Under mild restrictions the two sets are equal. Moving to convex geometry, we prove that the lengths of the maximal simplex inside the generic infinitesimal Newton--Okounkov body (iNObody) of the line bundle at the point are precisely the successive minima. As application we characterize when this body is simplicial, and give examples when it is not. When the point is very general the convex body has a shape that we call Borel-fixed, a property inspired by generic initial ideals. Borel-fixed convex bodies satisfy simplicial lower bounds and polytopal upper bounds determined by their widths. For the generic iNObody of the line bundle at very general points these widths are again the infinitesimal successive minima. 
\end{abstract}
\maketitle

\section{Introduction}

Demailly's  work on Seshadri constants \cite{Dem92} suggests the existence of a rich structure behind the data quantifying how much of the global positivity of a line bundle is visible at a point. The goal of this article is to investigate a deep synergy  between three different perspectives on this picture. Starting from geometry, we seek to understand the variation of certain base loci on the blow-up of the point. From a differential viewpoint we develop the idea of partial jet separation. Finally, we bring these to convex geometry through Newton--Okounkov bodies. This grants us access to techniques from the theory of generic initial ideals. At a basic level these perspectives underline the importance of first nonvanishing jets of Taylor expansions around a point for the study of linear series.

\subsection{Infinitesimal successive minima} 
Let us start with the geometric perspective. For a Cartier divisor $L$ on an $n$-dimensional normal projective variety $X$, one defines rational maps $X\dashrightarrow\bb P(|mL|)$ for any integer $m\geq 1$. 
The \emph{augmented base locus} ${\bf B}_+(L)$ is the largest closed subset where these maps are not embeddings. It is a numerical invariant whose extremes hold if ${\bf B}_+(L)=\varnothing$ when $L$ is ample, and  ${\bf B}_+(L)= X$ when $L$ is not \emph{big}. Here we study the variation of certain infinitesimal augmented base loci.

Fix $x\in X$ a smooth point and $\pi:{\rm Bl}_xX\to X$ the blow-up of $x$ with exceptional divisor $E$. On ${\rm Bl}_xX$ we have the ray of divisors $L_t\deq\pi^*L-tE$, for $ t> 0$, that give rise to an increasing family of base loci 
\[
({\bf B}_+(L_t))_{t>0} \ \subseteq \ {\rm Bl}_xX\ .
\]
This captures known invariants when $x\not\in{\bf B}_+(L)$. The \emph{moving Seshadri constant} $\epsilon(||L||;x)$ from \cite{Nak03,ELMNP09} measures when the ${\bf B}_+(L_t)$ start intersecting $E$. 
The \emph{Fujita--Nakayama invariant} $\mu(L;x)$ measures when ${\bf B}_+(L_t)={\rm Bl}_xX$, or equivalently $E\subset{\bf B}_+(L_t)$. There is a clear gap between the two extremes. Our \emph{infinitesimal successive minima} of $L$ at $x$ detect the dimension jumps. Specifically, for $i\geq 1$ set 
\[
\epsilon_i(L;x)\deq\min\bigl\{t>0\ \mid\ \dim{\bf B}_+(L_t)\cap E\geq i-1 \bigr\}\ .
\]
They form a nondecreasing sequence of positive real numbers, depend only on the numerical class of $L$, and satisfy homogeneity, superadditivity and continuity properties. Consequently, their definition can be extended to the entire open convex cone in $N^1(X)_{\bb R}$ consisting of those real classes $\xi$ such that $x\notin \Bplus(\xi)$.

Our particular definition was inspired by a different, local version of Ambro--Ito, introduced in \cite{AI} (see \S\ref{ss:otherwork}). Generally in the literature successive minima (or jumping numbers) measure a change in a ray $t>0$ of objects and appear in many areas of geometry. These include the successive minima of a convex body in Minkowski's Second Main Theorem; arithmetic successive minima (\cite{Zha95,FLQ24}); jumping numbers of multiplier ideals; the Hassett--Keel program \cite{Has05}; and more generally MMP with scaling \cite{BCHM10}; in $K$-stability \cite{Fuj19,Li17,BX19}, etc.

\subsection{Partial jet separation}
Our second perspective has a differential background. Recall that $|L|$ \emph{separates $s$-jets} at $x$ if the evaluation map $H^0(X,L)\to\cal O_X/\frak m_x^{s+1}$
is surjective. \cite{ELMNP09} interpret the moving Seshadri constant $\epsilon(||L||;x)=\epsilon_1(L;x)$ as asymptotic measure of jet separation. In turn, the Fujita--Nakayama invariant $\mu(L;x)=\epsilon_{n}(L;x)$  measures ``jet nontriviality'' by \cite{AI}. 
Infinitesimally we can restrict to the exceptional divisor $E$ and make use of general linear subspaces of $E$ to control the gap between the two extreme behavior of jets. We say that $L$ \emph{$i$-partially separates $s$-jets} at $x$ if the natural composition 
	\[H^0({\rm Bl}_xX,L_s)\to H^0(E,\cal O(s))\to H^0(\bb P^{n-i},\cal O(s))\]
	is surjective for some (general) linear subspace $\bb P^{n-i}\subseteq\bb P^{n-1}\simeq E$. The induced asymptotic invariant
	\[s_i(L;x)\deq\sup\bigl\{\frac pq\ \mid\ qL\ i\text{-partially separates }p\text{-jets at }x\} \ \]
shares many of the properties of $\epsilon_i(\xi;x)$. In particular, it is well-defined for real classes $\xi$ as above. The similarities are not accidental:
\begin{theorem}[Infinitesimal successive minima via partial jet separation]\label{thm:mintrosuccessjets}
	Let $X$ be a normal projective variety of dimension $n$. Let $\xi\in N^1(X)_{\bb R}$ and $x\in X\setminus{\bf B}_+(\xi)$ a smooth point. Then
	\[\epsilon_i(\xi;x)=s_i(\xi;x)\]
	for all $i=1,\ldots ,n$.
\end{theorem}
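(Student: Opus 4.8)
The plan is to prove the two inequalities $s_i(\xi;x)\le\epsilon_i(\xi;x)$ and $\epsilon_i(\xi;x)\le s_i(\xi;x)$ separately, after a routine reduction. Since both sides are homogeneous and continuous on the open cone $\{\xi\in N^1(X)_{\mathbb R}:x\notin\Bplus(\xi)\}$ (as recorded above), it suffices to treat an integral class $\xi=L$ with $x\notin\Bplus(L)$ and to compare the rationals $p/q$ arising from integral divisors $qL$ and integral jet orders $p$. Write $\pi\colon\mathrm{Bl}_xX\to X$, let $E\simeq\mathbb P^{n-1}$ be the exceptional divisor, and recall that $L_t|_E\simeq\mathcal O_E(t)$, so restriction of forms gives $H^0(E,\mathcal O(s))\twoheadrightarrow H^0(\Lambda,\mathcal O(s))$ for every linear $\Lambda\simeq\mathbb P^{n-i}\subseteq E$. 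Thus ``$qL$ $i$-partially separates $p$-jets'' says precisely that $H^0(\mathrm{Bl}_xX,q\pi^*L-pE)\to H^0(\Lambda,\mathcal O(p))$ is surjective for a general $\Lambda\simeq\mathbb P^{n-i}\subseteq E$; and since a general such $\Lambda$ has codimension $i-1$ in $E$ (hence meets a fixed $W\subseteq E$ iff $\dim W\ge i-1$) while $t\mapsto\Bplus(L_t)\cap E$ is nondecreasing, one obtains the reformulation
\[
\epsilon_i(L;x)=\sup\bigl\{\,t>0\ :\ \Lambda\cap\Bplus(L_t)=\varnothing\ \text{for a general }\Lambda\simeq\mathbb P^{n-i}\subseteq E\,\bigr\}.
\]
Both invariants are therefore suprema over $t$ of two a priori different ``positivity of $L_t$ seen by a general $\Lambda\subseteq E$'' conditions, and the content of the theorem is that these conditions coincide.

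For $\epsilon_i\le s_i$: fix a rational $t<\epsilon_i(L;x)$. Then $t<\epsilon_n(L;x)=\mu(L;x)$, so $L_t$ is big, and a general $\Lambda\simeq\mathbb P^{n-i}\subseteq E$ is disjoint from $\Bplus(L_t)$. I would then invoke the standard fact (ELMNP, via Nakamaye's theorem and the comparison of restricted and intrinsic volumes) that disjointness of a subvariety $\Lambda$ from $\Bplus(D)$ forces the restriction $H^0(\mathrm{Bl}_xX,kL_t)\to H^0(\Lambda,kL_t|_\Lambda)=H^0(\Lambda,\mathcal O_\Lambda(kt))$ to be \emph{surjective} for all sufficiently divisible $k$; here it is relevant that $\Lambda$ is a projective space and $L_t|_\Lambda=\mathcal O_\Lambda(t)$ is very positive (no higher cohomology, projectively normal high powers), which lets one pass from the asymptotic volume comparison to an honest surjection with no extra hypothesis on $L_t|_\Lambda$. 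Clearing denominators, $kL_t=q\pi^*L-pE$ with $p/q=t$, so $qL$ $i$-partially separates $p$-jets, whence $s_i(L;x)\ge t$; letting $t\uparrow\epsilon_i(L;x)$ gives the inequality.

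For $s_i\le\epsilon_i$: suppose $qL$ $i$-partially separates $p$-jets and set $t=p/q$. Composing with the surjections $\mathrm{Sym}^mH^0(\mathbb P^{n-i},\mathcal O(p))\twoheadrightarrow H^0(\mathbb P^{n-i},\mathcal O(mp))$ shows that $H^0(\mathrm{Bl}_xX,mq\pi^*L-mpE)\to H^0(\Lambda,\mathcal O(mp))$ is surjective for all $m\ge1$ and the same general $\Lambda$ (the $m=1$ case forces the rest). I would then argue that this persistence to all powers is incompatible with $\Lambda$ meeting $\Bplus(L_t)$: if $y\in\Lambda\cap\Bplus(L_t)$, then on a birational model extracting the negative part of $L_t$ the restricted linear series of $mL_t$ along (the strict transform of) $\Lambda$ is forced, for $m\gg0$, to vanish with multiplicity $\gtrsim m$ along a proper subvariety of $\Lambda$ through $y$, a vanishing of codimension $\sim m^{\dim\Lambda}$, contradicting surjectivity onto $H^0(\Lambda,\mathcal O(mp))$. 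Hence the general $\Lambda$ misses $\Bplus(L_t)$, i.e.\ $\dim\Bplus(L_t)\cap E<i-1$; by monotonicity of $t\mapsto\Bplus(L_t)$ this forces $t\le\epsilon_i(L;x)$, and taking the supremum over admissible $p/q$ gives $s_i\le\epsilon_i$.

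I expect the main obstacle to be exactly this last implication: the surjectivity of a \emph{single} restriction map controls only the \emph{stable} base locus $\mathbf B(q\pi^*L-pE)$, whereas $\epsilon_i$ is governed by the \emph{augmented} base locus $\Bplus(L_t)$; the extra leverage must come from exploiting that the surjectivity holds for a \emph{general} $\Lambda$ and for \emph{all} powers simultaneously, and converting this into a genuine obstruction near a point of $\Bplus(L_t)\cap E$ requires a careful ELMNP-style analysis of restricted linear series together with the structure of $\Bplus$ along $E$. As a sanity check, for $i=1$ the statement reads $\epsilon_1(\xi;x)=s_1(\xi;x)$, recovering the jet-theoretic description of the moving Seshadri constant $\epsilon(\|L\|;x)=\epsilon_1(L;x)$, and the argument above specializes to a variant of the known proof of that case.
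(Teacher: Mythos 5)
Your overall architecture (reduce to a Cartier divisor by homogeneity and continuity, then prove the two inequalities separately) matches the paper's, and your reformulation of $\epsilon_i$ in terms of a general $\Lambda\simeq\bb P^{n-i}\subseteq E$ meeting $\Bplus(L_t)\cap E$ is correct. For the direction $\epsilon_i\leq s_i$, however, the surjectivity you invoke as a ``standard fact'' --- that $\Lambda\cap\Bplus(L_t)=\varnothing$ forces $H^0(\overline X,kL_t)\to H^0(\Lambda,\cal O_\Lambda(kt))$ to be \emph{onto} for $k$ sufficiently divisible --- is precisely the paper's key Proposition \ref{prop:vanishing}, and it does not follow from the comparison of restricted and intrinsic volumes: equality of volumes only bounds the cokernel by $o(k^{\dim\Lambda})$, and projective normality of $\cal O_\Lambda(kt)$ does not upgrade an asymptotic density statement to an honest surjection. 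The paper proves the surjectivity directly (normalized blow-up of the base ideal of $|kpL|$, which is an isomorphism near $\Lambda$, followed by Serre vanishing on the image of the associated semiample model); this is also exactly where normality of $X$ is used. So this half of your argument rests on an unproved, nontrivial lemma.

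The genuine gap is in the direction $s_i\leq\epsilon_i$, and it is the one you yourself flag. Your proposed mechanism --- that a point $y\in\Lambda\cap\Bplus(L_t)$ forces, on a model extracting the negative part, vanishing of the restricted series of $mL_t$ with multiplicity $\gtrsim m$ along a subvariety of $\Lambda$ through $y$ --- is false: $\Bplus(L_t)$ can strictly contain both ${\bf B}(L_t)$ and ${\bf B}_-(L_t)$, and points of $\Bplus(L_t)\setminus{\bf B}_-(L_t)$ carry no asymptotic multiplicity for $\|L_t\|$ whatsoever. (Your target conclusion ``the general $\Lambda$ misses $\Bplus(L_{p/q})$'' can also fail outright at the extremal value $p/q=\epsilon_i$, where the minimum defining $\epsilon_i$ is attained.) The paper's resolution is elementary and bypasses multiplicities and powers entirely: Lemma \ref{lem:B-ray}.(3) gives the sandwich ${\bf B}(L_{t'})\subseteq\Bplus(L_t)\subseteq{\bf B}(L_{t'})\cup\pi^{-1}\Bplus(L)$ for rational $t'>t$ close to $t$, so that \emph{along $E$} the augmented base locus at $t$ coincides with a stable base locus at a slightly larger rational parameter. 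A \emph{single} surjection onto $H^0(\Lambda,\cal O(s))$ already yields global generation of $(mL)_s$ along $\Lambda$, hence $\dim{\rm Bs}((mL)_s)\cap E\leq i-2$; monotonicity of ${\bf B}(L_\bullet)$ along the ray plus the sandwich then forces $\dim\Bplus(L_t)\cap E\leq i-2$ for every $t<s/m$, i.e.\ $\epsilon_i(L;x)\geq s/m$. Without this bridge from ${\bf B}$ to $\Bplus$ along $E$, your argument does not close.
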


The key of the proof is Proposition \ref{prop:vanishing}, a global generation result along subschemes that avoid the augmented base locus of a big line bundle.

\subsection{Infinitesimal Newton--Okounkov bodies}
This section sets notation from the theory of Newton--Okounkov bodies, introduced by Okounkov \cite{Oko96}, and generalized by Kaveh--Khovanski\u{\i} \cite{KK12} and independently by Lazarsfeld--Musta\c t\u a \cite{LM09}. We focus on the infinitesimal version from the latter, and on a ``straightening'' of the resulting body.
Our results presented in the subsequent sections will bring several convex geometric interpretations to the infinitesimal successive minima. 
  
When $x$ is smooth, consider a complete linear flag
 \[
 Y_{\bullet}: {\rm Bl}_xX=Y_0\supset E=Y_1\supset Y_2\supset\ldots\supset Y_n\ ,
 \]
 where  $Y_i$  is a linear $\bb P^{n-i}$ in $E\simeq \bb P^{n-1}$ for $i\geq 1$. 
 For $D\sim_{\bb Q}L$ an effective $\bb Q$-divisor on $X$, one associates a valuative vector $\nu_{Y_{\bullet}}(D)=(\nu_1(D),\ldots,\nu_n(D))\in\bb Q^n_+$
 such that $\nu_1(D)=\mult_xD$ is the degree of the projectivized tangent cone $C_xD\subseteq E$, while loosely speaking $\nu_i(D)$ measures the intersection multiplicity of $C_xD$ with $Y_{i-1}$ along $Y_i$ for $i\geq 2$. See Definition \ref{def:inob} for the detailed construction. The \emph{infinitesimal Newton--Okounkov body (iNObody) of $L$ at $x$ with respect to $Y_{\bullet}$} is the topological closure in Euclidean space
\[
\nob{Y_{\bullet}}{L} \ \deq \ \overline{\{\nu_{Y_{\bullet}}(D) \ \mid \ D\geq 0, D\sim_{\bb Q}L\}}\ \subset\bb \ \RR^n_+
\]
By \cite{LM09}, we have $\vol(L)=n!\cdot {\rm vol}_{{\bb R}^n}\bigl(\nob{Y_{\bullet}}{L}\bigr)$, where the latter is the usual Euclidean volume. Using $(\nu_1,\ldots,\nu_n)\in\bb R^n$ as coordinates, then $\mu(L;x)$ is the $\nu_1$-width of $L$ at $x$, i.e., the largest $\nu_1$-coordinate of any element of $\nob{Y_{\bullet}}{L}$.
 If $x\not\in{\bf B}_+(L)$ and $0\leq t<\mu(L;x)$, the vertical slices $\nob{Y_{\bullet}}{L}_{\nu_1=t}\deq\nob{Y_{\bullet}}{L}\cap\{t\}\times\bb R^{n-1}_+$ are determined in \cite{LM09} by the restricted linear series of $L_t$ to $E$ and by $Y_{\bullet}$. Hence, $\nob{Y_{\bullet}}{L}$ encodes how the tangent cones of sections of powers of $L$ vanish along the linear flag $Y_{\bullet}$.

With the partial jet separation in mind, we study iNObodies with respect to very general\footnote{A property holds for the \emph{very general} object in a family of objects parameterized by an irreducible base scheme, if it is true outside of an at most countable union of proper closed subsets of the base.} linear flags in $E$. 
Similar to the construction of the generic initial ideal \cite{Har66,Gra72}, under this assumption \cite{LM09} shows that $\nob{Y_{\bullet}}{L}\subset\bb R^n_+$ is independent of $Y_{\bullet}$. We denote it by $\inob{x}{L}\subseteq \RR^n_{+}$ and call it \emph{the generic iNObody of $L$ at $x$}. Similarly, when $x$ is very general in $X$ and $Y_{\bullet}$ very general over $x$, the resulting body $\inob{X}{L}\subseteq \RR^n_+$ is the \emph{generic iNObody of $L$}. The genericity condition is a double-edged sword. The bodies $\inob{x}{L}$ and $\inob{X}{L}$ may be difficult to compute, but their shapes tend to be simpler. For example, in Figure \ref{fig:genericvsspecial} the triangle on the left is the body of $\cal O_{\bb P^1\times\bb P^1}(1,1)$ at any point $x$, when the point $Y_2$ in the infinitesimal flag over $x$ is not torus fixed. The shaded parallelogram is the result when $Y_2$ is torus fixed.
\begin{figure}
	\begin{tikzpicture}[scale=1.50]
		\draw (0,0) node[below]{$(0,0)$};
		\draw (2,0) node[below]{$(2,0)$};
		\draw (1,1) node[above]{$(1,1)$};
		\draw (2,1) node[above]{$(2,1)$};
		\draw (1,0) node[below]{$(1,0)$};
		\draw[thick, fill=gray, opacity=0.4] (0,0)--(2,0)--(1,1)--cycle;
		\draw[dashed, fill=gray!20, opacity=0.3] (0,0)--(1,1)--(2,1)--(1,0);
		\draw (5,0) node[left]{$(0,0)$};
		\draw (6,0) node[right]{$(1,0)$};
		\draw (5,2) node[left]{$(0,2)$};
		\draw (5,1) node[left]{$(0,1)$};
		\draw (6,1) node[right]{$(1,1)$}; 
				\draw[thick, fill=gray, opacity=0.4] (5,0)--(6,0)--(5,2)--cycle;
						\draw[dashed, fill=gray!20, opacity=0.3] (5,0)--(6,0)--(6,1)--(5,1);
	\end{tikzpicture}
	\centering\caption{Generic vs. special flag choices for $\inob{Y_{\bullet}}{L}$ and respectively $\sinob{Y_{\bullet}}{L}$}\label{fig:genericvsspecial}
\end{figure}

To simplify the convex shapes we work with, we consider usually the ``\emph{straightened up}'' (see Figure \ref{fig:genericvsspecial}) version
$\Delta^S=S(\Delta)$ of any convex set $\Delta\subseteq \RR^n$, where $S$ is the volume-preserving linear transformation
\begin{equation}\label{eq:introS} 
	\bb R^n\overset{S}{\to}\bb R^n: (\nu_1,\ldots,\nu_n)\mapsto (\alpha_1,\ldots,\alpha_n)=(\nu_2,\nu_3,\ldots,\nu_n,\nu_1-\nu_2-\ldots-\nu_n)\ .
\end{equation} 
The straightened up bodies can also be constructed directly on $X$ by using the degree lexicographic valuation. This was also considered by Witt Nystr\"om in \cite{WN18, WN23}.

\subsection{The shape of generic iNObodies at arbitrary smooth points}

Our starting point is \cite{KL17} which highlights the connection between iNObodies and simplices by proving:
\begin{equation}\label{eq:introKL17}
	\simplex\,_{\epsilon(||L||;x)}\ \subseteq\ \sinob{Y_{\bullet}}{L}\ \subseteq\ \simplex\,_{\mu(L;x)}\ ,
\end{equation}
where $\simplex\,_t$ is the standard $n$-dimensional simplex of size $t$ in $\bb R^n$.
Furthermore, $\epsilon(||L||;x)$ and $\mu(L;x)$ are maximal and respectively minimal with this property and can be read off from \emph{every} $\nob{Y_{\bullet}}{L}$.

 When working with very general flags we will prove that the other infinitesimal successive minima can also be recovered from the iNObody. Consider the simplex
\[\simplex(t_1,\ldots,t_{n})\deq\textup{convex hull}(t_1{\bf e}_1,\ldots,t_n{\bf e}_n)\ ,\]
so that $\simplex\,_t=\simplex(t,\ldots,t)$. For example, on the right in Figure \ref{fig:genericvsspecial} we see $\simplex(1,2)$.
\begin{theorem}[Convex geometric interpretations of the infinitesimal successive minima]\label{thm:introreadsuccessive}
With the assumptions of Theorem \ref{thm:mintrosuccessjets},
	\begin{equation}\label{eq:introsimplexinclusion}\simplex\bigl(\epsilon_1(\xi;x),\ldots,\epsilon_{n}(\xi;x)\bigr)\subseteq\sinob{x}{\xi} \ ,\end{equation} 
			and $\epsilon_i(\xi;x)$ are maximal with this property. In particular $\vol(\xi)\geq\prod\nolimits_{i=1}^{n}\epsilon_i(\xi;x)$. Furthermore, $\sinob{x}{\xi}$ is simplicial if and only if $\vol(\xi)=\prod_{i=1}^n\epsilon_i(\xi;x)$, in which case $(\ref{eq:introsimplexinclusion})$ is an equality.
\end{theorem}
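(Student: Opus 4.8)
The plan is to establish the inclusion $(\ref{eq:introsimplexinclusion})$ first, then the maximality, and finally the simpliciality characterization, working entirely with the straightened body $\sinob{x}{\xi}$ and, by continuity and homogeneity of both sides, reducing to the case where $\xi$ is the class of an integral big divisor $L$ with $x\notin\Bplus(L)$. The starting point is the identification $\epsilon_i(\xi;x)=s_i(\xi;x)$ from Theorem \ref{thm:mintrosuccessjets}: partial jet separation is exactly the tool that produces sections of $L_p = \pi^*(pL) - p\,\nu_1 E$-type divisors whose tangent cones map onto $H^0(\PP^{n-i},\sO(p))$, and such sections, read through the degree-lexicographic valuation defining $\Delta^S$, contribute lattice points to $\sinob{x}{L}$ with prescribed $\alpha_i$-coordinate. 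Concretely, I would show that if $qL$ $i$-partially separates $p$-jets then the point $\tfrac{p}{q}\mathbf e_i$ (appropriately normalized) lies in $\sinob{x}{L}$; taking suprema over such $p/q$ gives $\epsilon_i(\xi;x)\mathbf e_i\in\sinob{x}{\xi}$ for each $i$, and convexity of the body yields the full simplex $\simplex(\epsilon_1,\ldots,\epsilon_n)\subseteq\sinob{x}{\xi}$.

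For the maximality statement — that the $\epsilon_i$ are the largest values for which such a simplex inclusion holds — I would argue the contrapositive using the base-locus definition of $\epsilon_i$ directly. If $t>\epsilon_i(\xi;x)$ then $\dim\Bplus(\xi_t)\cap E\geq i-1$ for $\xi_t = \pi^*\xi - tE$, which forces every section of every power of $\xi_t$ to vanish along a subvariety $Z\subseteq E$ of dimension $\geq i-1$; passing to a very general linear flag, $Z$ meets $Y_{n-i}$ (a linear $\PP^{i-1}$) and this vanishing translates, via the valuative construction of Definition \ref{def:inob}, into the statement that no point of $\sinob{x}{\xi}$ can have its $i$-th straightened coordinate as large as $t$ while the others vanish. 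Hence $t\mathbf e_i\notin\sinob{x}{\xi}$, so $\simplex(\ldots,t,\ldots)\not\subseteq\sinob{x}{\xi}$, giving maximality. The genericity of the flag is what makes this clean: it is exactly the input that lets us intersect the (a priori arbitrary) base loci with the linear subspaces $Y_j$ transversally, in the spirit of generic initial ideals.

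The simpliciality characterization then follows by a volume comparison. The inclusion $(\ref{eq:introsimplexinclusion})$ gives $\vol(\xi)=n!\cdot\vol_{\RR^n}\sinob{x}{\xi}\geq n!\cdot\vol_{\RR^n}\simplex(\epsilon_1,\ldots,\epsilon_n)=\prod_{i=1}^n\epsilon_i(\xi;x)$, using that the straightening $S$ is volume-preserving and that $\vol_{\RR^n}\simplex(t_1,\ldots,t_n)=\tfrac1{n!}\prod t_i$. If equality $\vol(\xi)=\prod\epsilon_i$ holds, then the convex body $\sinob{x}{\xi}$ contains the simplex $\simplex(\epsilon_1,\ldots,\epsilon_n)$ and has the same volume as it; since both are compact convex bodies, this forces $\sinob{x}{\xi}=\simplex(\epsilon_1,\ldots,\epsilon_n)$, which is in particular simplicial. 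Conversely, if $\sinob{x}{\xi}$ is simplicial, then by $(\ref{eq:introKL17})$-type reasoning (the straightened body always contains the origin and its vertices on the axes record the one-dimensional "widths") its vertices on the coordinate axes must be exactly $\epsilon_i(\xi;x)\mathbf e_i$ — here maximality of the $\epsilon_i$ shows the axis-vertices are at most $\epsilon_i$, and the inclusion shows they are at least $\epsilon_i$ — so the body is $\simplex(\epsilon_1,\ldots,\epsilon_n)$ and the volume identity follows.

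The main obstacle I anticipate is the maximality step: one must translate a dimension bound on $\Bplus(\xi_t)\cap E$ into a sharp constraint on the straightened body, and this requires carefully tracking how the degree-lexicographic valuation sees vanishing along the very general linear subspaces $Y_j$ — essentially a clean "no section escapes the base locus" argument, which is presumably where Proposition \ref{prop:vanishing} or its consequences re-enter. The volume-rigidity direction of the simpliciality claim is standard (a convex body containing a maximal-volume inscribed simplex of equal volume equals that simplex), and the inclusion direction is a direct consequence of Theorem \ref{thm:mintrosuccessjets} once the dictionary between jet separation and lattice points of $\Delta^S$ is set up.
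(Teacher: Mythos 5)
Your treatment of the inclusion \eqref{eq:introsimplexinclusion}, of the volume inequality, and of the simpliciality characterization follows the paper's own route (Theorem \ref{thm:lowerbounds} and Corollary \ref{cor:simplicialcriterion}): partial jet separation supplies, for each $i$, a section whose straightened valuation is $\epsilon_i(\xi;x)\,{\bf e}_i$ (equivalently $\epsilon_i(\xi;x)\cdot({\bf e}_1+{\bf e}_{i+1})$ in the tilted coordinates), convexity of the body gives the simplex, and the volume-rigidity argument for the equality case is exactly the paper's.

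The gap is in the maximality step, which is the genuinely new content of the theorem. Your mechanism --- ``$\dim\Bplus(\xi_t)\cap E\geq i-1$ forces every section of every power of $\xi_t$ to vanish along $Z$, and the flag sees this'' --- does not suffice for two reasons. First, $Z\subseteq\Bplus(\xi_t)$ gives at best $Z\subseteq{\bf B}(\xi_{t})$ after shrinking $t$ slightly, and vanishing of each section of $m\xi_t$ along $Z$ only yields multiplicity $\geq 1/m$ for the corresponding point of the body, which tends to $0$: this imposes no constraint on $\sinob{x}{\xi}$ in the limit. What is actually needed is a \emph{uniform} positive asymptotic multiplicity; the paper (Theorem \ref{thm:successiveminimaviaslicewidths}) perturbs $\xi$ to a $\bb Q$-class, places $Z$ inside $\Bminus(\xi_{t''})$ for rational $t''$ slightly larger via Lemma \ref{lem:B-ray}.(3), invokes \cite[Proposition 2.8]{ELMNP06} to get $\delta\deq\mult_Z(||\xi_{t''}||)>0$, and then transports the resulting strict inequality back to $t$ by concavity of the slice width $w_{i+1}$. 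Second, even with $\delta>0$ in hand one still needs the quantitative translation into the body, namely $\nu_2+\cdots+\nu_{i+1}\leq t''-\delta$ on the slice $\nu_1=t''$. This is Lemma \ref{lem:boundsfrommultiplicity}, the inductive B\'ezout argument down the general flag: at each restriction the degree drops by $\nu_{j+1}$ while the multiplicity along $Z\cap Y_j$ does not decrease, so the residual degree must stay $\geq\delta$. That lemma --- and not Proposition \ref{prop:vanishing}, which only enters the identification $s_i=\epsilon_i$ --- is the ingredient your plan is missing; a mere ``the general flag meets $Z$'' statement gives no bound, since the obstruction is the multiplicity $\delta$, not the incidence.
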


\noindent 
A version of the resulting volume inequality also appeared in \cite{AI}. The simplicial criterion says that computing the numerical invariants $\epsilon_i(L;x)$ and $\vol(L)$ sometimes determines what tangent cones we can expect to find for divisors in $|mL|$. For very general $x$, we will see in Theorem \ref{thm:mintroupperbounds} that the $\epsilon_i(L;x)$ bound these tangent cones even in the non-simplicial case.

For proving inclusion \eqref{eq:introsimplexinclusion}, we use the partial jet interpretation of $\epsilon_i(\xi;x)$ to construct sections whose valuations determine the vertices of the simplex. 
We note that unlike \eqref{eq:introKL17}, valid for all infinitesimal flags, the refined inclusion \eqref{eq:introsimplexinclusion} may fail for special flags, as seen in Figure \ref{fig:genericvsspecial}, where $\epsilon_1(\xi;x)=1$, $\epsilon_2(\xi;x)=2$, and $\vol(\xi)=2$. Other examples, including non-simplicial ones, are described in Section \ref{s:examples}.

The optimality of the inclusion \eqref{eq:introsimplexinclusion} (and hence the remaining parts) follows from Theorem \ref{thm:mintroslicebounds}.(3) below. Its roots lie in the algebraic side of our convex constructions, since all the data is computed on $E\simeq \PP^{n-1}$. As motivation note that by "sliding down" the parallelogram in the special flag case from the left picture in Figure \ref{fig:genericvsspecial},  we obtain the triangle in the generic case. Its shape reminds us of the structure of exponent vectors of monomials in a Borel-fixed ideal (equivalently a generic initial ideal). Due to this, we define a convex set $P\subset\bb R^{n}_+$ to be \emph{Borel-fixed}\footnote{We do not claim an actual Borel group action on our sets, the name is simply terminology.} if for all $i=1,\ldots ,n$, the following property holds
\begin{equation}\label{eq:introborel}
(\nu_1,\ldots,\nu_n)\in P\Longrightarrow (\nu_1,\ldots,\nu_{i-1},0,\nu_i+\nu_{i+1},\nu_{i+2},\ldots,\nu_n)\in P \ .
\end{equation}
When $i=n$, we understand the point on the right as $(\nu_1,\ldots,\nu_{n-1},0)$.  

 Typical examples of Borel-fixed shapes are the simplex $\simplex(t_1,\ldots,t_n)$, and the polytope
\[
\polytope(t_1,\ldots, t_n)\ \deq \ \{(\nu_1,\ldots ,\nu_n)\in \RR^n_+ \ \mid \  \nu_1+\ldots+\nu_i\leq t_i\ \text{ for all }i=1,\ldots, n\} \ ,
\] 
both considered for $0\leq t_1\leq\ldots\leq t_n$. The polytope above appears arbitrary, but it is in fact generated by one of its vertices as a Borel-fixed body. When $n=2$, any convex shape $\simplex(t_1,t_2)\subseteq P\subseteq\polytope(t_1,t_2)$ is Borel-fixed, but the property is more restrictive when $n\geq 3$.  A version of $\polytope$ was also used by \cite{AI}.

For a Borel-fixed convex shape $P$ we consider its $\nu_i$-\emph{width}, that is defined to be 
\[
w_i(P)\deq\max\{w\ \mid\ \exists (\nu_1,\ldots,\nu_n)\in P\ ,\ \nu_i=w\} \ .
\]
The Borel-fixed property forces the $w_i(P)$ to form a nondecreasing sequence $0\leq w_1(P)\leq\ldots\leq w_n(P)$, just like the infinitesimal successive minima. It also imposes polytopal bounds
\[\simplex\bigl(w_1(P),\ldots,w_n(P)\bigr)\ \subseteq\ P\ \subseteq\ \polytope\bigl(w_1(P),\ldots,w_n(P)\bigr)\ .\]

The iNObody $\inob{x}{\xi}$ is not Borel-fixed, as we see in Figure \ref{fig:genericvsspecial} on the left (note for future reference that on the right in the same picture, $\sinob{x}{\xi}$ is Borel-fixed). The situation improves for the vertical slices $\inob{x}{\xi}_{\nu_1=t}$. They are constructed by evaluating a graded linear system on $E\simeq \PP^{n-1}$ with respect to a very general linear flag. Its connection to degree lexicographic orders allows us to use ideas developed for generic initial ideals in \cite{Gal74, BS87} and show parts $(1)$ and $(2)$ of the following theorem.
\begin{theorem}[Slice bounds]\label{thm:mintroslicebounds}
	With the assumptions of Theorem \ref{thm:mintrosuccessjets}, for all $0< t<\mu(\xi;x)$ we have 
	\begin{enumerate}
	\item  $\inob{x}{\xi}_{\nu_1=t}\subset\{t\}\times\bb R^{n-1}_+$ is Borel-fixed with respect to the coordinates $(\nu_2,\ldots,\nu_n)$. 
	\smallskip
	
	\item $\{t\}\times\simplex\bigl(w_2(t),\ldots,w_n(t)\bigr)\ \subseteq\ \inob{x}{\xi}_{\nu_1=t}\ \subseteq\ \{t\}\times\polytope\bigl(w_2(t),\ldots, w_n(t)\bigr)$, where  $w_i(t)$ is the $\nu_i$-width of $\inob{x}{\xi}_{\nu_1=t}$ for all $2\leq i\leq n$. Moreover, $0<w_2(t)\leq\ldots\leq w_n(t)\leq t$. 
	\smallskip
	
	\item $\epsilon_i(\xi;x)=\max\{t\in[0,\mu(\xi;x)]\ \mid\ w_{i+1}(t)=t\}$. In particular, the segment joining the origin to $\epsilon_i(\xi;x)({\bf e}_1+{\bf e}_{i+1})$ is an edge of $\inob{x}{\xi}$.
		\end{enumerate}
\end{theorem}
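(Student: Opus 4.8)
The plan is to establish the three parts in order: (1) is the algebraic input, (2) is formal once (1) is known, and (3) combines (2) with the inclusion \eqref{eq:introsimplexinclusion} of Theorem \ref{thm:introreadsuccessive} — whose own proof runs through the jet estimates and is independent of this theorem, so there is no circularity. Throughout I may assume $\xi$ is the class of a big $\QQ$-divisor $L$, the general case following by continuity of the invariants and of slices of $\inob{x}{\xi}$ on the interior of the $\nu_1$-range.

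For part (1) I would first recall from \cite{LM09} that for rational $t\in(0,\mu(\xi;x))$ the slice $\inob{x}{\xi}_{\nu_1=t}$, with its coordinates $(\nu_2,\ldots,\nu_n)$, is the Newton--Okounkov body on $E\simeq\PP^{n-1}$ of the restricted linear series of $L_t=\pi^*L-tE$, computed against the flag $Y_2\supset\cdots\supset Y_n$; the irrational case then follows by continuity, Borel-fixedness being a closed condition. Because $Y_\bullet$ is very general, after a generic change of homogeneous coordinates on $E$ this flag valuation becomes a monomial (degree-lexicographic type) valuation, so the initial data of the restricted linear series is a generic initial ideal; over $\CC$ such an ideal is strongly stable by Galligo and Bayer--Stillman \cite{Gal74,BS87}. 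I would then translate the ``move one unit of weight'' exchange property of the standard monomials of the degree-$m$ pieces, rescaled by $1/m$ and passed to the closure as $m\to\infty$, into exactly the convex condition \eqref{eq:introborel} in the coordinates $(\nu_2,\ldots,\nu_n)$; this is the assertion that $\inob{x}{\xi}_{\nu_1=t}$ is Borel-fixed.

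Granting (1), part (2) is quick. The displayed simplicial lower and polytopal upper bounds, together with $w_2(t)\le\cdots\le w_n(t)$, are the general bounds for Borel-fixed bodies applied to $P=\inob{x}{\xi}_{\nu_1=t}\subset\RR^{n-1}$. I would then add two elementary inequalities. First, the valuative-vector construction (Definition \ref{def:inob}) bounds each $\nu_k$ by the degree $\nu_1-\nu_2-\cdots-\nu_{k-1}$ of the residual tangent cone on $Y_{k-1}$; summing these gives $\nu_2+\cdots+\nu_n\le\nu_1$ on all of $\inob{x}{\xi}$, so at a point of the slice with $\nu_n=w_n(t)$ one gets $w_n(t)\le\nu_1=t$. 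Second, since $x\notin\Bplus(\xi)$ the class $\xi$ is big, hence $\inob{x}{\xi}$ is $n$-dimensional with $\nu_1$-image $[0,\mu(\xi;x)]$, so for $0<t<\mu(\xi;x)$ the slice is genuinely $(n-1)$-dimensional and all of its widths — in particular $w_2(t)$ — are positive.

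For part (3), set $\tau_i=\max\{t\in[0,\mu(\xi;x)]\mid w_{i+1}(t)=t\}$. The straightening $S$ carries $\epsilon_i({\bf e}_1+{\bf e}_{i+1})$ to the vertex $\epsilon_i{\bf e}_i$ of $\simplex(\epsilon_1,\ldots,\epsilon_n)$, which lies in $\sinob{x}{\xi}$ by \eqref{eq:introsimplexinclusion}, so $\epsilon_i({\bf e}_1+{\bf e}_{i+1})\in\inob{x}{\xi}$; by convexity and part (2) this yields $w_{i+1}(t)=t$ for every $t\in[0,\epsilon_i]$, so $\tau_i\ge\epsilon_i$. For the reverse inequality I would take rational $t<\tau_i$, so $w_{i+1}(t)=t$; by (1)--(2) the point of the slice with $\nu_{i+1}=t$ and all other coordinates zero is a vertex of $\polytope(w_2(t),\ldots,w_n(t))$ that lies in the slice, hence an extreme point of the slice, hence approached by valuative points (this uses that $\xi$ is big). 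Unwinding the flag valuation: a section $s$ of the degree-$m$ piece of the restricted series has valuation $(0,\ldots,0,mt,0,\ldots,0)$ precisely when $s|_{Y_i}$ is a nonzero $mt$-th power of the linear form cutting $Y_{i+1}$ out of $Y_i\simeq\PP^{n-i}$, and such an $s$ therefore exists for some $m$ with $mt\in\ZZ$. Fixing a very general $Y_i$ and letting $Y_{i+1}$ vary, the closed condition ``that $mt$-th power lies in the image of the restricted series in $H^0(Y_i,\mathcal O(mt))$'' holds very generally, hence identically; since $mt$-th powers of linear forms span $H^0(\PP^{n-i},\mathcal O(mt))$ over $\CC$, the restriction map onto $H^0(\PP^{n-i},\mathcal O(mt))$ is surjective, i.e.\ $mL$ $i$-partially separates $mt$-jets at $x$. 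Hence $t\le s_i(\xi;x)=\epsilon_i(\xi;x)$ by Theorem \ref{thm:mintrosuccessjets}, and letting $t\to\tau_i$ gives $\tau_i\le\epsilon_i$, so $\tau_i=\epsilon_i$. Finally, since $\nu_{i+1}\le\nu_2+\cdots+\nu_n\le\nu_1$ on $\inob{x}{\xi}$, the linear functional $\nu\mapsto\nu_{i+1}-\nu_1$ attains its maximum $0$ there and its face equals $\{s({\bf e}_1+{\bf e}_{i+1})\mid w_{i+1}(s)=s\}=[\origin,\epsilon_i({\bf e}_1+{\bf e}_{i+1})]$, a one-dimensional face, i.e.\ an edge.

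The hard part will be part (1): making the dictionary between Newton--Okounkov bodies for very general linear flags on $\PP^{n-1}$ and generic initial ideals precise, and pushing the strong stability of the gin through the limit over $m$ to the convex condition \eqref{eq:introborel}. A secondary technical point is the valuativity step in part (3) — realizing the rational extreme point of the slice by an honest section of some degree-$m$ piece of the restricted series — which again leans on bigness of $\xi$ so that valuative points are dense in $\inob{x}{\xi}$.
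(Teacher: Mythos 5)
Parts (1) and (2) follow the paper's route: the slice is the Newton--Okounkov body on $E\simeq\PP^{n-1}$ of the restricted algebra $A^t_\bullet$ (Theorem \ref{thm:slices}), the finite-level valuative sets are Borel-fixed by a Bayer--Stillman-type weight argument, and the polytopal bounds are formal consequences. One caveat on (1): the restricted series $A^t_m$ is a linear subspace of $H^0(\PP^{n-1},\cal O(mt))$, \emph{not} an ideal, so you cannot literally quote Galligo/Bayer--Stillman on generic initial ideals; the paper has to reprove the Borel-fixed property for arbitrary linear subspaces (Proposition \ref{prop:Borelfixeddiscrete}), and the absence of an ideal structure is exactly what forces the extra work there. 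Your sketch identifies the right strategy but glosses over this.

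The genuine gap is in the reverse inequality $\tau_i\leq\epsilon_i$ of part (3). You locate the extreme point $t\cdot({\bf e}_1+{\bf e}_{i+1})$ of $\inob{x}{\xi}$ and then ``unwind the flag valuation'' of a section with valuation exactly $mt\cdot({\bf e}_1+{\bf e}_{i+1})$, concluding that its restriction to $Y_i$ is a pure power $\ell^{mt}$ and hence that $mL$ $i$-partially separates $mt$-jets. But no such section need exist at any finite level $m$: the body is only the \emph{closure} of the valuative points, and an extreme boundary point (even a rational one) need not be valuative. What the finite-level Borel-fixed structure actually gives you (Proposition \ref{prop:Borelfixeddiscreteproperties}.(1)) is a section with valuation $a_m\cdot{\bf e}_{i+1}$ in the slice with $a_m/m\to t$ from below, whose restriction to $Y_i$ is $\ell^{a_m}\cdot g$ with $\deg g=mt-a_m>0$; such sections for varying $Y_{i+1}$ do not span $H^0(Y_i,\cal O(mt))$, so the jet-separation conclusion does not follow. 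In effect you are asserting the converse of Theorem \ref{thm:lowerbounds}.(1), and that converse is exactly the nontrivial content here. The paper closes this direction by an entirely different mechanism: if $t>\epsilon_i$ then ${\bf B}_+(\xi_{t'})\cap E$ has a component $Z$ of dimension $\geq i-1$ for $t'\in(\epsilon_i,t)$, the asymptotic multiplicity $\mult_Z\|L_{t''}\|=\delta>0$ by \cite[Proposition 2.8]{ELMNP06}, and the inductive B\'ezout estimate of Lemma \ref{lem:boundsfrommultiplicity} then forces $w_{i+1}(t'')\leq t''-\delta<t''$ (after a small ample perturbation to reduce to $\QQ$-classes). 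You need either this argument or a genuine replacement for it; the valuative-point step as written does not work.
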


\noindent
The convex geometric interpretation in the last part uses an inductive B\' ezout argument detailed in Theorem \ref{thm:successiveminimaviaslicewidths}. As previously mentioned, it is the key for proving that \eqref{eq:introsimplexinclusion} is optimal.
When $n=2$, $\inob{x}{\xi}_{\nu_1=t}$ is the vertical segment $\{t\}\times [0,w_2(t)]$. When $n=3$, the slices are two dimensional and Figure \ref{fig:mslicebounds} illustrates the bounds. Finally, $(3)$ supports the idea that local positivity of $\xi$ at $x$ can be studied via the ray of divisors $\xi_t$. 

\begin{figure}
	\begin{tikzpicture}[scale=1.5]
		\draw [->](-0.25,0)--(1.25,0) node[above right]{$\nu_2$};
		\draw [->](0,0) node[below left]{$(t,0,0)$}--(0,2.25) node[above]{$\nu_3$};
		\draw[thick,dashed] (1,0) node[below right]{$(t,w_2(t),0)$}--(0,2) node[right]{$(t,0,w_3(t))$};
		\draw[thick,dashed] (1,0)--(1,1)--(0,2);
		\draw[ultra thick] (0,2) to [out=310,in=100](1,0);
		\draw[fill=gray!20,opacity=0.5](0,0) -- (0,2) to [out=310,in=100](1,0) -- (0,0)--cycle;
	\end{tikzpicture}
	\centering\caption{Vertical slice of $\inob{x}{\xi}_{\nu_1=t}$ in dimension $3$.}\label{fig:mslicebounds} 
\end{figure}

\subsection{The shape of generic iNObodies for a very general point}\label{ss:verygenera}
When $x\in X$ is a very general point, differentiation techniques similar to \cite{EKL95}, but adjusted to jets, allow us to improve Theorem \ref{thm:mintroslicebounds} from the slices of $\inob{x}{\xi}$ to the entire straightened body $\sinob{x}{\xi}$ (see \eqref{eq:introS}).  In Theorem \ref{thm:upperbound} we prove:

\begin{theorem}[Upper bounds at very general points]\label{thm:mintroupperbounds}
	Let $X$ be a normal complex projective variety of dimension $n$. Let $\xi$ be a big numerical class in $N^1(X)_{\bb R}$. Let $\epsilon_i(\xi)$ and $\sinob{X}{\xi}\subset\bb R^n_+$ denote the common value of $\epsilon_i(\xi;x)$ and respectively $\sinob{x}{\xi}\subset\bb R^n_+$ when $x\in X$ is very general. Then $\sinob{X}{\xi}$ is Borel-fixed and its width with respect to the $\alpha_i$ coordinate is $\epsilon_{i}(\xi)$. In particular
	\[\simplex\bigl(\epsilon_1(\xi),\ldots,\epsilon_{n}(\xi)\bigr)\ \subseteq\ \sinob{X}{\xi}\ \subseteq\ \polytope\bigl(\epsilon_1(\xi),\ldots,\epsilon_{n}(\xi)\bigr)\ ,\]
	and $\frac{1}{n!}\cdot \vol(\xi)\leq{\rm vol}_{\bb R^n}\polytope(\epsilon_1(\xi),\ldots,\epsilon_n(\xi))\leq \prod_{i=1}^n\epsilon_i(\xi)$.
\end{theorem}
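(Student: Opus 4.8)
The strategy is to reduce everything to two assertions — that $\sinob{X}{\xi}$ is Borel-fixed, and that its width in the $\alpha_i$ coordinate equals $\epsilon_i(\xi)$ — and then read off the remaining conclusions. Granting these, the sandwich
\[\simplex\bigl(\epsilon_1(\xi),\ldots,\epsilon_n(\xi)\bigr)\ \subseteq\ \sinob{X}{\xi}\ \subseteq\ \polytope\bigl(\epsilon_1(\xi),\ldots,\epsilon_n(\xi)\bigr)\]
is precisely the general polytopal bound for a Borel-fixed body in terms of its own widths recalled just before the theorem, with $w_i(\sinob{X}{\xi})=\epsilon_i(\xi)$ substituted. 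The volume estimate follows at once, since $\vl{\RR^n}{\sinob{X}{\xi}}=\tfrac1{n!}\vol(\xi)$ by \cite{LM09} and the volume-invariance of $S$, while $\polytope(t_1,\ldots,t_n)$ has Euclidean volume at most $\prod_{i=1}^nt_i$, being contained in the box $\prod_{i=1}^n[0,t_i]$ (each coordinate obeys $\nu_i\leq\nu_1+\cdots+\nu_i\leq t_i$). Finally one half of the width equality is free: by Theorem~\ref{thm:introreadsuccessive} applied at the very general $x$, the vertex $\epsilon_i(\xi)\mathbf{e}_i$ of $\simplex(\epsilon_1(\xi),\ldots,\epsilon_n(\xi))$ lies in $\sinob{X}{\xi}$, so $w_i(\sinob{X}{\xi})\geq\epsilon_i(\xi)$. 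Hence the real content is (a) Borel-fixedness of $\sinob{X}{\xi}$, and (b) the inclusion $\sinob{X}{\xi}\subseteq\polytope(\epsilon_1(\xi),\ldots,\epsilon_n(\xi))$, which forces $w_i(\sinob{X}{\xi})\leq\epsilon_i(\xi)$.

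\medskip\noindent\textbf{Step 1: isolating the new direction of Borel-fixedness.}
Pulling the Borel relations \eqref{eq:introborel} for $\sinob{X}{\xi}$ back through the straightening $S$ of \eqref{eq:introS}, I would check that for each index $i<n$ the corresponding move on $\inob{x}{\xi}$ fixes $\nu_1=\alpha_1+\cdots+\alpha_n$ and only permutes mass among $\nu_2,\ldots,\nu_n$; these are exactly the moves already supplied by the slice statement Theorem~\ref{thm:mintroslicebounds}.(1), so they cost nothing new. The single genuinely new relation is the index-$n$ move, which unwinds to: if $(\nu_1,\ldots,\nu_n)\in\inob{x}{\xi}$ then $(\nu_2+\cdots+\nu_n,\nu_2,\ldots,\nu_n)\in\inob{x}{\xi}$. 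In divisorial terms, given an effective $D\sim_{\QQ}\xi$ one must produce, after clearing denominators, an effective $D'\sim_{\QQ}\xi$ with $\nu_j(D')=\nu_j(D)$ for all $j\geq2$ but $\mult_xD'=\nu_2(D)+\cdots+\nu_n(D)$ — that is, one must strip the excess multiplicity $\mult_xD-\sum_{j\geq2}\nu_j(D)$ of the tangent cone at $x$ without perturbing its contact pattern along the flag $Y_2\supseteq\cdots\supseteq Y_n$.

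\medskip\noindent\textbf{Step 2: the jet-adapted EKL differentiation.}
Both the index-$n$ move of Step~1 and the upper bound (b) are to be obtained from one mechanism: differentiating the local equation of a divisor at the \emph{very general} point $x$ while keeping track of the whole linear flag, rather than of the multiplicity alone, in the spirit of \cite{EKL95} and \cite{AI}. Given an effective $D\sim_{\QQ}\xi$, I would pass to a large multiple $qD$, choose local coordinates at $x$ adapted to the very general flag $Y_\bullet$, and apply successive generic first-order differential operators; because $x$ is very general and $Y_\bullet$ is very general over it, an Ein--Lazarsfeld-type transversality statement forces the successive derivatives to remain as generic as possible, so that each step lowers the multiplicity by one with the intersection data along $Y_2,\ldots,Y_n$ unchanged, at least down to multiplicity $\sum_{j\geq2}\nu_j$. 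Running this exactly to that threshold realizes the index-$n$ move, completing (a). For (b), if some point of $\sinob{X}{\xi}$ had $\alpha_1+\cdots+\alpha_i>\epsilon_i(\xi)$ with $i<n$, then, after normalizing by the Borel moves already established, it would be realized by an effective $D$ whose tangent cone is concentrated along the sub-flag $Y_2\supseteq\cdots\supseteq Y_{i+1}$ with $\mult_xD=\nu_2(D)+\cdots+\nu_{i+1}(D)>\epsilon_i(\xi)$; differentiating down to a multiplicity $t<\epsilon_i(\xi)$ (rational after scaling) would then produce an effective divisor exhibiting $\dim\bigl(\Bplus(\xi_t)\cap E\bigr)\geq i-1$ — equivalently, one whose tangent cone fails to restrict onto $H^0(\PP^{n-i},\mathcal O(t))$ for the very general $\PP^{n-i}=Y_i\subseteq E$ — contradicting both the defining minimality of $\epsilon_i(\xi)=\epsilon_i(\xi;x)$ and the partial-jet characterization $\epsilon_i(\xi;x)=s_i(\xi;x)$ of Theorem~\ref{thm:mintrosuccessjets}. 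Together with the trivial estimate $\alpha_n=\nu_1-\nu_2-\cdots-\nu_n\leq\nu_1\leq\mu(\xi;x)=\epsilon_n(\xi)$ for the last face, this yields $\sinob{X}{\xi}\subseteq\polytope(\epsilon_1(\xi),\ldots,\epsilon_n(\xi))$.

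\medskip\noindent\textbf{Main obstacle and loose ends.}
The crux, and the anticipated main difficulty, is Step~2: one must guarantee that generic differentiation at a \emph{very general} point, relative to a very general flag in $E$, removes exactly the intended amount of multiplicity and leaves the higher flag valuations $\nu_2,\ldots,\nu_n$ untouched — the precise bookkeeping being the content of Theorem~\ref{thm:upperbound}. This is where the hypothesis that $x$ is very general is indispensable and where an EKL / Ein--Lazarsfeld transversality input enters. The remaining points are routine: clearing denominators to replace $\QQ$-divisors by honest sections, and a closure and continuity argument — using the homogeneity and continuity of $\epsilon_i$ and of the bodies established earlier — to pass from $\QQ$-classes to an arbitrary big real class $\xi$.
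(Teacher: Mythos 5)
Your architecture matches the paper's proof of Theorem \ref{thm:upperbound} almost exactly: the Borel moves with index $i<n$ preserve $\nu_1$ and are absorbed by the Borel-fixedness of the vertical slices (Theorem \ref{thm:mintroslicebounds}.(1), no very generality needed); the index-$n$ move is the one genuinely new ingredient and is realized by a first-order differential operator at the very general point with symbol $\partial/\partial x_n$, which is precisely Proposition \ref{prop:differentiatejets}; and the polytopal and volume bounds then follow formally from Proposition \ref{prop:borelfixedshapesproperties} and Lemma \ref{lem:boundingpolytopes}.

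The one sub-argument that does not work as written is your direct proof of the upper bound (b) in Step 2. Having normalized, via the Borel moves, to a divisor $D$ with tilted valuation $c({\bf e}_1+{\bf e}_{i+1})$ and $c>\epsilon_i(\xi)$, you claim a contradiction with the defining minimality of $\epsilon_i$ and with the jet characterization. But a \emph{single} divisor whose tangent cone is concentrated along $Y_{i+1}$ neither exhibits $\dim\bigl(\Bplus(\xi_t)\cap E\bigr)\geq i-1$ nor negates the surjectivity of any restriction map --- both of those are statements about the whole linear system, and the implication runs the wrong way. The genuine content here is Theorem \ref{thm:mintroslicebounds}.(3) (Theorem \ref{thm:successiveminimaviaslicewidths} in the body): for $t>\epsilon_i(\xi;x)$ there is a component $Z\subseteq\Bplus(\xi_t)\cap E$ of dimension $\geq i-1$ along which $\mult_Z\|\xi_t\|>0$, and the B\'ezout argument of Lemma \ref{lem:boundsfrommultiplicity} then forces $\nu_2+\cdots+\nu_{i+1}<t$ for every divisor in the slice, hence $t({\bf e}_1+{\bf e}_{i+1})\notin\inob{x}{\xi}$. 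The fix is painless and is what the paper does: once Borel-fixedness is proved, $w_i(\sinob{X}{\xi})=\max\{a\mid a{\bf e}_i\in\sinob{X}{\xi}\}$ by Proposition \ref{prop:borelfixedshapesproperties}, and this maximum equals $\epsilon_i(\xi)$ by the edge characterization just quoted; the inclusion $\sinob{X}{\xi}\subseteq\polytope(\epsilon_1(\xi),\ldots,\epsilon_n(\xi))$ is then automatic rather than something to be proved by a separate differentiation argument.
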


\noindent Recall that we use coordinates $(\alpha_1,\ldots,\alpha_n)$ for the ambient $\bb R^n_+$ of the straightening $\sinob{x}{\xi}$. A version of the volume inequalities was also proved by \cite{AI}. The theorem provides yet another convex geometric interpretation for the infinitesimal successive minima at a very general point. They are the widths of the largest box that contains $\sinob{x}{\xi}$. Similarly to the simplicial criterion, if ${\rm vol}(L)=n!\cdot{\rm vol}_{\bb R^n}\polytope(\epsilon_1(L),\ldots,\epsilon_n(L))$, then $\sinob{x}{L}$ equals the upper bound $\polytope(\epsilon_1(L),\ldots,\epsilon_n(L))$, and so again the $n+1$ numbers $\vol(L)$ and $\epsilon_i(L)$ essentially tell us what tangent cones appear for divisors in $|mL|$. 

 Regarding the necessity of the very general assumption, for arbitrary point $x\in X$ and $1\leq i\leq n-1$, the function $w_{i+1}(t)$ is concave on $(0,\mu(\xi;x))$ and the interval where $w_{i+1}(t)=t$ is $(0,\epsilon_i(\xi;x)]$ by Theorem \ref{thm:mintroslicebounds}.  For very general $x$, the function is furthermore nonincreasing for $t\geq\epsilon_i(\xi)$. This result generalizes the surface case in \cite[Proposition 4.2.(2).(b)]{KL18}.
Example \ref{ex:verygeneralpointnecessary} shows that the global widths of $\inob{x}{\xi}$ when $x$ is not very general can be larger than $\epsilon_i(\xi;x)$. 
In Example \ref{ex:upperboundsharp} at very general points, we see that the upper bound $\sinob{X}{\xi}\ \subseteq\ \polytope(\epsilon_1(\xi),\ldots,\epsilon_{n}(\xi))$ is sharp. We again mention that one of the reasons for stating our results for $\sinob{x}{\xi}$ is that the tilted versions $\inob{x}{\xi}$ are never Borel-fixed when $n\geq 2$.

\subsection{Examples and applications}
A benefit of the theory above is that once one knows geometrical aspects of the pair $(X,L)$, then we could expect to be able to compute the shape of $\inob{x}{L}$. In this regard we are able to describe many examples in Section \ref{s:examples} (see also \cite{FL25,FL25b,FL25c}). On symmetric products of a curve and on quadric hypersurfaces, these sets are simplices as in Theorem \ref{thm:mintroupperbounds} due to simple geometric reasons. The simplicial form holds also for certain polarizations on products of curves and three-dimensional non-hyperelliptic Jacobian, but the geometry becomes much more involved. In the first case we describe infinitesimal successive minima with toric-inspired arguments, and in the second case the doubling map on the Jacobian plays a significant role. Similar arguments can be made for hyperelliptic Jacobians, but they are not yet sufficient to fully describe the generic iNObody. They do show that its shape is not simplicial.

Another merit of the theory above is its perceived potential towards applications. 
For example, we uncover an interesting connection between $\inob{x}{\xi}$ for an ample class $\xi$ and the Seshadri constant at $x$, as defined in \cite{Ful21}, of the curve class $(\xi^{n-1})$. In Theorem \ref{thm:curveseshadribounds} we prove in particular the following:

\begin{theorem}\label{thm:introcurveseshadribounds}
	Let $X$ be a complex projective manifold of dimension $n$, let $x\in X$ and let $\xi\in N^1(X)_{\bb R}$ be an ample class. Then
\begin{enumerate}[(1)]
\item $\prod\nolimits_{i=1}^{n-1}\epsilon_i(\xi;x)\leq\epsilon(( \xi^{n-1});x)\leq\frac{(\xi^n)}{\mu(\xi;x)}$ ,
\smallskip

\item If $x$ is very general, then $(n-1)!\cdot {\rm vol}_{\bb R^{n-1}}\sinob{x}{\xi}_{\alpha_n=0}\leq\epsilon((\xi^{n-1});x)$.
\end{enumerate}
\end{theorem}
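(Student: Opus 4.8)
\textbf{Proof proposal for Theorem \ref{thm:introcurveseshadribounds}.}

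The plan is to connect the curve Seshadri constant $\epsilon((\xi^{n-1});x)$ with slices of the infinitesimal Newton--Okounkov body by working on the blow-up $\pi\colon\Bl_xX\to X$. Recall from \cite{Ful21} that $\epsilon((\xi^{n-1});x)$ can be computed as the supremum of $t\geq 0$ such that $\pi^*(\xi^{n-1})-t[\ell]$ is a limit of classes of effective curves (or is nef, depending on the formulation) where $[\ell]$ is the class of a line in $E\simeq\PP^{n-1}$; equivalently it measures how positively $(\xi^{n-1})$ restricts to the exceptional divisor. For part (1), the upper bound $\epsilon((\xi^{n-1});x)\leq (\xi^n)/\mu(\xi;x)$ should follow from the projection formula together with $\mu(\xi;x)$ being the $\nu_1$-width: pairing $L_t=\pi^*\xi-tE$ against the curve class realizing the Seshadri constant gives $(\pi^*\xi\cdot(\xi^{n-1})) - t\cdot\epsilon(( \xi^{n-1});x)\cdot(\text{something})\geq 0$ for $t<\mu(\xi;x)$, and letting $t\to\mu(\xi;x)$ yields the inequality after identifying $(\pi^*\xi\cdot\pi^*(\xi^{n-1}))=(\xi^n)$. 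For the lower bound $\prod_{i=1}^{n-1}\epsilon_i(\xi;x)\leq\epsilon((\xi^{n-1});x)$, I would use the partial jet separation interpretation from Theorem \ref{thm:mintrosuccessjets}: the simplex inclusion $\simplex(\epsilon_1(\xi;x),\ldots,\epsilon_n(\xi;x))\subseteq\sinob{x}{\xi}$ from Theorem \ref{thm:introreadsuccessive} controls the intersection of tangent cones with the flag $Y_\bullet$, and intersecting $n-1$ generic such tangent cones (one for each of the first $n-1$ minima, avoiding $\Bplus$) produces a curve through $x$ with controlled multiplicity, giving a lower bound on the curve Seshadri constant by the very definition in \cite{Ful21}.

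For part (2), the key observation is that the hyperplane section $\sinob{x}{\xi}_{\alpha_n=0}$ of the straightened body should be, up to the linear change $S$, identified with the infinitesimal Newton--Okounkov body on $E\simeq\PP^{n-1}$ of the restricted class — roughly the body computing $\vol_{E}$ of the ``limiting'' restricted linear series, which by \cite{LM09}-type restriction theorems has Euclidean volume $\tfrac{1}{(n-1)!}$ times a restricted volume. The strategy is then: (i) identify $(n-1)!\cdot\vol_{\RR^{n-1}}\sinob{x}{\xi}_{\alpha_n=0}$ with the restricted volume $\vol_{\Bl_xX|E}(\pi^*\xi - tE)$ evaluated in the appropriate limiting regime, or more precisely with an intersection-theoretic quantity on $E$; (ii) relate that quantity to the curve class $(\xi^{n-1})$ restricted to $E$ via the projection formula, using that $(\pi^*\xi - tE)^{n-1}\cdot E = (\text{stuff})$ expands in $t$; (iii) invoke the defining property of $\epsilon((\xi^{n-1});x)$ from \cite{Ful21} as the threshold for nefness/effectivity of $(\xi^{n-1})$-twisted curve classes on the blow-up, which bounds the relevant restricted volume from above. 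I would run this through Theorem \ref{thm:successiveminimaviaslicewidths} and the slice description in Theorem \ref{thm:mintroslicebounds} to make the convex-geometric side precise, and through the very-general-point hypothesis to ensure the restriction theorem applies without correction terms.

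The main obstacle I anticipate is step (ii)/(i) of part (2): making rigorous the identification between the $(n-1)$-dimensional slice $\sinob{x}{\xi}_{\alpha_n=0}$ and an honest intersection-theoretic invariant of $(\xi^{n-1})$ at $x$. Newton--Okounkov bodies of restricted linear series compute restricted volumes, not intersection numbers of fixed curve classes, so one must pass through a limiting argument (as $t\to 0^+$ in the slices $\inob{x}{\xi}_{\nu_1=t}$) and control how the restricted volume on $E$ degenerates — this is where the very general assumption on $x$ and the differentiation-of-jets technique behind Theorem \ref{thm:mintroupperbounds} should enter, to guarantee the slice bodies behave continuously and the limiting body has the expected dimension and volume. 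The comparison with \cite{Ful21}'s definition of $\epsilon((\xi^{n-1});x)$ (which is phrased via a pairing/nefness condition rather than a volume) requires a convexity or log-concavity input to convert the volume bound into the Seshadri bound, and getting the constants $(n-1)!$ exactly right will need care with the normalization of $S$ and of $\vol_E$.
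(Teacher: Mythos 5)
Your part (1) upper bound matches the paper: one pairs the pseudoeffective class $\pi^*\xi-\mu(\xi;x)E$ against the limit-of-movable class $\pi^*(\xi^{n-1})-\epsilon((\xi^{n-1});x)\ell$ and uses $E\cdot\ell=-1$. But your lower bound in (1) rests on a misreading of the definition in \cite{Ful21}: $\epsilon((\xi^{n-1});x)$ is an \emph{infimum over effective divisors} $D\ni x$ of $(\xi^{n-1}\cdot D)/\mult_xD$, so a lower bound cannot be obtained by ``producing a curve through $x$ with controlled multiplicity'' --- curves are not the test objects here. The working argument (the paper's) is: for every prime divisor $Z$ through $x$, choose $D_1,\ldots,D_{n-1}\equiv\xi$ with $\mult_xD_i\geq t_i$ close to $\epsilon_i^{\rm loc}(\xi;x)$ and with $x$ an isolated point of $D_1\cap\cdots\cap D_{n-1}\cap Z$; semiampleness plus Fulton's excess intersection theory then gives $(\xi^{n-1}\cdot Z)\geq t_1\cdots t_{n-1}\cdot\mult_xZ$, and one concludes via $\epsilon_i(\xi;x)\leq\epsilon_i^{\rm loc}(\xi;x)$. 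Your simplex-inclusion route would only recover the bound at very general $x$ (by feeding $\simplex(\epsilon_1,\ldots,\epsilon_{n-1})\subseteq\sinob{x}{\xi}_{\alpha_n=0}$ into part (2)), whereas (1) is asserted for every $x$.

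For part (2) the gap is more serious: the slice $\sinob{x}{\xi}_{\alpha_n=0}$ is \emph{not} a vertical slice $\nu_1=t$ of $\inob{x}{\xi}$ in disguise. Under the straightening $S$ one has $\alpha_n=\nu_1-\nu_2-\cdots-\nu_n$, so the vertical slices correspond to $\alpha_1+\cdots+\alpha_n=t$, while $\alpha_n=0$ is a diagonal slice of the tilted body; it is not the Newton--Okounkov body of any restricted linear series on $E$, and no limit $t\to 0^+$ of restricted volumes $\vol_{\overline X|E}(\xi_t)$ computes it (those tend to $t^{n-1}$). The paper's actual mechanism is a counting argument with the degree-lexicographic valuation: given a test divisor $D$ with $\mult_xD=M$, choose coordinates so that $\nu_{\rm deglex}(D)=(0,\ldots,0,M)$; then sections of $mH-D$ have valuation with $\alpha_n\geq M$, so $\#\Gamma_{m,\alpha_n<M}\leq h^0(D,mH|_D)\approx\frac{m^{n-1}}{(n-1)!}(H^{n-1}\cdot D)$; finally the Borel-fixed property of $\sinob{x}{\xi}$ at a \emph{very general} point (Theorem \ref{thm:upperbound}, proved by differentiating jets) yields $\#\Gamma_{m,\alpha_n<M}\approx M\cdot\#\Gamma_{m,\alpha_n=0}\approx M\,m^{n-1}\vol_{\RR^{n-1}}\sinob{x}{\xi}_{\alpha_n=0}$. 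Dividing by $M=\mult_xD$ and taking the infimum over $D$ gives the claim. This combinatorial use of the Borel-fixed structure is the missing idea; the restricted-volume/log-concavity detour you sketch does not supply it.
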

As a consequence of this and Theorem \ref{thm:introreadsuccessive}, we compute the Seshadri constant of the curve class $(\xi^{n-1})$ when $\sinob{x}{\xi}$ is simplicial.
\smallskip

\subsection{{\bf Comparison with other work in the literature}}\label{ss:otherwork} Our main inspiration comes from \cite{KL17,AI} and from our examples in \cite{FL25,FL25b,FL25c} on genus 3 Jacobians, products of curves, and other product threefolds. \cite{KL17} prove the inclusions \eqref{eq:introKL17} that suggest how simplices appear naturally when studying iNObodies. 
	\cite{CHPW18} also consider nonstandard simplices $\simplex(t_1,\ldots,t_n)$ in the context of (non-infinitesimal) Newton--Okounkov bodies from flags on $X$. 
	
\cite{AI} introduced a different notion of successive minima for a line bundle $L$ at a possibly singular point $x\in X$. When $x\notin{\bf B}_+(L)$ is smooth, then $\epsilon_i^{\rm loc}(L;x)$ measures when the ${\bf B}_+(L_t)$ start having components of dimension at least $i$ that meet $E$ properly. See Definition \ref{def:aiminima}. These successive minima also form an increasing sequence. Possibly out of a desire to work intrinsically on $X$, irreducible components of ${\bf B}_+(L_t)$ that are fully contained in $E$ are not considered. Such components can appear, see Example \ref{ex:hyper3}, and were  identified in \cite{CN14} as obstacles in improving known lower bounds on Seshadri constants at very general points. 
Interestingly the infinitesimal minimum $\epsilon_{n-1}(L;x)$ also appears in \cite{CN14}. 

We show that $\epsilon_i(L;x)\leq\epsilon_{i}^{\rm loc}(L;x)$. The inequality is strict in Example \ref{ex:hyper3}. This may explain why it was difficult to obtain a jet interpretation for $\epsilon_i^{\rm loc}(L;x)$ when $i\neq 1,n$ in \cite{AI}. 
When working directly on $X$, the irreducible components in $\Bplus(L_t)$ that are fully contained in $E$ do not appear to interfere with intersection theoretic arguments. This explains why the bounds on ${\rm vol}(L)$  in terms of local successive minima from \cite{AI} can sometimes be better than those coming from \eqref{eq:introsimplexinclusion}. An instance of this is also our Theorem \ref{thm:curveseshadribounds}.(2). Other times, like in Theorem \ref{thm:mintroupperbounds}, the bounds are weaker. 

The idea that we exploit in this paper is that volume bounds arise from a convex geometric picture. Working infinitesimally (on the blow-up), instead of locally around $x$ on $X$, helped us achieve it. The infinitesimal perspective also played an important role in the main results of \cite{Loz24} and generic iNObodies were used extensively in \cite{Loz20}.

\smallskip

\subsection*{Acknowledgments} The first named author was partially supported by the Simons travel grant no.~579353.  The second named author was partially supported by the Research Project PRIN 2020 - CuRVI, CUP J37G21000000001, and PRIN 2022 - PEKBY, CUP J53D23003840006. Furthermore, he wishes to thank the MIUR Excellence Department of Mathematics, University of Genoa, CUP D33C23001110001. He is also a member of the INDAM-GNSAGA. 

We thank Sam Altschul, Florin Ambro, Aldo Conca, Robert Lazarsfeld, and Takumi Murayama for useful conversations. The idea of translating the volume bounds of \cite{AI} into bounds on generic iNObodies also came up in discussions between the authors and Alex K\" uronya.
The second author is furthermore thankful for the conversations with Alex K\"uronya (and indirectly with Lawrence Ein) about the impact that differentiating sections has on the shape of generic iNObodies at very general points.

\section{Infinitesimal successive minima and partial jets}

In this section we introduce and study the basic properties of the infinitesimal successive minima and the asymptotic partial jet separation order. They form increasing sequences of strictly positive numerical invariants that satisfy superadditivity and continuity properties on an open cone inside the Néron--Severi space. The main result of this section is Theorem \ref{thm:mintrosuccessjets} where we identify these two invariants for normal ambient spaces. In Section \ref{ss:motivation} we explain how classical invariants like the Seshadri constants and the Fujita--Nakayama invariants (widths) as well as the more recent local successive minima of \cite{AI} motivated our constructions. The experts may wish to jump straight to Definition \ref{def:infinitesimalsuccessiveminima} and call back as needed.

\subsection{Base loci}This subsection collects basic classical properties of base loci. 
 Let $X$ be a projective variety over an algebraically closed field and  $L$ be a line bundle on $X$. The \emph{base locus} of $L$ is the degeneracy locus ${\rm Bs}(L)\subseteq X$ with its subscheme structure of the evaluation morphism $H^0(X,L)\otimes\cal O_X\to L$. Its asymptotic version the \emph{stable base locus} of $L$ is the closed subset 
 \[
 {\bf B}(L)\ = \ \bigcap\nolimits_{m\geq 1}{\rm Bs}(mL) .
 \]
By Noetherianity ${\bf B}(L)={\rm Bs}(mL)$ for $m\gg 0$ sufficiently divisible. Thus, ${\bf B}(L)={\bf B}(mL)$ for all $m\geq 1$. This definition extends naturally to $\bb Q$-Cartier $\bb Q$-divisors. 

The base loci  above are not numerical invariants and cannot be extended to real classes in the N\' eron--Severi space of $X$. A fix is to consider the \emph{numerical base locus} of $L$ 
 \[
 {\bf B}_{\rm num}(L)=\bigcap\nolimits_D{\rm Supp}(D)
 \]
  as $D$ ranges through effective $\bb R$-Cartier $\bb R$-divisors numerically equivalent to $L$. It is a homogeneous numerical invariant and can be defined for any class $\xi\in N^1(X)_{\RR}$.

Motivated by similar concerns, \cite{ELMNP06} introduces two new base loci by perturbing the stable base locus with small ample classes. In \cite[\S 1]{ELMNP06} it is required for $X$ to be normal, but this assumption does not appear to be used in the proofs. First,  the \emph{augmented (or nonample) base locus} of $L$ is 
\[
{\bf B}_+(L)\ = \ {\bf B}(L-\frac 1mH)
\]
for all $m\gg 0$ and any ample divisor $H$ on $X$.  It is also the intersection of all ${\bf B}(L-A)$, where $A$ is an ample $\bb R$-Cartier $\bb R$-divisor, and $L-A$ is a $\bb Q$-Cartier $\bb Q$-divisor. It can be defined by using a sequence $A_m$ of ample divisors, numerically converging to $0$. Thus the augmented base locus is a numerical homogeneous invariant and can be defined for any class $\xi\in N^1(X)_{\bb R}$. In terms of positivity, $\xi$ is \emph{ample} precisely when $\Bplus(\xi)=\varnothing$, and $\xi $ is \emph{big} whenever $\Bplus(\xi)\neq X$. Other basic properties are summarized below:
\begin{proposition}\label{prop:bplus}
Let $X$ be a projective variety and $\xi\in N^1(X)_{\RR}$ a big numerical class on $X$. Then
\begin{enumerate}
\item ${\bf B}_+(\xi-\xi')\subseteq{\bf B}_+(\xi)$ for any small enough numerical class  $\xi'\in N^1(X)_{\bb R}$. If $\xi'$ is ample (and small), then equality holds.
\smallskip

\item
 ${\bf B}_+(\pi^*\xi)=\pi^{-1}{\bf B}_+(\xi)\cup{\rm Exc}(\pi)$, for a birational morphism $\pi:\overline X\to X$ of normal varieties.
\smallskip

\item When $X$ is normal and $\xi$ is the class of a  $\bb Q$-Cartier $\bb Q$-divisor $L$, then ${\bf B}_+(\xi)$ is the closed subset of points where $|mL|$ is not an isomorphism for $m\gg 0$ sufficiently divisible.
\end{enumerate}
\end{proposition}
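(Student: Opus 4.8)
All three parts are classical, and parts (1) and (3) — together with the passage of everything to real classes — are contained in \cite{ELMNP06}; the plan is to reprove (1) and (2) directly, since they are short, and for (3) to record the elementary half and quote \cite{ELMNP06} for the rest. I would work with $\bb Q$-classes first. The inputs beyond the definitions are: (i) the set-theoretic identity ${\bf B}(\pi^*D)=\pi^{-1}{\bf B}(D)$ for a $\bb Q$-Cartier $\bb Q$-divisor $D$ on $X$, which is immediate from the isomorphism $H^0(\overline X,\pi^*D)=H^0(X,D)$ (projection formula and $\pi_*\sO_{\overline X}=\sO_X$, valid because $X$ is normal and $\pi$ is proper birational); and (ii) Nakamaye's theorem, that ${\bf B}_+(\eta)=\Null(\eta):=\bigcup\{V:(\eta^{\dim V}\cdot V)=0\}$ for a nef and big class $\eta$. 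I would also use freely that ${\bf B}(M+N)\subseteq{\bf B}(M)\cup{\bf B}(N)$, that ${\bf B}(A)=\varnothing$ and ${\bf B}(N)\subseteq{\bf B}(N-A)$ for $A$ ample, that ${\bf B}(N)\subseteq{\bf B}_+(N)$ always, and that ${\bf B}_+(\xi)={\bf B}(\xi-A)$ for every sufficiently small ample class $A$.

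For (1), I would fix a small ample $\bb Q$-class $A$ with ${\bf B}_+(\xi)={\bf B}(\xi-A)$ and, given $\xi'$ small, choose an even smaller ample $\bb Q$-class $A'$ with ${\bf B}_+(\xi-\xi')={\bf B}((\xi-\xi')-A')$ and with $A-A'-\xi'$ still ample. Writing $A-A'-\xi'\sim_{\bb Q}B\geq 0$ and using subadditivity,
\[{\bf B}_+(\xi-\xi')\ =\ {\bf B}\bigl((\xi-A)+B\bigr)\ \subseteq\ {\bf B}(\xi-A)\cup{\bf B}(B)\ =\ {\bf B}_+(\xi)\ .\]
When $\xi'$ is itself ample and small I may take $A=\xi'$ in the identity ${\bf B}_+(\xi)={\bf B}(\xi-A)$, so ${\bf B}_+(\xi)={\bf B}(\xi-\xi')\subseteq{\bf B}_+(\xi-\xi')$; combined with the inclusion just proved, this gives equality. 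The real-class version is handled as in \cite{ELMNP06}.

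For (2), the inclusion ``$\subseteq$'' goes as follows. Fix a small ample $\bb Q$-class $A$ on $X$ with ${\bf B}(\xi-A)={\bf B}_+(\xi)$, and a small ample $\bb Q$-class $\overline A$ on $\overline X$, small enough that both ${\bf B}_+(\pi^*\xi)={\bf B}(\pi^*\xi-\overline A)$ and ${\bf B}(\pi^*A-\overline A)={\bf B}_+(\pi^*A)$. From $\pi^*\xi-\overline A=\pi^*(\xi-A)+(\pi^*A-\overline A)$ and (i),
\[{\bf B}_+(\pi^*\xi)\ \subseteq\ {\bf B}\bigl(\pi^*(\xi-A)\bigr)\cup{\bf B}_+(\pi^*A)\ =\ \pi^{-1}{\bf B}_+(\xi)\ \cup\ \Null(\pi^*A)\ ,\]
and $\Null(\pi^*A)={\rm Exc}(\pi)$ by Nakamaye's theorem, since $\bigl((\pi^*A)^{\dim V}\cdot V\bigr)=\bigl(A^{\dim V}\cdot\pi_*V\bigr)$ vanishes exactly for the subvarieties $V$ contracted by $\pi$. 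For ``$\supseteq$'': if $C$ is a curve contracted by $\pi$ then $(\pi^*\xi\cdot C)=0$, and since $\pi^*\xi$ is big while any member of a $|m(\pi^*\xi-\overline A)|$ missing $C$ would force $(\pi^*\xi\cdot C)\geq(\overline A\cdot C)>0$, we get $C\subseteq{\bf B}(\pi^*\xi-\overline A)={\bf B}_+(\pi^*\xi)$; as ${\rm Exc}(\pi)$ is a union of such curves, ${\rm Exc}(\pi)\subseteq{\bf B}_+(\pi^*\xi)$. Finally, choosing $\overline A=\pi^*A+\overline H$ with $\overline H$ a small ample $\bb Q$-class on $\overline X$ gives ${\bf B}_+(\pi^*\xi)={\bf B}(\pi^*(\xi-A)-\overline H)\supseteq{\bf B}(\pi^*(\xi-A))=\pi^{-1}{\bf B}(\xi-A)=\pi^{-1}{\bf B}_+(\xi)$, using ${\bf B}(N)\subseteq{\bf B}(N-\overline H)$.

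For (3), the implication ``$x\notin{\bf B}_+(L)\Rightarrow|mL|$ is an isomorphism near $x$ for $m\gg0$'' is the elementary half: I choose $m$ sufficiently divisible so that $m(L-A)$, $mA$ are Cartier, $mA$ is very ample, and $x\notin{\rm Bs}(m(L-A))$; for $t\in H^0(m(L-A))$ with $t(x)\neq0$ the subspace $t\cdot H^0(mA)\subseteq H^0(mL)$ defines on $\{t\neq0\}\ni x$ the same embedding as $|mA|$, so $|mL|$ — and likewise $|kmL|$ via $t^k$ — restricts to an isomorphism near $x$. For the converse, $x\in{\bf B}_+(L)\Rightarrow|mL|$ fails to be an isomorphism at $x$ for all $m\gg0$: when $L$ is not big this is clear since ${\bf B}_+(L)=X$ and already ${\rm Bs}(mL)$ together with the positive-dimensional general fibres of $\phi_{mL}$ cover $X$; when $L$ is big one passes to a resolution on which the moving part of $|m_0L|$ is free, applies Nakamaye's theorem there, and combines it with part (2) applied to $\pi^*L$ to exhibit a positive-dimensional subvariety through $x$ contracted by $\phi_{mL}$ for all $m$ — this is precisely the characterization of ${\bf B}_+$ in \cite[\S1]{ELMNP06}, which I quote. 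That last implication is the main obstacle: showing the non-isomorphism locus is not strictly smaller than ${\bf B}_+(L)$ is the one point where one genuinely needs Nakamaye's theorem together with the behaviour of ${\bf B}_+$ under the resolution of the rational map $\phi_{mL}$, and it is the step I would not carry out directly.
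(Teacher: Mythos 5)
Your argument is correct, but it takes a genuinely different route from the paper: the paper does not prove this proposition at all, it simply attributes (1) to \cite{ELMNP06}*{Corollary 1.6} (observing that normality is not used there), (2) to \cite{BBP13}*{Proposition 2.3}, and (3) to \cite{BCL14}, whereas you reprove (1) and (2) directly and supply the elementary half of (3). Your proof of (1) via the decomposition $(\xi-\xi')-A'=(\xi-A)+(A-A'-\xi')$ and subadditivity of ${\bf B}$ is exactly the standard argument and is fine (with the usual caveat, which you acknowledge, that one first works with $\bb Q$-classes and defers the passage to $\bb R$-classes). For (2), the trade-off is worth noting: your ``$\subseteq$'' inclusion rests on Nakamaye's theorem ${\bf B}_+(\eta)=\Null(\eta)$ applied to $\pi^*A$ on the normal variety $\overline X$ — a correct but heavy input, available over $\bb C$ by Cacciola--Lopez and over arbitrary algebraically closed fields (the generality in which the paper's Section 2 is partly phrased) only by Birkar's theorem — whereas \cite{BBP13} obtains the same inclusion by an elementary direct argument; your ``$\supseteq$'' via contracted curves and the choice $\overline A=\pi^*A+\overline H$ is clean and matches what one would do in any case. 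For (3), your easy direction (producing an immersion near $x$ from a section $t$ of $m(L-A)$ nonvanishing at $x$ times the very ample system $|mA|$) is correct; for the converse you should cite \cite{BCL14} rather than \cite{ELMNP06}*{\S 1} — the identification of ${\bf B}_+(L)$ with the non-isomorphism locus on normal varieties is precisely the main theorem of \cite{BCL14}, and it is the reference the paper itself uses.
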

\noindent The first statement is proved in \cite[Corollary 1.6]{ELMNP06} when $X$ is normal, but the proof does not use this condition. The second statement was proved in \cite[Proposition 2.3]{BBP13} and the third in \cite{BCL14}. 

 Second, the \emph{restricted (or diminished, or nonnef) base locus} of $L$ is defined to be
 \[
 {\bf B}_-(L)=\bigcup\nolimits_{m\geq 1}{\bf B}(L+\frac 1mH).
 \]
 It is also the union of ${\bf B}_+(L+A_m)$, where $A_m$ is a sequence of ample $\bb R$-Cartier $\bb R$-divisors, numerically converging to 0. Consequently, it is a numerical, homogeneous invariant, defined for any class $\xi\in N^1(X)_{\RR}$. It detects when $\xi$ is \emph{nef} (${\bf B}_-(\xi)=\varnothing$), and \emph{pseudo-effective} (${\bf B}_-(\xi)\neq X$).
 
 By definition, ${\bf B}_-({\xi})$ is a countable union of closed subsets.  It is not known if ${\bf B}_-(L)$ is always closed for Cartier divisors/line bundles, but \cite{Les14} provides examples of an $\bb R$-divisor whose ${\bf B}_-$ locus is not closed.  If $X$ is normal then ${\bf B}_-(\xi)$ contains no prime divisors if and only if $\xi$ is \emph{movable} (in the closure of the cone generated in $N^1(X)_{\bb R}$ by divisors moving in linear series without fixed divisorial components). Moreover, when $X$ is normal we can associate to ${\bf B}_-({\xi})$ a ``subscheme structure'', via a sequence of asymptotic multiplier ideals, as shown in \cite[Corollary 2.10]{ELMNP06}.

The following proposition summarizes other important proprieties of these base loci.
\begin{proposition}\label{prop:bminus}
Let $X$ be a projective variety and $\xi\in N^1(X)_{\RR}$ a pseudo-effective class on $X$. Then
\begin{enumerate}
\item $ {\bf B}_-(\xi)\subseteq{\bf B}_{\rm num}(\xi)\subseteq{\bf B}_+(\xi)$. If $L$ is a $\bb Q$-Cartier $\bb Q$-divisor, then ${\bf B}_-(L)\subseteq{\bf B}_{\rm num}(L)\subseteq{\bf B}(L)\subseteq{\bf B}_+(L)$.
\smallskip

\item ${\bf B}_-(\xi-\xi')={\bf B}_{\rm num}(\xi-\xi')={\bf B}_+(\xi-\xi')$ for any small enough ample class  $\xi'\in N^1(X)_{\bb R}$.  
\smallskip

\item ${\bf B}_{\pm}(\xi+\xi')\subseteq{\bf B}_{\pm}(\xi)\cup{\bf B}_-(\xi')$ for any $\xi'\in N^1(X)_{\RR}$.
\smallskip

\item  When $X$ is smooth and $\pi:\overline X\to X$ is a surjective morphism, then ${\bf B}_-(\pi^*\xi)=\pi^{-1}{\bf B}_-(\xi)$.
\end{enumerate}
\end{proposition}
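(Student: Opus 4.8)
The plan is to establish the four items of Proposition~\ref{prop:bminus} by reducing each to already-known facts about $\Bplus$, combined with the two defining perturbation descriptions of $\Bminus$, namely $\Bminus(\xi)=\bigcup_{m}{\bf B}(\xi+\frac1m H)$ for a fixed ample $H$, and equivalently $\Bminus(\xi)=\bigcup_m \Bplus(\xi+A_m)$ for any sequence of ample $\RR$-classes $A_m\to 0$. Throughout I will freely use that $\Bplus$ is decreasing under addition of ample classes and that $\Bplus(\xi-\xi')=\Bminus(\xi-\xi')$ for small ample $\xi'$ (which is essentially part~(2), proved first).

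\smallskip\noindent\emph{Part (1).} The inclusion ${\bf B}_-(\xi)\subseteq{\bf B}_{\rm num}(\xi)$: if $x\notin\Bnum(\xi)$ there is an effective $\RR$-divisor $D\equiv\xi$ with $x\notin\Supp D$; then for $t>0$ small, $D+t(\text{ample})$ supplies an effective representative of $\xi+tH'$ missing $x$, so $x\notin{\bf B}(\xi+\frac1mH)$ for $m\gg0$, hence $x\notin\Bminus(\xi)$. The inclusion $\Bnum(\xi)\subseteq\Bplus(\xi)$ holds because $\Bplus(\xi)=\bigcap_A{\bf B}(\xi-A)\supseteq\bigcap_A\Bnum(\xi-A)\supseteq$ --- more directly, each effective representative of $\xi-A$ is (after adding back $A$ written effectively when possible) compared to $\Bnum$; the cleanest route is: for $x\notin\Bplus(\xi)$, some ${\bf B}(\xi-A)\not\ni x$, so a multiple of $\xi-A$ moves off $x$; adding an effective representative of the ample $A$ gives an effective $\RR$-divisor $\equiv\xi$ avoiding a neighborhood of $x$ after a further small perturbation. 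The Cartier refinements ${\bf B}_-(L)\subseteq{\bf B}(L)\subseteq{\bf B}_+(L)$ and ${\bf B}(L)\subseteq\Bplus(L)$ are in \cite{ELMNP06}, and ${\bf B}_-(L)\subseteq{\bf B}(L)$ follows from ${\bf B}(L+\frac1mH)\subseteq{\bf B}(L)$ (adding an effective ample divisor only shrinks the base locus) together with Noetherianity.

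\smallskip\noindent\emph{Part (2).} Pick $\xi'$ ample and small. The middle term: $\Bnum(\xi-\xi')$ lies between $\Bminus(\xi-\xi')$ and $\Bplus(\xi-\xi')$ by (1), so it suffices to prove $\Bplus(\xi-\xi')\subseteq\Bminus(\xi-\xi')$. Write $\xi-\xi'=(\xi-2\xi')+\xi'$; by Proposition~\ref{prop:bplus}(1), for $\xi'$ small enough $\Bplus(\xi-\xi')=\Bplus((\xi-2\xi')+\xi')=\Bplus(\xi-2\xi')$. Now $\xi-2\xi'=(\xi-\xi')-\xi'$ expresses $\Bplus(\xi-2\xi')=\Bplus((\xi-\xi')+A_m)$ for a fixed ample $A_m=\xi'$... the honest argument is: choose ample classes $A_m\to0$ with $\xi-\xi'+A_m$ still of the form (small ample subtracted from $\xi$), so $\Bplus(\xi-\xi'+A_m)=\Bplus(\xi-\xi')$ by Proposition~\ref{prop:bplus}(1); hence $\Bplus(\xi-\xi')=\bigcap_m\Bplus(\xi-\xi'+A_m)=\Bminus(\xi-\xi')$ using that the right-hand union of an increasing sequence... wait, $\Bminus$ is the \emph{union}. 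Since all terms $\Bplus(\xi-\xi'+A_m)$ coincide, their union equals that common value, which is $\Bplus(\xi-\xi')$. So $\Bplus(\xi-\xi')=\Bminus(\xi-\xi')$, giving the triple equality.

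\smallskip\noindent\emph{Parts (3) and (4).} For (3), fix ample $H$. Then ${\bf B}(\xi+\xi'+\frac1mH)$: choose an auxiliary ample $H'$ and write $\xi+\xi'+\frac1mH$ so that $\xi'+\frac1{2m}H$ is pseudo-effective-plus-ample hence has an effective representative $D'$ with small base locus near $\Bminus(\xi')$; adding $D'$ shows ${\bf B}(\xi+\xi'+\tfrac1mH)\subseteq{\bf B}(\xi+\tfrac1{2m}H)\cup\Supp D'$. Taking $m\to\infty$ and intersecting over the choices of $D'$ gives $\Bminus(\xi+\xi')\subseteq\Bminus(\xi)\cup\Bminus(\xi')$; the $\Bplus$ version is identical using the perturbation $\Bplus(\eta)={\bf B}(\eta-\tfrac1mH)$ and the same decomposition of $\xi'$. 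For (4) with $X$ smooth and $\pi:\overline X\to X$ surjective: $\pi^*$ is compatible with the ample perturbations, $\pi^*A_m\to0$ and for each $m$ one has $\Bplus(\pi^*\xi+\pi^*A_m)$; when $\pi$ is generically finite this is $\pi^{-1}\Bplus(\xi+A_m)\cup{\rm Exc}(\pi)$ by Proposition~\ref{prop:bplus}(2), but the exceptional locus is killed upon taking the union because $\pi^{-1}\Bminus$ picks up only the "stable part" --- more carefully, ${\bf B}(\pi^*(\xi+\frac1mH))=\pi^{-1}{\bf B}(\xi+\frac1mH)$ for a surjective morphism from a smooth (hence the needed vanishing/flatness) source when the class is $\QQ$-Cartier and one uses that $H^0$ is preserved; taking the union over $m$ gives $\Bminus(\pi^*\xi)=\pi^{-1}\Bminus(\xi)$. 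The main obstacle, and the step I would write most carefully, is this last one: the clean identity ${\bf B}(\pi^*D)=\pi^{-1}{\bf B}(D)$ requires $\pi_*\mathcal O_{\overline X}=\mathcal O_X$ (so that $H^0(\overline X,\pi^*D)=H^0(X,D)$) which holds when $\pi$ has connected fibers, whereas for general surjective $\pi$ one must Stein-factorize and separately handle the finite part; pinning down exactly which smoothness/normality hypothesis makes the base-locus pullback exact (rather than merely an inclusion) is the delicate point, and I expect to invoke flatness of $\pi$ after base change or a direct sheaf-theoretic argument on the smooth $X$.
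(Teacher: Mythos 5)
The paper itself barely proves this proposition: part (1) is treated as classical, part (2) is said to follow from Proposition \ref{prop:bplus}, part (3) comes with the single hint ${\rm Bs}(\xi+\xi'-A)\subseteq{\rm Bs}(\xi-2A)\cup{\rm Bs}(\xi'+A)$, and part (4) is delegated entirely to \cite{FR23}. Your arguments for (2) and (3) implement exactly these two mechanisms (constancy of ${\bf B}_+$ under small ample perturbation, and subadditivity of stable base loci under a sum decomposition with an ample buffer), so apart from the usual rational-approximation bookkeeping for $\RR$-classes they are fine and coincide with the paper's route.

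There are, however, two genuine gaps. In (1), from $x\notin{\bf B}_{\rm num}(\xi)$ you produce an effective representative of $\xi+tH'$ missing $x$ and immediately conclude $x\notin{\bf B}(\xi+\frac1mH)$; but an effective \emph{numerical} representative only gives $x\notin{\bf B}_{\rm num}(\xi+tH')$, and the inclusion you would need to finish, ${\bf B}\subseteq{\bf B}_{\rm num}$, goes the wrong way. The repair is to route through ${\bf B}_-(\xi)=\bigcup_A{\bf B}_+(\xi+A)$ and part (2): ${\bf B}_+(\xi+A)={\bf B}_{\rm num}(\xi+A-A')\subseteq{\bf B}_{\rm num}(\xi)$, the last inclusion by adding an effective ample representative avoiding $x$. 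More seriously, in (4) you compute ${\bf B}_-(\pi^*\xi)$ as $\bigcup_m{\bf B}(\pi^*(\xi+\frac1mH))=\bigcup_m{\bf B}(\pi^*\xi+\frac1m\pi^*H)$; but $\pi^*H$ is only nef on $\overline X$ (it is not ample unless $\pi$ is finite), so these are \emph{not} the terms appearing in the definition of ${\bf B}_-(\pi^*\xi)$, and neither inclusion between $\bigcup_m{\bf B}(\pi^*\xi+\frac1m\pi^*H)$ and ${\bf B}_-(\pi^*\xi)$ is automatic. The missing ingredient is precisely the statement that nef perturbations still compute ${\bf B}_-$ (this is \cite{FR23}, Remark 2.24(c), which the paper itself must invoke in the proof of Lemma \ref{lem:B-ray}.(4)), or equivalently the valuative characterization of ${\bf B}_-$ on smooth varieties from \cite{ELMNP06}, Proposition 2.8, which behaves well under pullback by surjective morphisms. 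Your proposed fallback to ``flatness after base change'' will not supply this, and the Stein-factorization issue you flag, while real, is secondary to it.
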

\noindent The second statement follows from Proposition \ref{prop:bplus}. For example, the statement for $\Bplus$ in $(3)$ is due to the inclusion ${\rm Bs}(\xi+\xi'-A)\subset{\rm Bs}(\xi-2A)\cup{\rm Bs}(\xi+A)$ for any small enough ample class $A$ on $X$. The fourth statement was proved in \cite{FR23}. We introduce an important open convex cone.

\begin{lemma}
	Let $X$ be a projective variety and $x\in X$. The topological closure of the open cone 
	\begin{equation}\label{def:bigxcone}
		{\rm Big}_x(X)\deq \{\xi\in N^1(X)_{\bb R}\ \mid\ x\not\in{\bf B}_+(\xi)\}
	\end{equation}
	is ${\rm Psef}_x(X)=\{\xi\in N^1(X)_{\bb R}\ \mid\ x\not\in{\bf B}_-(\xi)\}$. 
\end{lemma}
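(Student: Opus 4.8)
The plan is to show the two sets $\Bbig_x(X)$ and $\Psef_x(X)$ have the same topological closure by establishing that $\Bbig_x(X)$ is open, that $\Psef_x(X)$ is closed, that $\Bbig_x(X)\subseteq\Psef_x(X)$, and finally that $\Psef_x(X)$ is contained in the closure of $\Bbig_x(X)$; together these give $\overline{\Bbig_x(X)}\subseteq\Psef_x(X)$ and the reverse inclusion. Openness of $\Bbig_x(X)$ follows from Proposition \ref{prop:bplus}.(1): if $x\notin\Bplus(\xi)$ then $x\notin\Bplus(\xi+\xi')=\Bplus(\xi)$ for any small ample perturbation $\xi'$, and since $\Bplus$ only shrinks for smaller classes (and the big cone is open), a full neighborhood of $\xi$ lies in $\Bbig_x(X)$; in particular $\xi$ big is automatic since $\Bplus(\xi)\neq X$. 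The inclusion $\Bbig_x(X)\subseteq\Psef_x(X)$ is immediate from Proposition \ref{prop:bminus}.(1), which gives $\Bminus(\xi)\subseteq\Bplus(\xi)$, so $x\notin\Bplus(\xi)$ forces $x\notin\Bminus(\xi)$.

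Next I would handle the two remaining inclusions, which are dual in spirit. To see $\Psef_x(X)$ is closed, suppose $\xi_k\to\xi$ with $x\notin\Bminus(\xi_k)$; fix an ample class $A$ and write $\xi+\tfrac1m A = (\xi-\xi_k) + \tfrac1m A + \xi_k$. For $k$ large enough, $\xi-\xi_k+\tfrac1m A$ is ample, so by Proposition \ref{prop:bminus}.(3) (applied with the roles arranged so that the ample piece contributes an empty $\Bminus$) we get $\Bminus(\xi+\tfrac1m A)\subseteq\Bminus(\xi_k)$, hence $x\notin\Bminus(\xi+\tfrac1m A)$ for every $m$; taking the union over $m$ shows $x\notin\Bminus(\xi)$. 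For the final inclusion $\Psef_x(X)\subseteq\overline{\Bbig_x(X)}$: given $\xi$ with $x\notin\Bminus(\xi)$ and any ample $A$, I claim $x\notin\Bplus(\xi+\tfrac1m A)$ for all $m$. Indeed $\Bplus(\xi+\tfrac1m A)\subseteq\Bplus(\xi+\tfrac1{2m}A)\cup\Bminus(\tfrac1{2m}A)$ by Proposition \ref{prop:bminus}.(3), and the second term is empty; iterating, or more cleanly using that $\Bminus(\xi)=\bigcup_m\Bplus(\xi+\tfrac1m A)$ by the stated description of the restricted base locus, we see $x\notin\Bplus(\xi+\tfrac1m A)$ for every $m$ precisely because $x\notin\Bminus(\xi)$. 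Thus $\xi+\tfrac1m A\in\Bbig_x(X)$, and letting $m\to\infty$ exhibits $\xi$ as a limit of points of $\Bbig_x(X)$.

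Combining: $\overline{\Bbig_x(X)}\subseteq\Psef_x(X)$ since the right side is closed and contains $\Bbig_x(X)$; and $\Psef_x(X)\subseteq\overline{\Bbig_x(X)}$ by the approximation argument. Hence $\overline{\Bbig_x(X)}=\Psef_x(X)$, as asserted. The routine sanity checks — that $\Bbig_x(X)$ is genuinely a convex cone (convexity from Proposition \ref{prop:bplus}.(1) or from subadditivity of $\Bplus$ on the big cone, homogeneity since $\Bplus$ is homogeneous), and that it is nonempty (it contains the ample cone) — I would dispatch in a sentence.

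The main obstacle is pinning down the precise form of the monotonicity/subadditivity statements for $\Bminus$ and $\Bplus$ under small ample perturbations and making sure the version of Proposition \ref{prop:bminus}.(3) I invoke is applied with the hypotheses correctly arranged — in particular that adding an ample class to a big (resp.\ pseudo-effective) class and passing to $\Bplus$ (resp.\ $\Bminus$) behaves as the approximation definitions of these loci predict. Once the bookkeeping with the perturbation parameter $m$ and the auxiliary ample class $A$ is set up carefully, each inclusion is a short formal consequence of Propositions \ref{prop:bplus} and \ref{prop:bminus}; the content is entirely in the correct use of the stated inclusions rather than in any new idea.
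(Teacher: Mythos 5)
Your proposal is correct and follows essentially the same route as the paper: one inclusion comes from showing ${\rm Psef}_x(X)$ is closed and contains ${\rm Big}_x(X)$ (via $\Bminus\subseteq\Bplus$ and Proposition \ref{prop:bminus}.(3) applied to a small ample perturbation absorbing $\xi-\xi_k$), and the other from approximating $\xi$ by $\xi+\tfrac1m A$ and checking $x\notin\Bplus(\xi+\tfrac1m A)$. Your "more cleanly" fallback for the last step — using that $\Bminus(\xi)$ is the union of the $\Bplus(\xi+\tfrac1m A)$ — is indeed the right move (the "iterating" version only shows monotonicity and does not close the argument), and it matches the paper's use of $\Bplus(\xi+\tfrac1m H)\subseteq\Bplus(\tfrac1m H)\cup\Bminus(\xi)=\Bminus(\xi)$.
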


\begin{proof}
	If $\xi\in {\rm Psef}_x(X)$  and $H$ is ample, then $\xi=\lim_{m\to\infty}\xi+\frac 1mH$ and ${\bf B}_+(\xi+\frac 1mH)\subseteq{\bf B}_+(\frac 1mH)\cup{\bf B}_-(\xi)$ does not contain $x$. It remains to prove that ${\rm Psef}_x(X)$ is a closed cone. Let $(\xi_m)_{m\geq 1}\in{\rm Psef}_x(X)$ be a sequence that converges to $\xi\in N^1(X)_{\bb R}$. If $x\in{\bf B}_-(\xi)$, then $x\in{\bf B}_+(\xi+\frac 1NH)$ for some large $N\geq 1$. For sufficiently large $m$ depending on $N$ we have that $\frac 1NH+\xi-\xi_m$ is ample. Then $x\in{\bf B}_+(\xi+\frac 1NH)\subseteq{\bf B}_+(\frac 1NH+\xi-\xi_m)\cup{\bf B}_-(\xi_m)={\bf B}_-(\xi_m)$ is a contradiction.
\end{proof}


\subsection{Infinitesimal successive minima}
Let $X$ be a projective variety of dimension $n$ over an algebraically closed field. Let $x\in X$ be a smooth closed point and $\xi\in N^1(X)_{\RR}$ be a pseudo-effective class. Let $\pi:\overline X\to X$ be the blow-up of $x$ with exceptional divisor $E\simeq\bb P^{n-1}$.
We want to study the positivity of the classes
\[
\xi_t\deq \pi^*\xi-tE 
\]
for $t\geq 0$. From this ray in $N^1(\overline X)_{\bb R}$ one determines classical local positivity invariants such as Seshadri constants or Fujita--Nakayama invariants.
More generally, inspired by the local picture developed in \cite{AI}, we introduce here a set of invariants that measure the change in dimension of ${\bf B}_{\pm}(\xi_t)$, when moving on the exceptional ray. In order to do so we first study the basic properties of this family of base loci. The results here extend those of \cite[Lemma 1.3]{Loz18} from ample divisors to big $\bb R$-classes.

\begin{remark}\label{rem:convexcombination}
	Several times in the course of our proofs we will use inclusions
	\[B(L_b)\subseteq B(L_c)\cup(B(L_a)\cap E)\]
	whenever $0\leq a\leq b\leq c$ where $B$ is any of the base loci we work with. Furthermore, when $B\in{\bf B}_{\pm}$, then we can replace $B(L_a)$ by ${\bf B}_-(L_a)$ on the right.
	These hold usually by homogeneity and Proposition \ref{prop:bminus}.$(3)$ because $L_b$ is nonnegative combination of $L_c$ and $E$, and nonnegative convex combination of $L_a$ and $L_c$.
\end{remark}

\begin{lemma}[${\bf B}$ locus properties]\label{lem:Bray}
	Let $X$ be a projective variety, $L$ be a big Cartier divisor on $X$, and $x\not\in{\bf B}(L)$ be a smooth point. 
	The stable base loci ${\bf B}(L_s)$ where $s\in\bb Q_+$ form an increasing family of closed subsets of $\overline X$. In particular, if $t\in\bb R_+$, then $\bigcap_{s>t,\ s\in\bb Q}{\bf B}(L_s)={\bf B}(L_{s_0})$ for all $s_0>t$ rational sufficiently close to $t$. The same holds if $L$ is an $\bb R$-Cartier $\bb R$-divisor, $s$ is real, and we replace ${\bf B}$ by ${\bf B}_{\rm num}$ throughout.
\end{lemma}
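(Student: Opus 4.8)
The plan is to prove the two halves separately: first the monotonicity of the family $\bigl({\bf B}(L_s)\bigr)_{s\in\QQ_+}$, then the stabilization statement, and finally to observe that the $\RR$-divisor / ${\bf B}_{\rm num}$ case is formally identical once one replaces "stable base locus" by "numerical base locus". Throughout I would work on $\overline X$ and freely use homogeneity of the base loci together with Remark \ref{rem:convexcombination} and Proposition \ref{prop:bminus}.

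For monotonicity, fix rationals $0\le s\le s'$. I would first dispose of the point $s=0$: since $x\notin{\bf B}(L)$ we have $x\notin{\bf B}(mL)$ for suitable $m$, so after blowing up, $\pi^*(mL)$ is globally generated near $E$ and ${\bf B}(L_0)={\bf B}(\pi^*L)=\pi^{-1}{\bf B}(L)$ misses $E$ entirely; in particular ${\bf B}(L_0)\subseteq{\bf B}(L_{s'})$ is automatic because any section of $mL$ vanishing to order $0$ along $E$ already gives a section of $m\pi^*L - 0\cdot E$, and more to the point the containment ${\bf B}(L_0)\cap E=\varnothing$ handles the base of the induction. For $0<s\le s'$, write $L_s$ as a nonnegative combination $L_s = \lambda L_{s'} + (1-\lambda)L_0$ with $\lambda = s/s'\in(0,1]$, or more simply $L_s = L_{s'} + (s'-s)E$; either way Remark \ref{rem:convexcombination} with $a=b=s\le c=s'$ (taking $B={\bf B}$) gives ${\bf B}(L_s)\subseteq{\bf B}(L_{s'})\cup\bigl({\bf B}(L_s)\cap E\bigr)$, which is not yet what I want. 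The cleaner route is: $L_s = \tfrac{s}{s'}L_{s'} + \bigl(1-\tfrac{s}{s'}\bigr)\pi^*L$, so by homogeneity and Proposition \ref{prop:bminus}.$(3)$ (for the stable base locus the analogous containment ${\bf B}(D+D')\subseteq{\bf B}(D)\cup{\bf B}(D')$ for effective combinations holds directly from multiplying sections) we get ${\bf B}(L_s)\subseteq{\bf B}(L_{s'})\cup{\bf B}(\pi^*L) = {\bf B}(L_{s'})\cup\pi^{-1}{\bf B}(L)$, and since $\pi^{-1}{\bf B}(L)={\bf B}(L_0)\subseteq{\bf B}(L_{s'})$ by the $s'\ge 0$ case applied with the roles adjusted — here I would instead just note $\pi^{-1}{\bf B}(L)\subseteq{\bf B}(L_s)\subseteq{\bf B}(L_{s'})$ is what we are proving, so one must be careful not to argue in a circle. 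I expect the correct bookkeeping here to be the main fiddly point: the safe statement is that for sufficiently divisible $m$, a section of $m\pi^*L - ms'E$ multiplied by a section of the (base-point-free near $E$) bundle $m(s'-s)$-th… — concretely, fix $k$ divisible enough that $kL_{s'}$ and $k\pi^*L$ are both effective with base loci equal to the stable ones; then $k s' \cdot L_s = k s\cdot L_{s'}\cdot\tfrac{s'}{s}\ldots$ Rather than chase denominators, I would phrase it as: for $m$ clearing all denominators, every section in $H^0(m L_{s'})$ is also, after multiplication by the pullback of a section of $m(s'-s)H^0$-worth… The honest clean argument is $m L_s = m L_{s'} + m(s'-s)E$ and $E$ is effective, hence ${\rm Bs}(mL_s)\subseteq{\rm Bs}(mL_{s'})\cup\Supp(m(s'-s)E) = {\rm Bs}(mL_{s'})\cup E$; intersecting over $m$, ${\bf B}(L_s)\subseteq{\bf B}(L_{s'})\cup E$. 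Combined with the complementary decomposition $m L_s = \tfrac{s}{s'}\,mL_{s'} + \tfrac{s'-s}{s'}\,m\pi^*L$ (valid after clearing denominators) giving ${\bf B}(L_s)\subseteq{\bf B}(L_{s'})\cup{\bf B}(\pi^*L)$, and ${\bf B}(\pi^*L)\cap E=\varnothing$, we conclude ${\bf B}(L_s)\subseteq{\bf B}(L_{s'})$, as the part of ${\bf B}(L_s)$ on $E$ is killed by the second containment and the part off $E$ is contained in ${\bf B}(L_{s'})$ by the first.

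For the stabilization statement, fix $t\in\RR_{\ge 0}$ and consider $\bigcap_{s>t,\,s\in\QQ}{\bf B}(L_s)$. The family $\{{\bf B}(L_s)\}_{s>t}$ is a nested decreasing family of closed subsets of the Noetherian space $\overline X$ as $s\downarrow t$ (decreasing by the monotonicity just proved), so by the descending chain condition it stabilizes: there is $s_0>t$ rational with ${\bf B}(L_{s_0}) = {\bf B}(L_{s})$ for all rational $s\in(t,s_0]$, and this common value equals the intersection. I would spell out that "stabilizes" here means: pick any strictly decreasing rational sequence $s_1>s_2>\cdots\to t$; the chain ${\bf B}(L_{s_1})\supseteq{\bf B}(L_{s_2})\supseteq\cdots$ stabilizes by Noetherianity, say from index $N$ on, and then monotonicity forces ${\bf B}(L_s)={\bf B}(L_{s_N})$ for every rational $s\in(t,s_N]$; set $s_0=s_N$. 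Finally, for the $\RR$-Cartier case, I would observe that every step above used only: (i) homogeneity of the base locus in the divisor, (ii) the containment ${\bf B}(D+D')\subseteq{\bf B}(D)\cup\Supp(D')$ for $D'$ effective and the corresponding effective-combination subadditivity, and (iii) ${\bf B}(\pi^*L)=\pi^{-1}{\bf B}(L)$ misses $E$ because $x\notin{\bf B}_{\rm num}(L)$ — all of which hold verbatim for ${\bf B}_{\rm num}$ by Proposition \ref{prop:bminus}, together with Noetherianity of $\overline X$ which is a property of the space, not the coefficients; hence the identical argument goes through with $s$ real and ${\bf B}$ replaced by ${\bf B}_{\rm num}$. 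The main obstacle, as flagged, is purely organizational: getting the two complementary containments (one contributing $E$, one contributing $\pi^{-1}{\bf B}(L)\subseteq{\bf B}(L_{s'})$) lined up so that monotonicity is deduced without circularity, and being careful that $x\notin{\bf B}(L)$ is exactly what makes ${\bf B}(\pi^*L)\cap E=\varnothing$.
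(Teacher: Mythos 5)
Your proof is correct and is essentially the paper's argument: the paper combines the two decompositions $L_s=L_{s'}+(s'-s)E$ and $L_s$ as a convex combination of $L_0$ and $L_{s'}$ (packaged as Remark \ref{rem:convexcombination}) with ${\bf B}(L_0)\cap E=\varnothing$ to get monotonicity, then invokes Noetherianity, exactly as you do. The false starts in your middle paragraph resolve into the right bookkeeping, and there is no circularity since both containments come directly from effective/convex decompositions rather than from the monotonicity being proved.
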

\begin{proof}Since $x\not\in{\bf B}(L)$, then ${\bf B}(L_0)\cap E=\varnothing$. Let $0\leq s<s'$ be rational. From Remark \ref{rem:convexcombination}, we deduce ${\bf B}(L_s)\subseteq{\bf B}(L_{s'})\cup (E\cap{\bf B}(L_0))={\bf B}(L_{s'})$.
	The last statement is a consequence of Noetherianity. The $\bb R$-divisor case for ${\bf B}_{\rm num}$ is analogous.
\end{proof}

\begin{lemma}[${\bf B}_+$ locus properties]\label{lem:B+interpretation}
	Let $X$ be a projective variety and $x\in X$ a smooth point. For $\xi\in{\rm Big}_x(X)$: 
	\begin{enumerate}
		\item ${\bf B}_+(\xi_0)=\pi^{-1}{\bf B}_+(\xi)\cup E$.
		\smallskip
		
		\item ${\bf B}_+(\xi_t)=\pi^{-1}{\bf B}_+(\xi)$ is disjoint from $E$ for all sufficiently small $t>0$.
		\smallskip

		\item ${\bf B}_+(\xi_t)$ form an increasing family for $t>0$, and ${\bf B}_+(\xi_t)={\bf B}_+(\xi_{t'})$ for all $t'>t$ sufficiently close to $t$.
	\end{enumerate}
\end{lemma}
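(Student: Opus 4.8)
The plan is to prove the three parts in order, reducing each to the properties of $\Bplus$ recorded in Proposition \ref{prop:bplus} together with the convex-combination principle of Remark \ref{rem:convexcombination} (applied with $B=\Bplus$). For part $(1)$, I would apply Proposition \ref{prop:bplus}$(2)$ to the blow-up morphism $\pi\colon\overline X\to X$: since $\xi_0=\pi^*\xi$ and $\mathrm{Exc}(\pi)=E$, we get $\Bplus(\xi_0)=\Bplus(\pi^*\xi)=\pi^{-1}\Bplus(\xi)\cup E$ directly. (One should note $\xi$ is big so $\pi^*\xi$ is big, and $\overline X$ is normal since $X$ is and $x$ is a smooth point, so the proposition applies.)

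For part $(2)$, the key point is that $\xi\in\mathrm{Big}_x(X)$ means $x\notin\Bplus(\xi)$, hence $\pi^{-1}\Bplus(\xi)$ is a closed set disjoint from $E$. I would argue as follows. Because $\xi$ is big, $\xi-\delta E_{X}$-type perturbations are controlled; more precisely, by definition of $\Bplus$ on $\overline X$ there is an ample $\mathbb R$-class $A$ on $\overline X$ with $\Bplus(\pi^*\xi)=\mathbf B(\pi^*\xi-A)$, and for small $t>0$ the class $\pi^*\xi-tE$ dominates $\pi^*\xi-A'$ for a suitable ample $A'$ away from $E$. The clean way: since $x\notin\Bplus(\xi)$, pick an ample $H$ on $X$ with $x\notin\mathbf B(\xi-\tfrac1mH)$ for $m\gg0$; then $\pi^*(\xi-\tfrac1mH)-tE$ is, for $t$ small enough relative to $1/m$, still the pullback of something whose base locus misses a neighborhood of $x$, plus a class that is relatively ample for $\pi$; this forces $E\not\subseteq\Bplus(\xi_t)$ and in fact $\Bplus(\xi_t)\cap E=\varnothing$. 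Combined with the monotonicity from Remark \ref{rem:convexcombination} (which gives $\Bplus(\xi_t)\subseteq\Bplus(\xi_0)=\pi^{-1}\Bplus(\xi)\cup E$ and, once we know $\Bplus(\xi_t)$ misses $E$, that it is contained in $\pi^{-1}\Bplus(\xi)$) and the reverse inclusion $\pi^{-1}\Bplus(\xi)\subseteq\Bplus(\xi_t)$ coming from $\pi_*\mathcal O_{\overline X}=\mathcal O_X$ together with $\xi_t$ agreeing with $\pi^*\xi$ away from $E$, we conclude $\Bplus(\xi_t)=\pi^{-1}\Bplus(\xi)$ for all sufficiently small $t>0$.

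For part $(3)$, the monotonicity is immediate from Remark \ref{rem:convexcombination}: for $0<t\le t'$, writing $\xi_t$ as a nonnegative combination of $\xi_{t'}$ and $E$, and using $\Bplus(\xi_0)\cap E$ is all of $E$ but $\Bplus(\xi_0)\supseteq\Bplus(\xi_{t_1})$ for small $t_1$... — here one must be slightly careful since the naive convex-combination bound introduces $E$. Instead I would run the argument on the open range $t>0$ directly: for $0<t\le t'$, the class $\xi_t$ lies on the segment between $\xi_{t'}$ and $\xi_{t_0}$ for some small $t_0>0$, so $\Bplus(\xi_t)\subseteq\Bplus(\xi_{t'})\cup\bigl(\Bplus(\xi_{t_0})\cap E\bigr)=\Bplus(\xi_{t'})$ using part $(2)$ that $\Bplus(\xi_{t_0})\cap E=\varnothing$. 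This gives the increasing property. For the local constancy on the right, I would use Proposition \ref{prop:bplus}$(1)$: $\Bplus(\xi_{t'})=\Bplus(\xi_t-(t'-t)E)\subseteq\Bplus(\xi_t)$ once $(t'-t)E$ is ``small'', but $E$ is not ample, so this needs a twist — perturb by a small ample $A$ on $\overline X$ first, write $\xi_{t'}=\xi_t-(t'-t)E$ and note $\Bplus(\xi_t)=\mathbf B(\xi_t-\epsilon A)$ for a fixed small ample $A$, then for $t'-t$ small enough $\xi_{t'}-\tfrac\epsilon2 A$ still dominates $\xi_t-\epsilon A$ modulo effective, giving $\Bplus(\xi_{t'})\subseteq\Bplus(\xi_t)$; the reverse is monotonicity. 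Hence equality for $t'$ close to $t$.

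The main obstacle I anticipate is part $(2)$: controlling $\Bplus$ of the perturbed, relatively-non-nef class $\xi_t$ near $E$ and matching it exactly with $\pi^{-1}\Bplus(\xi)$ rather than something a priori larger. The cleanest route is probably to avoid ad hoc perturbations and instead deduce $(2)$ formally from $(1)$ together with Proposition \ref{prop:bplus}$(1)$ applied on $\overline X$ — i.e. $\Bplus(\xi_t)=\Bplus(\pi^*\xi - tE)$ and, since $tE$ for small $t$ can be absorbed once we know $\pi^*\xi$ is big with $\Bplus(\pi^*\xi)=\pi^{-1}\Bplus(\xi)\cup E$ and $E$ is contractible, one uses that subtracting a small multiple of the contracted divisor $E$ removes exactly $E$ from the augmented base locus. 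This last fact is essentially the content of \cite[Proposition 2.3]{BBP13}-type reasoning and should be cited or proved in a line.
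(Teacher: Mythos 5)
Your parts (2) and (3) follow essentially the paper's route: for (2) the ``clean way'' you describe is made precise in the paper by writing $\xi=A+F$ with $A$ ample and $F$ effective with $x\notin\Supp(F)$ (possible because ${\bf B}_+(\xi)={\bf B}_{\rm num}(\xi-A)$), so that $\xi_t=(\pi^*A-tE)+\pi^*F$ with $\pi^*A-tE$ ample for small $t$, giving ${\bf B}_+(\xi_t)\subseteq\pi^{-1}\Supp(F)$ and, intersecting over all such $F$, the inclusion into $\pi^{-1}{\bf B}_+(\xi)$; the reverse inclusion is exactly your convex-combination argument via $(1)$. For (3), your worry about Proposition \ref{prop:bplus}$(1)$ is unfounded: that statement allows subtracting \emph{any} sufficiently small numerical class, not only an ample one, so ${\bf B}_+(\xi_{t'})={\bf B}_+(\xi_t-(t'-t)E)\subseteq{\bf B}_+(\xi_t)$ holds directly without the ample perturbation detour.

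There are two genuine problems. First, in part (1) you write that ``$\overline X$ is normal since $X$ is,'' but the lemma does not assume $X$ normal --- it is stated for an arbitrary projective variety with a smooth point $x$ --- and Proposition \ref{prop:bplus}$(2)$ is only available for birational morphisms of \emph{normal} varieties. The blow-up of a smooth point of a non-normal variety is still non-normal away from the exceptional divisor, so your appeal to that proposition does not apply. The paper handles this case directly: choose a small ample $A$ and small rational $r>0$ with ${\bf B}_+(L)={\bf B}(L-A)$, $\pi^*A-rE$ ample, and ${\bf B}_+(L_0)={\bf B}(\pi^*(L-A)+rE)$; since $x$ is smooth, $\pi$ is a fiber space with $\pi_*\cal O(mE)=\cal O_X$, whence ${\bf B}(\pi^*(L-A)+rE)=\pi^{-1}{\bf B}(L-A)\cup E$. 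Second, the ``cleanest route'' you propose at the end for (2) --- that ``subtracting a small multiple of the contracted divisor $E$ removes exactly $E$ from the augmented base locus'' --- is circular: that assertion \emph{is} statement (2), it is false without the hypothesis $x\notin{\bf B}_+(\xi)$ (if $x\in{\bf B}_+(\xi)$ then $E$ persists in ${\bf B}_+(\xi_t)$ for all small $t$), and it is not a formal consequence of $E$ being contractible. You should delete that paragraph and rely on the concrete decomposition argument, which is correct once written out as above.
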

\begin{proof}
	Let $L$ be an $\bb R$-Cartier $\bb R$-divisor that represents $\xi$.
	
	$(1)$. When $X$ is normal, it follows from Proposition \ref{prop:bplus}. For our more general setting, let $A$ be a numerically small ample $\bb R$-Cartier $\bb R$-divisor and $r>0$ a small rational number so that $L-A$ is a $\bb Q$-Cartier $\bb Q$-divisor, ${\bf B}_+(L)={\bf B}(L-A)$, the class $\pi^*A-rE$ is ample, and ${\bf B}_+(L_0)={\bf B}(\pi^*(L-A)+rE)$. As $x$ is smooth, $\pi$ is a fiber space and $\pi_*\cal O(mE)=\cal O_X$ for all $m\geq 0$. Thus ${\bf B}(\pi^*(L-A)+rE)=\pi^{-1}{\bf B}(L-A)\cup E$. We did not use the assumption $x\not\in{\bf B}_+(\xi)$ here. See also \cite{FR23} for a more general statement.
	\smallskip
	
	$(2)$.  Let $A$ be a sufficiently small ample $\RR$-Cartier $\RR$-divisor on $X$. Then ${\bf B}_+(\xi)={\bf B}_{\rm num}(\xi-A)$ by Proposition \ref{prop:bminus}.
	 Write $L=A+F$ for an effective $\bb R$-Cartier $\bb R$-divisor $F$. For small $t>0$, the class $\pi^*A-tE$ is ample on $\overline X$. As $L_{t}=(\pi^*A-tE)+\pi^*F$, then ${\bf B}_+(\xi_{t})\subseteq\pi^{-1}{\rm Supp}(F)$ and we get the direct inclusion. Choosing $F$ such that $x\notin \textup{Supp}(F)$, implies also that ${\bf B}_+(\xi_{t})$ and $\pi^{-1}{\bf B}_+(\xi)$ are disjoint from $E$. For the reverse inclusion use ${\bf B}_+(\xi_0)\subseteq{\bf B}_+(\xi_t)\cup E$ for all $t\geq 0$, use $(1)$, and again that $\pi^{-1}{\bf B}_+(\xi)\cap E=\varnothing$.
	\smallskip
	
	$(3)$. Let $0<\epsilon\ll t<t'$. From Remark \ref{rem:convexcombination} and $(2)$ we obtain ${\bf B}_+(\xi_t)\subseteq{\bf B}_+(\xi_{t'})\cup(E\cap{\bf B}_-(\xi_{\epsilon}))={\bf B}_+(\xi_{t'})$. Finally, statement $(1)$ from Proposition \ref{prop:bplus} implies that ${\bf B}_+(\xi_{t'})\subseteq {\bf B}_+(\xi_{t})$ for all $t'>t$ close to $t$. This concludes the second part of the statement.
\end{proof}

\begin{lemma}[${\bf B}_-$ locus properties]\label{lem:B-ray}Let $X$ be a projective variety and $x\in X$ a smooth point. For $\xi\in{\rm Big}_x(X)$:
\begin{enumerate}
				\item ${\bf B}_-(\xi_0)=\pi^{-1}{\bf B}_-(\xi)$ is disjoint from $E$. 
	\smallskip
	
	\item The diminished base loci $({\bf B}_-(\xi_t))_{t\geq 0}$ form an increasing family of subsets of $\overline X$.
	\smallskip

	\item If $t\in\bb R_{>0}$ and $\xi$ represents a Cartier divisor $L$, then
	${\bf B}(\xi_{t'})\subseteq{\bf B}_+(\xi_t)\subseteq{\bf B}(\xi_{t'})\cup\pi^{-1}{\bf B}_+(\xi)$ for all rational $t'>t$ sufficiently close to $t$. Similar statements hold true for any $\RR$-divisor class $\xi\in N^1(X)_{\RR}$ with $t'\in \RR$, when replacing ${\bf B}$ with ${\bf B}_{\rm num}$ or ${\bf B}_{-}$.
			\smallskip
			
	\item If $t>0$, then ${\bf B}_-(\xi_t)=\bigcup_{s>0}{\bf B}_-(\xi_t+s\xi_0)=\bigcup_{0\leq q<t}{\bf B}_-(\xi_q)$.
\end{enumerate} 	
\end{lemma}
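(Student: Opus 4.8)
The plan is to deduce all four statements from the base-locus calculus of Propositions~\ref{prop:bplus} and~\ref{prop:bminus}, the convex-combination inclusions of Remark~\ref{rem:convexcombination}, and Lemmas~\ref{lem:Bray}--\ref{lem:B+interpretation}.

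For $(1)$, since $\xi_0=\pi^*\xi$ one must show $\Bminus(\pi^*\xi)=\pi^{-1}\Bminus(\xi)$. When $X$ is smooth this is exactly Proposition~\ref{prop:bminus}.$(4)$; in general I would copy the perturbation argument of Lemma~\ref{lem:B+interpretation}.$(1)$: write $\Bminus(\pi^*\xi)$ as the union of the sets $\Bplus(\pi^*\xi+\bar A)$ over small ample classes $\bar A$ on $\overline X$, specialize to $\bar A=\pi^*A_0-\tau E$ with $A_0$ ample on $X$ and $\tau>0$ small, and use Lemma~\ref{lem:B+interpretation}.$(2)$ for the big class $\xi+A_0\in{\rm Big}_x(X)$ to identify $\Bplus(\pi^*(\xi+A_0)-\tau E)=\pi^{-1}\Bplus(\xi+A_0)=\pi^{-1}\Bplus(\xi)$, a set disjoint from $E$; the union over $A_0$ then recovers $\pi^{-1}\Bminus(\xi)$. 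Disjointness from $E$ is automatic since $\Bminus(\xi)\subseteq\Bplus(\xi)\not\ni x$, so its preimage misses the fibre $E$. For $(2)$, for $0\le t\le t'$ Remark~\ref{rem:convexcombination} in its $\Bminus$-form gives $\Bminus(\xi_t)\subseteq\Bminus(\xi_{t'})\cup(\Bminus(\xi_0)\cap E)$, and by $(1)$ the last intersection is empty.

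For $(4)$, the identity $\xi_t+s\xi_0=(1+s)\pi^*\xi-tE=(1+s)\,\xi_{t/(1+s)}$ together with homogeneity of $\Bminus$ gives $\Bminus(\xi_t+s\xi_0)=\Bminus(\xi_{t/(1+s)})$; as $s$ ranges over $(0,\infty)$ the parameter $t/(1+s)$ ranges over $(0,t)$, and $(2)$ absorbs the $q=0$ term, so $\bigcup_{s>0}\Bminus(\xi_t+s\xi_0)=\bigcup_{0\le q<t}\Bminus(\xi_q)$. In the remaining equality ``$\supseteq$'' is immediate from $(2)$; for ``$\subseteq$'' I would use that $\Bminus(\xi_t)$ equals the union of the sets $\Bplus(\xi_t+\bar A)$ over ample classes $\bar A$ on $\overline X$ (by the definition of $\Bminus$ and the monotonicity of $\Bplus$ under adding ample classes), and, given such a $\bar A$, choose $q\in(0,t)$ with $t-q$ small enough that $\bar A-(t-q)E$ is still ample (the ample cone of $\overline X$ is open) and rewrite $\xi_t+\bar A=\xi_q+(\bar A-(t-q)E)$ to get $\Bplus(\xi_t+\bar A)\subseteq\Bminus(\xi_q)$; taking the union over $\bar A$ finishes $(4)$.

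For $(3)$ the left inclusions are quick: $\Bstable(\xi_{t'})\subseteq\Bplus(\xi_{t'})=\Bplus(\xi_t)$ for $t'>t$ close to $t$, by Proposition~\ref{prop:bminus}.$(1)$ and Lemma~\ref{lem:B+interpretation}.$(3)$, and the same works verbatim for $\Bstable_{\rm num}$ and $\Bminus$ since each lies between $\Bminus$ and $\Bplus$. For the right inclusions I would first reduce to the $\Bminus$-version: because $\Bminus(\xi_{t'})\subseteq\Bstable_{\rm num}(\xi_{t'})\subseteq\Bstable(\xi_{t'})$, it suffices to prove $\Bplus(\xi_t)\subseteq\Bminus(\xi_{t'})\cup\pi^{-1}\Bplus(\xi)$ for $t'>t$ close to $t$. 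The engine is the identity $\Bplus(\xi_t)=\Bminus(\xi_t-\epsilon\bar A)$, valid for small $\epsilon>0$ and any ample $\bar A$ on $\overline X$ (combine Propositions~\ref{prop:bplus}.$(1)$ and~\ref{prop:bminus}.$(2)$): with $\bar A=\pi^*A-\delta E$ for an ample class $A$ on $X$, this identifies $\Bplus(\xi_t)$ with $\Bminus((\xi-\epsilon A)_{t-\epsilon\delta})$, and applying $(4)$ to the nearby big class $\xi-\epsilon A\in{\rm Big}_x(X)$ exhibits it as the increasing union $\bigcup_{0\le q<t-\epsilon\delta}\Bminus(\xi_q-\epsilon\pi^*A_0)$, where $A_0\ge0$ is an effective representative of $A$ avoiding $x$. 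It then remains to bound each term $\Bminus(\xi_q-\epsilon\pi^*A_0)$ by $\Bminus(\xi_{q'})\cup\pi^{-1}\Bplus(\xi)$ for some $q'$ with $q<q'<t'$; away from $E$ I would do this by feeding a general effective $\bb Q$-divisor $D\sim_{\bb Q}\xi_s$ (for rational $s$ slightly above $q$) missing a prescribed point, together with a pulled-back ``ample plus effective'' decomposition of $\xi$ missing that point and $x$, into the convex combination $\xi_q=\tfrac{q}{s}\,\xi_s+(1-\tfrac{q}{s})\,\pi^*\xi$, using that $\pi^*A_0$ is semiample with empty diminished base locus to keep everything on the $\Bminus$ side. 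Taking the union over $q$ and over all ample $\bar A$, $\bigcup_{q'<t'}\Bminus(\xi_{q'})=\Bminus(\xi_{t'})$ by $(4)$, which closes the argument.

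The step I expect to be the main obstacle is precisely this right inclusion in $(3)$, and within it the behaviour along $E$, where the summand $\pi^{-1}\Bplus(\xi)$ contributes nothing. Upgrading ``a point lies outside $\Bstable(\xi_s)$ for all $s>t$'' to ``it lies outside $\Bplus(\xi_t)$'' forces one to produce a genuinely \emph{ample} class on $\overline X$ inside the relevant $\bb Q$-linear system, and this is obstructed along $E$ because $\pi^*(\text{ample on }X)$ is trivial on the fibres of $\pi$. The ampleness has to be manufactured from the $-tE$ summand via openness of the ample cone of $\overline X$ and the right-continuity in Lemma~\ref{lem:B+interpretation}.$(2)$--$(3)$, and one must also check that the compensating effective divisor can be chosen avoiding the prescribed point of $E$ --- which is where the convex-combination bookkeeping, the semiampleness of $\pi^*A_0$, and the control on the exact vanishing orders at $x$ all have to be handled carefully.
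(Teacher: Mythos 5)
Parts (1), (2) and (4) of your plan are essentially workable, and your proof of (4) is in fact cleaner than the paper's: absorbing the $-(t-q)E$ term into the ample class $\bar A$ via openness of the ample cone of $\overline X$ avoids the external input the paper uses (the fact from \cite{FR23} that one may perturb by the merely nef class $\pi^*A$), while the identity $\xi_t+s\xi_0=(1+s)\xi_{t/(1+s)}$ is exactly the paper's homogeneity remark. Two caveats. In (1), the intermediate assertion $\Bplus(\xi+A_0)=\Bplus(\xi)$ is false for a fixed ample $A_0$ (if $\xi$ is big and nef but not ample, the left side is empty for every ample $A_0$), and it contradicts your own conclusion that the union over $A_0$ is $\pi^{-1}\Bminus(\xi)$ rather than $\pi^{-1}\Bplus(\xi)$; deleting it, the argument stands, provided you also record why restricting to the subfamily $\bar A=\pi^*A_0-\tau E$ loses nothing (for any ample $\bar A'$ choose $A_0,\tau$ small so that $\bar A'-\pi^*A_0+\tau E$ is ample and apply Proposition \ref{prop:bminus}.(3)). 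The paper proves (1) a bit differently, through stable base loci of the rational classes $\pi^*(L+A)$ and the fiber-space equality ${\bf B}(\pi^*M)=\pi^{-1}{\bf B}(M)$, but your route through Lemma \ref{lem:B+interpretation}.(2) is legitimate.

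The genuine gap is the right-hand inclusion in (3), precisely the step you flag as the main obstacle. Your reduction to the $\Bminus$-version is fine, but the proposed proof of it stalls: after writing $\Bplus(\xi_t)=\Bminus\bigl((\xi-\epsilon A)_{t-\epsilon\delta}\bigr)$ and expanding by (4), you must compare $\Bminus(\xi_q-\epsilon\pi^*A)$ with $\Bminus(\xi_{q'})\cup\pi^{-1}\Bplus(\xi)$, and the natural splitting $\xi_q-\epsilon\pi^*A=\tfrac{q}{q'}\xi_{q'}+\pi^*\bigl((1-\tfrac{q}{q'})\xi-\epsilon A\bigr)$ fails to close the loop in the range that matters: when $q$ is close to $t-\epsilon\delta$ and $q'<t'$, the class $(1-\tfrac{q}{q'})\xi-\epsilon A$ is $\xi$ minus an ample class that is \emph{not} small, so its base loci are not controlled by $\pi^{-1}\Bplus(\xi)$, and your sketched divisor-by-divisor treatment ``away from $E$'' plus a separate argument along $E$ is not supplied. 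None of this machinery is needed. For any $0<\epsilon<t<t'$ the class $\xi_t$ is a convex combination of $\xi_{\epsilon}$ and $\xi_{t'}$, so Proposition \ref{prop:bminus}.(3) gives at once ${\bf B}_+(\xi_t)\subseteq{\bf B}_+(\xi_{\epsilon})\cup{\bf B}_-(\xi_{t'})$, and taking $\epsilon$ small, Lemma \ref{lem:B+interpretation}.(2) identifies ${\bf B}_+(\xi_{\epsilon})$ with $\pi^{-1}{\bf B}_+(\xi)$. This is the paper's two-line argument; it already yields the strengthened ${\bf B}_-$-version you were aiming for (hence the ${\bf B}$ and ${\bf B}_{\rm num}$ versions, rationality of $t'$ entering only so that ${\bf B}(\xi_{t'})$ makes sense), with no appeal to (4), no Fujita-type perturbation, and no special analysis along $E$.
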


\begin{proof}Let $L$ be an $\bb R$-Cartier $\bb R$-divisor representing $\xi$.

	$(1)$. The statement is classical when $X$ is smooth, $L$ is a $\bb Q$-divisor, and $\pi$ is just a birational morphism: it follows from the valuative description of ${\bf B}_-(L)$ in \cite[Proposition 2.8]{ELMNP06}. See also \cite[Lemma 2.2]{DiL21}. For the case of $\bb R$-divisor classes, see \cite[Proposition 2.30]{FR23}. Since our hypotheses are different, we provide a proof when $X$ is just projective and $\pi$ is the blow-up of a smooth point.

	First, $\pi^{-1}{\bf B}_-(\xi)=\bigcup_A{\bf B}(\pi^*(L+A))$, where the union is over all  ample $\bb R$-Cartier $\bb R$-divisors $A$ such that $L+A$ is a $\bb Q$-Cartier $\bb Q$-divisor. Since $\pi$ is a fiber space, then ${\bf B}(\pi^*(L+A))=\pi^{-1}{\bf B}(L+A)$ and it is disjoint from $E$, as $x\not\in{\bf B}_+(\xi)$. Taking $t_A>0$ rational with $\pi^*A-t_AE$ ample, we have the inclusions
\[
{\bf B}(\pi^*(L+A))\subseteq{\bf B}(\pi^*L+(\pi^*A-t_AE))\subseteq{\bf B}_-(\pi^*L),
\]	
as the left side is disjoint from $E$. In particular, we get $\pi^{-1}{\bf B}_-(\xi)\subseteq{\bf B}_-(\xi_0)$. 

For the reverse inclusion, let $H$ be an ample $\bb R$-Cartier $\bb R$-divisor on $\overline X$ and $A$ an ample $\bb R$-Cartier $\bb R$-divisor on $X$ such that $H-\pi^*A$ is ample and $L+A$ is a $\bb Q$-Cartier $\bb Q$-divisor. Then Proposition \ref{prop:bminus}.(3) yields
\[
{\bf B}_+(\pi^*L+H)\subseteq{\bf B}_+(H-\pi^*A)\cup{\bf B}_-(\pi^*(L+A))\subseteq{\bf B}(\pi^*(L+A))\subseteq\pi^{-1}{\bf B}_-(L) .
\]
The union over all $H$ gives the result.
	\smallskip
	
		$(2)$. Follows from Lemma \ref{lem:B+interpretation}.$(2)$ and Remark \ref{rem:convexcombination}.
	\smallskip
	
	$(3)$. We show this statement solely for the stable base locus as the other cases are similar. Note that the rationality of $t'$ is only needed for the definition of ${\bf B}(L_{t'})$. So, first we have ${\bf B}(L_{t'})\subseteq{\bf B}_+(L_{t'})={\bf B}_+(L_t)$ when $t'>t$ is sufficiently close to $t$ by Lemma \ref{lem:B+interpretation}.(3). Fixing $0<\epsilon<t$, then $L_t$ is a convex combination of $L_{t'}$ and $L_{\epsilon}$. So, by Proposition \ref{prop:bminus}.(3) we have the inclusions ${\bf B}_+(L_t)\subseteq{\bf B}_+(L_{\epsilon})\cup{\bf B}_-(L_{t'})\subseteq {\bf B}_+(L_{\epsilon})\cup{\bf B}(L_{t'})$. We finish by taking $\epsilon$ small as in Lemma \ref{lem:B+interpretation}.(2).
	\smallskip

$(4)$. The second equality is clear by the homogeneity of ${\bf B}_-$. For the first equality, the $\supseteq$ inclusion follows from $(2)$ and the second equality.
For the reverse inclusion, let $A$ be ample on $X$. Then 
\[
{\bf B}_-(L_t)=\bigcup\nolimits_{m\geq 1}{\bf B}_-(L_t+\frac1m\pi^*A)
\]
by \cite[Remark 2.24.(c)]{FR23}, i.e., where $\pi^*A$ is only nef. It is sufficient to show that each term in the union is contained in some ${\bf B}_-(L_t+sL_0)$. Fixing an $m$ we can write $L_t+\frac lm\pi^*A$ as the sum of $L_t+sL_0$ and $\frac lm\pi^*A-sL_0$, where the latter is nef for sufficiently small $s>0$ since $\frac 1mA-sL$ is ample. By applying Proposition \ref{prop:bminus}.$(3)$ for the restricted locus, we get the final inclusion.
\end{proof}

We introduce the first important actors in this section, a set of invariants that measure a change in base loci when moving on the ray $(\xi_t)_{t\geq 0}$. 

\begin{definition}[Infinitesimal successive minima]\label{def:infinitesimalsuccessiveminima}
Let $X$ be a projective variety of dimension $n$. Let $x\in X$ be a smooth point and $\xi\in{\rm Big}_x(X)$ (see \eqref{def:bigxcone}).
	For $1\leq i\leq n$, let 
	\[\epsilon_i(\xi;x)\deq\min\{t>0\ \mid\ \dim {\bf B}_+(\xi_t)\cap E\geq i-1\}=\sup\{t>0\ \mid\ \dim{\bf B}_+(\xi_t)\cap E\leq i-2\}\ .\]
\end{definition}
\noindent We explain in Section \ref{ss:motivation} how this construction is motivated by the similar local successive minima of \cite{AI} and the classical (moving) Seshadri constant and Fujita--Nakayama invariant (width). Later we provide another motivation through convex geometry.
For now we use the variation of augmented base loci on the exceptional ray discussed above to study our new invariants.
\begin{lemma}[Basic properties of infinitesimal successive minima]\label{lem:infsuccessiveproperties}
With notation as above, then
\begin{enumerate}
\item The infinitesimal successive minima are an increasing set of homogeneous numerical invariants: \newline $0<\epsilon_1(\xi;x)\leq \epsilon_2(\xi;x)\leq\ldots\leq\epsilon_{n}(\xi;x)$.
\smallskip

 \item Each $\epsilon_i(\cdot ;x):\textup{Big}_x(X)\rightarrow \RR_{>0}$ defines a continuous and superadditive function. 
\smallskip

 \item For a birational morphism  $\rho:Y\to X$ of normal projective varieties that is an isomorphism over the smooth point $x\in X$, one has $\epsilon_i(\rho^*\xi;\rho^{-1}\{x\})=\epsilon_i(\xi;x)$ for all $i=1,\ldots ,n$.
 \smallskip
 
 \item The families $({\bf B}_{\rm num}(\xi_t)\cap E)_{t\geq 0}$ and $({\bf B}_-(\xi_t)\cap E)_{t\geq 0}$ also determine $\epsilon_i(\xi;x)$. When $L$ is a $\bb Q$-Cartier $\bb Q$-divisor, then the family $({\bf B}(L_t))_{t\in\bb Q_+}$ also determines $\epsilon_i(L;x)$.
 \end{enumerate}
\end{lemma}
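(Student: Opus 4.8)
\textbf{Proof proposal for Lemma \ref{lem:infsuccessiveproperties}.}

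The plan is to prove the four parts in order, since each builds on the base-locus variation lemmas (Lemmas \ref{lem:Bray}, \ref{lem:B+interpretation}, \ref{lem:B-ray}) established above, plus the ample-perturbation properties in Propositions \ref{prop:bplus} and \ref{prop:bminus}.

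For part $(1)$: Positivity $\epsilon_1(\xi;x)>0$ is exactly Lemma \ref{lem:B+interpretation}.$(2)$, which says $\Bplus(\xi_t)$ is disjoint from $E$ for all small $t>0$, so the defining set for $\epsilon_1$ is bounded away from $0$. Monotonicity $\epsilon_i\leq\epsilon_{i+1}$ is immediate from the definition: the condition $\dim\Bplus(\xi_t)\cap E\geq i$ implies $\dim\Bplus(\xi_t)\cap E\geq i-1$, so the set defining $\epsilon_{i+1}$ is contained in the one defining $\epsilon_i$, forcing $\epsilon_i(\xi;x)\leq\epsilon_{i+1}(\xi;x)$. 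That the minimum is attained (rather than just an infimum) follows from the right-continuity statement in Lemma \ref{lem:B+interpretation}.$(3)$: if $t_0=\inf\{t>0\mid\dim\Bplus(\xi_t)\cap E\geq i-1\}$, then for $t'>t_0$ close to $t_0$ we have $\Bplus(\xi_{t_0})=\Bplus(\xi_{t'})$, and picking such a $t'$ in the defining set gives $\dim\Bplus(\xi_{t_0})\cap E\geq i-1$, so $t_0$ itself lies in the set. The equality of the two displayed expressions for $\epsilon_i$ is then a formal consequence of $\Bplus(\xi_t)$ being an increasing family: for $t<\epsilon_i$ the dimension is $\leq i-2$, at $t=\epsilon_i$ it first reaches $\geq i-1$. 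Homogeneity $\epsilon_i(c\xi;x)=c\,\epsilon_i(\xi;x)$ for $c>0$ follows from $(c\xi)_t=c(\xi_{t/c})$ together with homogeneity of $\Bplus$; numerical invariance is inherited from numerical invariance of $\Bplus$ and of the ray construction.

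For part $(3)$: since $\rho$ is an isomorphism over $x$, we can identify the blow-up $\Bl_x X$ with $\Bl_{\rho^{-1}x}Y$ via a birational morphism $\overline\rho:\Bl_{\rho^{-1}x}Y\to\Bl_x X$ that is an isomorphism near the exceptional divisor $E$ and satisfies $\overline\rho^*(\pi^*\xi-tE)=\pi_Y^*\rho^*\xi-tE_Y$. By Proposition \ref{prop:bplus}.$(2)$ applied to $\overline\rho$ (valid since $Y$, $X$ normal), $\Bplus(\pi_Y^*\rho^*\xi-tE_Y)=\overline\rho^{-1}\Bplus(\pi^*\xi-tE)\cup\mathrm{Exc}(\overline\rho)$; intersecting with $E_Y$, and using that $\overline\rho$ is an isomorphism in a neighborhood of $E$ so that $\mathrm{Exc}(\overline\rho)\cap E_Y=\varnothing$, we get $\Bplus((\rho^*\xi)_t)\cap E_Y\cong\Bplus(\xi_t)\cap E$ as schemes, hence equal dimensions for every $t$, giving $\epsilon_i(\rho^*\xi;\rho^{-1}x)=\epsilon_i(\xi;x)$.

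For part $(4)$: this is where the argument needs the most care and I expect it to be the main obstacle. The point is that although $\Bminus$, $\Bnum$, $\Bplus$ differ in general, the families $(\Bminus(\xi_t)\cap E)$, $(\Bnum(\xi_t)\cap E)$, $(\Bplus(\xi_t)\cap E)$ (and $(\B(L_t))$ in the $\QQ$-Cartier case) all have the \emph{same dimension jumps} as $t$ varies, so any of them determines $\epsilon_i$. The inclusions $\Bminus(\xi_t)\subseteq\Bnum(\xi_t)\subseteq\Bplus(\xi_t)$ from Proposition \ref{prop:bminus}.$(1)$ give one direction of every dimension comparison. For the other direction I would use the "sandwich" Lemma \ref{lem:B-ray}.$(3)$: for rational $t'>t$ close to $t$, $\B(\xi_{t'})\subseteq\Bplus(\xi_t)\subseteq\B(\xi_{t'})\cup\pi^{-1}\Bplus(\xi)$, and since $\pi^{-1}\Bplus(\xi)\cap E=\varnothing$ (as $\xi\in\Bbig_x(X)$), intersecting with $E$ gives $\B(\xi_{t'})\cap E=\Bplus(\xi_t)\cap E$; the analogous statement holds with $\B$ replaced by $\Bnum$ or $\Bminus$. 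Thus for $t$ not equal to any $\epsilon_i$ the dimension of $\Bplus(\xi_t)\cap E$ agrees with the dimension of $B(\xi_{t'})\cap E$ for $t'$ slightly larger; combined with the increasing-family property (Lemmas \ref{lem:Bray}, \ref{lem:B-ray}.$(2)$) this shows the threshold $t$ at which $\dim B(\xi_t)\cap E$ first reaches $i-1$ coincides with $\epsilon_i(\xi;x)$ for each of the loci $B\in\{\Bminus,\Bnum,\B\}$. One subtlety to address is that $\Bminus$ is only a countable union of closed sets and may not be closed, so "dimension" must be interpreted as the supremum of dimensions of its closed constituents; Lemma \ref{lem:B-ray}.$(4)$, expressing $\Bminus(\xi_t)$ as an increasing union of $\Bminus(\xi_q)$ for $q<t$, together with the left-continuity it encodes, lets one match this up cleanly with the augmented-locus threshold. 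Finally, in the $\QQ$-Cartier case, Lemma \ref{lem:Bray} gives $\B(L_t)\cap E$ increasing with the same Noetherian stabilization, and the sandwich above pins its jumps to the $\epsilon_i$ as well.
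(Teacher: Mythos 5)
Your treatments of parts (1), (3) and (4) are correct and essentially follow the paper's route: positivity from Lemma \ref{lem:B+interpretation}.(2), monotonicity from the definition, part (3) via the pullback formula for $\Bplus$ under the induced birational morphism of the blow-ups (the paper cites \cite[Proposition 2.3]{BBP13}, i.e.\ Proposition \ref{prop:bplus}.(2)), and part (4) from the sandwich ${\bf B}(\xi_{t'})\subseteq{\bf B}_+(\xi_t)\subseteq{\bf B}(\xi_{t'})\cup\pi^{-1}{\bf B}_+(\xi)$ of Lemma \ref{lem:B-ray}.(3) intersected with $E$, where the $\pi^{-1}{\bf B}_+(\xi)$ term dies because $x\notin{\bf B}_+(\xi)$. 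Your extra observation in (1) that the infimum is attained, via the right-stabilization in Lemma \ref{lem:B+interpretation}.(3), is a worthwhile addition the paper leaves implicit.

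However, there is a genuine gap: part (2) — continuity and superadditivity of $\epsilon_i(\cdot\,;x)$ on ${\rm Big}_x(X)$ — is never addressed. You announce you will prove the four parts in order but jump from (1) to (3). This is not a throwaway item: superadditivity requires an actual argument, namely that if $\dim{\bf B}_+(\xi_t)\cap E\leq i-2$ and $\dim{\bf B}_+(\zeta_s)\cap E\leq i-2$, then ${\bf B}_+\bigl((\xi+\zeta)_{t+s}\bigr)={\bf B}_+(\xi_t+\zeta_s)\subseteq{\bf B}_+(\xi_t)\cup{\bf B}_+(\zeta_s)$ (via Proposition \ref{prop:bminus}), whence $t+s<\epsilon_i(\xi+\zeta;x)$ and, after taking suprema, $\epsilon_i(\xi;x)+\epsilon_i(\zeta;x)\leq\epsilon_i(\xi+\zeta;x)$. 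Continuity then follows because a homogeneous superadditive function on the open convex cone ${\rm Big}_x(X)$ is concave, hence continuous. Without this, the lemma's later uses collapse: continuity is what allows the reduction from $\bb R$-classes to $\bb Q$-Cartier divisors in Lemma \ref{lem:tauverygeneralconstant}, Theorem \ref{thm:successivejets}, Theorem \ref{thm:successiveminimaviaslicewidths}, and elsewhere. You should supply this argument.
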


\begin{proof}
$(1)$. The numerical invariance of our infinitesimal successive minima is due to the same property being satisfied by the augmented base loci. Their homogeneity: $\epsilon_i(m\xi;x)=m\cdotp\epsilon_i(\xi;x)$ for all $m\geq 1$ is due to the equalities ${\bf B}_+(\xi_t)={\bf B}_+(m(\xi_t))={\bf B}_+((m\xi)_{mt})$. Their monotonicity follows from the definition and the positivity of $\epsilon_1(\xi;x)$ can be found in Lemma \ref{lem:B+interpretation}.(2).

$(2)$. For the superadditivity choose as $s,t\in\RR$ such that $\dim {\bf B}_+(\xi_t)\cap E\leq i-2$ and $\dim {\bf B}_+(\zeta_s)\cap E\leq i-2$. In particular, by definition we know that $t<\epsilon_i(\xi;x)$ and $s<\epsilon_i(\zeta;x)$. Thus, by Proposition \ref{prop:bminus}.(2) we get the following containment of base loci:
\[
{\bf B}_+((\xi+\zeta)_{t+s})={\bf B}_+(\xi_t+\zeta_s)\subseteq{\bf B}_+(\xi_t)\cup{\bf B}_+(\zeta_s) \ ,
\]
implying that $t+s<\epsilon_i(\xi+\zeta;x)$. A limiting process finally confirms that $\epsilon_i(\xi;x)+\epsilon_i(\zeta;x)\leq\epsilon_i(\xi+\zeta;x)$.

Each function $\epsilon_i(\cdot;x)$ is homogeneous and superadditive hence concave and continuous on the open convex set ${\rm Big}_x(X)$.

$(3)$. Consider $\rho:Y\to X$ a birational morphism as in the statement. By \cite[Proposition 2.3]{BBP13}, we have that $y\deq\rho^{-1}\{x\}\not\in{\bf B}_+(\rho^*\xi)$.	The base change $\overline{\pi}$ of $\pi$ by $\rho$ is the blow-up of $y$.  \cite{BBP13} again for the induced $\overline{\rho}:\overline Y\to\overline X$ gives that $\overline{\rho}^{-1}{\bf B}_+(\xi_t)$ and ${\bf B}_+({\overline{\rho}}^*\xi_t)$ agree around the exceptional divisor over $y$. Consequently, we get $\epsilon_i(\rho^*\xi;\rho^{-1}\{x\})=\epsilon_i(\xi;x)$ for all $i$.

$(4)$ is a consequence of Lemma \ref{lem:B-ray}.$(3)$.
\end{proof}

The infinitesimal successive minima of a big divisor class are determined by the positive part in the Nakayama $\sigma$-Zariski decomposition \cite{Nak04}.

\begin{lemma}[Zariski decompositions]\label{lem:zariskidecompositionsuccessiveminima}
If $X$ is smooth, $x\in X$ and $\xi\in{\rm Big}_x(X)$, then $\epsilon_i(\xi;x)=\epsilon_i(P_{\sigma}(\xi);x)$, where $P_{\sigma}(\xi)$ is the positive part of $\xi$.
\end{lemma}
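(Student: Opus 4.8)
The plan is to push the Nakayama decomposition up to the blow-up $\overline X$, where it becomes the subtraction of a fixed effective divisor disjoint from $E$, and then run everything through the base-locus machinery of \S2. Write $\xi=P_{\sigma}(\xi)+N$ with $N\deq N_{\sigma}(\xi)\geq 0$. Since $\Supp(N)\subseteq{\bf B}_-(\xi)\subseteq{\bf B}_+(\xi)$ and $x\notin{\bf B}_+(\xi)$, the point $x$ lies off $\Supp(N)$, so $\pi^{*}N$ is an effective $\RR$-divisor on $\overline X$ disjoint from $E$ whose strict and total transforms agree. By Lemma~\ref{lem:infsuccessiveproperties}.(4) each $\epsilon_i(\cdot;x)$ is determined by the family $\bigl({\bf B}_{\rm num}((\cdot)_t)\cap E\bigr)_{t>0}$, so I would reduce the claim to proving two things: (a) $P_{\sigma}(\xi)\in{\rm Big}_x(X)$, and (b) ${\bf B}_{\rm num}(\xi_t)\cap E={\bf B}_{\rm num}\bigl(P_{\sigma}(\xi)_t\bigr)\cap E$ for every $t>0$.

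The heart of (b) is the observation that, for any big class $\zeta$ with $x\notin\Supp(N_{\sigma}(\zeta))$ and any $t\geq0$, every effective $\RR$-divisor $\overline D\sim_{\RR}\pi^{*}\zeta-tE$ on $\overline X$ dominates $\pi^{*}N_{\sigma}(\zeta)$. The reason: $\pi_{*}\overline D\sim_{\RR}\zeta$ is effective, hence $\pi_{*}\overline D\geq N_{\sigma}(\zeta)$ by the defining property of the $\sigma$-decomposition; and since $E$ is the only prime $\pi$-exceptional divisor, $\overline D$ minus its $E$-coefficient is the strict transform of $\pi_{*}\overline D$ and so dominates the strict transform $\pi^{*}N_{\sigma}(\zeta)$. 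It follows that $\overline D\mapsto\overline D-\pi^{*}N_{\sigma}(\zeta)$ is a bijection between the effective representatives of $\zeta_t$ and those of $P_{\sigma}(\zeta)_t$, and taking unions of supports — using $\Supp(\overline D)=\Supp\bigl(\overline D-\pi^{*}N_{\sigma}(\zeta)\bigr)\cup\Supp(\pi^{*}N_{\sigma}(\zeta))$ and the distributivity of intersection over union — gives ${\bf B}_{\rm num}(\zeta_t)=\Supp(\pi^{*}N_{\sigma}(\zeta))\cup{\bf B}_{\rm num}\bigl(P_{\sigma}(\zeta)_t\bigr)$. Intersecting with $E$, which misses $\Supp(\pi^{*}N_{\sigma}(\zeta))$, yields (b) upon taking $\zeta=\xi$.

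For (a), bigness of $P_{\sigma}(\xi)$ is automatic since $\vol(P_{\sigma}(\xi))=\vol(\xi)>0$, so the point is $x\notin{\bf B}_+(P_{\sigma}(\xi))$. Here I would use that ${\bf B}_+(\eta)={\bf B}_{\rm num}(\eta-A')$ for $\eta$ big and $A'$ small ample (Propositions~\ref{prop:bplus}.(1) and \ref{prop:bminus}.(2)): applying it to $\eta=P_{\sigma}(\xi)_t$, which is big for small $t>0$, with $A'=\pi^{*}A-sE$ for $A$ ample on $X$ and $0<s<t$, gives ${\bf B}_+\bigl(P_{\sigma}(\xi)_t\bigr)={\bf B}_{\rm num}\bigl(\pi^{*}(P_{\sigma}(\xi)-A)-(t-s)E\bigr)$. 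Running the pushforward argument of the previous paragraph for $\xi-A$ (big for $A$ small) and noting $N_{\sigma}(\xi-A)\geq N_{\sigma}(\xi)=N$ shows every effective representative of $\pi^{*}(\xi-A)-(t-s)E$ still dominates $\pi^{*}N$; the same bijection and support computation then give ${\bf B}_{\rm num}\bigl(\pi^{*}(P_{\sigma}(\xi)-A)-(t-s)E\bigr)\cap E={\bf B}_{\rm num}\bigl((\xi-A)_{t-s}\bigr)\cap E$. The latter is contained in ${\bf B}_+\bigl((\xi-A)_{t-s}\bigr)\cap E={\bf B}_+(\xi_t-A')\cap E={\bf B}_+(\xi_t)\cap E$ (the last equality by Proposition~\ref{prop:bplus}.(1), $\xi_t$ being big for small $t$), and this is empty for small $t>0$ by Lemma~\ref{lem:B+interpretation}.(2). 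Hence ${\bf B}_+\bigl(P_{\sigma}(\xi)_t\bigr)\cap E=\varnothing$ for small $t>0$, i.e.\ $x\notin{\bf B}_+(P_{\sigma}(\xi))$ by Lemma~\ref{lem:B+interpretation}.(2).

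The step I expect to be the main obstacle is the domination claim together with its compatibility with the blow-up: one must be careful that the only prime $\pi$-exceptional divisor is $E$, so every other prime divisor on $\overline X$ is a strict transform, and that $N_{\sigma}(\xi)$ avoids $x$ so that its strict and total transforms coincide; once this is arranged, what remains is purely formal manipulation of the base loci collected in \S2. A secondary subtlety is (a): a priori the positive part $P_{\sigma}(\xi)$ could enlarge the augmented base locus, and ruling this out at $x$ is exactly what the perturbation argument accomplishes — equivalently, running the same reasoning on $X$ itself yields ${\bf B}_+(P_{\sigma}(\xi))\subseteq{\bf B}_+(\xi)$.
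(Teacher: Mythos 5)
Your argument is correct, and it supplies in full the step that the paper's proof explicitly leaves as an exercise, by a somewhat different route. The paper first checks $P_{\sigma}(\xi)\in{\rm Big}_x(X)$ by writing $\xi=A+F$ with $A$ ample and $F$ effective avoiding $x$ (so that $F-N_{\sigma}(\xi)\geq 0$ and $P_{\sigma}(\xi)=A+F'$ with $x\notin\Supp(F')$), then reduces to a Cartier divisor $L$ by homogeneity and continuity, replaces $N_{\sigma}(L)$ by a rational approximation $N$, and appeals — without details — to the fact that $L$ and $P=L-N$ have the same graded section rings. You avoid both the reduction to Cartier classes and the rational approximation by staying at the level of $\mathbf{B}_{\rm num}$ of $\RR$-classes: the observation that every effective representative of $\zeta_t$ dominates $\pi^{*}N_{\sigma}(\zeta)$ (because its pushforward dominates $N_{\sigma}(\zeta)$ and $E$ is the only exceptional prime, while $x\notin\Supp(N_{\sigma}(\zeta))$ makes strict and total transforms agree) yields $\mathbf{B}_{\rm num}(\xi_t)=\mathbf{B}_{\rm num}(P_{\sigma}(\xi)_t)\cup\Supp(\pi^{*}N_{\sigma}(\xi))$, and Lemma~\ref{lem:infsuccessiveproperties}.(4) converts this into the statement about the successive minima. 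This is exactly the ``same section rings'' idea, made precise in a form that works directly for real classes; it is a legitimate and arguably cleaner execution.

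One step deserves a correction. In part (a) you deduce $x\notin{\bf B}_+(P_{\sigma}(\xi))$ from ${\bf B}_+(P_{\sigma}(\xi)_t)\cap E=\varnothing$ for small $t>0$ ``by Lemma~\ref{lem:B+interpretation}.(2)''. That lemma only gives the forward implication, and its hypothesis is precisely $P_{\sigma}(\xi)\in{\rm Big}_x(X)$, which is what you are trying to establish; the converse direction requires either the nontrivial fact that augmented base loci of big classes have no isolated points, or an argument producing a positive-dimensional component of ${\bf B}_+(P_{\sigma}(\xi))$ through $x$ whose strict transform meets $E$. Fortunately your closing parenthetical — running the same pushforward/bijection argument on $X$ itself to get ${\bf B}_+(P_{\sigma}(\xi))={\bf B}_{\rm num}(P_{\sigma}(\xi)-A)\subseteq{\bf B}_{\rm num}(\xi-A)={\bf B}_+(\xi)$ for $A$ small ample, using $N_{\sigma}(\xi-A)\geq N_{\sigma}(\xi)$ — is a complete and correct proof of (a), essentially equivalent to the paper's $\xi=A+F$ decomposition. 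You should promote that remark to the main argument and drop the blow-up detour for (a).
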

\begin{proof}
	

If $\xi=A+F$ with $A$ an ample $\bb R$-Cartier $\bb R$-divisor class and $F$ effective with $x\notin \textup{Supp}(F)$, then $F- N_{\sigma}(\xi)$ is effective. Thus $x\notin \textup{Supp}(N_{\sigma}(\xi))$ and $P_{\sigma}(\xi)=A+F'$ for $F'=F-N_{\sigma}(\xi)$ effective, with $x$ not in its support. Hence $P_{\sigma}(\xi)\in {\rm Big}_x(X)$. As both the positive part and the infinitesimal successive minima vary continuously and are homogeneous, we may assume $\xi$ is the class of a Cartier divisor $L$. 
Let $N\leq N_{\sigma}(L)$ be a rational approximation and set $P\deq L-N$. We have that $L\geq P\geq P_{\sigma}(\xi)$ is also a rational approximation of the positive part, and $P\in{\rm Big}_x(X)$. We leave the rest of the proof as an exercise. Morally it is because $L$ and $P$ have the same graded section rings and $x\not\in{\rm Supp}(N)$.
\end{proof}

	
The final property that we want to show is that infinitesimal successive minima are constant at very general base points in our ambient space.
\begin{lemma}\label{lem:tauverygeneralconstant}
	Let $X$ be a projective variety of dimension $n$ over an uncountable algebraically closed field.
	Let $\xi\in N^1(X)_{\bb R}$ be a big class. Then for every $1\leq i\leq n$ the function $x\mapsto\epsilon_i(\xi;x)$ is constant outside the union of at most countably many closed subsets of $X$.
\end{lemma}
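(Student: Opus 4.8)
The plan is to spread the point $x$ out over the smooth locus of $X$ and run a semicontinuity‑in‑families argument; the only delicate issue is that $\epsilon_i$ is controlled by \emph{stable} base loci, which are not visible in a single coherent sheaf, so a little care is needed to reduce to ordinary base loci.

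First I would reduce to the case where $\xi$ is the class of a big Cartier divisor $L$. The rational case follows from the homogeneity $\epsilon_i(m\xi;x)=m\,\epsilon_i(\xi;x)$ of Lemma \ref{lem:infsuccessiveproperties}.(1). For a general big real class $\xi$, choose big rational classes $\xi^{(k)}\to\xi$; removing ${\bf B}_+(\xi)$ and, for each $k$, the countable union of proper closed subsets where $\epsilon_i(\xi^{(k)};\cdot)$ is not constant (a countable union of such unions, still of the same type), one is left with a set on which $\xi,\xi^{(k)}\in{\rm Big}_x(X)$ for $k\gg 0$, each $\epsilon_i(\xi^{(k)};x)$ equals its generic value $c_i^{(k)}$, and $\epsilon_i(\xi;x)=\lim_k\epsilon_i(\xi^{(k)};x)=\lim_k c_i^{(k)}$ by the continuity of $\epsilon_i(\cdot;x)$ on the open cone ${\rm Big}_x(X)$ (Lemma \ref{lem:infsuccessiveproperties}.(2)); the right‑hand side is independent of $x$. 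So from now on $\xi=[L]$ for a big Cartier divisor $L$, and $x$ ranges over the dense open $X_{\mathrm{sm}}\setminus{\bf B}_+(L)$.

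Next I would translate $\epsilon_i(L;x)$ into a statement about stable base loci on the blow‑up. By Lemma \ref{lem:B-ray}.(3) and the disjointness $\pi^{-1}{\bf B}_+(L)\cap E=\varnothing$ from Lemma \ref{lem:B+interpretation}.(2), for every rational $t'$ slightly larger than $t>0$ one has ${\bf B}_+(L_t)\cap E={\bf B}(L_{t'})\cap E$, so by the monotonicity of both families (Lemmas \ref{lem:Bray} and \ref{lem:B+interpretation}.(3)) we get
\[
\epsilon_i(L;x)\ =\ \inf\bigl\{\,t\in\mathbb Q_{>0}\ \mid\ \dim\bigl({\bf B}(L_t)\cap E_x\bigr)\geq i-1\,\bigr\}\ .
\]
Since ${\bf B}(L_t)={\rm Bs}(mL_t)$ for $m$ sufficiently divisible, whereas ${\bf B}(L_t)\subseteq{\rm Bs}(mL_t)$ for all $m$ with $mt\in\mathbb Z$, we have $\dim({\bf B}(L_t)\cap E_x)=\min_{m\,:\,mt\in\mathbb Z}\dim({\rm Bs}(mL_t)\cap E_x)$. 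It is therefore enough to prove: \emph{for each fixed rational $t>0$ and each $m$ with $mt\in\mathbb Z$, the function $x\mapsto\dim({\rm Bs}(mL_t)\cap E_x)$ is constant on some dense open of $X_{\mathrm{sm}}$.} Indeed, intersecting over such $m$ then makes $x\mapsto\dim({\bf B}(L_t)\cap E_x)$ constant off a countable union of proper closed subsets, a further intersection over $t\in\mathbb Q_{>0}$ does so simultaneously for all rational $t$, and the displayed formula forces $\epsilon_i(L;x)$ to be constant on the resulting set, whose complement in $X_{\mathrm{sm}}$ is a countable union of proper closed subsets (and $X\setminus X_{\mathrm{sm}}$ is one more). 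This passage from the stable base locus to a countable family of ordinary ones is the step I expect to be the crux; the remainder is routine.

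Finally, the italicized claim is proved by the standard relative construction. Put $U=X_{\mathrm{sm}}$, let $\Delta\cong U$ be the graph of $U\hookrightarrow X$ (a regular codimension‑$n$ embedding, since $U\times X$ is smooth along $\Delta$), and form $\mathcal X={\rm Bl}_\Delta(U\times X)$ with exceptional divisor $\mathcal E$ and the two projections $f\colon\mathcal X\to U$ and $q\colon\mathcal X\to X$, so that the fibre of $f$ over $x$ is $\overline X_x={\rm Bl}_xX$ with $\mathcal E_x=E_x$. Fix $t$ and $m$ as above and set $\mathcal M=q^*(mL)-mt\mathcal E$, a line bundle restricting to $mL_t$ on every fibre. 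Since $f$ is proper and $\mathcal M$ is $U$‑flat, by generic flatness together with the theorem on cohomology and base change one may remove a proper closed subset of $U$ so that $f_*\mathcal M$ is locally free, its formation commutes with base change, and the relative base scheme $\mathcal B$ — the degeneracy locus of $f^*f_*\mathcal M\to\mathcal M$ — satisfies $\mathcal B\cap\overline X_x={\rm Bs}(mL_t)$ for every remaining $x$. Then $x\mapsto\dim({\rm Bs}(mL_t)\cap E_x)=\dim\bigl((\mathcal B\cap\mathcal E)\cap\overline X_x\bigr)$ is the fibre‑dimension function of the morphism $\mathcal B\cap\mathcal E\to U$, hence constant on a dense open by Chevalley's theorem on fibre dimension. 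This proves the claim, and with it the lemma.
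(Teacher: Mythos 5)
Your proof is correct and follows essentially the same route as the paper: reduce to a big Cartier divisor by homogeneity and continuity, rewrite $\epsilon_i(L;x)$ via the stable base loci ${\bf B}(L_t)\cap E$ (the paper's Lemma \ref{lem:infsuccessiveproperties}.(4)), and then spread out over the blow-up of the diagonal, using cohomology-and-base-change for the relative evaluation map and constancy of generic fibre dimension. The only (immaterial) difference is bookkeeping: the paper intersects the relative degeneracy loci $\mathcal B_{pq}$ inside the family before taking the generic fibre, whereas you treat each $(t,m)$ separately and recover $\dim({\bf B}(L_t)\cap E_x)$ as a minimum over $m$.
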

\begin{proof}
	We may assume that $x$ is smooth and not contained in ${\bf B}_+(\xi)$.
	Since $\xi$ is the limit of a countable sequence of $\bb Q$-divisor classes, and $\epsilon_i(\cdot;x)$ are homogeneous and continuous on ${\rm Big}_x(X)$, it suffices to show the statement for the case when $\xi$ is the class of a big Cartier divisor $L$.
	
	For fixed $x$, by Lemma \ref{lem:infsuccessiveproperties}.(4), the $\epsilon_i(L;x)$ are determined by the jumps of the nondecreasing function 
	\[\bb Q_+\to\{-1,0,\ldots,n-1\}:\ s  \ \mapsto\ \dim\big({\bf B}(L_s)\cap E\big).
	\] 
	In particular, it is sufficient to prove that for every rational number $s>0$ the dimension of ${\bf B}(L_s)\cap E$ is constant for very general $x$. This is realized by a natural gluing construction. 
	
	Let $\widetilde{X\times X}={\rm Bl}_{\Delta}X\times X$ be the blow-up of the diagonal $\Delta\subseteq X\times X$ with exceptional divisor $\cal E$. Let $p_1,p_2:\widetilde{X\times X}\to X$ be the two induced projections. The fiber $p_1^{-1}\{x\}$ identifies with ${\rm Bl}_xX$.
	Let $\cal B_{pq}\subseteq \widetilde{X\times X}$ be the locus where surjectivity fails for the relative evaluation map 
	\[
	p_1^*p_{1*}(p_2^*L^{\otimes q}(-p\cal E))\ \longrightarrow \ p_2^*L^{\otimes q}(-p\cal E) \ .
	\]
By Semicontinuity and Grauert's theorem, the restriction of this map to a general fiber of $p_1$ is exactly the evaluation map of the line bundle $q\pi^*L-pE$ on the blow-up of $X$ at the point. In particular, the general fiber of ${p_1}|_{\cal B_{pq}}$ is ${\rm Bs}(q\pi^*L-pE)$, and the general fiber of ${p_1}|_{\cal B_{pq}\cap\cal E}$ is ${\rm Bs}(q\pi^*L-pE)\cap E$.
	
	Fix a rational $s>0$. The general fiber of $p_1$ on the closed subset $\bigcap_{\frac pq>s}\cal B_{pq}\cap\cal E$ has constant dimension. The very general one is $\bigcap_{\frac pq>s}{\rm Bs}(q\pi^*L-pE)\cap E$, which is ${\bf B}(L_s)\cap E$ by Lemma \ref{lem:Bray}.
\end{proof}
Based on this statement we make the following definition:
\begin{definition}
	For a big divisor class $\xi\in N^1(X)_{\bb R}$, denote $\epsilon_i(\xi)\deq\epsilon_i(\xi;x)$ for very general $x\in X$.
\end{definition}
\begin{remark}[Positivity]
\noindent By Proposition \ref{prop:bplus}.(1) there are countably many proper closed subsets of $X$ that appear as the augmented base locus of a big real class on $X$. Lemma \ref{lem:infsuccessiveproperties} implies that for very general $x\in X$ we have  $\epsilon_i(\xi;x)>0$ for all $1\leq i\leq n$ and any big class $\xi\in N^1(X)_{\RR}$.
\end{remark}

\subsection{Asymptotic partial jet separation}
\cite{ELMNP09} show that the moving Seshadri constant measures asymptotic jet separation at the base point. \cite{AI} prove that the Fujita--Nakayama invariant is an asymptotic measure of "jet nontriviality". We see these two conditions on jets as the end points of an increasing sequence of invariants, measuring asymptotically a separation of partial jets.

\begin{definition}[$i$-th partial jet separation order]\label{def:partialjets}
	Let $X$ be a projective variety of dimension $n$. Let $L$ be a line bundle on $X$, let $x\in X$ be a smooth point. For $i=1,\ldots ,n$ denote
	\[S_i(L;x)\deq\max\{s\geq -1\ \mid\ H^0(\overline X,L_s)\to H^0(\bb P^{n-i},\cal O(s))\text{ is onto for some (general) linear }\bb P^{n-i}\subset E\}\ .\]
	 Put in algebraic terms, if $x_1,\ldots,x_n$ are local coordinates around $x$ and $z_1,\ldots,z_{i-1}$ are general linear forms in the $x_i$, then $H^0(X,L\otimes\frak m_x^s)\to\frak m_x^s/((z_1,\ldots,z_{i-1})\frak m_x^{s-1}+\frak m_x^{s+1})$ is onto.
\end{definition}

\begin{remark}[Properties of jet separation orders]\label{rmk:propsSc}$ $

	\noindent {\bf $(1)$. Base loci.} When $s=0$, then $H^0(\overline X,L_0)\to H^0(\bb P^{n-i},\cal O)$ is the evaluation at $x$, which is surjective if and only if $x\not\in{\rm Bs}(L)$. Thus $S_i(L;x)=-1$ if and only if $x\in{\rm Bs}(L)$.
	\smallskip
	
\noindent {\bf $(2)$. Genericity.} If the map $H^0(\overline X,L_s)\to H^0(\bb P^{n-i},\cal O(s))$ is onto for some linear subspace $\bb P^{n-i}\subset\bb P^{n-1}$, then the same is true of the general linear subspace of the same dimension. This is a standard argument on the Grassmann variety of $n-i$-dimensional linear subspaces of $\bb P^{n-1}$.
\smallskip

\noindent{\bf $(3)$. Birational invariance.} If $X$ is normal projective and $\rho:Y\to X$ is a projective birational morphism that is an isomorphism over the smooth point $x\not\in{\bf B}_+(L)$, then $S_i(\rho^*L;\rho^{-1}\{x\})=S_i(L;x)$. As $X$ is normal, $\rho^*$ preserves the evaluation maps $H^0(X,L\otimes\frak m_x^s)\to\frak m_x^s/\frak m_x^{s+1}$ under our assumptions. These identify with the restrictions $H^0(\overline X,L_s)\to H^0(E,\cal O(s))$.
\end{remark}

\begin{lemma}[Superadditivity]\label{lem:superadditivity}
If $L$ and $M$ are line bundles, and $x\not\in{\rm Bs}(L)\cup{\rm Bs}(M)$, then \[S_i(L;x)+S_i(M;x)\leq S_i(L\otimes M;x),\]
for any $i=1,\ldots ,n$.
\end{lemma}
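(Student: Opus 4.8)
The plan is to reduce the statement to the elementary fact that a product of sections of two line bundles has first-nonvanishing jet equal to the product of the first-nonvanishing jets, and then observe that this multiplicativity is compatible with restriction to a general linear subspace of $E$. First I would unwind the algebraic reformulation from Definition \ref{def:partialjets}: to say that $L$ $i$-partially separates $s$-jets at $x$ is to say that, after choosing local coordinates $x_1,\ldots,x_n$ around $x$ and general linear forms $z_1,\ldots,z_{i-1}$, the evaluation
\[
H^0(X,L\otimes\frak m_x^s)\ \longrightarrow\ \frak m_x^s/\bigl((z_1,\ldots,z_{i-1})\frak m_x^{s-1}+\frak m_x^{s+1}\bigr)
\]
is onto. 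The target is naturally identified with $H^0(\bb P^{n-i},\cal O(s))$, i.e.\ with the space of degree-$s$ forms in the remaining $n-i+1$ coordinates $\bar z$ on the linear subspace $Y=\{z_1=\cdots=z_{i-1}=0\}\subseteq E$. So if $S_i(L;x)=a$ and $S_i(M;x)=b$, there exist sections $\sigma\in H^0(X,L)$ and $\tau\in H^0(X,M)$ vanishing to order exactly $a$, resp.\ $b$, at $x$, whose degree-$a$, resp.\ degree-$b$, parts of the Taylor expansion, when restricted to $Y$, are \emph{prescribed} nonzero forms; but what I actually need is slightly stronger — that the restricted-leading-term maps are \emph{surjective}, so that for every target form I can realize it.

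The core step: fix a single general linear subspace $Y\simeq\bb P^{n-i}$ that works simultaneously for $L$ (at level $a$), for $M$ (at level $b$), and for $L\otimes M$ (at level $a+b$) — possible by Remark \ref{rmk:propsSc}.(2), since each condition is open dense on the Grassmannian, so the intersection is still dense. Now I claim the restricted leading-term map for $L\otimes M$ at level $a+b$ is onto. Given a form $g\in H^0(Y,\cal O(a+b))$, I want to write it as a combination of products of a degree-$a$ form and a degree-$b$ form, each of which is realized on $Y$ as the restricted leading term of a section. Since the polynomial ring $k[\bar z]$ in $n-i+1\ge 1$ variables is generated in degree one and $H^0(Y,\cal O(a)),H^0(Y,\cal O(b))$ are spanned by monomials, it suffices to realize each \emph{monomial} $\bar z^{\alpha}$ of degree $a+b$ as such a product: write $\bar z^\alpha=\bar z^\beta\cdot \bar z^\gamma$ with $|\beta|=a,|\gamma|=b$; by surjectivity of the level-$a$ map for $L$ pick $\sigma_\beta\in H^0(X,L)$ with restricted leading term $\bar z^\beta$, similarly $\tau_\gamma$ for $M$; then $\sigma_\beta\otimes\tau_\gamma\in H^0(X,L\otimes M)$ vanishes to order $\ge a+b$ at $x$ and its degree-$(a+b)$ part is $(\mathrm{in}_a\sigma_\beta)\cdot(\mathrm{in}_b\tau_\gamma)$ plus terms that vanish when restricted to $Y$ — wait, the product of the full Taylor expansions has leading part exactly $\mathrm{in}_a(\sigma_\beta)\cdot\mathrm{in}_b(\tau_\gamma)$ as an honest polynomial in $x_1,\ldots,x_n$, and restricting to $Y$ (setting $z_1=\cdots=z_{i-1}=0$) commutes with multiplication, giving $\bar z^\beta\cdot\bar z^\gamma=\bar z^\alpha$. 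By linearity over all monomials, $g$ is hit, so $S_i(L\otimes M;x)\ge a+b$.

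The main obstacle I anticipate is the passage from "$H^0(\overline X,L_s)\to H^0(\bb P^{n-i},\cal O(s))$ is onto for some $\bb P^{n-i}$" to having \emph{surjectivity of the leading-term map onto a polynomial ring in which I can multiply}. One must be careful that the map in Definition \ref{def:partialjets} is not literally "take the degree-$s$ part of the Taylor expansion" for a section vanishing to order exactly $s$; it is a quotient map, and a section vanishing to higher order maps to zero. The clean way around this is exactly the monomial argument above: I never need to control cross terms because I only multiply two sections whose leading orders I know precisely, and the ideal-theoretic bookkeeping $(z_1,\ldots,z_{i-1})\frak m_x^{a-1}\cdot\frak m_x^b+\frak m_x^{a}\cdot(z_1,\ldots,z_{i-1})\frak m_x^{b-1}\subseteq (z_1,\ldots,z_{i-1})\frak m_x^{a+b-1}$ together with $\frak m_x^{a+1}\frak m_x^{b}+\frak m_x^a\frak m_x^{b+1}\subseteq\frak m_x^{a+b+1}$ shows that the product descends correctly to the level-$(a+b)$ quotient. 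A secondary, purely bookkeeping point is the degenerate cases $S_i(L;x)=-1$ (then $x\in{\rm Bs}(L)$, excluded by hypothesis) and $S_i(L;x)=0$, where the inequality is immediate from Remark \ref{rmk:propsSc}.(1) and the fact that $x\notin{\rm Bs}(L\otimes M)$; these I would dispatch first.
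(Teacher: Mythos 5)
Your proposal is correct and is essentially the paper's proof: the paper uses the same genericity trick to fix one common $\bb P^{n-i}$ and then the commutative diagram whose bottom row is the surjective multiplication $H^0(\bb P^{n-i},\cal O(s))\otimes H^0(\bb P^{n-i},\cal O(t))\to H^0(\bb P^{n-i},\cal O(s+t))$, which is exactly what your monomial-factorization argument establishes by hand. Your explicit Taylor-expansion bookkeeping is just an unpacked version of the commutativity of that diagram.
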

\begin{proof}Indeed, if $H^0(\overline X,L_s)\to H^0(\bb P^{n-i},\cal O(s))$ and $H^0(\overline X,M_t)\to H^0(\bb P^{n-i},\cal O(t))$ are onto (we can choose the same $\bb P^{n-i}$ subspace for both by genericity), we have a commutative diagram
	\[\xymatrix{H^0(\overline X,L_s)\otimes H^0(\overline X,M_t)\ar[r]\ar@{->>}[d]& H^0(\overline X,L\otimes M_{s+t})\ar[d]\\ H^0(\bb P^{n-i},\cal O(s))\otimes H^0(\bb P^{n-i},\cal O(t))\ar@{->>}[r]& H^0(\bb P^{n-i},\cal O(s+t))}\] The bottom horizontal map is onto if $s,t\geq 0$.	In this case, the right vertical map is also onto. 
\end{proof} 
With this in hand we are able to introduce the asymptotic version of separation of partial jets.

\begin{definition}[Asymptotic partial jet separation order]
With the notation above define
	\[
	s_i(L;x)\deq\limsup_{m\to\infty}\frac 1mS_i(L^{\otimes m};x)=\sup_{m\geq 1}\frac 1mS_i(L^{\otimes m};x).
	\]
	for any $i=1,\ldots ,n$.
\end{definition}
\begin{remark}[Limit]\label{rem:lim}
Due to the superadditivity property in Lemma \ref{lem:superadditivity}, we note that restricting to the additive semigroup $\{m\in\bb N\ \mid\ S_i(mL;x)\geq 0\}$, the limsup becomes actually a limit by Fekete's Lemma.
\end{remark}

\begin{lemma}[Basic properties of asymptotic partial jet separation order]\label{lem:propssc}
Let $X$ be a projective variety, $L$ a line bundle on $X$ and $x\notin \Bplus(L)$ a smooth point. Then the asymptotic partial jet separation orders determine an increasing set of homogeneous numerical invariants: 
\[
0<s_1(L;x)\leq s_2(L;x)\leq\ldots\leq s_{n}(L;x).
\] 
In particular, each $s_i$ can be defined for $\QQ$-Cartier $\QQ$-divisors.
\end{lemma}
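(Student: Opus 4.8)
The plan is to read homogeneity, monotonicity and finiteness straight off the definitions, to deduce positivity from a Seshadri‑type estimate for ample divisors, and to spend the real effort on numerical invariance. Homogeneity $s_i(L^{\otimes k};x)=k\,s_i(L;x)$ is immediate from $s_i(L;x)=\sup_{m\ge 1}\tfrac1m S_i(L^{\otimes m};x)$ after reindexing the supremum. For monotonicity, suppose $H^0(\overline X,L_s)\to H^0(\PP^{n-i},\cal O(s))$ is onto for a general $\PP^{n-i}\subset E$; choosing a general $\PP^{n-i-1}\subset\PP^{n-i}$ — still general in $E$, since the flag variety dominates every Grassmannian of $E$ — and postcomposing with the surjection $H^0(\PP^{n-i},\cal O(s))\to H^0(\PP^{n-i-1},\cal O(s))$ on degree‑$s$ forms shows $S_i(L;x)\le S_{i+1}(L;x)$, hence $s_i(L;x)\le s_{i+1}(L;x)$. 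Finiteness holds because $S_i(L;x)\le S_n(L;x)$, and since $L$ is big the classes $\pi^*(mL)-sE$ are pseudoeffective only for $s\le m\,\mu(L;x)$ with $\mu(L;x)<\infty$; thus $S_n(mL;x)\le m\,\mu(L;x)$ and $s_n(L;x)\le\mu(L;x)<\infty$.

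For positivity it suffices, by monotonicity, to prove $s_1(L;x)>0$. Using $\Bplus(L)={\bf B}(L-A)$ for a small ample $\QQ$-divisor $A$, the hypothesis $x\notin\Bplus(L)$ lets us write $L\sim_{\QQ}A+F$ with $F$ an effective $\QQ$-divisor and $x\notin\Supp F$. An ample divisor separates jets at $x$ asymptotically at the rate of its Seshadri constant; multiplying sections of $|qA|$ that separate many jets at $x$ by the fixed section of $|qF|$ defining $qF$ — a unit at $x$, hence inducing an isomorphism of $\cal O_X/\m_x^{s+1}$ — shows that $qL$ separates at least as many jets, whence $s_1(L;x)\ge\epsilon(A;x)>0$. (Equivalently, $s_1(L;x)$ is the moving Seshadri constant $\epsilon(\|L\|;x)$ of \cite{ELMNP09}, positive precisely when $x\notin\Bplus(L)$.)

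The substance is numerical invariance: for a numerically trivial line bundle $\tau$ I must show $s_i(L\otimes\tau;x)=s_i(L;x)$, both sides being defined since $\Bplus$ is numerical, and by replacing $\tau$ with $\tau^{-1}$ it is enough to prove $\ge$. The obstruction is that $\tau$ need not be effective, so it cannot be absorbed by multiplying sections; instead one trades it for an ample correction and pays that back afterwards. Fix a very ample $A$ small relative to $L$ and an $m_0$ so large that (i) $\cal O(m_0A)\otimes\tau^{\otimes m}$ is globally generated for every $m\ge 0$ — with a single $m_0$, because $\{\tau^{\otimes m}\}$ is a bounded family and Serre vanishing together with Mumford regularity apply uniformly over it — and (ii) $m_0\bigl((L\otimes\tau)-A\bigr)$ carries a section not vanishing at $x$ (possible as $(L\otimes\tau)-A$ is big with $x\notin\Bplus$, by Proposition \ref{prop:bplus}). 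Given $\epsilon>0$ and $m$ large with $mL$ that $i$-partially separates $\lceil(s_i(L;x)-\epsilon)m\rceil$-jets, multiply those sections (pulled back to $\overline X$) by the pullback of a global generator of $\cal O(m_0A)\otimes\tau^{\otimes m}$ not vanishing at $x$; this generator restricts to a nonzero constant on $E$, because the pullback of any line bundle from $X$ is trivial along $E$, so the relevant target $H^0(\PP^{n-i},\cal O(\,\cdot\,))$ is unchanged, and hence $m(L\otimes\tau)\otimes\cal O(m_0A)$ also $i$-partially separates $\lceil(s_i(L;x)-\epsilon)m\rceil$-jets. Multiplying once more by the section from (ii) turns the $\cal O(m_0A)$ correction into $m_0$ extra copies of $L\otimes\tau$, giving $S_i\bigl((m+m_0)(L\otimes\tau);x\bigr)\ge\lceil(s_i(L;x)-\epsilon)m\rceil$; dividing by $m+m_0$ and letting $m\to\infty$ yields $s_i(L\otimes\tau;x)\ge s_i(L;x)-\epsilon$, hence $\ge s_i(L;x)$.

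The $\QQ$-Cartier $\QQ$-divisor statement is then formal from homogeneity: for such a $D$ with $x\notin\Bplus(D)$, set $s_i(D;x)\deq\tfrac1q s_i(qD;x)$ for any $q$ clearing denominators, consistent by homogeneity. I expect the numerical‑invariance step to be the only genuine obstacle: a numerically trivial twist cannot be cancelled at the level of a single line bundle, so the argument is forced to route through an auxiliary ample summand and a limit in which that summand becomes negligible, and arranging (i) and (ii) for a common $m_0$ is the technical heart. (Alternatively, once Theorem \ref{thm:mintrosuccessjets} is available, numerical invariance of $s_i$ follows a posteriori from that of $\epsilon_i$, which is manifest from its definition via $\Bplus$.)
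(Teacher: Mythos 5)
Your proof is correct, and parts (homogeneity, monotonicity, positivity) coincide with the paper's; the genuine divergence is in the numerical invariance step, which is also where the paper concentrates its effort. The paper argues softly: it first shows $t\mapsto s_i(L+tA;x)$ is continuous at $t=0$ (for $A$ ample) because $(a,b)\mapsto s_i(aL+bA;x)$ is homogeneous and superadditive, hence concave, on the relevant open cone in $\QQ^2$; it then combines this with superadditivity applied to $L+(N+\tfrac1mA)$, where $N+\tfrac1mA$ is ample since $A+mN$ is, to squeeze out $s_i(L+N;x)\geq s_i(L;x)$ and concludes by symmetry. You instead argue by hand: you absorb the numerically trivial twist $\tau^{\otimes m}$ into a fixed ample correction $m_0A$ via a uniform global generation statement, multiply jets by sections that are units at $x$ (correctly observing that such a section restricts to a nonzero constant on $E$, so the surjection onto $H^0(\PP^{n-i},\cal O(p))$ is only rescaled), and then pay back the $m_0A$ with a section of $m_0(L\otimes\tau-A)$ not vanishing at $x$. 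This works, but note what it costs: the uniform choice of $m_0$ in your step (i) rests on the boundedness of $\Pic^\tau(X)$ for a possibly non-normal projective variety (Grothendieck), which is a heavier input than anything in the paper's convexity argument; in exchange your proof is more constructive and makes the mechanism (units at $x$ act trivially on first nonvanishing jets) explicit. One caution about your closing parenthetical: deducing numerical invariance of $s_i$ a posteriori from Theorem \ref{thm:mintrosuccessjets} would be circular, since the proof of Theorem \ref{thm:successivejets} begins by invoking the homogeneity and numerical invariance of $s_i(\cdot;x)$ to reduce to the Cartier case, and the equality $s_i=\epsilon_i$ is only proved for normal $X$, whereas the present lemma is stated for arbitrary projective $X$.
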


\begin{remark}[Further properties]$ $ 
We do not use the remarks described here in the sequel, but they might be of general interest. Let $L$ be a line bundle on the projective variety $X$ of dimension $n$, and let $x\in X$ be a smooth point such that $x\not\in{\bf B}_+(L)$.
\smallskip

\noindent {\bf $(1)$. Partial jet separation is eventually not sparse.}	
		Fix $0<s<s_i(L;x)$ rational. For sufficiently divisible $m$ we have that $H^0(\overline X,m\pi^*L-jE)\to H^0(\bb P^{n-i},\cal O(j))$ is onto for all $0\leq j\leq ms$ and general $\bb P^{n-i}\subset E$.

The proof, left as an exercise, is a standard argument that mimics the proof of \cite[Proposition 2.1]{LM09}, using \cite[Proposition 3.3]{Kho92}.
\smallskip

	\noindent{\bf $(2)$. Restrictions to subvarieties.}
	Let $Z\subseteq X$ be a subvariety of dimension $d$ that also contains $x$ as a smooth point, then
	\begin{enumerate}[(i)]
		\item If $d\geq i+1$, then $s_i(L;x)\leq s_{i}(L|_Z;x)$.
		\smallskip
		
		\item If $T_xZ\subset T_xX$ is general, then $s_i(L;x)\leq s_{\max\{i+d-n,1\}}(L|_Z;x)$. 
	\end{enumerate}
\noindent The statements extends formally by continuity to $\bb R$-classes in ${\rm Big}_x(X)$.
	
For $(i)$ let $H^0(\overline X,L_s)\to H^0(\bb P^{n-i},\cal O(s))$ be surjective for some (general) $\bb P^{n-i}\subset E$. Then $\bb P^{d-i}\simeq {\bf P}_{\rm sub}(T_xZ)\cap\bb P^{n-i}\subset E|_{\overline Z}$ has codimension $i-1$ and $H^0(\overline Z,(L|_Z)_s)\to H^0(\bb P^{d-i},\cal O(s))$ is still surjective. Up to scaling $L$ and taking limits, this gives the claim.
		
	For $(ii)$ let $d-1\geq n-i$. Then a general $\bb P^{d-1}\simeq{\bf P}_{\rm sub}(T_xZ)$ in $E$ contains a general $\bb P^{n-i}$ with codimension $i+d-n-1$. But whenever  $d-1\leq n-i$, ${\bf P}_{\rm sub}(T_xZ)$ is contained in a general $\bb P^{n-i}$. Argue as in $(i)$. 
	\smallskip

	\noindent{\bf $(3)$. Interpretation via complete linear series $|mL|$.} Let $X$ be normal and $m$ be sufficiently divisible so that ${\bf B}(L)={\rm Bs}(mL)$. Let $\frak b_m$ be the base ideal of $|mL|$ and $\beta_m:X_m\to X$ be a resolution of ${\rm Bl}_{\frak b_m}X$ that is an isomorphism away from $x$. Let $x_m\in X_m$ be the preimage of $x$. Then $\beta_m^*mL=M_m+F_m$ where $M_m$ is globally generated on $X_m$, $F_m$ is effective, without $x_m$ in its support, and $H^0(X_m,M_m)=H^0(X,mL)$. We have
	\[s_i(L;x)=\lim_{m\to\infty}\frac 1ms_i(M_m;x_m).\]
	The limit is taken only over $m$ sufficiently divisible as above so that $x\not\in\frak b_m$. The argument is similar to the ideas around \cite[Proposition 6.4]{ELMNP09} and completing the proof is left as an exercise.
	
		
		
\end{remark}

\begin{proof}[Proof of Lemma \ref{lem:propssc}]
Homogeneity follows from Remark \ref{rem:lim}. As a consequence we can define $s_i(L;x)$ even when $L$ is a $\bb Q$-Cartier $\bb Q$-divisor.

Our sequence of $s_i$ is increasing  because  $S_1(L;x)\leq \ldots\leq S_{n}(L;x)$, where the latter is due to the fact that when the map $H^0(\overline X,L_s)\to H^0(\bb P^{n-i},\cal O(s))$ is surjective, then the map $H^0(\overline X,L_s)\to H^0(\bb P^{n-i-1},\cal O(s))$ is also surjective for any  codimension one linear subspace $\PP^{n-i-1}\subseteq \PP^{n-i}$. On the other hand, to show $s_1(L;x)>0$ we use the fact that $x\notin \Bplus(L)$ to say that $x\notin \Bstable(L-A)$ for some small ample $A$. So, modulo normalization we are able to get sections of $L$, coming from from $L-A$ that do not pass through $x$ and from $A$. In particular, this line of thought implies $s_1(L;x)\geq s_1(A;x)>0$, where the latter inequality follows from Remark \ref{rem:lim}. Similarly we get $S_i(mL;x)\geq S_1(mL;x)> 0$ for all $m$ sufficiently large and thus the supremum in the definition of each $s_i$ is actually a limit under our assumptions.

Lastly, we show the numerical nature of our invariants in two steps. Fixing $A$ an ample Cartier divisor, we show first that $\lim_{t\to 0}s_i(L+tA;x)=s_i(L;x)$, where $t$ is rational, either positive or negative. For this consider the open cone in $\bb Q^2$ of all $\{(a,b)\in\bb Q_{>0}\times\bb Q\ \mid\ x\not\in{\bf B}_+(aL+bA)\}$. The function $(a,b)\to s_i(aL+bA;x)$ is superadditive and homogeneous, so concave on our cone. This implies the partial continuity property.

For the second step let $N$ be a numerically trivial Cartier divisor. We still have $x\not\in{\bf B}_+(L+N)$ since the augmented base locus is a numerical invariant. Consider an ample divisor $A$. Then $A+mN$ remains ample for all $m\in\bb Z$. 	By the continuity step, $\lim_{m\to\infty}s_i(L+N+\frac 1mA;x)=s_i(L+N;x)$. We also have 
	\[
	s_i(L+N+\frac 1mA;x)\geq s_i(L;x)+s_i(N+\frac 1mA;x)> s_i(L;x) \ ,
	\]
	as $mN+A$ is ample and $x\not\in{\bf B}_+(L)$. Thus $s_i(L+N;x)\geq s_i(L;x)$. This holds for any $L$ with $x\notin \Bplus(L)$ and any numerically trivial $N$. Applying it for $L'=L+N$ and $N'=-N$ implies the opposite inequality.
\end{proof}

\subsection{Infinitesimal successive minima versus partial jet separation}
Demailly realized that the Seshadri constant of an ample line bundle measures both the appearance of base loci on the blow-up along the exceptional ray and quantifies asymptotic jet separation (see \cite[Chapter 5]{Laz04} for more details). Our goal here is to extend these two different perspectives on the Seshadri constant in all codimensions for any big class and prove Theorem \ref{thm:mintrosuccessjets}.
\begin{theorem}\label{thm:successivejets}Let $X$ be a projective variety of dimension $n$, $L$ a $\QQ$-Cartier $\QQ$-divisor and $x\notin \Bplus(L)$ a smooth point. Then $s_i(L;x)\leq\epsilon_i(L;x)$ for all $1\leq i\leq n$. If furthermore $X$ is normal, then equality holds
	\[s_i(L;x)= \epsilon_{i}(L;x)\ .\]
	for all $i=1,\ldots ,n$.
\end{theorem}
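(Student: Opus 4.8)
The plan is to establish the two inequalities separately: $s_i(L;x)\leq\epsilon_i(L;x)$ for an arbitrary projective variety, and the reverse inequality under the normality hypothesis. A harmless reduction first: $\epsilon_i(\cdot;x)$ and $s_i(\cdot;x)$ are homogeneous (Lemma~\ref{lem:infsuccessiveproperties} and Lemma~\ref{lem:propssc}) and $\Bplus$ is a homogeneous invariant, so I may assume $L$ is an honest Cartier divisor. The link between the ``partial jet'' side, which controls $s_i$, and the ``base locus'' side, which controls $\epsilon_i$, is one elementary incidence fact, used in both directions: a general linear $\PP^{n-i}\subseteq E\cong\PP^{n-1}$ is disjoint from any fixed closed subset of $E$ of dimension $\leq i-2$, since $(i-2)+(n-i)<n-1=\dim E$.

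For $s_i(L;x)\leq\epsilon_i(L;x)$, fix $m\geq 1$ and put $k\deq S_i(mL;x)$; the case $k\leq 0$ is vacuous because $\epsilon_i(L;x)>0$, so assume $k\geq 1$. By definition the restriction $H^0(\overline X,\,m\pi^*L-kE)\to H^0(\PP^{n-i},\cal O(k))$ is onto for a general linear $\PP^{n-i}\subseteq E$. Were $\dim\bigl({\bf B}(m\pi^*L-kE)\cap E\bigr)\geq i-1$, the incidence fact would let a general $\PP^{n-i}$ meet a component of that intersection at a point $p$; since ${\bf B}\subseteq{\rm Bs}$, every section of $m\pi^*L-kE$ vanishes at $p$, so the image of the restriction contains no section of $\cal O_{\PP^{n-i}}(k)$ nonzero at $p$ — impossible for $k\geq 1$. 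Hence $\dim\bigl({\bf B}(L_{k/m})\cap E\bigr)\leq i-2$, using ${\bf B}(m\pi^*L-kE)={\bf B}(mL_{k/m})={\bf B}(L_{k/m})$, and since the family $({\bf B}(L_t)\cap E)_{t\in\QQ_+}$ determines $\epsilon_i(L;x)$ (Lemma~\ref{lem:infsuccessiveproperties}.(4), via Lemma~\ref{lem:B-ray}.(3)), this forces $k/m\leq\epsilon_i(L;x)$. Taking the supremum over $m$ gives the inequality, with no use of normality.

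For the reverse inequality I assume $X$, hence $\overline X$, is normal (the blow-up of a normal variety at a smooth point is normal). Fix a rational $s$ with $0<s<\epsilon_i(L;x)$. By the definition of $\epsilon_i$ and the monotonicity of $\Bplus(L_t)$ in $t$ (Lemma~\ref{lem:B+interpretation}.(3)) we get $\dim\bigl(\Bplus(L_s)\cap E\bigr)\leq i-2$; in particular $E\not\subseteq\Bplus(L_s)$, so $L_s=\pi^*L-sE$ is a big $\QQ$-divisor on $\overline X$. Pick a general linear $\Lambda=\PP^{n-i}\subseteq E$: by the incidence fact $\Lambda$ misses $\Bplus(L_s)\cap E$, hence misses $\Bplus(L_s)$ as $\Lambda\subseteq E$. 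Now feed this into Proposition~\ref{prop:vanishing}, the global generation result for big $\QQ$-divisors along subvarieties disjoint from the augmented base locus, to conclude that for every sufficiently divisible $m$ the restriction $H^0(\overline X,\,mL_s)\to H^0(\Lambda,\,mL_s|_\Lambda)$ is onto. Since $\pi^*L|_\Lambda$ is trivial and $E|_\Lambda=\cal O_{\PP^{n-i}}(-1)$, one has $mL_s|_\Lambda=\cal O_{\PP^{n-i}}(ms)$, and identifying $mL_s=(mL)_{ms}$ this surjectivity is precisely the statement $S_i(mL;x)\geq ms$. Hence $s_i(L;x)\geq s$, and letting $s\uparrow\epsilon_i(L;x)$ yields $s_i(L;x)\geq\epsilon_i(L;x)$, which together with the first part gives the equality.

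The main obstacle is Proposition~\ref{prop:vanishing}; everything else is bookkeeping with the base-locus lemmas of this section and the incidence count. To prove it I would write $\Bplus(L_s)={\bf B}(L_s-A)$ for a small ample $\QQ$-divisor $A$ with $L_s-A$ a $\QQ$-divisor, so that for divisible $m$ the system $|m(L_s-A)|$ is base-point-free near $\Lambda$; then combine this global generation of $\cal O_{\overline X}(m(L_s-A))$ near $\Lambda$ with the ampleness of $mA$ and a vanishing statement (Serre vanishing on a log resolution of the base ideal of $|m(L_s-A)|$, or Nadel vanishing for an asymptotic multiplier ideal $\cal J(\|m(L_s-A)\|)$ that is trivial near $\Lambda$) in order to annihilate $H^1(\overline X,\cal I_\Lambda\otimes mL_s)$. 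This is where normality genuinely enters, and the delicate point will be obtaining the surjectivity uniformly for all large divisible $m$ rather than for infinitely many; Proposition~\ref{prop:vanishing} should be phrased precisely so as to supply this.
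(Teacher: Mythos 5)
Your argument for the theorem itself is correct and follows essentially the same route as the paper: the inequality $s_i\leq\epsilon_i$ via the incidence count (a general linear $\PP^{n-i}$ misses closed subsets of $E$ of dimension $\leq i-2$ and meets those of dimension $\geq i-1$) together with the comparison of ${\bf B}$ and $\Bplus$ along the ray from Lemmas \ref{lem:Bray} and \ref{lem:B-ray}.(3), and the reverse inequality by feeding a general $\Lambda=\PP^{n-i}$ disjoint from $\Bplus(L_s)$ into Proposition \ref{prop:vanishing} applied to a Cartier multiple of $L_s$.

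The one place where you diverge is your closing sketch of Proposition \ref{prop:vanishing}, and there the paper's argument is both more elementary and more general. The paper does not use multiplier ideals or Nadel vanishing at all: it passes to the normalized blow-up $\pi_m:X_m\to X$ of the base ideal of $|mpL|$ (with $p$ chosen so that $\Bplus(L)={\bf B}(pL-H)$), writes $\pi_m^*(mpL)=M_m+E_m$ with $M_m$ free and $\pi_m({\rm Supp}\,E_m)={\rm Bs}(mpL)$ disjoint from $V$, maps $X_m$ to a projective space by $|M_m|$, and applies Serre vanishing there to the scheme-theoretic image of $V$; normality of $X$ enters only to identify $H^0(X_m,\pi_m^*mpL)$ with $H^0(X,mpL)$. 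This works over any algebraically closed field (the proposition is stated in that generality), whereas Nadel vanishing confines you to characteristic zero and to settings where multiplier ideals on the normal $\overline X$ make sense. It also sidesteps the uniformity issue you flag: surjectivity for one $q\geq q_0$ on the image projective space immediately gives it for all sufficiently divisible $m$, because the relevant restriction maps are compatible with the Veronese re-embeddings. If you intend your multiplier-ideal route, you would need to address both points explicitly; as written, that paragraph is a plan rather than a proof, though it does not affect the correctness of your proof of the theorem, which legitimately quotes the proposition as a black box exactly as the paper does.
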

\begin{proof} 
	Since $s_i(\cdot;x)$ and $\epsilon_i(\cdot;x)$ are homogeneous and numerical invariants, we may assume that $\xi=L$ is Cartier. 
	For the $\leq$ inequality we want to show that if $s\geq 0$ is an integer and $m\geq 1$ is such that 
	\[
	H^0(\overline X,(mL)_s) \ \longrightarrow \ H^0(\bb P^{n-i},\cal O(s))
	\] 
	is onto for some general $\bb P^{n-i}\subset E$, and if $t>0$ is real such that $\dim {\bf B}_+(L_t)\cap E\geq i-1$, then $t\geq \frac sm$.
	By Lemma \ref{lem:B-ray}.(3), we may assume that $t$ is rational and ${\bf B}_+(L_t)\cap E={\bf B}(L_t)\cap E$, and $\dim{\bf B}(L_t)\cap E\geq i-1$. The surjectivity condition implies that $(mL)_s$ is globally generated along $\bb P^{n-i}$, so this linear subspace does not meet ${\rm Bs}((mL)_s)\cap E$, so $\dim{\rm Bs}((mL)_s)\cap E\leq i-2$. If $t<\frac sm$, then 
	\[
	{\bf B}(L_t)\cap E\subseteq {\bf B}(L_{\frac sm})\cap E\subseteq{\rm Bs}((mL)_s)\cap E
	\]
	 by Lemma \ref{lem:Bray}. This leads to a contradiction for dimension reasons.
	
For the $\geq$ inequality when $X$ is normal, assume $t>0$ and $\dim{\bf B}_+(L_t)\cap E\leq i-2$, so that $\epsilon_{i}(L;x)> t$. We want to prove that $s_i(L;x)\geq t$. 
A general $\bb P^{n-i}\subset E$ does not meet ${\bf B}_+(L_t)\cap E$ for dimension reasons. We may replace $t$ with a small rational $\epsilon_i(L;x)>t'>t$. We want to prove that $H^0(\overline X,m(L_t))\to H^0(\bb P^{n-i},\cal O(mt))$ is onto for $m$ sufficiently large and divisible, i.e., $S_i(mL;x)\geq mt$, so $s_i(L;x)\geq t$. For this apply Proposition \ref{prop:vanishing} below for some Cartier integer multiple of $L_t$.
	\end{proof}

	Inspired by \cite{BCL14}, the following generation statement closes the proof of Theorem \ref{thm:successivejets}.

\begin{proposition}\label{prop:vanishing}Let $X$ be a normal projective variety over an algebraically closed field. Let $L$ be a Cartier divisor on $X$.
	Suppose that $V\subset X$ is a closed subscheme whose support is disjoint from ${\bf B}_+(L)$. Then the natural restriction morphism
	\[H^0(X,mL)\to H^0(V,mL|_V)\]
	is surjective for all $m$ sufficiently large and divisible.
\end{proposition}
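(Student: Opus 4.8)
The plan is to reduce, via resolution of the base locus and Stein factorization, to Serre vanishing for an ample line bundle on an auxiliary normal projective variety. If $V=\varnothing$ there is nothing to prove, and otherwise ${\bf B}_+(L)\subsetneq X$, so $L$ is big. I would first fix $p\gg 0$ sufficiently divisible so that ${\bf B}(pL)={\rm Bs}(pL)$ and $\phi_{|pL|}$ is birational onto its image, and take a normalized blow-up $\beta\colon X'\to X$ of the base ideal of $|pL|$ — an isomorphism over $X\setminus{\rm Bs}(pL)\supseteq V$ — writing $\beta^*(pL)=M+F$ with $M$ globally generated and big, $F\geq 0$ effective and Cartier with $\Supp F\subseteq\beta^{-1}({\rm Bs}(pL))$, and $H^0(X',M)=H^0(X,pL)$. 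Then $V':=\beta^{-1}(V)$ is carried isomorphically onto $V$ by $\beta$ and is disjoint from $F$.

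There are then two reductions. Since $M\leq\beta^*(pL)$ and $F|_{V'}=0$, pulling back and restricting along the isomorphism $V'\cong V$ shows that the map in the statement (for $mp$ in place of $m$) admits the restriction map $H^0(X',mM)\to H^0(V',mM|_{V'})$ as a factor; it is therefore enough to prove the latter is surjective for $m\gg 0$, since ``$m$ a large multiple of $p$'' already witnesses ``sufficiently large and divisible''. Next, Stein-factorize $\psi_{|M|}\colon X'\to Z:=\psi_{|M|}(X')$ as $X'\xrightarrow{f}W\xrightarrow{g}Z$ with $f_*\mathcal{O}_{X'}=\mathcal{O}_W$ (so $W$ is normal and projective) and $g$ finite birational, and put $B:=g^*\mathcal{O}_Z(1)$, an ample line bundle on $W$ with $M=f^*B$; by the projection formula $H^0(X',mM)=H^0(W,mB)$. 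The key point is that $V'$ avoids ${\bf B}_+(M)$: on one hand, for small ample $A'$ on $X'$, ${\bf B}_+(M)={\bf B}(\beta^*(pL)-\epsilon A'-F)\subseteq{\bf B}_+(\beta^*(pL))\cup\Supp F\subseteq\beta^{-1}({\bf B}_+(L))$ by Proposition \ref{prop:bplus}.(2) together with ${\rm Bs}(pL)={\bf B}(L)\subseteq{\bf B}_+(L)$, and $\beta^{-1}({\bf B}_+(L))\cap V'=\varnothing$; on the other hand, Proposition \ref{prop:bplus}.(2) applied to the birational morphism $f$ of normal varieties gives ${\bf B}_+(M)={\bf B}_+(f^*B)={\rm Exc}(f)$ since $B$ is ample. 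Hence $f$ is an isomorphism near $V'$, so $f|_{V'}$ identifies $V'$ with a closed subscheme $\overline V\subseteq W$ and turns $H^0(X',mM)\to H^0(V',mM|_{V'})$ into the restriction map $H^0(W,mB)\to H^0(\overline V,mB|_{\overline V})$. This is surjective for $m\gg 0$ by Serre vanishing applied to $\mathcal{I}_{\overline V}\otimes\mathcal{O}_W(mB)$, which finishes the proof.

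The routine but slightly delicate part is the bookkeeping: checking that $\beta$ and $f$ leave $V$ (with its scheme structure) untouched, that all the restriction maps are compatible under the identifications $V\cong V'\cong\overline V$, and that the elementary containments ${\bf B}(\xi-F)\subseteq{\bf B}(\xi)\cup\Supp F$ for effective $F$ and ${\rm Exc}(\beta)\subseteq\beta^{-1}({\rm Bs}(pL))$ are used correctly. The conceptual heart is the reduction itself: the hypothesis that $V$ avoids ${\bf B}_+(L)$ is exactly what lets one replace the merely big divisor $L$ by an honest ample divisor $B$ on $W$ in a neighbourhood of $\overline V$, after which the assertion collapses to Serre vanishing for an ideal sheaf.
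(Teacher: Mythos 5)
Your overall architecture (resolve the base ideal of one multiple $|pL|$, split off the moving part $M$, Stein-factorize, and finish with Serre vanishing for an ample bundle on the normal Stein factor $W$) is close to the paper's, and both reductions as well as the final Serre-vanishing step are fine. The gap is in the ``key point'' that $V'$ avoids ${\bf B}_+(M)$. The containment you call elementary, ${\bf B}(\xi-F)\subseteq{\bf B}(\xi)\cup\Supp F$ for $F\geq 0$, is false: subtracting an effective divisor can create base locus anywhere (on $\bb P^2$ with $\xi=\cal O(1)$ and $F=2\ell$ a double line, ${\bf B}(\xi-F)={\bf B}(\cal O(-1))=\bb P^2$, while ${\bf B}(\xi)\cup\Supp F=\ell$). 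The true elementary inclusion goes the other way, ${\bf B}(\xi+F)\subseteq{\bf B}(\xi)\cup\Supp F$, which here only yields ${\bf B}_+(\beta^*(pL))\subseteq{\bf B}_+(M)\cup\Supp F$ — useless for your purpose. So the chain ${\bf B}_+(M)\subseteq{\bf B}_+(\beta^*(pL))\cup\Supp F\subseteq\beta^{-1}({\bf B}_+(L))$ is unjustified, and nothing in your choice of $p$ excludes the failure mode it was meant to rule out: ``${\rm Bs}(pL)={\bf B}(L)$'' and ``$\phi_{|pL|}$ birational onto its image'' control the base locus and the generic behaviour of $\phi_{|pL|}$, but say nothing about $|pL|$ separating points near $\Supp V$ at the fixed level $p$. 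If $\phi_{|pL|}$ contracts a positive-dimensional subvariety through a point of $\Supp V$ that is not contained in ${\bf B}_+(L)$, then ${\rm Exc}(f)={\bf B}_+(M)$ meets $V'$, $f$ is not an isomorphism near $V'$, and the identification $V'\cong\overline V$ collapses.

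The repair is precisely the extra care the paper takes at the outset. Either invoke Proposition \ref{prop:bplus}.(3) to choose $p$ so that $|pL|$ is an isomorphism on a neighbourhood of $X\setminus{\bf B}_+(L)$, or, as the paper does, fix a very ample $H$ with ${\bf B}_+(L)={\bf B}(pL-H)$ and then choose $m$ with ${\rm Bs}(mpL-mH)={\bf B}(pL-H)$; since $|mpL|\supseteq|mpL-mH|+|mH|$, the system $|mpL|$ is then very ample near $\Supp V$, so the morphism defined by its moving part embeds the preimage of $V$, and your Stein-factorization/Serre-vanishing endgame (or the paper's variant, which applies Serre vanishing to the scheme-theoretic image of $V_m$ in the target projective space rather than on $W$) goes through. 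Note also that the paper resolves the base ideal of $|mpL|$ for each relevant $m$ instead of taking powers of a single moving part — that is what keeps the embedding statement available at every stage.
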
 
\begin{proof}
	Let $H$ be a very ample divisor on $X$. There exists $p>0$ such that ${\bf B}_+(L)={\bf B}(pL-H)$. Choose $m$ sufficiently divisible so that ${\bf B}(pL-H)={\rm Bs}(mpL-mH)$. Then in particular $|mpL-mH|$ is basepoint free around the support of $V_{\rm red}$ and $|mpL|$ is thus an embedding around the support of $V$.
	
	Let $\pi_m:X_m\to X$ be the normalization of the blow-up of the base ideal of $|mpL|$. There exists an effective Cartier divisor $E_m$ on $X_m$ such that $\pi_m({\rm Supp}(E_m))={\rm Bs}(mpL)$, the divisor $M_m\deq\pi_m^*mpL-E_m$ is globally generated and $H^0(X_m,M_m)\to H^0(X_m,\pi^*_mmpL)$ is an isomorphism. Since $X$ is normal, the latter is isomorphic to $H^0(X,mpL)$ via $\pi_m^*$. Note that the support of $V$ is disjoint from $\pi_m({\rm Supp}(E_m))$ and thus $\pi_m$ is an isomorphism over $V$. Denoting by $V_m\subset X_m$ the preimage, then $\pi_m|_{V_m}^V:V_m\to V$ is an isomorphism.
	
	Let $g_m:X_m\to{\bf P}_m$ be the morphism to a projective space induced by the basepoint free linear system $|M_m|$ so that $M_m=g_m^*\cal O_{{\bf P}_m}(1)$. Denote $W_m\deq g_m(V_m)$ with the (closed sub-) scheme theoretic image structure. Observe that $|M_m|=\pi^*|mpL|-E_m$ still determines an embedding of $V_m$, in particular $g_m|_{V_m}^{W_m}:V_m\to W_m$ is an isomorphism.
	By Serre vanishing, there exists $q_0>0$ such that 
	\[\Sym^qH^0({\bf P}_m,\cal O(1))\simeq H^0({\bf P}_m,\cal O(q))\to H^0(W_m,\cal O_{W_m}(q))\] is surjective for all $q\geq q_0$.
Then on $X_m$ we also have that
$\Sym^qH^0(X_m,M_m)\to H^0(V_m,qM_m|_{V_m})$
is surjective, hence so is the restriction map
\[H^0(X_m,qM_m)\to H^0(V_m,qM_m|_{V_m})\]	
that it factors through. The commutative diagram
\[\xymatrix{
H^0(X_m,qM_m)\ar@{^{(}->}[r]\ar@{->>}[d]\ar@{}[dr]|{\large{\circlearrowleft}}& \pi_m^*H^0(X,qmpL)\ar[d]\\
H^0(V_m,qM_m|_{V_m})\ar[r]^-{\simeq}& {(\pi_m|_{V_m}^V)}^*H^0(V,qmpL|_V)
}\]
implies that the right vertical map is also surjective, concluding the proof.	
\end{proof}
Since the successive minima behave continuously on the open cone ${\rm Big}_x(X)$, then 
as a consequence of Theorem \ref{thm:successivejets} we get that the asymptotic partial jets order can be extended uniquely to a continuous function on the same cone. In particular this provides a proof of Theorem \ref{thm:mintrosuccessjets}. 

\begin{corollary}[Continuity for asymptotic partial jets order]\label{cor:normaljetloci}
Let $X$ be a normal projective variety and $x\in X$ a smooth point. Then each function $\xi\mapsto s_i(\xi;x)$ on $\textup{Big}_x(X)_{\QQ}$ (see \eqref{def:bigxcone}) extends uniquely to a continuous superadditive function $s_i(\cdot ;x):\textup{Big}_x(X)\rightarrow \RR_{>0}$.
 \end{corollary}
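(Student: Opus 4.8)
The plan is to take $\epsilon_i(\cdot;x)$ itself as the desired extension. First I would recall from Lemma~\ref{lem:infsuccessiveproperties} that $\epsilon_i(\cdot;x)\colon\textup{Big}_x(X)\to\RR_{>0}$ is defined on the entire open cone, takes strictly positive values there, and is continuous and superadditive (indeed concave, being homogeneous and superadditive). By Theorem~\ref{thm:successivejets}, normality of $X$ gives $s_i(\xi;x)=\epsilon_i(\xi;x)$ for every class $\xi\in\textup{Big}_x(X)_{\QQ}$ of a $\QQ$-Cartier $\QQ$-divisor. Thus $\epsilon_i(\cdot;x)$ is a continuous, superadditive, strictly positive function on $\textup{Big}_x(X)$ that restricts to $s_i(\cdot;x)$ on the rational classes, which establishes existence of the extension.

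For uniqueness, I would note that $\textup{Big}_x(X)_{\QQ}$ is dense in the open cone $\textup{Big}_x(X)\subseteq N^1(X)_{\RR}$: the rational points $N^1(X)_{\QQ}$ are dense in $N^1(X)_{\RR}$, and density is inherited by every nonempty open subset. Since $\RR$ is Hausdorff, a continuous function on $\textup{Big}_x(X)$ is determined by its values on a dense subset, so any two continuous extensions of $s_i(\cdot;x)$ coincide. Superadditivity of the extension needs no separate argument, as the extension is literally $\epsilon_i(\cdot;x)$; alternatively, the inequality $f(\xi)+f(\zeta)\leq f(\xi+\zeta)$ defines a closed condition and therefore persists under passage to the limit from $\textup{Big}_x(X)_{\QQ}$.

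There is no genuine obstacle here: all the substantive work has already been carried out in Theorem~\ref{thm:successivejets} (which in turn rests on the global generation statement Proposition~\ref{prop:vanishing}) and in the previously established properties of the infinitesimal successive minima. The only points worth stating explicitly are the density of the rational classes in the open cone and the standard fact that a continuous map into a Hausdorff space is determined by any dense subset of its domain.
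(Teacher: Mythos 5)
Your proposal is correct and follows essentially the same route as the paper: the paper also deduces the corollary directly from Theorem~\ref{thm:successivejets} (identifying $s_i$ with $\epsilon_i$ on rational classes) together with the continuity and superadditivity of $\epsilon_i(\cdot;x)$ on $\textup{Big}_x(X)$ established in Lemma~\ref{lem:infsuccessiveproperties}, with uniqueness coming from density of the rational classes.
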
 
 
Even without the normality assumption we can obtain the continuity of each $s_i(\cdot ;x)$ for a smooth point $x\in X$ from Lemma \ref{lem:propssc} and from the following statement applied to the open convex cone ${\rm Big}_x(X)\cap\bb Q$ and for the homogeneous superadditive function $s_i(\cdot;x)$ on it.
\smallskip
	
\noindent "\textit{Let $V$ be a finite dimensional vector space over $\bb Q$. Let $G\subset V$ be an open convex cone. Let $d>0$ be a real number, and let $f:G\to\bb R_+$ be a $d$-homogeneous function, such that $f(g+g')\geq f(g)$ for all $g,g'\in G$. Then $f$ is locally uniformly continuous around every $\xi$ in $G$. In particular $f$ admits a unique continuous extension to the cone generated by $G\otimes 1$ in $V\otimes_{\bb Q}{\bb R}$.}"
\smallskip

\noindent The topology on $V$ is induced from the Euclidean topology on $V\otimes_{\bb Q}\bb R$. 
$G$ being a convex cone means that it is stable under addition, and under scaling by $\bb Q_{>0}$. Openness is with respect to the topology above. The function $f$ being $d$-homogeneous means $f(\lambda g)=\lambda^df(g)$ for all $g\in G$ and $\lambda\in\bb Q_{>0}$.
Essentially the same result is also stated without proof in \cite[Lemma 2.7]{Leh16}. One can give a proof inspired by arguments in the proof of the continuity of volumes of big divisors in \cite[Theorem 2.2.44]{Laz04}, and of restricted volumes and moving Seshadri constants in \cite{ELMNP09}.



\subsection{Historical background and the successive minima of Ambro--Ito}\label{ss:motivation}
Both infinitesimal successive minima and asymptotic partial jet separation order are inspired by two special cases treated previously in the literature. On one hand, we have the classical theory of Seshadri constants and Fujita--Nakayama invariants. On the other hand, Ambro--Ito \cite{AI} have introduced a local version of successive minima. We explain the connection between these invariants.

In the classical theory, when $\xi$ is a nef class on an irreducible projective variety $X$, its \emph{Seshadri constant} at a smooth point $x\in X$ is defined to be 
\[
\epsilon(\xi;x)\ \deq \ \inf_C\frac{\xi\cdot C}{\mult_xC}\ = \ \max\{t\geq 0\ \mid\ \xi_t\text{ is nef}\} \ ,
\]
where the infimum is over closed irreducible curves $C\subseteq X$ through $x$. When $\xi=L$ is a big and nef $\bb Q$-Cartier $\bb Q$-divisor, and $X$ is normal, $\epsilon(L;x)$ measures the asymptotic order of jet separation of $L$ at $x$. When $L$ is ample, the normality assumption is not needed.

This was generalized to the case when $\xi$ is represented by a big Cartier divisor $L$ on the projective variety $X$. The \emph{moving Seshadri constant} 
\[\epsilon(||L||;x)\deq\lim_{m\to\infty}\frac 1m\max\{s\ \mid\ H^0(X,mL)\to{\cal O_X}/{\frak m_x^{s+1}}\text{ is surjective }\},\] 
was introduced in \cite{Nak03} and further studied in \cite{ELMNP09}. When $X$ is smooth, there is an equivalent interpretation as limit of Seshadri constants of Fujita approximations of $L$.
The moving Seshadri constant extends with a continuity argument to all classes in ${\rm Big}_x(X)$. When $X$ is normal, $\xi$ is nef and big and $x\in X$ is a smooth point, then $\epsilon(||\xi||;x)=\epsilon(\xi;x)$ by \cite[Theorem 5.1.17]{Laz04}. A related result valid also when $x$ is singular is \cite[Theorem 5.3]{FM21}.

Another invariant for a class $\xi\in N^1(X)_\RR$ is the Fujita--Nakayama invariant (or \emph{width} as it will soon make sense to call it for convex geometric reasons) of $\xi$ at a smooth point $x\in X$. It is defined to be
\[
\mu(\xi;x)\ \deq\ \max\{t\geq 0\ \mid\ \xi_t\text{ is pseudo-effective}\}.
\] 
It is clear that both invariants measure some change in the base loci of the classes $\xi_t$. The Seshadri constant tells us when the class $\xi_t$ starts having a non-empty base locus, whenever $\xi$ is ample. The width instead describes when the base locus starts being the entire space. This remark leads to the following lemma:
\begin{lemma}[Fujita--Nakayama invariant and Seshadri constant]\label{lem:seshadriwidth} Let $X$ be a projective variety of dimension $n$, consider $x\in X$ a smooth point, and $\xi\in{\rm Big}_x(X)$. Then 
\begin{enumerate}
\item $s_n(\xi;x)=\epsilon_n(\xi;x)=\mu(\xi;x)$.
\smallskip

\item $s_1(\xi;x)=\epsilon(||\xi||;x)\leq \epsilon_1(\xi;x)$. When $X$ is normal, all three invariants are equal. When $X$ is only irreducible but $\xi$ is ample, all three invariants are equal to the usual Seshadri constant $\epsilon(\xi;x)$.
\end{enumerate}
\end{lemma}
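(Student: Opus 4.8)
The plan is to prove the two displayed equalities separately, since they come from different circles of ideas. For part $(1)$, the key is that $\mu(\xi;x)$ detects when $\xi_t$ stops being pseudo-effective, and that for a big class this happens exactly when $E \subseteq {\bf B}_+(\xi_t)$, i.e.\ when $\dim({\bf B}_+(\xi_t) \cap E) = n-1$. So the first step is to observe that $\epsilon_n(\xi;x) = \mu(\xi;x)$ is almost immediate from the definitions plus the fact that ${\bf B}_+(\xi_t) = \overline{X}$ precisely when $\xi_t$ fails to be big, together with Lemma~\ref{lem:B+interpretation}.(1), which shows $E$ is always a component of ${\bf B}_+(\xi_0)$; one then uses the monotonicity in Lemma~\ref{lem:B+interpretation}.(3) and the fact (Proposition~\ref{prop:bplus}) that $\xi_t$ big $\Leftrightarrow {\bf B}_+(\xi_t) \neq \overline{X}$. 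For $s_n(\xi;x) = \epsilon_n(\xi;x)$ I would invoke Theorem~\ref{thm:successivejets} directly: since $s_i = \epsilon_i$ whenever $X$ is normal, and both invariants are continuous and numerical on ${\rm Big}_x(X)$, it suffices to check equality when $X$ is normal and $\xi = L$ is Cartier, where it is a special case of that theorem. Alternatively, for the non-normal case, one reduces to the normalization $\nu: X' \to X$: both $s_n$ and $\mu$ are birationally invariant in the relevant sense (Remark~\ref{rmk:propsSc}.(3), Lemma~\ref{lem:infsuccessiveproperties}.(3) adapted to finite morphisms, plus Proposition~\ref{prop:bplus}.(2) type statements), so the normal case suffices.

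For part $(2)$, the equality $s_1(\xi;x) = \epsilon(\|\xi\|;x)$ is essentially definitional: when $i=1$, the linear subspace $\bb P^{n-i} = \bb P^{n-1}$ is all of $E$, so $S_1(L;x)$ is exactly the largest $s$ with $H^0(\overline{X}, L_s) \to H^0(E, \cal O(s))$ surjective, which by Remark~\ref{rmk:propsSc} (the algebraic reformulation) is the largest $s$ with $H^0(X, L \otimes \frak m_x^s) \to \frak m_x^s/\frak m_x^{s+1}$ surjective — and combined with surjectivity onto the lower-order pieces (automatic from $L_{s'}$, $s' < s$, being globally generated, or directly from the graded-ring structure) this is the usual jet-separation condition defining $\epsilon(\|L\|;x)$. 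So the first step here is just unwinding the two definitions and matching them term by term, taking the $\limsup$/$\sup$ over $m$. The inequality $\epsilon(\|\xi\|;x) = s_1(\xi;x) \leq \epsilon_1(\xi;x)$ then follows from Theorem~\ref{thm:successivejets}, and when $X$ is normal the same theorem gives equality. For the final clause, when $\xi$ is ample on an irreducible $X$, I would cite the classical fact that $\epsilon(\|\xi\|;x) = \epsilon(\xi;x)$ for ample classes (e.g.\ \cite[Theorem 5.1.17]{Laz04} combined with the remark in \S\ref{ss:motivation} that normality is not needed when $L$ is ample, since one can pass to the normalization without changing ampleness or the blow-up picture near $x$).

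The main obstacle I anticipate is the non-normal case of $s_n = \epsilon_n$ (and the corresponding clauses about dropping normality). Theorem~\ref{thm:successivejets} only gives $s_i \leq \epsilon_i$ without normality, so to get $s_n \geq \epsilon_n = \mu$ one cannot simply quote it; instead one must either argue that $\mu(\xi;x)$ and $s_n(\xi;x)$ are both unchanged under passing to the normalization $\nu: X' \to X$ (which requires knowing that ${\bf B}_+(\nu^*\xi) = \nu^{-1}{\bf B}_+(\xi)$ and that sections and jet-separation lift appropriately under the finite birational map $\nu$), or produce a direct argument that when $\xi_t$ is pseudo-effective with $t < \mu(\xi;x)$, large multiples of $mL_s$ with $s/m$ close to $t$ actually surject onto $H^0(E, \cal O(s))$. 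The cleanest route is the normalization reduction: a finite morphism that is an isomorphism away from $x$ (which we may arrange since $x$ is a smooth, hence normal, point) preserves all the data in sight, and then apply the normal case. I would flag this reduction explicitly and keep the rest brief, as the matching of definitions in part $(2)$ and the $\epsilon_n = \mu$ identification are routine.
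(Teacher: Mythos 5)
Your overall architecture is right, and you correctly locate the crux of $(1)$ in the non-normal case of $s_n(\xi;x)\geq\mu(\xi;x)$, but both routes you offer for it have problems. The normalization reduction goes the wrong way: for $\nu:X'\to X$ finite birational one has $H^0(X',m\nu^*L)=H^0(X,mL\otimes\nu_*\cal O_{X'})\supseteq H^0(X,mL)$, possibly strictly, so sections witnessing jet separation on $X'$ need not descend; this yields only $s_n(\nu^*L;\nu^{-1}x)\geq s_n(L;x)$, i.e.\ $\mu(L;x)\geq s_n(L;x)$, which is the inequality you already have. Your alternative ``direct argument'' aims at surjectivity onto $H^0(E,\cal O(s))$, but that is the $i=1$ condition (it would force $s_1\geq\mu$, which is false in general); for $i=n$ the target is $H^0(\bb P^{n-n},\cal O(s))\cong\bb C$, so all one needs is a section of $m\pi^*L-pE$ with $p\geq mt$ whose restriction to $E$ is not identically zero. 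This is exactly the paper's argument: for rational $t<\mu(\xi;x)$ the class $L_t$ is big, some $|mL_t|$ contains an effective divisor, and after absorbing the multiplicity of $E$ in the fixed part one gets such a section; no normality is needed. The corrected direct route is the one to take, and it makes the normalization detour unnecessary.

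In $(2)$ there are two further soft spots. First, $s_1(L;x)=\epsilon(||L||;x)$ is not ``essentially definitional'': for a fixed $m$, $1$-partial separation of $s$-jets only asks for surjectivity onto the top graded piece $\frak m_x^s/\frak m_x^{s+1}$, whereas classical $s$-jet separation requires surjectivity onto all of $\cal O_X/\frak m_x^{s+1}$; the two orders agree only asymptotically, and the paper has to cite \cite[Proposition 2.25, Lemma 2.28]{AI} for this. Your parenthetical appeal to global generation of $L_{s'}$ for $s'<s$ is not available without further hypotheses. Second, and more seriously, for the ample irreducible clause you establish $\epsilon(\xi;x)=\epsilon(||\xi||;x)=s_1(\xi;x)\leq\epsilon_1(\xi;x)$ but never prove the reverse inequality $\epsilon_1(\xi;x)\leq\epsilon(\xi;x)$, without which the three invariants are not shown equal. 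The paper closes this by a nefness argument: for $0<t<\epsilon_1(L;x)$ one has $\Bplus(L_t)\cap E=\varnothing$, and any irreducible curve $\overline C$ with $L_t\cdot\overline C<0$ would lie in $\Bplus(L_t)$ yet meet $E$, a contradiction; hence $L_t$ is nef and $t\leq\epsilon(L;x)$. Some such step must be supplied.
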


\begin{proof}
Since $\epsilon_i(\cdot;x)$, $s_i(\cdot;s)$, $\mu(\cdot;x)$, $\epsilon(||\cdot||;x)$ are homogeneous and continuous on ${\rm Big}_x(X)$, we may assume that $\xi$ is represented by a Cartier divisor $L$.

$(1)$. Theorem \ref{thm:successivejets} implies $s_n(L;x)\leq\epsilon_n(L;x)$. From the definition of the infinitesimal successive minima, $\epsilon_n(L;x)\leq\mu(L;x)$. If $0<t<\mu(L;x)$ is rational then $L_t$ is big, thus $mL_t$ is effective for $m$ sufficiently large and divisible. There exists an integer $p\geq mt$ such that $m\pi^*L-pE$ is linearly equivalent to an effective divisor without $E$ in its support. The associated section has nontrivial restriction to $E$ and thus does not vanish at a general point $\bb P^{n-n}\subset E$, giving $S_n(mL;x)\geq p\geq mt$ and $s_n(L;x)\geq t$, hence $s_n(L;x)\geq\mu(L;x)$. 

$(2)$. We prove first the equality between the first two invariants.
Note $1$-partial $s$-jet separation is related to the classical $s$-jet separation. If $L$ separates $s$-jets at $x$, then $H^0(X,L)\to\cal O_X/\frak m_x^{s+1}$ is surjective and so is \[
H^0(\overline X,L_s)\simeq H^0(X,L\otimes\frak m_x^s)\to\frak m_x^s/\frak m_x^{s+1}\simeq H^0(E,\cal O(s)) ,
\]
therefore $L$ also $1$-partially separates $s$-jets at $x$. In fact the invariants coincide asymptotically: 
	\[s_1(L;x)=\lim_{m\to\infty}\frac 1mS_1(mL;x)=\epsilon(||L||;x)\]
by \cite[Proposition 2.25, and Lemma 2.28]{AI}. 

When $X$ is normal, the equality between all three invariants in $(2)$ follows from Theorem \ref{thm:successivejets} and Corollary \ref{cor:normaljetloci}. When $X$ irreducible, $\epsilon(\cdot ;x)$, $\epsilon(||\cdot||;x)$ and $\epsilon_1(\cdot ;x)$ are continuous and homogeneous inside the ample cone. So, the equality follows when it holds for $\xi=L$ an ample Cartier divisor. In this case, we know $\epsilon(L;x)=\epsilon(||L||;x)$ by our definition of the moving Seshadri constant in the non-normal case. It remains to show that $\epsilon(L;x)\geq \epsilon_1(L;x)$. For $0<t<\epsilon_1(L;x)$, we have ${\bf B}_+(L_t)\cap E=\varnothing$. The class $L_t$ has nonnegative intersection with the strict transforms of any curve $C\subset X$, not passing through $x$. Since $t>0$, it also has positive intersection with any curve $T\subset E$. Now, let $C\subseteq X$ be a curve through $x$, such that its strict transform has the property $L_t\cdot \overline C<0$. This implies $\overline C\subseteq {\bf B}_+(L_t)$ and we are lead to a contradiction as $\varnothing \neq \overline C\cap E\subseteq {\bf B}_+(L_t)\cap E$.  Thus $L_t$ is nef, showing $\epsilon_1(L;x)\leq\epsilon(L;x)$.
\end{proof}

\begin{corollary}\label{cor:EnotinB+}
	Let $X$ be a projective variety of dimension $n$, let $x\in X$ be a smooth point and $\xi\in{\rm Big}_x(X)$. If $0<t<\mu(\xi;x)$, then $E$ is not a component of ${\bf B}_+(\xi_t)$. 	
\end{corollary}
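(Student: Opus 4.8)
The strategy is to argue by contradiction: suppose $E$ is an irreducible component of ${\bf B}_+(\xi_t)$ for some $0<t<\mu(\xi;x)$, and derive a conflict with one of the equivalent descriptions of the Fujita--Nakayama invariant, namely $\mu(\xi;x)=\epsilon_n(\xi;x)$ from Lemma \ref{lem:seshadriwidth}.(1). Since $t<\mu(\xi;x)=\epsilon_n(\xi;x)$, by the very definition of the top infinitesimal successive minimum we have $\dim({\bf B}_+(\xi_t)\cap E)\leq n-2$; in particular ${\bf B}_+(\xi_t)$ cannot contain the $(n-1)$-dimensional divisor $E$. That is the whole argument in one line, but I should make sure the reduction is clean, so I would phrase it as: if $E\subseteq{\bf B}_+(\xi_t)$ then $E\subseteq {\bf B}_+(\xi_t)\cap E$, so $\dim\big({\bf B}_+(\xi_t)\cap E\big)=n-1\geq n-1$, which by Definition \ref{def:infinitesimalsuccessiveminima} forces $t\geq\epsilon_n(\xi;x)=\mu(\xi;x)$, contradicting $t<\mu(\xi;x)$.

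A minor point to address is that the statement speaks of $E$ being ``a component'' of ${\bf B}_+(\xi_t)$, whereas what the definition of $\epsilon_n$ directly controls is the dimension of ${\bf B}_+(\xi_t)\cap E$. But these are compatible: $E$ being a component of ${\bf B}_+(\xi_t)$ certainly implies $E\subseteq {\bf B}_+(\xi_t)$, hence ${\bf B}_+(\xi_t)\cap E=E$ has dimension $n-1$, which is all I need. (Conversely, if $E\subseteq{\bf B}_+(\xi_t)$ then $E$ is automatically a component, being a prime divisor, but I will not need this direction.) One could alternatively invoke Lemma \ref{lem:B+interpretation}.(1)--(2): since $t>0$ we are past the range where ${\bf B}_+(\xi_t)$ is disjoint from $E$, but the increasing family ${\bf B}_+(\xi_t)$ only reaches all of $E$ (equivalently ${\bf B}_+(\xi_t)={\bf B}_+(\xi_0)=\pi^{-1}{\bf B}_+(\xi)\cup E$) at $t=\mu(\xi;x)$; for $t$ strictly below, the intersection with $E$ is proper.

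There is really no hard step here — the corollary is essentially a bookkeeping consequence of the identification $\mu(\xi;x)=\epsilon_n(\xi;x)$ established just above. The only thing to be careful about is invoking the correct clause: it is the sup-description $\epsilon_n(\xi;x)=\sup\{t>0\mid \dim({\bf B}_+(\xi_t)\cap E)\leq n-2\}$ in Definition \ref{def:infinitesimalsuccessiveminima}, combined with $\epsilon_n(\xi;x)=\mu(\xi;x)$, that immediately yields $\dim({\bf B}_+(\xi_t)\cap E)\leq n-2<n-1=\dim E$ for every $t<\mu(\xi;x)$, and hence $E\not\subseteq{\bf B}_+(\xi_t)$.
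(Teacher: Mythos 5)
Your proof is correct and is exactly the paper's argument: the paper also deduces the corollary directly from Lemma \ref{lem:seshadriwidth}.(1), i.e., from the identity $\epsilon_n(\xi;x)=\mu(\xi;x)$ together with the defining property that $t<\epsilon_n(\xi;x)$ forces $\dim\big({\bf B}_+(\xi_t)\cap E\big)\leq n-2$, so $E\not\subseteq{\bf B}_+(\xi_t)$. Your bookkeeping about ``component'' versus ``contained in'' is the right way to fill in the one-line reduction.
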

\begin{proof}
	It follows from Lemma \ref{lem:seshadriwidth}.$(1)$. A different proof was found in \cite{KL17}, using \cite{FKL16}.
\end{proof}

\noindent \cite[Theorem 6.2]{ELMNP09} show the continuity of the moving Seshadri constant on $N^1(X)_{\bb R}$ for smooth projective $X$, using the continuity of restricted volumes, a highly nonelementary result. By Lemma \ref{lem:seshadriwidth} it is tempting to use the continuity of $\epsilon_1$ instead. This proves it only inside the cone ${\rm Big}_x(X)$. As with \cite{ELMNP09}, the difficulty is in showing that these invariants tend to zero as $\xi$ approaches the boundary.

In the remaining part of this subsection we explain the connection between our infinitesimal invariants and the local successive minima of Ambro--Ito \cite{AI}, which inspired parts of the material above. The main difference is that \cite{AI} studies base loci directly on $X$. Fix $X$ a projective variety, $x\in X$ a point (possibly singular) and $L$ a line bundle on $X$. Then for $t\geq 0$ consider the closed subset
	\begin{equation}\label{eq:Bs+}
	{\rm Bs}|\frak m_x^{t+}L|_{\bb Q} \ \subseteq \ X \ ,
	\end{equation} 
	the common vanishing locus of sections of $pL$ that have multiplicity bigger than $pt$ at $x$ for all $p\geq 1$.

\begin{definition}[The successive minima of \cite{AI}]\label{def:aiminima}Under the above assumptions, the \emph{local successive minima} of $L$ at $x$ are
	\[\epsilon^{\rm loc}_{i}(L;x)\deq\min\{t\geq 0\ \mid\ \dim_x{\rm Bs}|\frak m_x^{t+}L|_{\bb Q}\geq i\}=\sup\{t\geq 0\mid\ \dim_x{\rm Bs}|\frak m_x^{t+}L|_{\bb Q}\leq i-1\},\]
	where $\dim_xZ$ is the maximal dimension of an irreducible component of $Z$ that contains $x$.
\end{definition}
In \cite{AI} the authors denote $\epsilon^{\rm loc}_i(L;x)$ by $\epsilon_{n-i+1}(L;x)$, preferring to work with codimension and call them the \emph{successive minima} of $L$ at $x$.
In order to connect them to our infinitesimal version, we introduce a concept of \emph{dimension around $E$}.
\begin{definition}
	For a closed subset $Z\subseteq\overline X$ set
	\[
	\dim_EZ \ \deq \ \max \{\dim(Z') \ | \ Z' \textup{-irreducible component of } Z \textup{ with } Z'\nsubseteq E \textup{ and } Z'\cap E\neq \varnothing\}\ .
	\]
This considers only those $Z'$ that are strict transform of a positive dimensional subvariety of $X$, passing through $x$. By convention $\dim_EZ=-1$ if $Z\subseteq E$ or when $Z\cap E=\varnothing$.	
\end{definition}
With this in hand, the following corollary gives a description of the Ambro--Ito successive minima in terms of data coming from the blow-up of a smooth point.
\begin{corollary}\label{cor:B+B}
	Let $X$ be a projective variety of dimension $n$, and let $x\in X$ a smooth point. Let $L$ be a Cartier divisor such that $x\not\in{\bf B}_+(L)$. Let $t\in\bb R_{>0}$. Then
	\begin{enumerate}
		\item ${\rm Bs}|\frak m_x^{t+}L|_{\bb Q}=\pi(\bigcap_{t<s\in\bb Q}{\bf B}(L_s))\cup\{x\}$ and ${\rm Bs}|\frak m_x^{t+}L|_{\bb Q}\subseteq\pi({\bf B}_+(L_t))\cup\{x\}\subseteq{\rm Bs}|\frak m_x^{t+}L|_{\bb Q}\cup{\bf B}_+(L)$.
		\smallskip
		
		\item $\epsilon^{\rm loc}_{i}(L;x)=\textup{min}\{t>0\ | \ \textup{dim}_E\,\Bplus(\xi_t)\geq i\}=\sup\{t>0\ \mid\ \dim_E\,\Bplus(\xi_t)\leq i-1\}$ for every $1\leq i\leq n$.
	\end{enumerate}
\end{corollary}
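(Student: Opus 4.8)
The plan is to deduce the corollary from the dictionary between sections of $pL$ with prescribed multiplicity at $x$ and sections on the blow-up, combined with the base-locus comparisons of Lemmas \ref{lem:Bray}, \ref{lem:B+interpretation} and \ref{lem:B-ray}. Since $x$ is smooth, $\pi_*\cal O_{\overline X}(-mE)=\frak m_x^m$ (powers of a regular maximal ideal are integrally closed), so the projection formula identifies $H^0(X,pL\otimes\frak m_x^m)$ with $H^0(\overline X,p\pi^*L-mE)=H^0(\overline X,pL_{m/p})$, compatibly with evaluation at any point $y\neq x$ and its strict transform $\overline y$. Thus a section of $pL$ of multiplicity exactly $m>pt$ at $x$ corresponds to a section of $p\pi^*L-mE$ that does not vanish identically along $E$, and vanishing at $y$ upstairs and downstairs means the same thing. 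Using this, the monotonicity of $s\mapsto{\bf B}(L_s)$ (Lemma \ref{lem:Bray}), and the trivial inclusion ${\bf B}(L_s)\subseteq{\rm Bs}(q\pi^*L-qsE)$ whenever $qs\in\bb Z$, I would verify both inclusions of the equality
\[
{\rm Bs}|\frak m_x^{t+}L|_{\bb Q}=\pi\Big(\bigcap_{t<s\in\bb Q}{\bf B}(L_s)\Big)\cup\{x\}.
\]
Indeed, the point $x$ always lies on the left because every section under consideration vanishes there, and a point $y\neq x$ lies on the left precisely when $\overline y\in{\bf B}(L_s)$ for every rational $s>t$.

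Next, by Lemma \ref{lem:Bray} the intersection $\bigcap_{t<s}{\bf B}(L_s)$ stabilizes to ${\bf B}(L_{s_0})$ for any rational $s_0>t$ sufficiently close to $t$; shrinking once and for all, I may also assume ${\bf B}(L_{s_0})\subseteq{\bf B}_+(L_t)\subseteq{\bf B}(L_{s_0})\cup\pi^{-1}{\bf B}_+(L)$ by Lemma \ref{lem:B-ray}.(3) and ${\bf B}_+(L_{s_0})={\bf B}_+(L_t)$ by Lemma \ref{lem:B+interpretation}.(3). Applying the surjection $\pi$ to this chain, together with $\pi\pi^{-1}{\bf B}_+(L)={\bf B}_+(L)$, immediately yields ${\rm Bs}|\frak m_x^{t+}L|_{\bb Q}=\pi({\bf B}(L_{s_0}))\cup\{x\}\subseteq\pi({\bf B}_+(L_t))\cup\{x\}\subseteq{\rm Bs}|\frak m_x^{t+}L|_{\bb Q}\cup{\bf B}_+(L)$, which is part $(1)$.

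For part $(2)$ I would compare dimensions near $E$. Because $\pi$ contracts exactly $E$ to $x$ and sends the strict transform of a $d$-dimensional subvariety through $x$ to a $d$-dimensional subvariety through $x$, the positive-dimensional irreducible components of ${\rm Bs}|\frak m_x^{t+}L|_{\bb Q}$ through $x$ are exactly the images of the strict-transform components of ${\bf B}(L_{s_0})$ that meet $E$: a component contained in $E$ contributes only the point $x$, and a component disjoint from $E$ avoids $x$. Hence $\dim_x{\rm Bs}|\frak m_x^{t+}L|_{\bb Q}=\max(0,\dim_E{\bf B}(L_{s_0}))$. Then I would upgrade ${\bf B}(L_{s_0})$ to ${\bf B}_+(L_t)$ inside $\dim_E$: since $\pi^{-1}{\bf B}_+(L)$ is disjoint from $E$ (as $x\notin{\bf B}_+(L)$), the sandwich above forces every irreducible component of ${\bf B}_+(L_t)$ that meets $E$ but is not contained in $E$ to lie inside such a component of ${\bf B}(L_{s_0})$, while conversely ${\bf B}(L_{s_0})\subseteq{\bf B}_+(L_t)$; so $\dim_E{\bf B}(L_{s_0})=\dim_E{\bf B}_+(L_t)$. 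Therefore, for $1\leq i\leq n$, $\dim_x{\rm Bs}|\frak m_x^{t+}L|_{\bb Q}\geq i\iff\dim_E{\bf B}_+(L_t)\geq i$, the truncation at $0$ being irrelevant once $i\geq 1$; moreover ${\bf B}_+(L_0)=\pi^{-1}{\bf B}_+(L)\cup E$ has $\dim_E=-1$ by Lemma \ref{lem:B+interpretation}.(1), so the thresholds taken over $t\geq 0$ and over $t>0$ coincide. Feeding this equivalence into Definition \ref{def:aiminima} gives the two claimed formulas for $\epsilon^{\rm loc}_i(L;x)$.

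I expect the only genuinely delicate point to be the component bookkeeping in the last paragraph — keeping track of which irreducible components are seen by $\dim_E$ versus $\dim_x$ (notably that a component of ${\bf B}(L_{s_0})$ lying inside $E$ collapses to $\{x\}$ and thus contributes $0$, not its dimension, to $\dim_x$), and arranging that a single $s_0>t$ serves Lemmas \ref{lem:Bray}, \ref{lem:B+interpretation}.(3) and \ref{lem:B-ray}.(3) simultaneously. None of this is deep, but it is where care is required.
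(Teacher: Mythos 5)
Your proof is correct and takes essentially the same route as the paper's (much terser) argument: the identification $\pi_*\cal O_{\overline X}(-mE)=\frak m_x^m$ at the smooth point for the first equality of $(1)$, the sandwich from Lemma \ref{lem:B-ray}.(3) (together with the stabilization in Lemma \ref{lem:Bray}) for the second chain of inclusions, and for $(2)$ the observation that the two loci agree near $x$ so only the components meeting $E$ properly matter. The paper compresses that last step into one sentence; your explicit bookkeeping $\dim_x{\rm Bs}|\frak m_x^{t+}L|_{\bb Q}=\max\bigl(0,\dim_E{\bf B}_+(L_t)\bigr)$ is the same content spelled out.
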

\begin{proof}
The first statement in $(1)$ is due to the fact that $\pi:\overline{X}\rightarrow X$ is a fiber space. The second one follows from Lemma \ref{lem:B-ray}.(3). For $(2)$, by $(1)$ we know ${\rm Bs}|\frak m_x^{t+}L|_{\bb Q}$ and $\pi({\bf B}_+(L_t))\cup\{x\}$ agree away from ${\bf B}_+(L)$, in particular they agree around $x$. The successive minima of \cite{AI} are determined locally around $x$. 
\end{proof}

The only restriction in the Definition \ref{def:aiminima} from \cite{AI} is that $L$ should be a $\bb Q$-Cartier $\bb Q$-divisor. The point $x$ can be singular, and it may belong to ${\bf B}_+(L)$. Their definition is intrinsic to $X$. Our interpretation in terms of augmented base loci on the blow-up of smooth points has its advantages. For one, ${\bf B}_+(L_t)$ are better studied than ${\rm Bs}|\frak m_x^{t+}L|_{\bb Q}$. Second, our definition works for $\bb R$-divisors. Finally and more importantly, the convex geometric results of this paper are better suited for infinitesimal successive minima.

Before moving forward we do want to emphasize that local successive minima satisfy properties similar to those of the infinitesimal version, when we look at smooth points.
\begin{lemma}[Basic properties of local successive minima]\label{lem:successiveproperties}
Let $X$ be a projective variety of dimension $n$, let $x\in X$ be a smooth point, and $\xi\in{\rm Big}_x(X)$.
\begin{enumerate}
\item The local successive minima form an increasing set of homogeneous numerical invariants: \newline $0<\epsilon^{\rm loc}_1(\xi;x)\leq \epsilon^{\rm loc}_2(\xi;x)\leq\ldots\leq\epsilon^{\rm loc}_{n}(\xi;x)$.
\smallskip

 \item $\epsilon^{\rm loc}_n(\xi;x)=\mu(\xi;x)$ is the width. Moreover, $\epsilon^{\rm loc}_1(\xi;x)=\epsilon_1(\xi;x)$ is the Seshadri constant $\epsilon(\xi;x)$, whenever $\xi$ is ample. When $X$ is normal, then again $\epsilon_1^{\rm loc}(\xi;x)=\epsilon_1(\xi;x)$.
\smallskip

 \item $\epsilon_i(\xi;x)\leq \epsilon^{\rm loc}_{i}(\xi;x)$ for each $i=1,\ldots ,n$. \end{enumerate}
\end{lemma}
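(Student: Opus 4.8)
\textbf{Proof proposal for Lemma \ref{lem:successiveproperties}.}

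The plan is to prove the three parts by leveraging the dictionary established in Corollary \ref{cor:B+B}, which identifies $\epsilon^{\rm loc}_i(\xi;x)$ with the jumping numbers of the function $t\mapsto\dim_E{\bf B}_+(\xi_t)$, and comparing it with the analogous description of $\epsilon_i(\xi;x)$ coming from Definition \ref{def:infinitesimalsuccessiveminima} in terms of $\dim({\bf B}_+(\xi_t)\cap E)$. The basic observation driving everything is that for any closed subset $Z\subseteq\overline X$ one has $\dim_E Z\leq 1+\dim(Z\cap E)$: an irreducible component $Z'$ with $Z'\not\subseteq E$ and $Z'\cap E\neq\varnothing$ meets $E$ in a subset of dimension at least $\dim Z'-1$ (since $E$ is a Cartier divisor on $\overline X$), so $\dim(Z\cap E)\geq\dim_E Z-1$. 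Both invariants are homogeneous numerical invariants because the augmented base loci are, and since they are monotone jumping numbers of a nondecreasing family of dimensions (Lemma \ref{lem:B+interpretation}.(3)), they form nondecreasing sequences; this, together with reduction to the Cartier case by homogeneity, handles the formal parts of $(1)$.

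For part $(3)$, fix $1\leq i\leq n$ and suppose $t>0$ satisfies $\dim_E{\bf B}_+(\xi_t)\leq i-1$, i.e.\ $t<\epsilon^{\rm loc}_i(\xi;x)$ in the supremum description. I want to conclude $\dim({\bf B}_+(\xi_t)\cap E)\leq i-2$, which gives $t<\epsilon_i(\xi;x)$ and hence $\epsilon_i(\xi;x)\geq\epsilon^{\rm loc}_i(\xi;x)$ after taking suprema. Here I must be careful: ${\bf B}_+(\xi_t)\cap E$ could a priori have a large-dimensional piece coming from an irreducible component $Z'$ of ${\bf B}_+(\xi_t)$ that is entirely contained in $E$, which $\dim_E$ ignores. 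This is precisely the subtlety flagged in \S\ref{ss:otherwork} (Example \ref{ex:hyper3}), and it is the main obstacle. The resolution is Corollary \ref{cor:EnotinB+}: for $0<t<\mu(\xi;x)$, the exceptional divisor $E$ itself is not a component of ${\bf B}_+(\xi_t)$. Combined with the fact (Lemma \ref{lem:B+interpretation}.(2)) that ${\bf B}_+(\xi_t)\cap E$ becomes nonempty only for $t\geq\epsilon_1(\xi;x)$, and with Lemma \ref{lem:B-ray}.(3) allowing me to pass to a nearby rational $t'$ and replace ${\bf B}_+$ by ${\bf B}$, I can analyze ${\bf B}(\xi_{t'})\cap E$ slice-theoretically. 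The key point I need is that any irreducible component $W$ of ${\bf B}_+(\xi_t)$ contained in $E$ still satisfies $\dim W\leq\dim({\bf B}_+(\xi_t)\cap E)$ trivially, but I must bound the latter using components \emph{not} in $E$; so I should instead argue that when $t<\mu(\xi;x)$, every component of ${\bf B}_+(\xi_t)$ that meets $E$ — whether or not it lies in $E$ — has dimension at most $\dim_E{\bf B}_+(\xi_t)+1\leq i$, hence its trace on $E$ has dimension $\leq i-1$... which is one off. The honest fix is that a component $W\subseteq E$ of ${\bf B}_+(\xi_t)$ of dimension $d$ contributes a $d$-dimensional piece to ${\bf B}_+(\xi_t)\cap E$, and I need $d\leq i-2$; this does \emph{not} follow from $\dim_E$ alone, so the inequality $\epsilon_i\leq\epsilon^{\rm loc}_i$ genuinely relies on a further input — and indeed the inequality is stated as $\epsilon_i\leq\epsilon^{\rm loc}_i$, meaning I have the direction backwards above. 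Rereading: part $(3)$ asserts $\epsilon_i(\xi;x)\leq\epsilon^{\rm loc}_i(\xi;x)$, and since ${\bf B}_+(\xi_t)\cap E\supseteq$ (traces of non-$E$ components through $x$) gives $\dim({\bf B}_+(\xi_t)\cap E)\geq\dim_E{\bf B}_+(\xi_t)-1$ when a non-$E$ component is present, I get: if $\dim({\bf B}_+(\xi_t)\cap E)\leq i-2$ then for each component $Z'$ meeting $E$ with $Z'\not\subseteq E$ we have $\dim Z'\leq i-1$, so $\dim_E{\bf B}_+(\xi_t)\leq i-1$; taking suprema, $t<\epsilon_i\Rightarrow t<\epsilon^{\rm loc}_i$, which is exactly $\epsilon_i\leq\epsilon^{\rm loc}_i$. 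The asymmetry — that the $\dim_E$ description forgets components inside $E$, only ever making $\epsilon^{\rm loc}_i$ \emph{larger} — is what makes the inequality one-directional, and this is the conceptual heart of part $(3)$.

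For part $(2)$: the equality $\epsilon^{\rm loc}_n(\xi;x)=\mu(\xi;x)$ follows from $\epsilon^{\rm loc}_n(\xi;x)\leq\mu(\xi;x)$ (once $\xi_t$ is no longer pseudo-effective, ${\rm Bs}|\frak m_x^{t+}L|_{\bb Q}=X$) together with the reverse inequality $\epsilon_n(\xi;x)=\mu(\xi;x)$ from Lemma \ref{lem:seshadriwidth}.(1) and the inequality $\epsilon_n\leq\epsilon^{\rm loc}_n$ from part $(3)$. For the first-minimum statements, when $\xi$ is ample we have $\epsilon^{\rm loc}_1(\xi;x)=\epsilon(\xi;x)$ directly from Definition \ref{def:aiminima}: ${\rm Bs}|\frak m_x^{t+}L|_{\bb Q}$ acquires a curve through $x$ exactly when some curve $C\ni x$ has $\xi\cdot C<t\cdot\mult_x C$, i.e.\ at $t=\inf_C\frac{\xi\cdot C}{\mult_x C}=\epsilon(\xi;x)$; combined with Lemma \ref{lem:seshadriwidth}.(2), which gives $\epsilon_1(\xi;x)=\epsilon(\xi;x)$ in the ample case, we get $\epsilon^{\rm loc}_1=\epsilon_1$. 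When $X$ is normal, Lemma \ref{lem:seshadriwidth}.(2) gives $\epsilon_1(\xi;x)=\epsilon(\|\xi\|;x)=s_1(\xi;x)$; combined with $\epsilon_1\leq\epsilon^{\rm loc}_1$ (part $(3)$), it remains only to show $\epsilon^{\rm loc}_1(\xi;x)\leq\epsilon(\|\xi\|;x)$, which I expect follows from Corollary \ref{cor:B+B}.(1): away from ${\bf B}_+(\xi)$ (and $x\not\in{\bf B}_+(\xi)$), ${\rm Bs}|\frak m_x^{t+}L|_{\bb Q}$ agrees with $\pi({\bf B}_+(\xi_t))\cup\{x\}$ near $x$, and the latter has dimension $\geq 1$ near $x$ precisely when ${\bf B}_+(\xi_t)\cap E\neq\varnothing$ or a curve through $x$ lies in ${\bf B}_+(\xi_t)$ — which is governed by $\epsilon_1(\xi;x)$. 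The main obstacle throughout remains the bookkeeping around components of ${\bf B}_+(\xi_t)$ contained in $E$ versus meeting $E$ properly; Corollary \ref{cor:EnotinB+} is the essential tool that keeps this under control below the width.
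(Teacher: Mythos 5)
Your parts (1) and (3), and the width and ample-Seshadri statements in (2), follow essentially the paper's route: translate $\epsilon_i^{\rm loc}$ into the jumping numbers of $t\mapsto\dim_E{\bf B}_+(\xi_t)$ via Corollary \ref{cor:B+B}.(2) and compare with $\dim({\bf B}_+(\xi_t)\cap E)$ using the fact that an irreducible component $Z'\not\subseteq E$ with $Z'\cap E\neq\varnothing$ satisfies $\dim(Z'\cap E)\geq\dim Z'-1$. The paper states this as ``$\dim_E{\bf B}_+(\xi_t)\geq i\Rightarrow\dim({\bf B}_+(\xi_t)\cap E)\geq i-1$''; you arrive (after some visible backtracking) at the contrapositive, and your identification of the asymmetry — components inside $E$ are forgotten by $\dim_E$, which can only make $\epsilon_i^{\rm loc}$ larger — is exactly the right conceptual point. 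Your sandwich $\mu=\epsilon_n\leq\epsilon_n^{\rm loc}\leq\mu$ and your B\'ezout argument that a Seshadri-violating curve lies in ${\rm Bs}|\frak m_x^{t+}L|_{\bb Q}$ (giving $\epsilon_1^{\rm loc}\leq\epsilon(\xi;x)$, with the reverse inequality from (3) and Lemma \ref{lem:seshadriwidth}.(2)) are correct, though the ``exactly when'' you assert is stronger than what you prove or need.

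The genuine gap is the normal case of (2). Your proposed route — that Corollary \ref{cor:B+B}.(1) lets you read $\epsilon_1^{\rm loc}$ off the locus where $\pi({\bf B}_+(\xi_t))\cup\{x\}$ becomes positive-dimensional near $x$, and that this happens ``precisely when ${\bf B}_+(\xi_t)\cap E\neq\varnothing$'' — is false as stated: a component of ${\bf B}_+(\xi_t)$ contained in $E$ is contracted by $\pi$ to the single point $x$ and contributes nothing to $\dim_x$, so ${\bf B}_+(\xi_t)\cap E\neq\varnothing$ does not imply $\dim_E{\bf B}_+(\xi_t)\geq 1$. Ruling out the scenario in which, for $t$ just above $\epsilon_1(\xi;x)$, every component of ${\bf B}_+(\xi_t)$ meeting $E$ lies inside $E$ is precisely the subtlety that makes the inequality in (3) strict for intermediate $i$ (Example \ref{ex:hyper3}); for $i=1$ it requires a jet-theoretic input rather than the purely set-theoretic comparison you offer. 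The paper closes this by citing \cite[Remark 2.23]{AI}, which identifies $\epsilon_1^{\rm loc}$ with the moving Seshadri constant, together with Lemma \ref{lem:seshadriwidth}.(2); without that (or an equivalent appeal to Theorem \ref{thm:successivejets}), your argument for $\epsilon_1^{\rm loc}(\xi;x)\leq\epsilon_1(\xi;x)$ in the normal case does not close.
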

\begin{remark}
Example \ref{ex:hyper3} shows that the inequalities in $(3)$ can be strict, when $2\leq i\leq n-1$. This happens, when $\dim_E{\bf B}_+(\xi_t)\leq i$ and $\dim{\bf B}_+(\xi_t)\cap E\geq i$ for some $t>0$ and $i=2,\ldots ,n-1$. Equivalently, there is a nonempty irreducible component of ${\bf B}_+(\xi_t)$ of dimension $\geq i$ entirely contained in $E$.
\end{remark}
\begin{proof}
Part $(1)$ is similar to Lemma \ref{lem:infsuccessiveproperties}.  For part $(3)$ use Corollary \ref{cor:B+B}.(2) and that if $\dim_E{\bf B}_+(\xi_t)\geq i$, then $\dim{\bf B}_+(\xi_t)\cap E\geq i-1$. The description in $(2)$ for the width is due to the fact that $\dim_E\,\Bplus(\xi_t)\leq n-1$ if and only if $\dim\,\Bplus(\xi_t)\leq n-1$.
The equality $\epsilon_1^{\rm loc}(\xi;x)=\epsilon_1(\xi;x)$ in the normal case follows from \cite[Remark 2.23]{AI} and Lemma \ref{lem:seshadriwidth}.(2). For the Seshadri constant in the ample case, by $(3)$, it remains to prove the inverse inequality. Take $0\leq t<\epsilon^{\rm loc}_1(\xi;x)$. We want to show that $\xi_t$ is nef. This is analogous to the argument in Lemma \ref{lem:seshadriwidth}.(2). 
\end{proof}

Successive minima relate to nefness and bigness of divisors. There is also a connection with movability.

\begin{example}
	Let $X$ be smooth and let $L$ be ample. Consider the \emph{movable threshold} \[\nu=\nu(L;x)\deq\sup\{t\geq 0\ \mid\ L_t\in\Mov(\overline X)\}.\] 
	Then ${\bf B}_-(L_t)$ contains divisors precisely for $t>\nu$, since the divisors contained in ${\bf B}_-(L_t)$ are the support of the negative (non-movable) part in the Nakayama $\sigma$-Zariski decomposition \cite{Nak04}. From Lemma \ref{lem:B-ray}.$(3)$ we deduce that $\dim{\bf B}_+(L_t)\geq n-1$ precisely for $t\geq\nu$. In particular $\epsilon^{\rm loc}_{n-1}(L;x)\geq\nu(L;x)$. The inequality could in principle be strict if the divisorial components of ${\bf B}_+(L_{\nu})$ are disjoint from $E$.\qed
\end{example}

Equality holds when $X$ has Picard rank 1 because in this case any prime divisor on $\overline X$ disjoint from $E$ is a pullback from $X$, hence nef, and so it cannot be contained in ${\bf B}_-(L_t)$ for $t\in(\nu,\mu]$.
It is an interesting question whether there exists an ample line bundle $L$ and $t>0$ such that $\dim {\bf B}_+(L_t)=\dim X-1$, and all its divisorial components are disjoint from $E$.


\section{Borel-fixed properties for generic valuations on $\PP^{n-1}$}
Above we showed that the infinitesimal successive minima can be interpreted as invariants from generic linear subspaces of the exceptional divisor $E\simeq \bb P^{n-1}$. Such subspaces are components of generic complete linear flags on $\PP^{n-1}$. Following \cite{LM09}, we can use such a flag and associate a convex set to any graded linear series on $E$. This convex set has a valuative nature essentially capturing the lowest degree-lexicographic term in the local expansion around $x$ of any section from the graded linear series. The genericity condition on the flag invites comparison with the theory of generic initial ideals. These are Borel-fixed monomial ideals by \cite{Gal74, BS87} and then the set of exponent vectors of monomials in these ideals have a certain structure that will resemble the structure of our valuative sets.

In this section we translate these algebraic results to the theory of Newton--Okounkov bodies on $E\simeq \PP^{n-1}$. We show that these convex sets considered with respect to generic linear flags satisfy a Borel-fixed type property. We deduce polytopal bounds of the convex bodies.

\subsection{Basic preliminaries of convex geometry in $\RR^n$}
We first introduce basic notation about objects in $\RR^n$. Note that we will will apply it both to $\bb R^{n-1}$ and $\bb R^n$.
We generally write $(\nu_1,\ldots,\nu_n)\in\bb R^n$ for the coordinates of a vector with respect to the standard basis ${\bf e}_1,\ldots ,{\bf e}_n$.
Any bounded convex set $P\subseteq \RR^n$ has Euclidean volume in $\RR^n$, denoted ${\rm vol}_{\bb R^n}(P)$ so that the volume of the standard simplex is $\frac{1}{n!}$ and the volume of the unit hypercube is $1$.

We introduce the following polytopal shapes in $\RR^n$ that play an important role throughout the paper, especially when describing upper and lower bounds on generic iNObodies.
\begin{definition}[The bounding polytopes]
Fix real numbers $t_1,\ldots,t_n> 0$. We denote the simplicial polytope 
\[
\simplex(t_1,\ldots,t_n)\deq\textup{convex hull}\,({\bf 0},t_1{\bf e}_1,\ldots,t_n{\bf e_n})\ .
\]
When $t_1=\ldots =t_n=t$, then we denote by $\simplex\,_t\deq\simplex(t,\ldots,t)$ the standard simplex of size $t$.
Let 
\[
\polytope{}(t_1,\ldots,t_{n})\deq\bigl\{(\nu_1,\ldots,\nu_{n})\in\bb R^{n}_+\ \mid\ \sum\nolimits_{j=1}^i\nu_j\leq t_i\ \forall\ i\in\{1,\ldots,n\}\bigr\}\ .
\]
\end{definition}
\noindent  The notation is motivated by the picture in $\bb R^2$ for $t_1<t_2$. The simplex $\simplex\,_t$ was denoted by $\Delta_t$ in \cite{KL17}. The polytope $\polytope{}(t_1,\ldots,t_{n})$ is related to $\square(t_{n},\ldots,t_{1})$, defined in \cite[p11]{AI}.

\begin{lemma}[Properties of the bounding polytopes]\label{lem:boundingpolytopes}Assume $0\leq t_1\leq t_2\leq\ldots\leq t_n$. Then
\begin{enumerate}[(1).]
\item $\simplex(t'_1,\ldots,t'_n)\subseteq\polytope(t_1,\ldots,t_n)$ if and only if $t'_i\leq t_i$ for all $i$. 
\smallskip

\item  $\simplex(t,\ldots,t)=\polytope(t,\ldots,t)=\simplex\,_t$.
\smallskip

\item $\polytope(t'_1,\ldots,t'_n)\subseteq[0,t_1]\times\ldots\times[0,t_{n}]$ if and only if $t'_i\leq t_i$ for all $i$.
\end{enumerate} 
\end{lemma}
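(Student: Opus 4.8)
The plan is to verify the three claimed equivalences directly from the definitions of $\simplex(t_1,\ldots,t_n)$ and $\polytope(t_1,\ldots,t_n)$, using only elementary convexity. The hypothesis $0\leq t_1\leq\ldots\leq t_n$ should be kept in mind throughout, as it is what makes $\polytope(t_1,\ldots,t_n)$ genuinely $n$-dimensional and what forces the various reverse implications.

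For part (1), first I would record that the vertices of $\simplex(t'_1,\ldots,t'_n)$ are $\mathbf 0$ and $t'_i\mathbf e_i$. Since $\polytope(t_1,\ldots,t_n)$ is convex, the inclusion $\simplex(t'_1,\ldots,t'_n)\subseteq\polytope(t_1,\ldots,t_n)$ holds if and only if all these vertices lie in $\polytope(t_1,\ldots,t_n)$. Now $\mathbf 0\in\polytope(t_1,\ldots,t_n)$ always (all partial sums are $0\leq t_i$), and the point $t'_i\mathbf e_i$ satisfies the defining inequalities $\sum_{j=1}^k\nu_j\leq t_k$ precisely when $t'_i\leq t_k$ for all $k\geq i$; by monotonicity of the $t_k$ this is equivalent to $t'_i\leq t_i$. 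Running this over all $i$ gives the equivalence.

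Part (2) is the degenerate case $t_1=\ldots=t_n=t$. The inclusion $\simplex\,_t\subseteq\polytope(t,\ldots,t)$ is the special case of part (1) with $t'_i=t_i=t$. For the reverse inclusion I would take any $(\nu_1,\ldots,\nu_n)\in\polytope(t,\ldots,t)$, so that in particular $\nu_1+\ldots+\nu_n\leq t$ and each $\nu_j\geq 0$; writing $\nu_j=\lambda_j t$ with $\lambda_j\geq 0$ and $\sum\lambda_j\leq 1$ exhibits the point as the convex combination $(1-\sum\lambda_j)\mathbf 0+\sum_j\lambda_j(t\mathbf e_j)$ of the vertices of $\simplex\,_t$. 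Hence $\polytope(t,\ldots,t)\subseteq\simplex\,_t$ and the two bodies coincide.

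Part (3) compares $\polytope(t'_1,\ldots,t'_n)$ with the box $[0,t_1]\times\ldots\times[0,t_n]$. For the ``only if'' direction, note that the point $t'_i\mathbf e_i$ lies in $\polytope(t'_1,\ldots,t'_n)$ (its partial sums are $0$ up to index $i-1$, then $t'_i\leq t'_k$ for $k\geq i$ by monotonicity of the $t'_k$), and being in the box forces its $i$-th coordinate $t'_i\leq t_i$. For the ``if'' direction, suppose $t'_i\leq t_i$ for all $i$; then for any $(\nu_1,\ldots,\nu_n)\in\polytope(t'_1,\ldots,t'_n)$ we have $0\leq\nu_i\leq\sum_{j=1}^i\nu_j\leq t'_i\leq t_i$, so the point lies in the box. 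The only mild subtlety — and the one point I would be careful about — is that $t'_i\mathbf e_i\in\polytope(t'_1,\ldots,t'_n)$ genuinely uses the monotonicity hypothesis on the $t'_j$; without it the claimed equivalence could fail, so I would state explicitly that we are applying the standing assumption $0\leq t'_1\leq\ldots\leq t'_n$ (inherited by $\polytope$ being stated only for nondecreasing tuples). No step here is a real obstacle; the whole lemma is a bookkeeping exercise in checking vertices against half-space inequalities.
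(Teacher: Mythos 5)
Your proof is correct, and the paper itself gives no argument for this lemma (it is stated as a routine bookkeeping fact), so your vertex-versus-half-space verification is exactly the intended one: parts (1) and (3) reduce to checking the points $t'_i\mathbf e_i$ against the defining inequalities, and part (2) to writing a point of $\polytope(t,\ldots,t)$ as an explicit convex combination. Your remark on part (3) is well taken: the ``only if'' direction genuinely needs $t'_1\leq\ldots\leq t'_n$ (otherwise the $\nu_i$-width of $\polytope(t'_1,\ldots,t'_n)$ is $\min_{k\geq i}t'_k$ rather than $t'_i$), which is consistent with how the paper always uses these polytopes but is worth making explicit, as you do.
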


\subsection{{\bf Borel-fixed sets}} We begin with the discrete version of Borel-fixed sets. Set 
\[
{\bf f}_i\deq -{\bf e}_i+{\bf e}_{i+1}\text{ for all }i=1,\ldots ,n-2\text{ and }{\bf f}_{n-1}\deq-{\bf e}_{n-1}\ .
\] 
Inspired by \cite{Gal74, BS87}, we start with the following definition:
\begin{definition}
	A finite set $S\subset\bb Z^{n-1}_+$ is called \emph{Borel-fixed}, whenever the inclusion
	\[
	(S+{\bf f}_i)\cap\bb Z^{n-1}_+\subseteq S \ 
	\]
holds for any $i=1,\ldots ,n-1$.
Equivalently one can use all vectors ${\bf e}_i-{\bf e}_j$ and $-{\bf e}_i$ instead of the ${\bf f}_i$ above, for all $1\leq i\leq j\leq n-1$.
\end{definition}
\smallskip

\begin{example}[Borel-fixed sets]~
	\begin{enumerate}
		\item If $0\leq t_1\leq\ldots\leq t_{n-1}$ are real numbers, then $\simplex(t_1,\ldots,t_{n-1})\cap\bb Z^{n-1}_+$ and $\polytope(t_1,\ldots,t_{n-1})\cap\bb Z^{n-1}_+$ are Borel-fixed. Up to decreasing $n$, we may assume $t_1>0$. In this case the simplex is described in $\bb R^{n-1}_+$ by $\sum_{i=1}^{n-1}\frac{\nu_i}{t_i}\leq 1$. It follows easily that the integer points in the simplex form a Borel-fixed set. 
		For the polytope, the claim follows directly from its defining equations.
		\smallskip
		
		\item If $S,S'\subset\bb Z^{n-1}_+$ are Borel-fixed, then so are $S\cap S'$, $S\cup S'$, and $S+S'=\{s+s'\ \mid\ s\in S,\ s'\in S'\}$ - their Minkowski sum. 
		\smallskip
		
		\item For fixed $n\geq 2$, there are few Borel-fixed finite sets with a small number of points. If $|S|=1$, then $S=\{{\bf 0}\}$. If $|S|=2$, then $S=\{{\bf 0},{\bf e}_{n-1}\}$. If $|S|=3$, then $S=\{{\bf 0},{\bf e}_{n-1},2\cdotp{\bf e}_{n-1}\}$ or $S=\{{\bf 0},{\bf e}_{n-1},{\bf e}_{n-2}\}$, when $n\geq 3$. All these examples are generated by one element, whereas the Borel-fixed set $S=\{{\bf 0},{\bf e}_{n-2},{\bf e}_{n-1},2\cdotp{\bf e}_{n-1}\}$ is not.
		\smallskip
		
		\item Let $S\subset\bb Z^{n-1}_+$ be a set that is saturated with respect to deglex, i.e., for all ${\bf a}\in S$ and all ${\bf b}\in\bb Z^{n-1}_+$ such that ${\bf b}\leq_{\rm deglex}{\bf a}$, then ${\bf b}\in S$. Then $S$ is Borel-fixed, as ${\bf a}+{\bf f}_i<_{\rm deglex}{\bf a}$ for all $i$. However, note that 
\[
(2,1)<_{\rm deglex}(0,4)\in \simplex(1,4)\cap\polytope(1,4),
\]
but $(2,1)$ is in neither polytope. In particular, the integral points of the convex set on the right form a Borel-fixed set, however it is not saturated.
	\end{enumerate}
\end{example}

Borel-fixed sets satisfy strong bounds in terms of their widths:

\begin{definition}\label{def:widthborelfixedset}
	Let $S\subset\bb Z_+^{n-1}$ be a nonempty finite set.
	For each $i=1,\ldots ,n-1$, the $i$-\emph{th width} of $S$ is
	\[w_i(S)\deq \textup{max}\{w\ |\  \exists\ {\bf a}=(a_1,\ldots ,a_i,\ldots ,a_{n-1})\in S,\ a_i=w\}\ .\]
\end{definition}

\begin{proposition}\label{prop:Borelfixeddiscreteproperties}
	Let $S\subset\bb Z^{n-1}_+$ be a nonempty Borel-fixed set. Then
	\begin{enumerate}
		\item $w_1(S)\leq\ldots\leq w_{n-1}(S)$ and $w_i(S)=\max\{a\ \mid\ a\cdot{\bf e}_i\in S\}$.
		\smallskip
		
		\item $w_i(S)\cdot\simplex(0,\ldots,0,1,\ldots,1)\cap\bb Z^{n-1}_+\subseteq S$ for all $i=1,\ldots ,n-1$ with the first $1$ appearing in the $i$-th spot.
		\smallskip
		
		\item $S\subseteq\polytope(w_1(S),\ldots,w_{n-1}(S))$.
	\end{enumerate}
	In particular,  $1+\sum_{i=1}^{n-1}{{w_i(S)+n-i-1}\choose{n-i}}\ \leq\  |S|\ \leq\  \prod_{i=1}^{n-1}(w_i(S)+1)$.
\end{proposition}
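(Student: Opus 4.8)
The plan is to prove the three enumerated items in order, and then derive the final counting inequalities as formal consequences. For part (1), I would first show $w_i(S) = \max\{a \mid a\cdot{\bf e}_i \in S\}$: given any ${\bf a} = (a_1,\ldots,a_{n-1}) \in S$ with $a_i = w_i(S)$, I would repeatedly apply the Borel moves ${\bf e}_j - {\bf e}_k$ (for $j < k$) and $-{\bf e}_k$ to zero out every coordinate other than the $i$-th without decreasing the $i$-th coordinate; concretely, for each coordinate $a_k$ with $k \neq i$, either move it ``down'' toward index $i$ using the allowed vectors or kill it with $-{\bf e}_k$, staying inside $\bb Z^{n-1}_+$ at every step (this is where the ``$(S+{\bf f}_i)\cap\bb Z^{n-1}_+\subseteq S$'' phrasing, i.e. staying in the positive orthant, matters). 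The monotonicity $w_1(S) \leq \ldots \leq w_{n-1}(S)$ then follows because $w_i(S)\cdot{\bf e}_i \in S$ and applying ${\bf e}_i - {\bf e}_{i+1}$ exactly $w_i(S)$ times (legal since the $i$-th coordinate stays $\geq 0$) produces $w_i(S)\cdot{\bf e}_{i+1} \in S$, so $w_{i+1}(S) \geq w_i(S)$.

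For part (2), I would start from the point $w_i(S)\cdot{\bf e}_i \in S$ established in (1). The simplex $w_i(S)\cdot\simplex(0,\ldots,0,1,\ldots,1)$ (with the first $1$ in spot $i$) is the convex hull of ${\bf 0}$ and $w_i(S)\cdot{\bf e}_j$ for $j \geq i$; its integer points are exactly the ${\bf b} \in \bb Z^{n-1}_+$ supported on coordinates $\geq i$ with $\sum_{j\geq i} b_j \leq w_i(S)$. I would show each such ${\bf b}$ lies in $S$ by starting from $\big(\sum_{j\geq i} b_j\big){\bf e}_i \in S$ (which is in $S$ since $\sum_{j\geq i}b_j \leq w_i(S)$ and $w_i(S){\bf e}_i\in S$ admits the move $-{\bf e}_i$) and then pushing mass from index $i$ out to the higher indices via the vectors ${\bf e}_i - {\bf e}_j$, reproducing ${\bf b}$ coordinate by coordinate while never leaving $\bb Z^{n-1}_+$.

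For part (3), I need $S \subseteq \polytope(w_1(S),\ldots,w_{n-1}(S))$, i.e. every ${\bf a} \in S$ satisfies $a_1 + \ldots + a_k \leq w_k(S)$ for each $k$. Fix ${\bf a} \in S$ and $k$; I would collapse the first $k$ coordinates onto index $k$ by applying the moves ${\bf e}_j - {\bf e}_k$ (for $j < k$) $a_j$ times each — legal because each such move only decreases $a_j \geq 0$ and increases $a_k$ — arriving at a point of $S$ whose $k$-th coordinate is $a_1 + \ldots + a_k$; by definition of $w_k(S)$ this is $\leq w_k(S)$. Finally the counting bounds: the upper bound $|S| \leq \prod_{i=1}^{n-1}(w_i(S)+1)$ is immediate from (3) together with Lemma \ref{lem:boundingpolytopes}.(3), since $\polytope(w_1(S),\ldots,w_{n-1}(S)) \subseteq [0,w_1(S)]\times\ldots\times[0,w_{n-1}(S)]$, and a box with those side lengths contains $\prod(w_i(S)+1)$ lattice points. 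The lower bound $1 + \sum_{i=1}^{n-1}\binom{w_i(S)+n-i-1}{n-i} \leq |S|$ comes from (2): the nested simplices $w_i(S)\cdot\simplex(0,\ldots,0,1,\ldots,1)\cap\bb Z^{n-1}_+$ all lie in $S$, and I would count the new lattice points contributed by the $i$-th one beyond what the origin and the thicker (lower-index) simplices already give — the points with $a_i > 0$ and $a_1 = \ldots = a_{i-1} = 0$ and $\sum_{j\geq i} a_j \leq w_i(S)$ number $\binom{w_i(S)+n-i-1}{n-i}$, these families are disjoint across $i$, and the origin accounts for the ``$1+$''.

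The main obstacle I expect is bookkeeping in the lower bound: making the disjointness of the contributions $\{{\bf a} \in S : a_1=\ldots=a_{i-1}=0,\ a_i > 0\}$ precise and checking that the lattice-point count of $w_i(S)\cdot\simplex(0,\ldots,0,1,\ldots,1)$ restricted to $a_i > 0$ is exactly $\binom{w_i(S)+n-i-1}{n-i}$ (a standard stars-and-bars count for $a_i + \ldots + a_{n-1} \leq w_i(S)$ with $a_i \geq 1$, equivalently $\leq w_i(S)-1$ with all $\geq 0$ in $n-i$ variables, giving $\binom{w_i(S)-1+n-i}{n-i}$). The individual Borel-move manipulations in (1)–(3) are routine once one is careful to stay in the positive orthant.
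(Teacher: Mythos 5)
Your proof is correct and follows essentially the same route as the paper's (whose write-up is terser and leaves the counting bounds as an exercise, which you have filled in correctly, including the stars-and-bars identity $\binom{w_i(S)-1+n-i}{n-i}=\binom{w_i(S)+n-i-1}{n-i}$ and the disjointness of the contributions indexed by the first nonzero coordinate). The only blemish is notational: you repeatedly write the transfer moves as ${\bf e}_j-{\bf e}_k$ for $j<k$ while describing the effect of ${\bf e}_k-{\bf e}_j={\bf f}_j+\cdots+{\bf f}_{k-1}$, the legal generator that moves mass toward the \emph{higher} index (and you correctly fall back on $-{\bf e}_k$ where needed), so the logic is unaffected.
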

\noindent Usually one finds the $i$-th width of a set $S$ by projecting $S$ on the $i$-th coordinate axis and picking the largest value in the image. Part (1) says that for Borel-fixed shapes we can find the $i$-th width by interesting $S$ with the $i$-th coordinate axis, a much stronger statement. 
\begin{proof}
	 $(1)$ If ${\bf a}=(a_1,\ldots,a_{n-1})\in S$ has $a_i=w_i(S)$ for some $1\leq i\leq n-2$, then ${\bf a}+w_i(S)\cdot{\bf f}_i\in S$ giving us $w_i(S)+a_{i+1}\leq w_{i+1}(S)$. The monotonicity claim follows. 
	 For the second part, note that each ${\bf e}_j=-\sum\nolimits_{k=j}^{n-1}{\bf f}_k$. Thus, from the definition, if ${\bf a}\in S$ has $a_i=w_i(S)$, then $w_i(S)\cdot{\bf e}_i={\bf a}+\sum_{j\neq i}a_j\cdot(-{\bf e}_j)\in S$, implying the new description of the $i$-th width.
	\smallskip
	
	$(2)$ The set on the left is ``generated'' as a discrete Borel-fixed set by the point $w_i(S)\cdot{\bf e}_i$. 
	\smallskip
	
	$(3)$ If ${\bf a}=(a_1,\ldots ,a_{n-1})\in S$, then the Borel-fixed property implies that $(a_1+\ldots+a_i)\cdot{\bf e}_i\in S$ for all $i$.
	\smallskip
	
	For the last part, the upper bound is a consequence of $(3)$, while the lower bound follows from $(2)$. We leave the details as an exercise.
\end{proof}

\subsection{{\bf Borel-fixed shapes}} The Borel-fixed property for finite sets extends naturally to convex sets. 

\begin{definition}
	A compact convex set $P\subset\bb R^{n-1}_+$ is called \emph{Borel-fixed} if the inclusion
	 \[
	(P+t\cdot {\bf f}_i)\cap\bb R^{n-1}_+\subseteq P
\]
hold for all $t\geq 0$ and all $i=1,\ldots ,n-1$.  Equivalently, this is saying pointwise that if $(a_1,\ldots,a_{n-1})\in P$, then $(a_1,\ldots,a_{i-1},0,a_{i}+a_{i+1},a_{i+2},\ldots,a_{n-1})\in P$ for all $i=1,\ldots , n-2$, and $(a_1,\ldots,a_{n-2},0)\in P$.
\end{definition}

\begin{example}[Borel-fixed shapes]$ $
	\begin{enumerate}
		\item If $S\subset\bb Z^{n-1}_+$ is a finite Borel-fixed set, then $P\deq\textup{convex hull}(S)$ is a Borel-fixed set in $\bb R^{n-1}_+$.
		\smallskip
		
		\item The intersection, scaling, Minkowski sum, and the convex hull of the union of Borel-fixed sets in $\bb R^{n-1}_+$ is again Borel-fixed.
		\smallskip
		
		\item If $P\subseteq\bb R^{n-1}_+$ is Borel-fixed, then for each $i=1,\ldots ,n-1$, any nonempty slice $P_{\nu_1=t_1,\ldots,\nu_i=t_i}$ is Borel-fixed in $\{(t_1,\ldots,t_i)\}\times\bb R^{n-i-1}_+$. 
		\smallskip
		
		\item Fix an increasing sequence of real numbers $0\leq t_1\leq\ldots\leq t_{n-1}$. The simplex $\simplex(t_1,\ldots,t_{n-1})$ and the polytope $\polytope(t_1,\ldots,t_{n-1})$ are Borel-fixed. 
		
	\end{enumerate}
\end{example}
\medskip

\begin{example}[Generation of Borel-fixed shapes]~\\
	Fixing an increasing sequence of real numbers $0\leq t_1\leq\ldots\leq t_{n-1}$, then we have the identity
	\[
	(a_1,\ldots,a_{n-1})=(t_1,t_2-t_1,t_3-t_2,\ldots,t_{n-1}-t_{n-2})+\sum\nolimits_{i=1}^{n-1}(t_i-\sum\nolimits_{j=1}^{i}a_j)\cdot{\bf f}_i. 
	\]
	Applying this identity to the points in $\polytope(t_1,\ldots,t_{n-1})$, implies that this convex shape is generated as a Borel-fixed set by the vertex $(t_1,t_2-t_1,t_3-t_2,\ldots,t_{n-1}-t_{n-2})$.\\ 
	\noindent 
	On the other hand, $\simplex(t_1,\ldots,t_{n-1})$ is usually not generated by one of its vertices. So, for example for $\simplex(1,2)$ the vertex $(1,0)$ generates $\simplex(1,1)$ while $(0,2)$ generates the segment $\simplex(0,2)$.\qed
\end{example}

In the setting of Borel-fixed shapes, the widths defined analogously to Definition \ref{def:widthborelfixedset} again play an important role in bounding our convex sets.

\begin{proposition}[Polytopal bounds of Borel-fixed shapes]\label{prop:borelfixedshapesproperties}
	Let $P\subset\bb R^{n-1}_+$ be a non-empty Borel-fixed compact convex set. Then the widths form an increasing sequence $w_1(P)\leq\ldots\leq w_{n-1}(P)$ and
	\[
	\simplex(w_1(P),\ldots,w_{n-1}(P))\subseteq P\subseteq\polytope(w_1(P),\ldots,w_{n-1}(P))\ .
	\]
	In particular $\frac{1}{(n-1)!}\prod_{i=1}^{n-1}w_i(P)\leq{\rm vol}_{\bb R^{n-1}}(P)\leq \prod_{i=1}^{n-1}w_i(P)$. 
\end{proposition}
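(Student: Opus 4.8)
The plan is to transport the discrete argument of Proposition~\ref{prop:Borelfixeddiscreteproperties} to the convex setting, the engine being the following observation: for every point ${\bf a}=(a_1,\ldots,a_{n-1})\in P$ and every $i\in\{1,\ldots,n-1\}$, the point $(a_1+\ldots+a_i)\cdot{\bf e}_i$ lies in $P$. To see this, apply the pointwise form of the Borel-fixed property successively with indices $1,2,\ldots,i-1$: the index-$1$ move replaces ${\bf a}$ by $(0,a_1+a_2,a_3,\ldots,a_{n-1})$, the index-$2$ move then gives $(0,0,a_1+a_2+a_3,a_4,\ldots,a_{n-1})$, and after the index-$(i-1)$ move one reaches ${\bf b}=(0,\ldots,0,a_1+\ldots+a_i,a_{i+1},\ldots,a_{n-1})\in P$. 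Now apply the moves with indices $i+1,i+2,\ldots,n-1$; each leaves the $i$-th coordinate untouched and sweeps the remaining mass one step to the right, the last one deleting the $(n-1)$-st coordinate, so that one ends at $(a_1+\ldots+a_i)\cdot{\bf e}_i\in P$. Every intermediate point clearly lies in $\bb R^{n-1}_+$, so the defining inclusion of Borel-fixedness applies at each step.

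Granting this, the upper bound is immediate: if ${\bf a}\in P$, then $(a_1+\ldots+a_i)\cdot{\bf e}_i\in P$, hence its $i$-th coordinate satisfies $a_1+\ldots+a_i\leq w_i(P)$, which says precisely ${\bf a}\in\polytope(w_1(P),\ldots,w_{n-1}(P))$. For the remaining points, since $P$ is compact we may pick ${\bf a}\in P$ with $a_i=w_i(P)$; then $a_1+\ldots+a_i\leq w_i(P)=a_i$ forces $a_1=\ldots=a_{i-1}=0$, so the observation yields $w_i(P)\cdot{\bf e}_i\in P$. Taking $i=n-1$ and performing one further index-$(n-1)$ move shows ${\bf 0}\in P$. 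The monotonicity $w_1(P)\leq\ldots\leq w_{n-1}(P)$ follows because the index-$i$ move carries $w_i(P)\cdot{\bf e}_i\in P$ to $w_i(P)\cdot{\bf e}_{i+1}\in P$, whence $w_{i+1}(P)\geq w_i(P)$. As $P$ is convex and contains ${\bf 0}$ together with each $w_i(P)\cdot{\bf e}_i$, it contains their convex hull $\simplex(w_1(P),\ldots,w_{n-1}(P))$, which is the lower bound.

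The volume estimates then drop out of the two inclusions: one has ${\rm vol}_{\bb R^{n-1}}\bigl(\simplex(t_1,\ldots,t_{n-1})\bigr)=\tfrac{1}{(n-1)!}\prod_{i=1}^{n-1}t_i$, while $\polytope(t_1,\ldots,t_{n-1})\subseteq[0,t_1]\times\ldots\times[0,t_{n-1}]$ by Lemma~\ref{lem:boundingpolytopes}.(3), so it has volume at most $\prod_{i=1}^{n-1}t_i$; applying these with $t_i=w_i(P)$ gives the claimed bounds.

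I do not anticipate a real obstacle. The only point requiring attention is checking, as in the first paragraph, that each sequence of pointwise Borel-fixed moves stays inside $\bb R^{n-1}_+$, so that the defining inclusion $(P+t\cdot{\bf f}_i)\cap\bb R^{n-1}_+\subseteq P$ may legitimately be invoked at every stage — this is automatic here because all moves used only redistribute nonnegative mass rightward or delete mass from the last coordinate. (One could instead deduce the statement from Proposition~\ref{prop:Borelfixeddiscreteproperties} by approximating $P$ with the rescaled lattice sets $\tfrac1N\bigl(P\cap\tfrac1N\bb Z^{n-1}\bigr)$ and letting $N\to\infty$, but the direct route avoids the limiting technicalities that arise when $P$ is lower-dimensional or the $w_i(P)$ are irrational.)
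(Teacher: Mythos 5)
Your proof is correct and follows essentially the same route as the paper, which explicitly describes its own argument as "analogous to Proposition~\ref{prop:Borelfixeddiscreteproperties}, with convexity supplying the left inclusion from $w_i(P)\cdot{\bf e}_i\in P$ and Lemma~\ref{lem:boundingpolytopes}.(3) giving the volume upper bound." You have simply written out in full the chain of pointwise Borel moves that the paper leaves implicit; the details all check out.
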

\noindent Observe that when the Borel-fixed property holds, then the widths, the optimal lower simplicial bound, and optimal upper bound of shape $\polytope$ (or box shaped) determine each other.
\begin{proof}
	The proof is analogous to that of Proposition \ref{prop:Borelfixeddiscreteproperties}. The new input is the use of convexity of $P$ to prove the left inclusion from $w_i(P)\cdot{\bf e}_i\in P$. The upper bound on the volume follows from Lemma \ref{lem:boundingpolytopes}.(3).
\end{proof}
\smallskip

\begin{example}[Abundance of Borel-fixed shapes]~\\
	Let $n=3$ and an increasing sequence of real numbers  $0\leq t_1\leq t_2$. Then in $\RR^2_+$ any intermediate convex body $\simplex(t_1,t_2)\subseteq P\subseteq\polytope(t_1,t_2)$ is Borel-fixed.\\
	\noindent
	This is no longer true in $\RR^3_+$. Consider the vector $(1,1,1)\in\polytope(1,2,3)\setminus\simplex(1,2,3)$. The convex hull $P$ of $\simplex(1,2,3)\cup\{(1,1,1)\}$ is not Borel-fixed since it does not contain the point $(1,0,2)$.
\end{example}

\subsection{Flag valuation sets on the projective space and the Borel-fixed property}
We turn our attention to a more algebro-geometric setting. The goal is to use valuations along generic linear flags in the projective space to produce Borel-fixed sets and shapes. 

Consider on $\PP^{n-1}$ a complete flag of linear subspaces 
\[
Y_{\bullet} \ : \ \PP^{n-1}=Y_1\supseteq Y_2\supseteq \ldots \supseteq Y_n=\{x\} \ ,
\] 
where $Y_i\subseteq \PP^{n-1}$ is a linear subspace of dimension $n-i$, for any $i=1,\ldots ,n$. 

Consider $D$ to be an effective $\RR$-divisor on $\PP^{n-1}$. We can associate a \textit{flag-valuation vector:}  
\[
\nu_{Y_{\bullet}}(D)=(\nu_1(D),\ldots ,\nu_{n-1}(D))\in \RR^{n-1}_+.
\]
This is constructed inductively, by first setting $D_0=D$ and $i=1$ and then via
\begin{equation}\label{eq:nobdef}\nu_i(D)\deq{\rm ord}_{Y_i}(D_{i-1}),\text{ and }D_i\deq \big(D_{i-1}-\nu_i(D)Y_{i}\big)|_{Y_i}\ .
\end{equation} 
For example, consider the standard flag $Y_{\bullet}$ on $\PP^{n-1}$, where $Y_i=\{x_1=\ldots=x_i=0\}$ for each $i=1,\ldots ,n-1$. 

For a polynomial $0\neq s\in H^0(\bb P^{n-1},\cal O(d))$ the valuation vector $\nu_{Y_{\bullet}}(s)=(\nu_1,\ldots ,\nu_{n-1})\in\bb Z^{n-1}_+$ has the property that its homogenzied version
\[
\nu_{Y_{\bullet}}(s)^{h} \ \deq \ (\nu_1,\ldots ,\nu_{n-1}, d-\nu_1 -\ldots -\nu_{n-1})\in \ZZ^n_+.
\]
is the exponent of the smallest monomial with respect to the lexicographic order in the homogeneous degree $d$ form $s$, with respect to the variables $x_1,\ldots ,x_n$.
This connection to monomial orders provides access to ideas developed for generic initial ideals by \cite{BS87}. Our next goal is to show the Borel-fixed property for subsets consisting of valuative vectors constructed with respect to generic linear flags on $\PP^{n-1}$.

\begin{proposition}[Borel-fixed property for valuative sets]\label{prop:Borelfixeddiscrete}
	Let $d\geq 1$ and $n\geq 2$ be positive integers and let $V\subseteq H^0(\bb P^{n-1}_{\CC},\cal O(d))$ be a linear subspace. If $Y_{\bullet}\subset\bb P^{n-1}_{\CC}$ is a general linear flag, then $\nu_{Y_{\bullet}}(V\setminus\{{\bf 0}\})$ is independent of $Y_{\bullet}$ and Borel-fixed.
\end{proposition}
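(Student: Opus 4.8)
The plan is to reduce the statement to a known fact about generic initial ideals via the homogenization map described just before the proposition, so I would spend most of the argument carefully translating between flag-valuations and lexicographic initial terms.

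First I would fix the standard flag $Y^{\mathrm{std}}_\bullet$ given by $Y_i=\{x_1=\cdots=x_i=0\}$ and record the key observation already stated in the excerpt: for $0\neq s\in H^0(\bb P^{n-1},\cal O(d))$, the homogenized vector $\nu_{Y^{\mathrm{std}}_\bullet}(s)^h\in\bb Z^n_+$ is precisely the exponent vector of the $\le_{\mathrm{lex}}$-smallest monomial occurring in $s$. Equivalently, if we reverse the variable order, $\nu^h$ reads off the leading exponent for a degree-lexicographic-type term order. Thus for a general flag $Y_\bullet=g\cdot Y^{\mathrm{std}}_\bullet$ with $g\in\mathrm{PGL}_n$, one has $\nu_{Y_\bullet}(s)^h$ equal to the leading exponent (in this fixed term order) of $g^{-1}\cdot s$, so that
\[
\nu_{Y_\bullet}(V\setminus\{{\bf 0}\})^h \;=\; \bigl\{\text{leading exponents of elements of } g^{-1}V\bigr\}.
\]
Applying this degreewise to the graded linear system generated by $V$ (i.e. passing to $\bigoplus_{m\ge 0}\mathrm{Sym}^m V \to \bigoplus_m H^0(\bb P^{n-1},\cal O(md))$), the right-hand side becomes the set of exponent vectors of the monomials in the initial ideal $\mathrm{in}_{<}(I)$ of the associated homogeneous ideal $I$, after the generic change of coordinates $g$. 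But for generic $g$ this is exactly the \emph{generic initial ideal} $\mathrm{gin}_<(I)$, which by Galligo's theorem \cite{Gal74} (see also \cite{BS87}) is (a) independent of the generic choice of $g$, and (b) Borel-fixed as a monomial ideal. Independence of $g$ gives the independence of $\nu_{Y_\bullet}(V\setminus\{{\bf 0}\})$ from $Y_\bullet$ (one passes back from the homogenized set in $\bb Z^n_+$ to its image in $\bb Z^{n-1}_+$ under deleting the last coordinate, which is a bijection on each degree-$d$ slice).

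For the Borel-fixed conclusion I would translate the classical Borel-fixedness of $\mathrm{gin}_<(I)$ — stability under the operations $x_j\mapsto x_i$ for $i<j$ on monomials — into the combinatorial condition in our Definition. Concretely, a monomial ideal being Borel-fixed means its exponent set is closed under replacing an exponent vector $\alpha$ by $\alpha - e_j + e_i$ whenever $i<j$ and $\alpha_j>0$; on the homogenized degree-$d$ slice, removing the last coordinate, this is exactly the condition that the set in $\bb Z^{n-1}_+$ is closed under adding the vectors $-e_j+e_i$ and $-e_i$, which is the definition of a discrete Borel-fixed set (the paper already notes the equivalence of the ${\bf f}_i$ generators with all ${\bf e}_i-{\bf e}_j$ and $-{\bf e}_i$). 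One caveat: the exponent set of a \emph{monomial ideal} is infinite and up-closed, whereas our $\nu_{Y_\bullet}(V\setminus\{{\bf 0}\})$ is a finite set; so I would apply the translation only in fixed degree $d$, using that the degree-$d$ piece of a Borel-fixed ideal, viewed in $\bb Z^{n-1}_+$ after homogenization/truncation, is a finite Borel-fixed set in our sense — this requires checking that the relevant Borel moves never push us out of degree $d$ or into negative coordinates, which is automatic since $-e_j+e_i$ and $-e_i$ both weakly decrease the coordinate sum and we intersect with $\bb Z^{n-1}_+$.

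The main obstacle I anticipate is purely bookkeeping: pinning down the precise term order (lexicographic vs. its reverse, and which variable is ``largest'') so that $\nu^h$ genuinely records the \emph{initial} term in the sense where Galligo's theorem applies, and making sure the generic flag $Y_\bullet$ really corresponds to a generic coordinate change $g$ in $\mathrm{PGL}_n$ rather than only a generic one in some subgroup (it does, since the space of complete flags is $\mathrm{PGL}_n/B$ and a dense orbit argument identifies ``general flag'' with ``$g\cdot Y^{\mathrm{std}}_\bullet$ for $g$ in a dense open subset''). Once the dictionary is set up correctly, both independence and the Borel-fixed property are immediate consequences of Galligo's theorem, so there is no substantive analytic or geometric difficulty beyond this translation.
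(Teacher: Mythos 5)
Your reduction is correct, and it is genuinely different from the paper's argument. The key observation that makes it work is that the ideal $I=(V)$ satisfies $I_d=V$, so the set of minimal-lex exponents of elements of $g^{-1}V$ is exactly the degree-$d$ slice of $\mathrm{in}(g^{-1}I)$, and Galligo's theorem then hands you both the constancy for generic $g$ and the Borel-fixed property; since the exchange moves $\alpha\mapsto\alpha-{\bf e}_i+{\bf e}_j$ preserve total degree, Borel-fixedness of the monomial ideal restricts to the degree-$d$ slice and dehomogenizes to closure under the ${\bf f}_i$. The one point you must nail down (and correctly flag as the main bookkeeping risk) is that ``take the lex-smallest monomial'' is the initial-term operator of an honest multiplicative monomial order, namely the order that first compares degree and then reverses lex within each degree; under that order the variables are ranked $x_n\succ\cdots\succ x_1$, so Galligo's theorem yields stability under $x_i\mapsto x_j$ for $i<j$ (mass moving to \emph{later} coordinates), which is exactly the paper's convention — a sign slip here would silently give the wrong Borel direction. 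The paper instead reproves the Bayer--Stillman argument from scratch for the linear subspace $V$ (weights, wedge powers $\bigwedge^qV$, and a one-parameter family of lower-triangular substitutions), explicitly remarking that they ``do not have an ideal on which the Borel group acts''; your observation that $V$ sits as the degree-$d$ piece of the ideal it generates shows that an ideal is in fact available, making the classical theorem directly citable. What the paper's longer route buys is a self-contained proof whose machinery (the weight/wedge setup and the genericity of $t$) is reused in their independence argument and makes visible exactly where characteristic $0$ enters; what your route buys is brevity and a clean conceptual source for the statement. One minor imprecision: the passage through $\bigoplus_m\mathrm{Sym}^mV$ is unnecessary and slightly misleading (the subalgebra generated by $V$ is not the ideal $I$, and only the degree-$d$ piece of $I$ is relevant), but since you immediately restrict to degree $d$ this does not affect the argument.
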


\begin{proof}
	First, consider the standard flag $Y_{\bullet}$ on $\PP^{n-1}$, that we have seen just above. Set
	\[
	S\ = \ S_{Y_{\bullet}}\ \deq \ \nu_{Y_{\bullet}}(V\setminus\{{\bf 0}\})^h\ = \ \{{\bf a}_1,\ldots,{\bf a}_q\} \ \subset\bb Z^n_+\ ,
	\] 
	where $|S|=\dim V=q$. Besides the above natural interpretation, homogenization in $\bb Z^n$ allows us to treat the directions ${\bf f}_1,\ldots,{\bf f}_{n-2}$ and the slightly changed ${\bf f}_{n-1}=-{\bf e}_{n-1}+{\bf e}_n$ (used only here) in the same way. 
	
	We will use monomial weights to study $V$ and the line $\bigwedge^qV\subset\bigwedge^qH^0(\bb P^{n-1},\cal O(d))$. There exist such weights that determine the lexicographic order on monomials of degree $d$. For example, if ${\bf a}\in\bb Z^n_+$, the \emph{weight} of ${\bf x}^{\bf a}=x_1^{a_1}\cdots x_n^{a_n}$ with respect to ${\bf w}={\bf w}_d\deq ((d+1)^{n-1},\ldots, d+1,1)\in\bb Z^n_+$ is defined to be ${\bf w}\cdot{\bf a}=\sum_{i=1}^nw_ia_i$. In particular, for ${\bf a},{\bf b}\in \ZZ^{n}_+$ of degree $d$, it holds that ${\bf a}>_{\rm lex}{\bf b}$ if and only if ${\bf w}\cdot {\bf a}>{\bf w}\cdot {\bf b}$. The weight of a simple monomial tensor ${\bf x}^{{\bf b}_1}\wedge\ldots\wedge{\bf x}^{{\bf b}_q}\in \bigwedge^qH^0(\bb P^{n-1},\cal O(d))$ is then ${\bf w}\cdot\sum_{i=1}^q{\bf b}_i$. By \cite[Lemma 1.4]{LM09} there is a basis $\{s_1,\ldots,s_q\}$ of $V$ such that $\nu_{Y_{\bullet}}(s_k)^h={\bf a}_k$ for all $1\leq k\leq q$. Note that ${\bf x}^{{\bf a}_1}\wedge\ldots\wedge{\bf x}^{{\bf a}_q}$ is the only monomial tensor with the smallest possible weight among all the monomial tensors in the expansion of $s_1\wedge\ldots\wedge s_q$, a basis of $\bigwedge^qV$. Moreover, for any basis $\{\sigma_1,\ldots,\sigma_q\}$ of $V$ we recover $S$ from the smallest weight monomial term in $\sigma_1\wedge\ldots\wedge \sigma_q$.
	
	Consider now any complete linear flag $Z_{\bullet}$ on $\PP^{n-1}$. It is of form $g\cdot Y_{\bullet}$, for some $g=(g_{ij})\in{\rm Gl}_n(\bb C)$, where  $Z_i=\{z_1=\ldots=z_i=0\}$ is the $i$-th term and ${\bf x}=g^T\cdot{\bf z}$. As acting with upper triangular matrices preserves $Y_{\bullet}$, we work instead with the Borel subgroup of lower triangular matrices, that acts transitively on all flags. The vector $\nu_{g\cdot Y_{\bullet}}(s)^h$ is the smallest lexicographic term with respect to ${\bf z}$ in $s({\bf x})=s(g^T\cdot{\bf z})$. Thus, the set $g\cdot S\deq S_{g\cdot Y_{\bullet}}=\{{\bf a}'_1,\ldots,{\bf a}'_q\}$ is determined by the unique lowest weight monomial tensor in $s_1(g^T\cdot{\bf z})\wedge\ldots\wedge s_q(g^T\cdot{\bf z})$, which must be ${\bf z}^{{\bf a}'_1}\wedge\ldots\wedge{\bf z}^{{\bf a}'_q}$ by the previous paragraph. So, for our statement it suffices to show that  $g\cdot S$ is constant for general $g\in{\rm Gl}_n(\bb C)$ and is stable under addition by each ${\bf f}_i$. 
	
	For the first statement consider the $g_{ij}$ as parameters. 
	When expanding $s_1(g^T\cdot{\bf z})\wedge\ldots\wedge s_q(g^T\cdot{\bf z})$ as polynomial in ${\bf z}$, the coefficients are polynomial in the $g_{ij}$. There is a unique nonzero one $h(g_{ij})$ corresponding to the unique monomial tensor of smallest weight. The locus $h(g_{ij})\neq 0$ is a nonempty Zariski open subset of ${\rm GL}_n(\bb C)$ where $g\cdot S$ is independent of $g$. Up to a translation in ${\rm GL}_n(\bb C)$, we may assume $g=I_n$ is in this locus, so the standard flag $Y_{\bullet}$ is ``general''.
	
	We focus now on the Borel-fixed conclusion. Let ${\bf a}_i=(a_{i,1},\ldots,a_{i,n})$ for all $i$. Fix $1\leq i\leq n-1$. We want to prove that ${\bf a}_1+{\bf f}_i\in S$ if $a_{i,1}\geq 1$. Let $g[t]=g[t,i]$ be the lower triangular matrix associated to the change of coordinates $x_j=z_j$ for $j\neq i$ and $x_i=z_i+t\cdot z_{i+1}$.  When $t$ is nonzero and general, $h(g[t]_{jr})\neq 0$ since it is nonzero for $t=0$, and thus $g[t]\cdot S=S$ for such $t$. Assume henceforth that $t$ is general.
	For any ${\bf a}=(a_1,\ldots,a_n)\in\bb Z^n_+$ of degree $d$ we compute
	\[{\bf x}^{{\bf a}}=\sum\nolimits_{r=0}^{a_{i}}{{a_{i}}\choose r}t^r\left(\frac{z_{i+1}}{z_i}\right)^r{\bf z}^{{\bf a}}=\sum\nolimits_{r=0}^{a_{i}}{{a_{i}}\choose r}t^r{\bf z}^{{\bf a}+r\cdot {\bf f}_i}\]
	and ${\bf w}\cdot({\bf a}-({\bf a}+r\cdot {\bf f}_i))=r\cdot(w_{i}-w_{i+1})>0$. Thus the exponent of $t$ controls a weight drop. 
	
	Recall $\nu_{Y_{\bullet}}(s_k)^h={\bf a}_k=(a_{k,1},\ldots ,a_{k,n})\in S$ for all $k$. So, assume the opposite that ${\bf a}_1+{\bf f}_i\notin S$ and $a_{1,i}\geq 1$.
	Then ${\bf z}^{{\bf a}_1+{\bf f}_i}\wedge{\bf z}^{{\bf a}_2}\wedge\ldots\wedge{\bf z}^{{\bf a}_q}$ is nonzero and has smaller weight than ${\bf z}^{{\bf a}_1}\wedge{\bf z}^{{\bf a}_2}\wedge\ldots\wedge{\bf z}^{{\bf a}_q}$. Its coefficient in the monomial expansion of $s_1(g[t]^T\cdot{\bf z})\wedge\ldots\wedge s_q(g[t]^T\cdot{\bf z})$ must vanish by our last assumption. Before cancellation, this tensor expands as sum of terms of form 
	\[
	(c_{{\bf b}_1,\ldots,{\bf b}_q,r_1,\ldots,r_q}\cdot t^{r_1+\ldots+r_q})\cdot {\bf z}^{{\bf b}_1+r_1\cdot {\bf f}_i}\wedge\ldots\wedge{\bf z}^{{\bf b}_q+r_q\cdot {\bf f}_i}\ , \]
	where the $c$'s are nonzero constants (at least in characteristic 0), ${\bf b}_k\geq_{\rm lex}{\bf a}_k$ with $|{\bf b}_k|=d$, and $0\leq r_k\leq b_{k,i}$. Varying $t$, the cancellation must also occur when the exponent of $t$ is precisely $1$, which is easier to treat.
	
	We get our contradiction, once we show that ${\bf b}_k={\bf a}_k$ for all $k$ and $r_1=1$, while $r_2=\ldots=r_q=0$ is the only instance when a term as above has exponent $1$ in $t$, so its coefficient cannot be canceled out. To prove the claim, assume first $r_1=1$ while $r_2=\ldots=r_q=0$. So, the weight of ${\bf z}^{{\bf b}_1+r_1\cdot {\bf f}_i}\wedge\ldots\wedge{\bf z}^{{\bf b}_q+r_q\cdot {\bf f}_i}$ is at least the weight of ${\bf z}^{{\bf a}_1+{\bf f}_i}\wedge{\bf z}^{{\bf a}_2}\wedge\ldots\wedge{\bf z}^{{\bf a}_q}$, and the equality holds precisely when ${\bf b}_k={\bf a}_k$ for all $k$. Assume now that $r_k=0$ for all $k$ except some $k>1$. Without loss of generality say $r_2=1$. By comparing weights, we obtain ${\bf b}_k={\bf a}_k$ for all $k$, but the monomial now looks different: ${\bf z}^{{\bf a}_1}\wedge{\bf z}^{{\bf a}_2+{\bf f}_i}\wedge\ldots\wedge{\bf z}^{{\bf a}_q}$. It is in fact different. The nonzero ${\bf z}^{{\bf a}_1+{\bf f}_i}\wedge{\bf z}^{{\bf a}_2}\wedge\ldots\wedge{\bf z}^{{\bf a}_q}$ is only equal up to sign to wedge products of a permutations of its factors, while clearly $\{{\bf a}_1+{\bf f}_i,{\bf a}_2\}\neq\{{\bf a}_1,{\bf a}_2+{\bf f}_i\}$, since ${\bf a}_1\neq {\bf a}_2$.
\end{proof}

\noindent The proof above is similar to that of \cite[Proposition 1]{BS87} with certain differences. We work here with the smallest lexicographic term instead of the largest, and consequently with lower triangular Borel matrices. Furthermore we do not have ``an ideal'' on which the Borel group acts. This is partly responsible for needing $t$ general in the last part of the proof. Another twist is the reduction to terms of degree 1 at the end, where characteristic 0 is needed, unlike in \cite{BS87}.

The ideas from Proposition \ref{prop:Borelfixeddiscrete} can be easily generalized to a more asymptotic setting. On one hand linear subspaces turn to graded subalgebras of the standard graded polynomial ring of $\PP^{n-1}$ and flag valuative sets are asymptotically studied via the theory of Newton--Okounkov bodies in $\RR^{n-1}_+$. For this consider $Y_{\bullet}$ a linear flag on $\bb P^{n-1}$. Fix a rational number $t\in \QQ_+$ and a graded subalgebra 
\begin{equation}\label{eq:subalgebra}
A^t_{\bullet}=\bigoplus\nolimits_{m\in\bb N,\ mt\in\NN}A^t_{m} \ \subseteq \  \bigoplus\nolimits_{mt\in \NN}H^0(\bb P^{n-1},\cal O(mt)) \ .
\end{equation}
Similar to Proposition \ref{prop:Borelfixeddiscrete} define $\Gamma_{m,Y_{\bullet}}\deq\nu_{Y_{\bullet}}(A^t_m\setminus\{0\})\subseteq \ZZ^{n-1}_+$. Note that $\Gamma_{m,Y_{\bullet}}=\varnothing$ when $A_m=0$. 

The \textit{Newton--Okounkov body (NObody)} of $A^t_{\bullet}$ with respect to $Y_{\bullet}$ is defined to be
\[
\nob{Y_{\bullet}}{A^t_{\bullet}}\ \deq\ \text{closure}\,\bigl\{\bigcup\nolimits_{m\in\bb N,\ mt\in\bb N}\frac 1m\Gamma_{m,Y_{\bullet}}\bigr\}\subseteq \ \RR^{n-1}_+
\]
The union under the closure sign is a convex set over $\bb Q$. As an example, when $A_{\bullet}^t=H^0(\bb P^{n-1},\cal O(mt))$, then $\nob{Y_\bullet}{A^t_{\bullet}}=\nob{Y_{\bullet}}{tH}=\simplex\,_t$, where $H\subset\bb P^{n-1}$ is a linear hyperplane. As a consequence, for any subalgebra $A_{\bullet}^t$ its NObody is contained in $\simplex\,_t$, and thus it is compact.

We now show that the Newton--Okounkov body with respect to a generic linear flag of a graded subalgebra of the standard graded polynomial ring is always a Borel-fixed convex body.

\begin{corollary}\label{cor:borelone}
	With $A^t_{\bullet}$ as above, let $Y_{\bullet}$ be a very general linear flag on $\bb P^{n-1}$. Then the convex body $\Delta(A^t_{\bullet})\deq \nob{Y_{\bullet}}{A^t_{\bullet}}\subset\bb R^{n-1}_+$ is independent of $Y_{\bullet}$ and Borel-fixed. In particular, if $w_i(t)$ is the $i$-th width of $\Delta(A^t_{\bullet})$ for each $i=1,\ldots ,n-1$, then 
	\[
	\simplex(w_1(t),\ldots,w_{n-1}(t)) \subseteq \Delta(A^t_{\bullet}) \subseteq \polytope(w_1(t),\ldots,w_{n-1}(t)) 
	\]
and	thus $\frac{1}{(n-1)!}\prod_{i=1}^{n-1}w_i(t)\leq{\rm vol}_{\bb R^{n-1}}(\Delta(A^t_{\bullet}))\leq \prod_{i=1}^{n-1}w_i(t)$.
\end{corollary}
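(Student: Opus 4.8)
The plan is to reduce everything to the finite-dimensional statement of Proposition~\ref{prop:Borelfixeddiscrete}, so that no new geometric input is required. First I would fix the flag. For each $m$ with $mt\in\bb N$, apply Proposition~\ref{prop:Borelfixeddiscrete} to $V=A^t_m\subseteq H^0(\bb P^{n-1},\cal O(mt))$: it produces a proper Zariski-closed subset $Z_m$ of the variety of complete linear flags on $\bb P^{n-1}$ such that for $Y_\bullet\notin Z_m$ the finite set $\Gamma_{m,Y_\bullet}=\nu_{Y_\bullet}(A^t_m\setminus\{0\})\subset\bb Z^{n-1}_+$ is independent of $Y_\bullet$ and Borel-fixed. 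Choosing $Y_\bullet$ outside $\bigcup_m Z_m$, a countable union of proper closed subsets and hence a very general condition, makes all the $\Gamma_{m,Y_\bullet}$ simultaneously equal to their generic values; since $\nob{Y_{\bullet}}{A^t_{\bullet}}$ is assembled only from these sets, it too is independent of $Y_\bullet$. If $A^t_\bullet=0$ the body is empty and there is nothing to prove, so assume otherwise; then $\Delta(A^t_{\bullet})$ is a non-empty compact convex set, being the closure of a set that is convex over $\bb Q$ (as noted after the definition of the Newton--Okounkov body) and contained in the bounded region $\simplex\,_t$.

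Next I would carry the Borel-fixed property through the limit. By the examples of Borel-fixed shapes recorded above, the convex hull $\mathrm{conv}(\Gamma_{m,Y_\bullet})$ is a Borel-fixed compact convex subset of $\bb R^{n-1}_+$, and rescaling is compatible with the defining inclusions $(P+s{\bf f}_i)\cap\bb R^{n-1}_+\subseteq P$, so $\tfrac1m\mathrm{conv}(\Gamma_{m,Y_\bullet})$ is Borel-fixed. Since $\Delta(A^t_{\bullet})$ is convex and contains each $\tfrac1m\Gamma_{m,Y_\bullet}$, it contains each $\tfrac1m\mathrm{conv}(\Gamma_{m,Y_\bullet})$, hence also the set $Q\deq\mathrm{conv}\bigl(\bigcup_m\tfrac1m\mathrm{conv}(\Gamma_{m,Y_\bullet})\bigr)$; and $Q$ in turn contains $\bigcup_m\tfrac1m\Gamma_{m,Y_\bullet}$, whose closure is $\Delta(A^t_{\bullet})$, so $\overline Q=\Delta(A^t_{\bullet})$. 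Now $Q$ is a convex hull of a union of Borel-fixed sets, hence Borel-fixed, and the Borel-fixed property survives the topological closure: given ${\bf a}\in\overline Q$ and $s\ge 0$ with ${\bf a}+s{\bf f}_i\in\bb R^{n-1}_+$, one treats the case $a_i>s$ by approximating ${\bf a}$ from inside $Q$ (the approximants still lie in $\bb R^{n-1}_+$, so Borel-fixedness of $Q$ applies), and then the boundary case $a_i=s$ by letting $s'\uparrow s$ and using that $\overline Q$ is closed. Thus $\Delta(A^t_{\bullet})$ is Borel-fixed.

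Finally, the displayed double inclusion between $\simplex(w_1(t),\ldots,w_{n-1}(t))$ and $\polytope(w_1(t),\ldots,w_{n-1}(t))$, together with the two volume estimates, is exactly what Proposition~\ref{prop:borelfixedshapesproperties} yields for the non-empty compact convex Borel-fixed set $P=\Delta(A^t_{\bullet})$, once its widths $w_i(P)$ are renamed $w_i(t)$. I expect the genuinely substantive content to be entirely inside Proposition~\ref{prop:Borelfixeddiscrete}---the weight and wedge-product argument producing Borel-fixed valuative sets from general flags---so that the main thing to be careful about here is purely formal: that the Borel-fixed property really is preserved under rescaling, under convex hulls of countable unions, and under topological closure, the last of these concealing the small boundary subtlety described above.
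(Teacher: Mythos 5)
Your proposal is correct and follows essentially the same route as the paper: reduce to Proposition~\ref{prop:Borelfixeddiscrete} for each graded piece, pass to rescaled convex hulls, and invoke Proposition~\ref{prop:borelfixedshapesproperties} at the end. The only cosmetic difference is that the paper uses the multiplicative inclusions $P_m\subseteq P_{dm}$ to see the union is already convex, whereas you take the convex hull of the union and check (carefully, including the closure step the paper leaves implicit) that Borel-fixedness survives.
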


\begin{proof}
By Proposition \ref{prop:Borelfixeddiscrete}, for any $m\in\bb N$ such that $mt\in\bb N$ and $Y_{\bullet}$ is very general, all $\Gamma_{m,Y_{\bullet}}$ are independent of $Y_{\bullet}$ and Borel-fixed as finite sets. Denote the common values by $\Gamma_m$ and fix such a very general flag $Y_{\bullet}$. 
	The $P_m\deq\frac 1m\cdot\textup{convex hull}(\Gamma_m)$ are Borel-fixed as convex sets and contained in $\simplex\,_t$. 
	
	Now, multiplication of sections give rise to inclusions $P_m\subseteq P_{dm}$ for all integers $d\geq 1$. Then $\bigcup_{m\in\bb N,\ mt\in\bb N}P_m$ is a convex set, in particular it is the convex hull of the union. It is also bounded above by $\simplex\,_t$, thus its closure $\nob{Y_{\bullet}}{A^t_{\bullet}}$ is compact and independent of $Y_{\bullet}$. It is furthermore Borel-fixed.
\end{proof}

\begin{remark}	Denote by $r$ the \emph{index} of $A^t_{\bullet}$, the greatest common divisor of all $m$ such that $A_m\neq 0$. 
	For every $1\leq i\leq n-1$ the sequence $w_i(P_{rm})=\frac 1{rm}w_i(\Gamma_{rm})\leq t$ is increasing for the divisibility order on positive integers $m$. Fekete's Lemma implies that $\lim_{m\to\infty}w_i(P_{rm})=\sup_{m\geq 1}w_i(P_{rm})$. As $\Delta(A^t_{\bullet})=\overline{\bigcup\nolimits_{m\geq 1}P_{rm}}$, then 
	\[w_i(\Delta(A^t_{\bullet}))=\lim_{m\to\infty}\frac 1{rm}w_i(\Gamma_{rm})=\sup_{m\geq 1}\frac 1{rm}w_i(\Gamma_{rm})\leq t\ 
	\]
	for each $i=1,\ldots ,n-1$.
\end{remark}

\begin{remark}
	Let $A^t_{\bullet}$ be as above with index $r\geq 1$.  Let $\kappa+1$ be the dimension of the subgroup of $\bb Z^{n}$ generated by all $\{m\}\times\Gamma_m$. We may call $\kappa$ the Iitaka--Kodaira dimension of $A^t_{\bullet}$.  
	
	Denoting $\gamma_m\deq|\Gamma_m|$, then the limit $\lim_{m\to \infty}\frac{\gamma_{rm}}{(rm)^{\kappa}}$ exists and is a positive (finite) real number, because of the Borel property and \cite[Proposition 2.1]{LM09}. In more general situations this was originally proved in \cite[Theorem 2]{KK12} and \cite[Theorem 8.1]{Cut14}.
	
For an arbitrary flag $Y_{\bullet}$, the above tells us that $\Gamma_{m,Y_{\bullet}}$ is contained in a $\kappa$-dimensional subspace of $\bb R^{n-1}$. For a very general flag $Y_{\bullet}$, the Borel-fixed property implies that this subspace is $\{{\bf 0}_{\bb R^{n-1-\kappa}}\}\times\bb R^{\kappa}$.
\end{remark}

\section{Bounds on generic iNObodies}
At any smooth point of a projective variety we can associate a Newton--Okounkov body to a line bundle, using complete linear flags on the tangent space at the point. Alternatively, the same construction can be done using the exceptional divisor of the blow-up of the point and we call these convex sets infinitesimal NObodies (iNObodies). It turns out that the vertical slices of these convex sets are NObodies of subalgebras on the exceptional $\bb P^{n-1}$ with respect to very general flags and are thus Borel-fixed. The largest simplex contained in the generic iNObody is determined by the variation of the slices. 
Using B\' ezout we prove that the lengths of the simplex are the infinitesimal successive minima of the line bundle at the base point.

When the point $x$ is very general, the ability to differentiate sections at $x$, allows us to prove that the entire straightened up body is Borel-fixed. Furthermore the infinitesimal successive minima are its widths and they determine a polytopal upper bound on the body. This bound is sometimes sharp.

\subsection{Background and preliminaries on infinitesimal Newton--Okounkov bodies}
We collect the basic definitions and properties and settle the notation and convention around infinitesimal Newton--Okounkov bodies that we will use in the sequel. Some of the material below works also over algebraically closed fields of any characteristic, but unless otherwise stated we will work over the complex numbers.

Let $X$ be a complex projective variety of dimension $n$, $L$ a big line bundle on $X$ and $x\in X$ a smooth point. Denote by $\pi:\overline{X}\rightarrow X$ the blow-up of $X$ at the point $x$ with $E={\bf P}_{\rm sub}(T_xX)\simeq\PP^{n-1}$ being the exceptional divisor.
Consider an infinitesimal linear flag over $x$
\[
Y_{\bullet}: \overline X=Y_0\supset E=Y_1\supset Y_2\supset\ldots\supset Y_n
\]
so that $Y_i$ is a linear subspace of dimension $n-i$ for any $i=1,\ldots ,n$. We will continue to denote by $Y_{\bullet}$ any truncation of $Y_{\bullet}$ where we leave out the first one or more terms.

With this data in hand for any effective $\RR$-Cartier $\RR$-divisor $D$ we can construct a valuative vector 
\[
\nu_{Y_{\bullet}}(D)=(\nu_1(D),\nu_2(D), \ldots ,\nu_n(D)) \ \in \RR^n_+,
\]
where $\nu_1(D)=\mult_x(D)$ and $(\nu_2(D), \ldots ,\nu_n(D))$ is the vector associated to the effective divisor $D_1\deq \big(\pi^*(D)-\nu_1(D)E\big)|_E$ by using $(\ref{eq:nobdef})$ with respect to the linear flag $Y_{\bullet}$ on $E$. 

In more details, if $s\in H^0(X,L)$ is a nonzero section, and ${\rm ord}_xs=p$, then we can see $s$ as a section of $\pi^*L-pE$ on $\overline X$, and let \emph{the first nonvanishing jet} of $s$ to be the degree $p$ homogeneous polynomial
	\[
	J^p_x(s) \ \deq \ s|_E\in H^0(E,\cal O(p)), 
	\]
which can be seen as an element in $\frak m_x^p/\frak m_x^{p+1}$. If $D\subset X$ is the divisor given by $s=0$, then $D_1$ is the divisor in $E$ of $J^p_x(s)=0$, the \emph{projectivized tangent cone} to $\pi(D)$ at $x$ and $\nu_{Y_{\bullet}}(s)=(p,\nu_{Y_{\bullet}}(D_1))$. 

We introduce the main players of this section. 
\begin{definition}[iNObodies]\label{def:inob}
Under the above assumptions:
\begin{enumerate}
\item The \textit{infinitesimal Newton--Okounkov body (iNObody)} of $L$ with respect to $Y_{\bullet}$ is defined to be
\[
\nob{Y_{\bullet}}{L}\ \deq \ \textup{closure}\bigl\{\frac{1}{m}\nu_{Y_{\bullet}}(D) \ | \ m\geq 1 \textup{ and } D\in |mL|\bigr\}\subseteq \ \RR^{n-1}_+ \ . \footnote{Historically, this convex set is denoted by $\nob{Y_{\bullet}}{\pi^*(L)}$.}
\]
\item When $Y_{\bullet}$ is a very general infinitesimal linear flag over $x$, then $\nob{Y_{\bullet}}{L}\subset\bb R^n$ is independent of $Y_{\bullet}$, by \cite[Proposition 5.3]{LM09}. We denote it $\inob{x}{L}$ and call it the \emph{generic iNObody} of $L$ at $x$, or by some abuse the iNObody of $L$ at $x$.
\smallskip
\item When additionally $x$ is very general, then $\inob{x}{L}\subset\bb R^n_+$ is independent of $x$. We denote it $\inob{X}{L}$ and call it the \emph{generic iNObody} of $L$, or the iNObody of $L$.
\end{enumerate} 
\end{definition}
The basic properties of this convex set that we list below have been studied in \cite{LM09} 
\begin{theorem}[Properties of iNObodies]\label{thm:lm}
Under the above notation, we have the following properties:
\begin{enumerate}
\item {\bf Numerical invariance.} $\nob{Y_{\bullet}}{L}\subset\bb R^n$ is invariant under numerical equivalence. 
\smallskip
\item  {\bf Homogeneity.} $\nob{Y_{\bullet}}{mL}=m\cdotp\nob{Y_{\bullet}}{L}$ for all $m\geq 1$. Consequently we can extend the definition of iNObodies to $\bb Q$-divisor classes in $N^1(X)_{\bb Q}$.
	\smallskip
	
\item {\bf Gluing.} There is a global iNObody (in fact a cone) $\nob{Y_{\bullet}}{X}$ in $N^1(X)_{\bb R}\times\bb R^n$ whose fiber over all big classes $\xi\in N^1(X)_{\bb Q}$ is $\nob{Y_{\bullet}}{\xi}$. A similar picture holds for the generic versions.
\end{enumerate}
\end{theorem}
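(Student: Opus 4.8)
The plan is to reduce parts (1)--(3) to the general theory of Newton--Okounkov bodies of big classes on the blow-up $\overline X$ developed in \cite{LM09}. Since $x$ is a smooth point of $X$, the variety $\overline X$ is smooth in a Zariski neighbourhood of $E$, so $Y_{\bullet}$ is an admissible flag on $\overline X$; moreover $\pi^*L$ is big on $\overline X$ because $\vol_{\overline X}(\pi^*L)=\vol_X(L)>0$, and by the definition recalled above (see the footnote) the iNObody $\nob{Y_{\bullet}}{L}$ coincides with the Okounkov body $\nob{Y_{\bullet}}{\pi^*L}$ of \cite{LM09}. Homogeneity (2) then follows at once: the flag valuation is additive under products of sections, so $\nu_{Y_{\bullet}}(mD)=m\,\nu_{Y_{\bullet}}(D)$ for every effective divisor $D$, and the sets $\{\tfrac1k\nu_{Y_{\bullet}}(D')\ \mid\ D'\in|kmL|,\ k\ge1\}$ and $m\cdot\{\tfrac1j\nu_{Y_{\bullet}}(D)\ \mid\ D\in|jL|,\ j\ge1\}$ are literally equal; passing to closures gives $\nob{Y_{\bullet}}{mL}=m\cdot\nob{Y_{\bullet}}{L}$, which also legitimizes the definition of $\nob{Y_{\bullet}}{\xi}$ for $\xi\in N^1(X)_{\bb Q}$.

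For numerical invariance (1) it suffices to prove that $\nob{Y_{\bullet}}{M}=\nob{Y_{\bullet}}{M'}$ for numerically equivalent big classes $M,M'$ on $\overline X$ (apply this to $M=\pi^*L$, $M'=\pi^*L'$). Multiplying sections gives the Minkowski inclusion $\nob{Y_{\bullet}}{M}+\nob{Y_{\bullet}}{M'}\subseteq\nob{Y_{\bullet}}{M+M'}$. Since $M+M'\equiv 2M$ and $\vol_{\overline X}$ is a numerical invariant (\cite{Laz04}), and since $\vol_{\overline X}(N)=n!\,\vol_{\bb R^n}\nob{Y_{\bullet}}{N}$ for big $N$ (\cite{LM09}), the Euclidean volumes of $\nob{Y_{\bullet}}{M}$, $\nob{Y_{\bullet}}{M'}$, $\nob{Y_{\bullet}}{2M}=2\nob{Y_{\bullet}}{M}$ and $\nob{Y_{\bullet}}{M+M'}$ satisfy the Brunn--Minkowski inequality with equality, which forces $\nob{Y_{\bullet}}{M}$ and $\nob{Y_{\bullet}}{M'}$ to be translates of each other. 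The translation vector is then pinned to $\mathbf 0$ by observing that, for every $i$, the extreme value $\min\{\nu_i\ \mid\ \nu\in\nob{Y_{\bullet}}{N}\}$ is itself a numerical invariant of $N$: it is the asymptotic order of vanishing along the fixed linear subspace $Y_i$ of the iterated restricted series of $N$, hence is governed by the (numerical) base loci of $N$ and their restrictions to the flag, so these extremes coincide for $M$ and $M'$.

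The gluing statement (3) is obtained as in \cite[\S 4]{LM09}, transported along $\pi^*\colon N^1(X)\hookrightarrow N^1(\overline X)$: one forms the graded semigroup of tuples $(m,\nu_{Y_{\bullet}}(s),[L])$ over all big integral $L$ on $X$, all $m\ge1$ and all $0\ne s\in H^0(X,mL)$, takes the closed convex cone $\nob{Y_{\bullet}}{X}\subset N^1(X)_{\bb R}\times\bb R^n$ generated by the rescaled points $([L],\tfrac1m\nu_{Y_{\bullet}}(s))$, and checks --- using (1), (2) and the continuity of the volume function --- that the fibre of $\nob{Y_{\bullet}}{X}$ over every big rational $\xi$ equals $\nob{Y_{\bullet}}{\xi}$. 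The analogous statement for the generic bodies $\inob{x}{\xi}$ and $\inob{X}{\xi}$ follows by the same construction, once one knows that for a very general flag over $x$ (respectively a very general $x$) the body $\nob{Y_{\bullet}}{\xi}$ is independent of the choices; this is \cite[Proposition 5.3]{LM09}, in the spirit of Proposition \ref{prop:Borelfixeddiscrete}.

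The step I expect to be the main obstacle is the rigidity at the end of (1): Brunn--Minkowski alone only determines the two bodies up to translation, and ruling out a nonzero shift forces one to check that every extreme coordinate value of an Okounkov body with respect to a fixed flag is numerical --- conceptually natural, but this is precisely where care is needed, since the relevant restricted base loci along the flag, with their Nakayama-type subtleties, must be controlled in all coordinate directions, not merely in $\nu_1$.
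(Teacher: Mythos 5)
Parts (2) and (3) are fine: homogeneity is the literal equality of the pre-closure sets you describe, and the gluing statement is exactly the global Okounkov body construction of \cite[\S 4]{LM09} transported through $\pi^*$. The paper itself offers no proof of this theorem beyond citing \cite{LM09}, so there is no ``same approach'' to match; but for part (1) your route genuinely diverges from the standard one, and it is there that the argument is incomplete.

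The gap is the one you flag yourself, and it is a real gap, not a formality. Brunn--Minkowski equality (using $\vol(M)=\vol(M')=2^{-n}\vol(M+M')$ and $\nob{Y_{\bullet}}{M}+\nob{Y_{\bullet}}{M'}\subseteq\nob{Y_{\bullet}}{M+M'}$) does give that the two bodies are translates. But the assertion that $\min\{\nu_i\mid \nu\in\nob{Y_{\bullet}}{N}\}$ is a numerical invariant of $N$ for \emph{every} $i$ is left unproved, and for $i\geq 2$ it is not a known black box: the global minimum of $\nu_i$ is $\min_t$ of an asymptotic order of vanishing along $Y_i$ of an iterated \emph{restricted} linear series, and the numerical nature of such quantities is essentially of the same depth as the numerical invariance of the body itself (it is clean only for $i=1$, where it is Nakayama's $\sigma_{E}$). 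So as written the proof of (1) is circular in spirit at its last step. Two ways out. (a) Follow \cite[Proposition 4.1]{LM09}: write $M'=M+P$ with $P\equiv 0$, choose an ample $A$ with $A\pm P$ ample, and squeeze $\nob{Y_{\bullet}}{M}\subseteq\nob{Y_{\bullet}}{M+\tfrac1p(A+P)}$ etc., letting $p\to\infty$; this avoids any rigidity argument. (b) Complete your own route without the coordinate-minima claim: applying your translate statement to $M+kP\equiv M$ gives $\nob{Y_{\bullet}}{M+kP}=\nob{Y_{\bullet}}{M}+v_k$ with $v_{k}+v_{j}=2v_m$ whenever $k+j=2m$ (by homogeneity applied to $2M+2mP$), hence $v_k=kv_1$; since all the bodies $\nob{Y_{\bullet}}{M+kP}$ lie in a fixed compact region determined by intersection numbers of the numerical class of $M$ with the flag, $v_1=0$. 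Either repair makes part (1) sound; without one of them the proposal does not establish numerical invariance.
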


A consequence of these properties is that 
we can use effective $\bb R$-Cartier $\bb R$-divisors \emph{numerically} (instead of rationally) equivalent to $L$ in the definition of the iNObody. From this and \cite[Proposition 4.4]{Bou14} it makes sense to define $\inob{x}{\xi}$ for the class $\xi\in N^1(X)_{\bb R}$ as the fiber over $\xi$ of the global iNObody $\nob{Y_{\bullet}}{X}$.

Going back to Okounkov, (i)NObodies were introduced to elucidate convexity properties satisfied by intersection numbers and various volume functions of divisors. \cite{LM09} describe two such constructions:
\begin{theorem}[Volumes of divisors and iNObodies]\label{thm:slices}
Let $L$ be a big $\QQ$-Cartier $\bb Q$-divisor on $X$ and $Y_{\bullet}$ an infinitesimal linear flag. Let $L_t=\pi^*L-tE$ for any $t\geq 0$.
\begin{enumerate}
\item {\bf Vertical slice.} If $x\notin \Bplus(L)$ and $L_t$ is big for rational $t> 0$, then the slice $\nob{Y_{\bullet}}{L}_{\nu_1=t}$ is the NObody $\Delta_{Y_{\bullet}}(A_{\bullet})$ of the graded algebra $A_{\bullet}^t$ on $E$, with $A_m^t\deq {\rm Im}\big(H^0(\overline{X},m L_t)\to H^0(E,\sO_E(mt))\big)$ as in $(\ref{eq:subalgebra})$. 
\item {\bf Volumes.} If $m$ is sufficiently divisible, then
\[
{\rm vol}(L)\deq \limsup_{m\to\infty}\frac{h^0(X,mL)}{m^n/n!} = n!\cdot {\rm vol}_{\RR^n}(\nob{Y_{\bullet}}{L}).
\]
 A similar equality holds for the volume of the subalgebra $A_{\bullet}^t$ and the vertical slice $\nob{Y_{\bullet}}{L}_{\nu_1=t}$.
\end{enumerate}
\end{theorem}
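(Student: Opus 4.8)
The plan is to recall the semigroup-theoretic argument of Lazarsfeld--Musta\c t\u a, since both parts are instances of the general machinery of Okounkov bodies. First I would fix the infinitesimal flag $Y_\bullet$ on $\overline X$ whose flag point lies on $E$, note that $\overline X$ is smooth near $E$ and $\pi_*\sO_{\overline X}=\sO_X$ so that $H^0(\overline X,m\pi^*L)=H^0(X,mL)$, and record the graded semigroup
\[
\Gamma \ = \ \Gamma_{Y_\bullet}(L)\ \deq\ \bigl\{(m,\nu_{Y_\bullet}(s))\ \mid\ m\geq 1,\ 0\neq s\in H^0(\overline X,m\pi^*L)\bigr\}\ \subset\ \bb N\times\bb Z^n_+\ ,
\]
whose fibre over $m$ I denote $\Gamma_m$, so that $\nob{Y_\bullet}{L}$ is by definition the closed convex hull of $\bigcup_m\frac1m\Gamma_m$.

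For the volume equality I would combine two inputs. First, the flag $Y_\bullet$ is admissible --- each $Y_i$ is smooth of codimension $i$ and cut out in $Y_{i-1}$ by a local equation near the flag point --- so the recipe \eqref{eq:nobdef} is a rank-$n$ valuation with one-dimensional leaves; concretely \cite[Lemma 1.4]{LM09} gives, for each $m$, a basis of $H^0(\overline X,m\pi^*L)$ with pairwise distinct valuation vectors, whence $|\Gamma_m|=h^0(X,mL)$. Second, since $L$ is big the semigroup $\Gamma$ generates a full-dimensional cone, and the Okounkov-body counting lemma (\cite[Proposition 2.1]{LM09}, building on \cite{Kho92}; see also \cite[Theorem 2]{KK12}, \cite[Theorem 8.1]{Cut14}) gives $|\Gamma_m|/m^n\to\vol_{\bb R^n}\bigl(\nob{Y_\bullet}{L}\bigr)$. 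Dividing by $m^n/n!$ and passing to the limit yields $\vol(L)=n!\cdot\vol_{\bb R^n}\bigl(\nob{Y_\bullet}{L}\bigr)$.

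For the vertical slice, fix rational $t>0$ with $L_t$ big and $x\notin\Bplus(L)$. A nonzero section of $mL_t=m\pi^*L-mtE$ is the same as a section $s$ of $m\pi^*L$ with $\ord_x(s)\geq mt$, and $s|_E\in H^0(E,\sO_E(mt))$ is nonzero precisely when $\ord_x(s)=mt$, in which case $(\nu_2(s),\ldots,\nu_n(s))$ is exactly the flag valuation on $E$ of $s|_E$. Hence the horizontal-slice semigroup $\bigl\{(m,\nu_2,\ldots,\nu_n)\mid(m,mt,\nu_2,\ldots,\nu_n)\in\Gamma\bigr\}$ is the graded semigroup of the algebra $A^t_\bullet$ on $E$, so its Okounkov body is $\nob{Y_\bullet}{A^t_\bullet}$. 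It then remains to identify this body with the genuine slice $\nob{Y_\bullet}{L}_{\nu_1=t}$. I would deduce this from the slicing principle for convex bodies built from graded semigroups \cite{LM09}: for a convex Okounkov body the slice over $\nu_1=t$ equals the Okounkov body of the slice semigroup, provided $t$ is interior to the projection of $\nob{Y_\bullet}{L}$ onto the first coordinate, which holds since $0<t<\mu(L;x)$ by bigness of $L_t$ and Lemma \ref{lem:seshadriwidth}. The parallel volume statement for $A^t_\bullet$ and the slice then follows by applying the counting lemma on $E\simeq\bb P^{n-1}$, or equivalently by Fubini from the $n$-dimensional equality.

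The hard part is the slice identity: in general the slice of an Okounkov body can strictly contain the body of the naive slice semigroup, or the latter can drop dimension, and a soft convexity argument alone does not settle equality. I expect to rule this out using the hypothesis $x\notin\Bplus(L)$ together with Corollary \ref{cor:EnotinB+}: for $0<t<\mu(L;x)$ the exceptional divisor $E$ is not a component of $\Bplus(L_t)$, so $A^t_\bullet$ has Iitaka--Kodaira dimension $n-1$ and $\nob{Y_\bullet}{A^t_\bullet}$ is genuinely $(n-1)$-dimensional, of the same dimension as the slice. Feeding this full-dimensionality into the semigroup slicing argument should upgrade the evident inclusion $\nob{Y_\bullet}{A^t_\bullet}\subseteq\nob{Y_\bullet}{L}_{\nu_1=t}$ to an equality, and simultaneously guarantees the slices are nonempty throughout $(0,\mu(L;x))$.
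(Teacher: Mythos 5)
Your proposal is correct and follows essentially the same route as the paper, which obtains part (1) directly from Corollary \ref{cor:EnotinB+} together with \cite[Theorem 4.26]{LM09} and part (2) from \cite[Corollary 4.27]{LM09}; your semigroup discussion is an unpacking of those citations, and you correctly identify Corollary \ref{cor:EnotinB+} (that $E$ is not a component of ${\bf B}_+(L_t)$ for $0<t<\mu$) as the hypothesis that makes the restriction theorem applicable. The one step you leave at the level of ``should follow'' --- upgrading the inclusion $\nob{Y_\bullet}{A^t_\bullet}\subseteq\nob{Y_\bullet}{L}_{\nu_1=t}$ to an equality --- is precisely the content of the cited \cite[Theorem 4.26]{LM09}, so nothing essential is missing relative to the paper's own argument.
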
  
\noindent The statement $(1)$ follows from Corollary \ref{cor:EnotinB+} and \cite[Theorem 4.26]{LM09}. The second statement on the volume of the vertical slices follows from $(1)$ and \cite[Corollary 4.27]{LM09}.

An important consequence of Theorem \ref{thm:slices} is that the Euclidean volume of any iNObody of a fixed class is independent of the choice of the infinitesimal flag. Moreover, the existence of global iNObody above allows to extend the notion of volume for a Cartier divisor to any big numerical class in $N^1(X)_{\RR}$. Finally, the \emph{volume} of the subalgebra $A_{\bullet}^t$ is the \textit{restricted volume} of the class $\xi_t$ along $E$, denoted $\vol_{\overline X|E}(\xi_t)$. It was introduced in \cite{ELMNP09} in any codimension.

Our goal is to understand the shape of generic iNObodies. Their slices are NObodies of algebras defined on $E\simeq \PP^{n-1}$, thus we deduce the following from Corollary \ref{cor:borelone}.

\begin{corollary}\label{cor:slicesborel}
	Let $X$ be a normal complex projective variety of dimension $n$. Let $x\in X$ be a smooth point and $\xi\in{\rm Big}_x(X)$. If $0\leq t\leq\mu(\xi;x)$, then the slice $\inob{x}{\xi}_{\nu_1=t}$ is Borel-fixed in $\bb R^{n-1}_+$.
\end{corollary}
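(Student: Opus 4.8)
The plan is to recognise the rational slices as Newton--Okounkov bodies on $E\simeq\PP^{n-1}$, conclude with Corollary~\ref{cor:borelone}, and then propagate the conclusion to all heights and to real classes by convexity, closedness, and a Hausdorff-limit argument. First I would reduce to the case where $\xi=L$ is a big $\QQ$-Cartier $\QQ$-divisor with $x\notin\Bplus(L)$, and take a rational $t$ with $0<t<\mu(L;x)$. Then $L_t$ is big, being a positive combination of the big class $L_0=\pi^*L$ and the pseudo-effective class $L_{\mu(L;x)}$, so Theorem~\ref{thm:slices}.(1) identifies the slice $\nob{Y_{\bullet}}{L}_{\nu_1=t}$ with the Newton--Okounkov body $\nob{Y_{\bullet}}{A^t_{\bullet}}$ of the graded subalgebra $A^t_{\bullet}\subseteq\bigoplus_{m}H^0(E,\sO_E(mt))$ with $A^t_m={\rm Im}\big(H^0(\overline X,mL_t)\to H^0(E,\sO_E(mt))\big)$, where on the right $Y_{\bullet}$ denotes the truncation of the infinitesimal flag to $E$. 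Since a very general infinitesimal linear flag over $x$ truncates to a very general complete linear flag on $E\simeq\PP^{n-1}$, one may enlarge the countable union of proper closed subsets defining $\inob{x}{L}$ so that it also contains, for every rational $t$, the bad subsets produced by Corollary~\ref{cor:borelone} applied to $A^t_{\bullet}$; that corollary then gives that $\nob{Y_{\bullet}}{A^t_{\bullet}}$ is independent of the flag and Borel-fixed. Hence $\inob{x}{L}_{\nu_1=t}$ is Borel-fixed for every rational $t\in(0,\mu(L;x))$.

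Next I would upgrade this to all heights for such an $L$. Since $x\notin\Bplus(L)\supseteq\Bstable(L)$, the compact convex body $\inob{x}{L}$ contains the origin. Fix $(\nu_1,\nu')\in\inob{x}{L}$ with $\nu_1\in(0,\mu(L;x)]$, an index $j\in\{1,\ldots,n-1\}$, and $\tau\geq0$ with $\nu'+\tau\,{\bf f}_j\geq0$. Choose $\lambda_k\uparrow1$ in $(0,1)$ with each $\lambda_k\nu_1$ rational; then $\lambda_k\nu_1\in(0,\mu(L;x))$, and by convexity with the origin $\lambda_k(\nu_1,\nu')\in\inob{x}{L}$. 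By the previous step the slice at height $\lambda_k\nu_1$ is Borel-fixed, hence contains $\lambda_k\nu'+\lambda_k\tau\,{\bf f}_j=\lambda_k(\nu'+\tau\,{\bf f}_j)$; letting $k\to\infty$ and using that $\inob{x}{L}$ is closed gives $(\nu_1,\nu'+\tau\,{\bf f}_j)\in\inob{x}{L}$. The height $\nu_1=0$ is trivial, its slice being $\{{\bf 0}\}$ and $\tau$ then forced to vanish. Thus every vertical slice of $\inob{x}{L}$ is Borel-fixed.

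Finally, for a general real class $\xi\in{\rm Big}_x(X)$ I would write $\xi=\lim_m L_m$ with $L_m$ big $\QQ$-Cartier $\QQ$-divisors in ${\rm Big}_x(X)$, so that $\inob{x}{L_m}\to\inob{x}{\xi}$ in the Hausdorff metric, these being the (compact) fibers of the global generic iNObody cone (Theorem~\ref{thm:lm}.(3) together with \cite[Proposition~4.4]{Bou14}). The property ``every vertical slice of $K$ is Borel-fixed'' is equivalent to $\big(K+\tau(0,{\bf f}_j)\big)\cap\RR^n_+\subseteq K$ for all $\tau\geq0$ and all $j$, and this reformulation is stable under Hausdorff limits of compact convex bodies: if $v\in K=\lim_m K_m$ satisfies $v+\tau(0,{\bf f}_j)\geq0$ and $v^{(m)}\in K_m$ with $v^{(m)}\to v$, then for each $\sigma<\tau$ the points $v^{(m)}+\sigma(0,{\bf f}_j)$ lie in $\RR^n_+$ for $m\gg0$, hence in $K_m$, so $v+\sigma(0,{\bf f}_j)\in K$; letting $\sigma\uparrow\tau$ with $K$ closed gives $v+\tau(0,{\bf f}_j)\in K$. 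Applied to $K_m=\inob{x}{L_m}$, this shows every vertical slice of $\inob{x}{\xi}$ is Borel-fixed. The substance is entirely in the first step---an immediate combination of Theorem~\ref{thm:slices}.(1) and Corollary~\ref{cor:borelone}---and the point I expect to need the most care, the main obstacle, is the stability of the Borel-fixed property under the limits in the last two steps, which the reformulation $\big(K+\tau(0,{\bf f}_j)\big)\cap\RR^n_+\subseteq K$ is designed to handle.
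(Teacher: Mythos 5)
Your proposal is correct and follows the same route as the paper, which deduces the corollary directly from Theorem~\ref{thm:slices}.(1) (the rational vertical slices are NObodies of the graded algebras $A^t_{\bullet}$ on $E\simeq\PP^{n-1}$) combined with Corollary~\ref{cor:borelone}. The extra limiting steps you supply --- passing to irrational heights via convexity through the origin, and to real classes via the reformulation $\bigl(K+\tau(0,{\bf f}_j)\bigr)\cap\RR^n_+\subseteq K$ and Hausdorff continuity of the fibers of the global body --- are details the paper leaves implicit, and they are carried out correctly.
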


Unlike their vertical slices, the generic iNObodies tend to have ``tilted'' shapes, which prevents them from being Borel-fixed. To fix this, we introduce a \emph{straightening}.
It corresponds to a different type of NObody, constructed using the degree-lexicographic valuation, e.g., see \cite{KK12,WN18}. 


\begin{definition}[Straightened up generic iNObody]
Let $X$ be an irreducible projective variety, $x\in X$ a smooth point, and $\xi\in N^1(X)_{\RR}$ a big class. Then define by
\[
\sinob{x}{\xi}\ = \ S(\inob{x}{\xi})
\]	
where $S:\RR^n\rightarrow \RR^n$ the linear map defined by $S(v_1,\ldots,v_n)=(v_2,\ldots,v_n,v_1-\sum_{i=2}^nv_i)$. 
\end{definition}
\noindent When $\xi=L$ is a line bundle on $X$, then $\sinob{x}{L}$ is the closure of all $\frac 1m{\bf a}$ where ${\bf a}$ appears as the exponent of the lowest degree-lexicographic term in the expansion around $x$ of some section in $|mL|$. 
While the flag version of iNObodies carries geometry in a cleaner fashion, the straightened up version has a nicer shape, as the name suggests.

\begin{example}
	On $\bb P^n$, we have $\inob{x}{\cal O(1)}=S^{-1}(\simplex_1)$, i.e.,  $\sinob{x}{\cal O(1)}=\simplex_1$ for every $x\in X$.
\end{example}
The first result about polytopal upper and lower bounds for our iNObodies was proved in \cite{KL17} for smooth ambient spaces, inspired by the two-dimensional case from \cite{KL18} of the same authors.
\begin{theorem}[First bounds on the iNObody.] \label{thm:firstbounds} Assume that $X$ is normal and $x\in X$ smooth. If $\xi\in \textup{Big}_x(X)$, then for every infinitesimal flag $Y_{\bullet}\subset\overline X$ we have 
\[
\simplex_{\epsilon(||\xi||;x)}\ \subseteq \ \sinob{Y_{\bullet}}{\xi}\ \subseteq\ \simplex_{\mu(\xi;x)}\ \subset \ \RR^n_+.
\]
In particular this holds for the generic infinitesimal body $\sinob{x}{\xi}$. 
\end{theorem}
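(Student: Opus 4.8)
The plan is to prove the two inclusions separately, first for $\xi=L$ the class of a big $\QQ$-Cartier $\QQ$-divisor and then for arbitrary $\xi\in{\rm Big}_x(X)$ by continuity: by Theorem~\ref{thm:lm} the iNObody of a big class is the fiber of a closed convex cone and varies continuously on the big cone, while $\mu(\cdot;x)$ and $\epsilon(||\cdot||;x)$ are continuous on ${\rm Big}_x(X)$ by Lemmas~\ref{lem:seshadriwidth} and~\ref{lem:infsuccessiveproperties}, so the bounding simplices vary continuously and the inclusions pass to limits. The first step is a linear-algebra computation: since $S(v_1,\dots,v_n)=(v_2,\dots,v_n,v_1-v_2-\dots-v_n)$ is a volume-preserving isomorphism, the asserted inclusions are equivalent to $S^{-1}(\simplex_{\epsilon(||\xi||;x)})\subseteq\nob{Y_{\bullet}}{\xi}\subseteq S^{-1}(\simplex_{\mu(\xi;x)})$, and for any $c\ge 0$ one checks directly that
\[
S^{-1}(\simplex_c)=\{(\nu_1,\dots,\nu_n)\in\RR^n_+\ \mid\ \nu_2+\dots+\nu_n\le\nu_1\le c\}=\bigcup\nolimits_{0\le t\le c}\{t\}\times\simplex_t\,,
\]
so both bounds become statements about the vertical slices of $\nob{Y_{\bullet}}{\xi}$ over the $\nu_1$-axis.

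For the upper bound I would argue directly with the generators of the iNObody. If $D\geq 0$ and $D\sim_{\QQ}mL$, then $\pi^*D-(\mult_x D)E$ is effective, so $(mL)_{\mult_x D}$ is pseudoeffective and $\mult_x D/m\le\mu(L;x)$, which bounds the first coordinate. Moreover $(\nu_2(D),\dots,\nu_n(D))=\nu_{Y_{\bullet}}(D_1)$, where $D_1\subset E\simeq\PP^{n-1}$ is the projectivized tangent cone, a divisor of degree $\nu_1(D)=\mult_x D$; since the flag valuation on $\PP^{n-1}$ of a degree-$d$ effective divisor has coordinate sum at most $d$ — equivalently $\nu_{Y_{\bullet}}(H^0(\PP^{n-1},\cal O(d))\setminus\{0\})\subseteq\simplex_d$, which is the case $A^t_{\bullet}=H^0(\cal O(mt))$ of Theorem~\ref{thm:slices} — we get $\nu_2(D)+\dots+\nu_n(D)\le\nu_1(D)$. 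Hence every $\frac1m\nu_{Y_{\bullet}}(D)$ lies in the closed set $S^{-1}(\simplex_{\mu(L;x)})$, and so does $\nob{Y_{\bullet}}{L}$ after taking closures.

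For the lower bound I would show that the slice $\nob{Y_{\bullet}}{L}_{\nu_1=t}$ equals the full simplex $\simplex_t\subset\RR^{n-1}$ for every rational $0<t<\epsilon:=\epsilon(||L||;x)$; then closedness and convexity of $\nob{Y_{\bullet}}{L}$, density of such $t$ in $[0,\epsilon]$, and $\simplex_{t'}\to\simplex_t$ yield $S^{-1}(\simplex_{\epsilon})\subseteq\nob{Y_{\bullet}}{L}$. By Lemma~\ref{lem:seshadriwidth}.(2) we have $\epsilon=s_1(L;x)$, so for such $t$ there is $m_0\ge1$ with $m_0t\in\NN$ and $S_1(m_0L;x)\ge m_0t$, i.e.\ the restriction $H^0(\overline X,(m_0L)_{m_0t})\to H^0(E,\cal O_E(m_0t))$ is onto; in the notation of Theorem~\ref{thm:slices}.(1) this says $A^t_{m_0}=H^0(E,\cal O_E(m_0t))$, and since $\cal O_E(m_0t)$ is very ample on $\PP^{n-1}$ this forces $A^t_{km_0}=H^0(E,\cal O_E(km_0t))$ for all $k\ge1$; hence the slice $\nob{Y_{\bullet}}{A^t_{\bullet}}$ contains $\nob{Y_{\bullet}}{tH}=\simplex_t$, and together with the upper bound it equals $\simplex_t$. (Alternatively one can run this through Proposition~\ref{prop:vanishing} applied to $V=E$, which for $t<\epsilon_1(L;x)=\epsilon$ is disjoint from $\Bplus(L_t)$.) Finally, since the argument is valid for every flag $Y_{\bullet}$, the statement for $\sinob{x}{\xi}$ is the special case of a very general flag.

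I do not expect a genuinely hard step: the two substantive inputs — that flag valuations on $\PP^{n-1}$ land in the standard simplex, and that jet separation below the moving Seshadri constant forces surjectivity of restriction to $E$ — are already available in §2 and in the cited results. The main care is bookkeeping: the identification of $S^{-1}(\simplex_c)$ and of its horizontal slices, the reduction to $\QQ$-classes together with the continuity extension to all of ${\rm Big}_x(X)$, and the limiting arguments (closure of the iNObody, density of the rational slices). That is where the proof actually lives.
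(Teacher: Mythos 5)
Your proof is correct, but it takes a genuinely different route from the paper. The paper does not reprove the inclusions at all: it observes that the smooth case is \cite[Theorem~D / Theorem~3.1]{KL17} and reduces the normal case to it by pulling back along a resolution of singularities that is an isomorphism over $x$, using the birational invariance of $\epsilon(||\cdot||;x)$, $\mu(\cdot;x)$ and of the iNObody, together with \cite{BBP13} to preserve the condition $x\notin\Bplus(\xi)$. You instead give a self-contained argument in the normal setting, built on the paper's own machinery: the identification $S^{-1}(\simplex_c)=\bigcup_{0\le t\le c}\{t\}\times\simplex_t$, the elementary effectivity/B\'ezout bound for the upper inclusion, and $\epsilon(||L||;x)=s_1(L;x)$ plus surjectivity of restriction to $E$ for the lower one. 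What your route buys is independence from the resolution step and from \cite{KL17}; what it costs is that you are essentially re-deriving the content of \cite{KL17} (whose proof is in the same spirit). Two small points to tighten: (i) in the lower bound, $S_1(m_0L;x)\ge m_0t$ does \emph{not} immediately yield surjectivity of $H^0(\overline X,(m_0L)_{m_0t})\to H^0(E,\cal O(m_0t))$ — partial jet separation at the maximal order does not a priori descend to all smaller orders (this is exactly the ``not sparse'' remark the paper leaves as an exercise); your parenthetical alternative via Proposition~\ref{prop:vanishing} applied to $V=E\subset\overline X$ (legitimate since $\overline X$ is normal and $E\cap\Bplus(L_t)=\varnothing$ for $t<\epsilon_1(L;x)$) is the clean way to do it, and is precisely the mechanism of Theorem~\ref{thm:successivejets}; (ii) for irrational classes the upper inclusion is more safely obtained directly from the $\RR$-divisor description of $\nob{Y_\bullet}{\xi}$ (numerically equivalent effective $\RR$-divisors, via \cite[Proposition~4.4]{Bou14}) than from a Hausdorff-limit argument, since upper semicontinuity of fibers of the global cone is the delicate direction.
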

\noindent The normal case above is implied by the smooth case in \cite{KL17}. Ou invariants are preserved by pullback by birational morphisms that are fiber spaces, and isomorphisms around $x$. The condition $x\not\in{\bf B}_+(\xi)$ is  also preserved when passing to a resolution of singularities that is an isomorphism over $x$, by \cite{BBP13}.


\subsection{Infinitesimal successive minima and generic iNObodies}
For very general infinitesimal linear flags on the tangent space we can refine Theorem \ref{thm:firstbounds} to include all infinitesimal successive minima. 
\begin{theorem}\label{thm:lowerbounds} Let $X$ be a projective variety, $x\in X$ a smooth point, and $\xi\in{\rm Big}_x(X)$. Then
	\begin{enumerate}
		\item $\simplex(s_1(\xi;x),\ldots,s_{n}(\xi;x))\subseteq\sinob{x}{\xi}$. 
		In particular, $\prod_{i=1}^{n}s_i(\xi;x)\leq\vol(\xi)$.
		\smallskip
		
		\item If $X$ is normal, then  $\simplex(\epsilon_1(\xi;x),\ldots,\epsilon_{n}(\xi;x))\subseteq\sinob{x}{\xi}$.
		In particular $\prod_{i=1}^{n}\epsilon_i(\xi;x)\leq\vol(\xi)$.
		\smallskip
		
		\item When $X$ is normal, the inclusions in (1) and (2) are equalities if and only if $\prod_{i=1}^{n}\epsilon_i(\xi;x)=\vol(\xi)$. In particular if $\prod_{i=1}^n\epsilon_i(\xi;x)\geq\vol(\xi)$, then $\inob{x}{\xi}$ is simplicial.
	\end{enumerate}	
\end{theorem}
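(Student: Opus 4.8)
The plan is to reduce the whole statement to the inclusion in $(1)$ for a big Cartier divisor, and then deduce $(2)$ and $(3)$ quickly. First I would use that $\epsilon_i(\cdot;x)$, $s_i(\cdot;x)$ and $\sinob{x}{\cdot}$ all vary continuously and homogeneously on ${\rm Big}_x(X)$ (Lemma~\ref{lem:propssc}, Theorem~\ref{thm:lm}), together with continuity of $\simplex(t_1,\ldots,t_n)$ in the $t_i$, to assume $\xi=L$ is a big Cartier divisor. The volume statements are then bookkeeping: since $S$ is volume preserving, $\vol(L)=n!\cdot{\rm vol}_{\bb R^n}(\sinob{x}{L})$ by Theorem~\ref{thm:slices}, so each inclusion $\simplex(\cdots)\subseteq\sinob{x}{L}$ yields the corresponding product inequality; and $(2)$ follows from $(1)$ together with the equality $s_i(L;x)=\epsilon_i(L;x)$ of Theorem~\ref{thm:successivejets}. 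So the real content is the inclusion in $(1)$ and the equality criterion in $(3)$.

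For $(1)$ I would use convexity of $\sinob{x}{L}$ and show it contains each of the $n+1$ vertices of $\simplex(s_1(L;x),\ldots,s_n(L;x))$, namely ${\bf 0}$ and $s_i(L;x){\bf e}_i$ for $i=1,\ldots,n$ in the $(\alpha_1,\ldots,\alpha_n)$ coordinates. Unwinding the definition of $S$, the vertex $t{\bf e}_i$ in $\alpha$-coordinates is $S$ of $t({\bf e}_1+{\bf e}_{i+1})$ (for $i<n$), respectively of $t{\bf e}_1$ (for $i=n$), in $\nu$-coordinates. The origin is immediate, since $x\notin{\bf B}_+(L)\supseteq{\bf B}(L)$ gives a section of $mL$ ($m$ divisible) not vanishing at $x$, hence with valuation vector ${\bf 0}$. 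Thus the task reduces to: for every rational $0<t<s_i(L;x)$ produce a point $t({\bf e}_1+{\bf e}_{i+1})$ (resp.\ $t{\bf e}_1$) in $\inob{x}{L}$; taking suprema over such $t$, then closures, then the convex hull with ${\bf 0}$ and the remaining vertices yields $(1)$.

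To build these points I would use the partial jet interpretation. Given rational $0<t<s_i(L;x)$, choose integers $p,q>0$ with $t<p/q<s_i(L;x)$ such that $qL$ $i$-partially separates $p$-jets at $x$; by Remark~\ref{rmk:propsSc}$(2)$ the surjection $H^0(\overline X,q\pi^*L-pE)\to H^0(\Lambda,\cal O_\Lambda(p))$ then holds for a general $(n-i)$-plane $\Lambda\subset E$. I would fix a very general infinitesimal flag $Y_\bullet$ over $x$ so that $Y_i$ lies in this dense open locus for every relevant pair $(p,q)$ and, simultaneously, $\nob{Y_\bullet}{L}=\inob{x}{L}$. Using the surjection, pick a section $s$ of $q\pi^*L-pE$ with $s|_{Y_i}=\ell^{\,p}$, where $\ell$ is a linear form with $\{\ell=0\}=Y_{i+1}$ in $Y_i\simeq\bb P^{n-i}$ (when $i=n$, $Y_n$ is a point and one simply takes $s|_{Y_n}\neq 0$). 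Then $s|_{Y_j}\neq 0$ for all $j\le i$ since its restriction to $Y_i$ is $\ell^{\,p}\neq 0$, so $\ord_x s=p$ and $\nu_2(s)=\cdots=\nu_i(s)=0$; and because $\operatorname{div}(s|_{Y_i})=p\,Y_{i+1}$ one gets $\nu_{i+1}(s)=p$ and then $\nu_{i+2}(s)=\cdots=\nu_n(s)=0$. Hence $\tfrac1q\nu_{Y_\bullet}(s)=\tfrac pq({\bf e}_1+{\bf e}_{i+1})$ (resp.\ $\tfrac pq{\bf e}_1$) lies in $\nob{Y_\bullet}{L}=\inob{x}{L}$, and convexity with the origin produces $t({\bf e}_1+{\bf e}_{i+1})$; applying $S$ finishes $(1)$.

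For $(3)$, with $X$ normal so that $(2)$ holds, I would argue by volume. Equality of the inclusion in $(2)$ gives $\vol(L)=n!\cdot{\rm vol}_{\bb R^n}\simplex(\epsilon_1(L;x),\ldots,\epsilon_n(L;x))=\prod_{i=1}^n\epsilon_i(L;x)$. Conversely, if $\prod\epsilon_i(L;x)=\vol(L)$, then $\simplex(\epsilon_1(L;x),\ldots,\epsilon_n(L;x))\subseteq\sinob{x}{L}$ are compact convex sets of equal, positive, finite $n$-volume (positivity from $\epsilon_i(L;x)>0$ by Lemma~\ref{lem:infsuccessiveproperties} and from $\vol(L)>0$), hence both are full-dimensional and a full-dimensional convex body contained in another of the same volume must equal it; the same holds for $(1)$ since $s_i=\epsilon_i$. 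The final assertion is then automatic: $(2)$ always gives $\prod\epsilon_i(L;x)\le\vol(L)$, so $\prod\epsilon_i(L;x)\ge\vol(L)$ forces equality, whence $\sinob{x}{L}$ is a simplex and $\inob{x}{L}=S^{-1}(\sinob{x}{L})$ is simplicial. The step I expect to be the main obstacle is the construction in the third paragraph: extracting from a mere $i$-partial jet separation statement a single global section whose infinitesimal flag valuation vector is \emph{exactly} $p({\bf e}_1+{\bf e}_{i+1})$, and arranging the flag generically enough that this point lands in the \emph{generic} iNObody rather than in the possibly larger body of a special flag.
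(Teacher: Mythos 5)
Your proposal is correct and follows essentially the same route as the paper: reduce to a Cartier divisor by homogeneity and continuity, exhibit the simplex's vertices as valuation vectors of sections constructed via the partial jet surjections (the paper phrases the condition $s|_{Y_i}=\ell^{\,p}$ equivalently in local coordinates as the jet having the form $F_s+a\,x_i^{\,s}$ with $F_s\in\cal I_{Y_i}$), deduce (2) from Theorem \ref{thm:successivejets}, and get (3) by comparing volumes of nested convex bodies. The step you flagged as the likely obstacle is exactly the paper's argument and works as you describe.
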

\noindent 
The lower bound on volume in $(1)$ or $(2)$ is weaker than $\vol(L)\geq\prod_{i=1}^n\epsilon^{\rm loc}_i(L;x)$ from \cite[Lemma 3.2]{AI}. However, our convex geometric conclusion is new and offers more geometric information about the line bundle $L$ and the tangent cones of all the effective divisors in its numerical class. The inclusion in $(2)$ cannot always be improved to $\simplex(\epsilon^{\rm loc}_1(L;x),\ldots,\epsilon^{\rm loc}_n(L;x))\subseteq\sinob{x}{L}$, as seen in Example \ref{ex:hyper3}. We will strengthen $(2)$ and $(3)$ in Theorem \ref{thm:successiveminimaviaslicewidths} and \ref{cor:simplicialcriterion} respectively, completing the proof of Theorem \ref{thm:introreadsuccessive}.
\begin{proof}
The objects $\inob{x}{\cdot}$, $s_i(\cdot;x)$, $\epsilon_i(\cdot;x)$ are homogeneous, continuous, numerical invariants. Thus for $(1)$ and $(2)$ we may assume that $\xi=L$ is a line bundle.

$(1)$. With $S$ as in \eqref{eq:introS}, and $\inob{x}{L}=S^{-1}\sinob{x}{L}$, we want to prove that 
\[
{\bf 0}, \mu(L;x)\cdotp{\bf e}_1=s_{n}(L;x)\cdotp{\bf e}_1, s_1(L;x)\cdotp ({\bf e}_1+{\bf e}_2), \ldots, s_{n-1}(L;x)\cdotp({\bf e}_1+{\bf e}_n) \in \inob{x}{L} .
\]
By Lemma \ref{lem:B+interpretation} and \cite[Theorem 3.1]{KL17} we get ${\bf 0}\in \inob{x}{L}$. By \cite[Proposition 2.5]{KL17} the same can be said about the second vertex on the list, since our flag is very general.

For the other vertices, by homogeneity and continuity it suffices to show that if $0\leq s\leq S_i(L;x)$ is an integer such that  $H^0(\overline X,L_s)\to H^0(\bb P^{n-i},\cal O(s))$ is surjective for general $\bb P^{n-i}\subset E$,
 then $s\cdotp({\bf e}_1+{\bf e}_{i+1})$ is the valuation of a divisor in $|L_s|$ with respect to a very general infinitesimal flag $Y_{\bullet}$ where $Y_{i}$ is a $\bb P^{n-i}$ as above. For local computations, let $x_1,\ldots,x_n$ be an analytic coordinate set  around $x$ so that as homogeneous coordinates on $E$, we have that $Y_{i+1}$ are given by $x_i=0$ in $Y_{i}$ for all $i\geq 1$.

The divisor $D\in|L_s|$ of a section $\sigma\in H^0(X,L)$ satisfies $\nu_{Y_{\bullet}}(D)=s\cdotp({\bf e}_1+{\bf e}_{i+1})$ if and only if $\sigma\in H^0(X,L\otimes\frak m_x^s)$ and its image in $H^0(E,\cal O(s))\simeq\frak m_x^s/\frak m_x^{s+1}$ has the form $F_s(x_1,\ldots,x_n)+a\cdotp x_{i}^s+\frak m_x^{s+1}$ where $F_s\in \cal I_{Y_{i}}=(x_1,\ldots,x_{i-1})$ is homogeneous of degree $s$, and $a\in\bb C^*$ is nonzero. Equivalently, the further restriction on $Y_{i}$ is $a\cdotp x_{i}^s$. Such a section exists by the surjectivity in the definition of $i$-partial jet separation.

$(2)$. Follows from $(1)$ and Theorem \ref{thm:successivejets}.

$(3)$ The two convex bodies from $(2)$ have the same volume, hence the inclusion forces equality. 
\end{proof}

\noindent From Corollary \ref{cor:slicesborel} and Corollary \ref{cor:borelone} the vertical slices $\inob{x}{\xi}_{\nu_1=t}$ are Borel-fixed and thus satisfy strong polytopal bounds in terms of their widths.


\begin{definition}
	Let $X$ be a complex projective variety, $x\in X$ a smooth point and $\xi\in{\rm Big}_x(X)$. The functions
	\[
  w_i=w_i(\cdot ,\xi,x) \ : \ (0,\mu(\xi;x)) \ \longrightarrow \ \RR_+ 
  \]
 are called the \emph{slice widths} of $\inob{x}{\xi}$.
\end{definition}
\begin{remark}[Properties of slice widths]\label{rmk:slicewidthsproperties}$ $
	
	\begin{enumerate}
		\item The functions $w_i$ are continuous and concave on $(0,\mu(\xi;x))$ by the convexity of $\inob{x}{\xi}$. 
		\smallskip
		
		\item 	We have $w_i(t)\leq t$ for all $2\leq i\leq n$ and $t\in (0,\mu(\xi;x))$. The equality $w_i(t)=t$ holds for $t\leq s_{i-1}(\xi;x)$, by Theorem \ref{thm:lowerbounds}. The continuity and concavity properties extend each function to $t=0$.
		\smallskip
		
		\item 
		The widths are positive and form an increasing sequence: $w_2(t)\leq w_3(t)\leq\ldots\leq w_n(t)$ on the defining interval, as each slice is a full $n-1$-dimensional convex set, which is also Borel-fixed.
			
		\item 
		The existence of the convex global body on $X$ with respect to a very general linear flag $Y_{\bullet}$ in $E$ implies that $w_i(\cdot,\cdot,x)$ is continuous and superadditive on pairs $(\xi,t)$ where $\xi_t$ is big: each slice $\inob{x}{\xi}_{\nu_1=t}$ is the fiber over $(\xi,t)$ of the projection $\Delta_{Y_{\bullet}}(X)\subset N^1(X)_{\bb R}\times\bb R^n_+\to N^1(X)_{\bb R}\times\bb R_+$ given by $(\zeta,{\bf v})\mapsto(\zeta,\nu_1)$.
	\end{enumerate}
\end{remark}

Next we show that we can improve the inequalities $w_i(t)\leq t$ of Remark \ref{rmk:slicewidthsproperties} when precise knowledge of certain multiplicities along components of ${\bf B}_+(L_t)$ is known.

\begin{lemma}\label{lem:boundsfrommultiplicity}
Under the assumptions and notation from Theorem \ref{thm:slices}, assume $X$ is normal. Suppose for some rational number $t\in (0,\mu(L;x))$ the graded algebra $A_{\bullet}^t$ satisfies the property that $\mult_{Z}(||A_{\bullet}^t||)\geq \delta$ for a subvariety $Z\subset E\subset \overline X$ of dimension $d\leq n-2$.  Then
\[
\inob{x}{L}_{\nu_1=t} \ \subseteq \RR^{n-1}_+\cap \{\nu_2+\ldots+\nu_{d+2}\leq t-\delta\}\ .
\]
In particular, $w_{d+2}(t)\leq t-\delta$. Similar result when  $\mult_{Z}(||L_t||)\geq\delta$.
	\end{lemma}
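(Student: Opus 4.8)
\emph{Plan.} The idea is to recast the stated inclusion as a bound on one coordinate width of the slice, and then to extract $\delta$ from an inductive ``divide and restrict'' computation of the flag valuation along a well-chosen very general flag. First, by Theorem \ref{thm:slices}.(1) the slice $\inob{x}{L}_{\nu_1=t}$ is the Newton--Okounkov body $\nob{Y_\bullet}{A^t_\bullet}$ of $A^t_\bullet$ on $E\simeq\PP^{n-1}$ with respect to a very general flag $Y_\bullet$, and by Corollary \ref{cor:slicesborel} it is Borel-fixed in the coordinates $\nu_2,\dots,\nu_n$. Proposition \ref{prop:borelfixedshapesproperties} then gives $\inob{x}{L}_{\nu_1=t}\subseteq\polytope(w_2(t),\dots,w_n(t))$, whose defining inequality in degree $d+2$ is exactly $\nu_2+\dots+\nu_{d+2}\le w_{d+2}(t)$, so it suffices to prove $w_{d+2}(t)\le t-\delta$. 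Since $\inob{x}{L}_{\nu_1=t}$, hence $w_{d+2}(t)$, is independent of the very general flag, and $w_{d+2}(t)=\sup_m\frac1m\max\{\nu_{d+2}(f)\mid 0\ne f\in A^t_m\}$ by the remark after Corollary \ref{cor:borelone}, the claim reduces to proving $\nu_{d+2}(f)\le m(t-\delta)$ for every $m$ with $mt\in\NN$, every $0\ne f\in A^t_m$, and one fixed very general flag $Y_\bullet\colon E=Y_1\supset Y_2\supset\cdots\supset Y_n$ on $E$, writing $\nu=\nu_{Y_\bullet}$.

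Next I would unwind the hypothesis. The condition $\mult_Z(\|A^t_\bullet\|)\ge\delta$ says $\lim_m\frac1m\ord_Z(\mathfrak b_m)\ge\delta$ for the base ideals $\mathfrak b_m$ of $A^t_m$; by subadditivity of $\ord_Z(\mathfrak b_\bullet)$ this limit is an infimum, so $\ord_Z(\mathfrak b_m)\ge m\delta$ and every $f\in A^t_m$ vanishes along $Z$ to order $\ge m\delta$. Since the slice and its widths do not depend on the very general flag, I may additionally take $Y_\bullet$ in general position relative to the irreducible $d$-dimensional subvariety $Z$. Because $d\le n-2$, the element $Y_{d+1}$ has dimension $n-d-1\ge 1$ and codimension $d$ in $E$, so $Y_{d+1}\cap Z$ is nonempty and finite by B\'ezout; fix $p\in Y_{d+1}\cap Z$. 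Writing $Y_i=\{G_2=\cdots=G_i=0\}$ for general linear forms $G_2,\dots,G_n$ on $E$, one has $G_i(p)=0$ for $2\le i\le d+1$, while $G_{d+2}$, being general, misses the finite set $Y_{d+1}\cap Z$, so $G_{d+2}(p)\ne 0$.

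The inductive estimate is the core step. The valuation of $f$ is computed by $\tilde f_1=f$ (a form of degree $mt$ on $E=Y_1$) and, for $i\ge 1$, $\nu_{i+1}(f)=\ord_{Y_{i+1}}(\tilde f_i)$ together with $\tilde f_{i+1}=\bigl(\tilde f_i/g_{i+1}^{\nu_{i+1}(f)}\bigr)|_{Y_{i+1}}$, where $g_{i+1}=G_{i+1}|_{Y_i}$ cuts $Y_{i+1}$ out of $Y_i$; each $\tilde f_i$ is a nonzero form on $Y_i\simeq\PP^{n-i}$ of degree $mt-\sum_{j=2}^i\nu_j(f)$. For $i\le d$ we have $p\in Y_i$ and $g_{i+1}(p)=0$, so $g_{i+1}$ is a nonzero linear function on the smooth $Y_i$ with $\ord_p(g_{i+1})=1$; hence dividing by $g_{i+1}^{\nu_{i+1}(f)}$ drops the order at $p$ by $\nu_{i+1}(f)$, and restricting to the smooth $Y_{i+1}\ni p$ does not decrease it, so $\ord_p(\tilde f_{i+1})\ge\ord_p(\tilde f_i)-\nu_{i+1}(f)$. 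Iterating for $i=1,\dots,d$ and using $\ord_p(f)\ge\ord_Z(f)\ge m\delta$ (upper semicontinuity of multiplicity and $p\in Z$), I get $\ord_p(\tilde f_{d+1})\ge m\delta-\sum_{j=2}^{d+1}\nu_j(f)$. Now $\nu_{d+2}(f)=\ord_{Y_{d+2}}(\tilde f_{d+1})$ forces $g_{d+2}^{\nu_{d+2}(f)}\mid\tilde f_{d+1}$; writing $\tilde f_{d+1}=g_{d+2}^{\nu_{d+2}(f)}h$ and using $G_{d+2}(p)\ne 0$ one has $\ord_p(h)=\ord_p(\tilde f_{d+1})$, and since a form vanishes at a point to order at most its degree,
\[
mt-\sum_{j=2}^{d+1}\nu_j(f)-\nu_{d+2}(f)\;=\;\deg h\;\ge\;\ord_p(h)\;\ge\;m\delta-\sum_{j=2}^{d+1}\nu_j(f),
\]
i.e. $\nu_{d+2}(f)\le m(t-\delta)$. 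This proves $w_{d+2}(t)\le t-\delta$ and the displayed inclusion.

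Finally, for the last assertion: any $s\in H^0(\overline X,mL_t)$ vanishing along $Z\subset E$ to order $\ge k$ on $\overline X$ restricts to a section of $\mathcal O_E(mt)$ vanishing along $Z$ to order $\ge k$, whence $\mult_Z(\|A^t_\bullet\|)\ge\mult_Z(\|L_t\|)\ge\delta$ and the case already proved applies. I expect the third paragraph to be the main obstacle: one must arrange the flag so that the iterated valuation loses at $p$ precisely the linear amounts $\nu_j(f)$ while $Y_{d+1}$ still meets $Z$ and $Y_{d+2}$ avoids $p$; the remaining steps are bookkeeping with the results recalled above.
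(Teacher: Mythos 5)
Your proof is correct, and it reaches the same core computation as the paper's — a very general flag with $Y_{d+1}\cap Z$ finite and nonempty and $Y_{d+2}\cap Z=\varnothing$, an inductive descent through the flag, and a degree-versus-multiplicity (B\'ezout) comparison at the bottom — but the bookkeeping is genuinely different. The paper tracks $\mult_{Z\cap Y_i}$ of the successive divisors $D_i$: since $Y_{i+1}$ contains no component of $Z\cap Y_i$, subtracting $\nu_{i+1}Y_{i+1}$ does not lower that multiplicity, so the bound $\geq\delta$ propagates unchanged and B\'ezout on $Y_{d+1}$ yields the partial-sum inequality $\nu_2+\dots+\nu_{d+2}\leq t-\delta$ in one stroke, with no appeal to Borel-fixedness. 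You instead track $\ord_p$ at a single point $p\in Z\cap Y_{d+1}$, where the order genuinely drops by $\nu_{i+1}$ at each division (since $g_{i+1}(p)=0$); the accumulated drops cancel against the degree, so your B\'ezout step only delivers the single-width bound $\nu_{d+2}(f)\leq m(t-\delta)$, and you must then invoke Corollary \ref{cor:slicesborel} together with Proposition \ref{prop:borelfixedshapesproperties} to upgrade this to the stated partial-sum inclusion. This is legitimate and non-circular (the Borel-fixed property of the slices is established independently of this lemma), and it has the mild virtue of isolating exactly which width is being controlled; the cost is that your argument is tied to very general flags and to the Borel-fixed machinery, whereas the paper's direct multiplicity-along-$Z\cap Y_i$ computation proves the partial-sum bound intrinsically and only needs the stated transversality of the flag with $Z$. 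Your reductions at the ends (from $\mult_Z(\|A^t_\bullet\|)\geq\delta$ to $\ord_Z(f)\geq m\delta$ via Fekete, and from $\mult_Z(\|L_t\|)$ to $\mult_Z(\|A^t_\bullet\|)$ by restriction) are both sound.
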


\begin{proof}
It is enough to show the statement for the rational valuative points in the vertical slice, since they form a dense subset. Let $Y_{\bullet}$ be a very general infinitesimal linear flag such that $Y_{d+1}\cap Z$ is finitely many points and $Y_i\cap Z=\varnothing$ for $i>d+1$.
		
		Let $D$ be an effective $\bb Q$-Cartier divisor, numerically equivalent with $L$, such that $\nu_{Y_{\bullet}}(D)=(t,\nu_2,\ldots,\nu_n)$. Let $\overline{D}$ be the strict transform of $D$ on $\overline X$, numerically equivalent to $\xi_t$. Its restriction $D_1=\overline{D}|_E$ on $E\simeq\bb P^{n-1}$ is a hypersurface of degree $t$, and the valuation vector $\nu_{Y_{\bullet}}(D_1)=(\nu_2,\ldots,\nu_n)$ with respect to the truncated flag $Y_{\bullet}$ is constructed as in $(\ref{eq:nobdef})$. The condition on $A^t_{\bullet}$ implies $\mult_ZD_1\geq\delta$. 
The result follows by checking inductively that for $i\leq d$ and $D_{i}$ on $Y_{i}$ as in \eqref{eq:nobdef} we have
\begin{align*}
t-\nu_2-\ldots-\nu_{i+2}=\deg_{Y_{i+1}} \big(D_{i+1}-\nu_{i+2}Y_{i+2}\big)&\geq 
\mult_{Z\cap Y_{i+1}}\big(D_{i+1}-\nu_{i+2}Y_{i+2})\\
& =\mult_{Z\cap Y_{i+1}}(D_{i+1})\geq\mult_{Z\cap Y_i} \big(D_i-\nu_{i+1}(D)Y_{i+1}\big)\geq\delta\ ,
\end{align*}
The first inequality is B\' ezout's Theorem in $\bb P^{n-i-1}\simeq Y_{i+1}$.
For the second equality note that $Y_{i+1}$ does not contain any of the components of $Y_i\cap Z$ by our very general choices and thus $D_{i+1}$ and $D_{i}$ have equal multiplicity at the generic point(s) of $Y_{i+1}\cap Z$ for all $i\leq d$. The final inequality is to due to the fact that multiplicity does not decrease by restriction.
\end{proof}

We are ready to explain how to read the infinitesimal successive minima from the generic iNObodies, completing the proof of Theorem \ref{thm:mintroslicebounds}. 
\begin{theorem}\label{thm:successiveminimaviaslicewidths}
	Let $X$ be a normal complex projective variety of dimension $n$, let $x\in X$ be a smooth point, and $\xi\in{\rm Big}_x(X)$. Then for every $i=1,\ldots ,n-1$ we have
	\[
	\epsilon_{i}(\xi;x)=\max\{t\in[0,\mu(\xi;x)]\ \mid\ w_{i+1}(t)=t\}\ ,
	\]
or equivalently, $\epsilon_i\deq \epsilon_i(\xi;x)=\max\{t>0 \mid t\cdot({\bf e}_1+{\bf e}_{i+1})\in\inob{x}{\xi}\}$. Consequently, $\simplex(t_1,\ldots,t_{n})\subseteq\sinob{x}{\xi}$ if and only if $t_i\leq\epsilon_i$ for all $i$. In particular, $\simplex(\epsilon_1,\ldots,\epsilon_{n})$ is the largest simplex contained in $\sinob{x}{\xi}$. 
\end{theorem}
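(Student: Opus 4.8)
The plan is to prove the chain of equivalences in Theorem~\ref{thm:successiveminimaviaslicewidths} by combining the lower bound already established in Theorem~\ref{thm:lowerbounds}.(2) with a matching upper bound obtained from an inductive B\'ezout argument run through the vertical slices $\inob{x}{\xi}_{\nu_1=t}$. First I would fix $1\le i\le n-1$ and translate the two formulations into one another: since $\inob{x}{\xi}=S^{-1}(\sinob{x}{\xi})$, the segment from the origin to $\epsilon_i({\bf e}_1+{\bf e}_{i+1})$ lies in $\inob{x}{\xi}$ exactly when $w_{i+1}(t)=t$ for all $t\le\epsilon_i$, which by concavity of $w_{i+1}$ (Remark~\ref{rmk:slicewidthsproperties}.(1)) and the inequality $w_{i+1}(t)\le t$ (Remark~\ref{rmk:slicewidthsproperties}.(2)) is equivalent to $w_{i+1}(t)=t$ at the single endpoint $t=\epsilon_i$. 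So it suffices to prove the displayed equality of maxima.

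For the inequality $\epsilon_i(\xi;x)\le\max\{t\mid w_{i+1}(t)=t\}$: by Theorem~\ref{thm:successivejets} and Theorem~\ref{thm:lowerbounds}.(2) we already know $\simplex(\epsilon_1,\ldots,\epsilon_n)\subseteq\sinob{x}{\xi}$, hence $\epsilon_i({\bf e}_1+{\bf e}_{i+1})\in\inob{x}{\xi}$ and so $w_{i+1}(\epsilon_i)=\epsilon_i$; this direction is immediate. The substance is the reverse inequality: I must show that if $t>\epsilon_i(\xi;x)$ then $w_{i+1}(t)<t$. By definition of $\epsilon_i$ we have $\dim{\bf B}_+(\xi_t)\cap E\ge i-1$, and by Lemma~\ref{lem:B-ray}.(3) together with Lemma~\ref{lem:B+interpretation}.(3) I may pass to a slightly larger rational $t'>\epsilon_i$ with $t'<t$ and with ${\bf B}_+(\xi_{t'})\cap E={\bf B}(L_{t'})\cap E$ of dimension $\ge i-1$ for a Cartier representative $L$ (using homogeneity to reduce to the Cartier case and continuity of $\inob{x}{\cdot}$). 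Let $Z\subseteq E$ be an irreducible component of this base locus of dimension $d:=i-1$. The key point is that $Z$ forces a multiplicity drop: every divisor in (a sufficiently divisible multiple of) $|L_{t'}|$, restricted to $E$, contains $Z$ in its support, so $\mult_Z(\|A^{t'}_\bullet\|)\ge\delta$ for some $\delta>0$ — in fact one can take $\delta$ to be the generic multiplicity along $Z$ of the tangent-cone divisor, which is positive precisely because $Z\subseteq{\bf B}(L_{t'})\cap E$. Now apply Lemma~\ref{lem:boundsfrommultiplicity} with this $Z$ of dimension $d=i-1$: it yields $w_{(d+1)+1}(t')=w_{i+1}(t')\le t'-\delta<t'$, and hence $w_{i+1}(t)<t$ by monotonicity/concavity considerations (or directly, since $w_{i+1}(t')<t'$ already shows $t'$ is not in the set, and concavity of $w_{i+1}(t)-t$ — which is $0$ at $0$ and negative at $t'$ — forces it negative for all $t\ge t'$, in particular at $t$). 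Running this over the (dense set of) rational $t'\in(\epsilon_i,t)$ gives the claim.

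The main obstacle I expect is the clean identification of the multiplicity $\delta$ and the verification that Lemma~\ref{lem:boundsfrommultiplicity} applies with the correct dimension bookkeeping: I need $Z$ to have dimension exactly $i-1\le n-2$ (the case $d=n-2$, i.e.\ $i=n-1$, is the boundary case where the lemma still applies), and I need to know that $\mult_Z(\|A^{t'}_\bullet\|)$ is bounded below by a positive constant independent of the truncation level $m$ — this is where the asymptotic multiplier-ideal / Okounkov-body formalism of \cite{LM09,ELMNP09} enters, giving that $Z\subseteq{\bf B}(L_{t'})\cap E$ implies a uniform positive asymptotic multiplicity along $Z$ of the restricted system. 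Once the uniform $\delta>0$ is in hand, Lemma~\ref{lem:boundsfrommultiplicity} does the rest mechanically. Finally, the last two sentences of the theorem (the characterization of which simplices $\simplex(t_1,\ldots,t_n)$ embed, and maximality of $\simplex(\epsilon_1,\ldots,\epsilon_n)$) follow formally: $\simplex(t_1,\ldots,t_n)\subseteq\sinob{x}{\xi}$ iff each vertex $t_i{\bf e}_i$ lies in $\sinob{x}{\xi}$ iff (after applying $S^{-1}$, using $S^{-1}({\bf e}_i)={\bf e}_1+{\bf e}_{i+1}$ for $i<n$ and $S^{-1}({\bf e}_n)={\bf e}_1$) $t_i\le\epsilon_i$ for all $i$ — the $i=n$ case being $\mu(\xi;x)=\epsilon_n(\xi;x)$ from Lemma~\ref{lem:seshadriwidth}.(1) — and convexity of $\sinob{x}{\xi}$ upgrades the vertexwise condition to containment of the whole simplex; maximality is then the statement that $\simplex(\epsilon_1,\ldots,\epsilon_n)$ is not properly contained in any larger such simplex, which is exactly the "if and only if".
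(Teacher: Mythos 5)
Your overall architecture matches the paper's: reduce to showing $w_{i+1}(t)<t$ for $t>\epsilon_i(\xi;x)$, extract an irreducible component $Z\subseteq{\bf B}_+(\xi_{t'})\cap E$ of dimension at least $i-1$ for a rational $t'\in(\epsilon_i,t)$, feed a positive asymptotic multiplicity along $Z$ into Lemma \ref{lem:boundsfrommultiplicity}, and let concavity of $w_{i+1}(t)-t$ do the propagation. The formal translations between the three formulations and the final simplex characterization are also handled correctly (modulo quoting the Borel-fixed property of the slices to pass from ``$w_{i+1}(t)=t$'' to ``$t({\bf e}_1+{\bf e}_{i+1})\in\inob{x}{\xi}$'').

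However, the step you yourself flag as the main obstacle is resolved incorrectly, and this is a genuine gap. You claim that $Z\subseteq{\bf B}(L_{t'})\cap E$ (or ${\bf B}_+(L_{t'})\cap E$) yields a \emph{uniform} $\delta>0$ with $\mult_Z(\|A^{t'}_\bullet\|)\geq\delta$, ``because every divisor in $|mL_{t'}|$ contains $Z$ in its support.'' That only gives $\mult_Z|mL_{t'}|\geq 1$ for each $m$, hence an asymptotic multiplicity $\lim_m\frac 1m\mult_Z|mL_{t'}|$ that may well be $0$: membership in the stable base locus does not control the asymptotic order of vanishing (think of a big and nef divisor whose augmented base locus is a curve -- all asymptotic multiplicities vanish). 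The locus where the asymptotic multiplicity is positive is precisely the \emph{restricted} base locus ${\bf B}_-$, by \cite[Proposition 2.8]{ELMNP06}. The paper's fix is to shift the parameter: for $t''\in(t',t)$ rational and close to $t'$ one has $Z\subseteq{\bf B}_+(L_{t'})\cap E={\bf B}_-(L_{t''})\cap E$ by Lemma \ref{lem:B-ray}.(3), whence $\delta\deq\mult_Z(\|L_{t''}\|)>0$, and Lemma \ref{lem:boundsfrommultiplicity} applied at $t''$ gives $w_{i+1}(t'')\leq t''-\delta<t''$; concavity then finishes as you describe. Without this shift to ${\bf B}_-$ at a strictly larger parameter, your $\delta$ is not known to be positive and the argument stalls. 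A secondary, smaller point: your reduction from $\bb R$-classes to Cartier representatives ``by homogeneity and continuity'' is too quick, since the statement at a fixed $t$ is a strict inequality that does not obviously pass to limits; the paper handles this by a one-sided perturbation $\xi\rightsquigarrow\xi+A$ with $A$ small ample, using $\inob{x}{\xi}\subseteq\inob{x}{\xi+A}$ and the strict superadditivity $\epsilon_i(\xi;x)<\epsilon_i(\xi+A;x)$.
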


\begin{proof}
As $\epsilon_i=s_i\deq s_i(\xi;x)$ by Theorem \ref{thm:successivejets}, we first prove that $w_{i+1}(s_i)=s_i$. By Remark \ref{rmk:slicewidthsproperties}  it suffices to show the reverse inequality. So, let $S$ as in \eqref{eq:introS}. Theorem \ref{thm:lowerbounds} says that $S^{-1}\simplex(s_0,\ldots,s_{n-1})\subseteq\inob{x}{\xi}$.  As $s_i\cdotp({\bf e}_1+{\bf e}_{i+1})$ is in the simplex on the left, then $s_i\cdotp({\bf e}_1+{\bf e}_{i+1})\in\inob{x}{\xi}_{s_i}$, and so $w_{i+1}({s_i})\geq s_i$.

	It remains to show that $w_{i+1}(t)<t$ for $t\in(\epsilon_i,\mu)$. 
	Let $A\in N^1(X)_{\bb R}$ be an ample class such that $\xi+A$ is a $\bb Q$-class. We have $\inob{x}{\xi}\subset\inob{x}{\xi}+\inob{x}{A}\subseteq\inob{x}{\xi+A}$, which implies $w_{i+1}(t;\xi,x)\leq w_{i+1}(t;\xi+A,x)$. Furthermore, $\epsilon_i(\xi;x)<\epsilon_i(\xi;x)+\epsilon_i(A;x)\leq\epsilon_i(\xi+A;x)$, where the latter is due to Lemma \ref{lem:infsuccessiveproperties}. Since $\epsilon_i(\cdot;x)$ is continuous on ${\rm Big}_x(X)$,  then choosing $A$ sufficiently small we may assume $\epsilon_i(\xi;x)\leq\epsilon_i(\xi+A;x)<t$. So, by showing that $w_{i+1}(t;\xi+A,x)<t$, we also get $w_{i+1}(\xi;x)<t$. Then up to replacing $\xi$ by $\xi+A$, we may assume that $\xi$ is a $\bb Q$-class.  Let $L$ be a $\bb Q$-Cartier $\bb Q$-divisor that represents it. 
	
	Assume for a contradiction that $w_{i+1}(t)=t$ for a fixed $t\in(\epsilon_i,\mu)$. By concavity $w_{i+1}(t')=t'$ for all $\epsilon_i(L;x)<t'<t$. Since $t'>\epsilon_i(L;x)$, by definition we have $\textup{dim}(\Bplus(L_{t'})\cap E)\geq i-1$. Let $Z$ be a component of ${\bf B}_+(L_{t'})\cap E$ with $\dim Z\geq i-1$. For all $t''\in (t',t)$ with $t''$ sufficiently close to $t'$ we have that $Z$ is contained of ${\bf B}_-(L_{t''})\cap E={\bf B}_+(L_{t'})\cap E$, with the latter equality due to Lemma \ref{lem:B-ray}.$(3)$. Thus $Z\subseteq {\bf B}_-(L_{t''})$ and assuming $t''$ is rational, this implies that $\delta\deq\mult_Z(||L_{t''}||)>0$ by \cite[Proposition 2.8]{ELMNP06}. Applying Lemma \ref{lem:boundsfrommultiplicity} to this yields $w_{i+1}(t'')\leq t''-\delta<t''$, getting us the final contradiction.
	
For the second conclusion use Theorem \ref{thm:lowerbounds} to deduce that $\epsilon_i\cdot({\bf e}_1+{\bf e}_{i+1})\in\inob{x}{\xi}$. If $t>\epsilon_i$, then $w_{i+1}(t)<t$ by the first part of the proof. In particular $t\cdot ({\bf e}_1+{\bf e}_{i+1})$ cannot be contained in $\inob{x}{L}$. Applying $S$ the second conclusion implies easily the last two and this finishes the proof.
\end{proof}

As a consequence of Theorem \ref{thm:successiveminimaviaslicewidths}, we characterize when the generic iNObody $\inob{x}{L}$ is simplicial, improving the sufficient condition from Theorem \ref{thm:lowerbounds}.

\begin{corollary}[Simplicial characterization of generic iNObodies]\label{cor:simplicialcriterion}
	Under the assumptions and notation of Theorem \ref{thm:successiveminimaviaslicewidths}, the following statements are equivalent:
	\begin{enumerate}[(1)]
		\item $\inob{x}{\xi}$ is simplicial;
		\smallskip
		
		\item $\sinob{x}{\xi}=\simplex(\epsilon_1(\xi;x),\ldots,\epsilon_{n}(\xi;x))$;
		\smallskip
		
		\item $\vol(\xi)=\prod_{i=1}^{n}\epsilon_i(\xi;x)$.
	\end{enumerate} 
In this case we also necessarily have that $\epsilon_i(\xi;x)=\epsilon^{\rm loc}_{i}(\xi;x)$ for all $1\leq i\leq n$, connecting the local and infinitesimal successive minima.
\end{corollary}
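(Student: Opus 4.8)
The plan is to prove the equivalences through $(2)\Leftrightarrow(3)$, the trivial $(2)\Rightarrow(1)$, and the substantive $(1)\Rightarrow(2)$, and then to derive $\epsilon_i(\xi;x)=\epsilon^{\rm loc}_i(\xi;x)$ from the volume identity in $(3)$. The implication $(2)\Rightarrow(1)$ is immediate, since $\simplex(\epsilon_1(\xi;x),\ldots,\epsilon_n(\xi;x))$ is an $n$-simplex. For $(2)\Leftrightarrow(3)$ I would invoke Theorem~\ref{thm:lowerbounds}.(3): it says exactly that the inclusion $\simplex(\epsilon_1(\xi;x),\ldots,\epsilon_n(\xi;x))\subseteq\sinob{x}{\xi}$ of Theorem~\ref{thm:lowerbounds}.(2) is an equality precisely when $\prod_{i=1}^n\epsilon_i(\xi;x)=\vol(\xi)$; the point is simply that the left-hand simplex has Euclidean volume $\tfrac{1}{n!}\prod_{i=1}^n\epsilon_i(\xi;x)$, while $\vol_{\RR^n}(\sinob{x}{\xi})=\vol_{\RR^n}(\inob{x}{\xi})=\tfrac{1}{n!}\vol(\xi)$ since $S$ is volume preserving and by Theorem~\ref{thm:slices}.(2), so an inclusion of full-dimensional convex bodies of equal volume is an equality.

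The crux is $(1)\Rightarrow(2)$. Because $S$ is a linear isomorphism it carries faces to faces, so $\sinob{x}{\xi}$ is a simplex exactly when $\inob{x}{\xi}$ is, and that simplex is then $n$-dimensional (its volume $\tfrac{1}{n!}\vol(\xi)$ is positive, as $\xi$ is big), hence has exactly $n+1$ vertices. Assume $\sinob{x}{\xi}$ is a simplex $\Sigma$. It contains $\Sigma_0\deq\simplex(\epsilon_1(\xi;x),\ldots,\epsilon_n(\xi;x))$ by Theorem~\ref{thm:lowerbounds}.(2), and I would show that each of the $n+1$ vertices ${\bf 0},\epsilon_1(\xi;x){\bf e}_1,\ldots,\epsilon_n(\xi;x){\bf e}_n$ of $\Sigma_0$ is a vertex of $\Sigma$; since $\Sigma$ has no further vertices the two vertex sets coincide, and hence $\Sigma$, being the convex hull of its vertices, equals $\Sigma_0$, which is $(2)$. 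For ${\bf 0}$ and $\epsilon_n(\xi;x){\bf e}_n=\mu(\xi;x){\bf e}_n$ (using $\epsilon_n(\xi;x)=\mu(\xi;x)$ from Lemma~\ref{lem:seshadriwidth}.(1)): by Theorem~\ref{thm:firstbounds} we have $\Sigma\subseteq\simplex_{\mu(\xi;x)}$, and both points are vertices of $\simplex_{\mu(\xi;x)}$ lying in $\Sigma$, hence are extreme points of $\Sigma$, hence vertices of $\Sigma$. For $\epsilon_i(\xi;x){\bf e}_i$ with $1\leq i\leq n-1$: its $S$-preimage is $\epsilon_i(\xi;x)({\bf e}_1+{\bf e}_{i+1})$, and by the last assertion of Theorem~\ref{thm:successiveminimaviaslicewidths} the segment from ${\bf 0}$ to that point is an edge of $\inob{x}{\xi}$; applying $S$, the segment from ${\bf 0}$ to $\epsilon_i(\xi;x){\bf e}_i$ is an edge of $\Sigma$, so $\epsilon_i(\xi;x){\bf e}_i$ is a vertex of $\Sigma$. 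This settles the claim.

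It remains to compare the local and infinitesimal successive minima when $(1)$--$(3)$ hold. Using $\epsilon_i(\xi;x)\leq\epsilon^{\rm loc}_i(\xi;x)$ from Lemma~\ref{lem:successiveproperties}.(3), the Ambro--Ito volume bound $\vol(\xi)\geq\prod_{i=1}^n\epsilon^{\rm loc}_i(\xi;x)$ of \cite[Lemma~3.2]{AI}, and the identity in $(3)$, we obtain
\[
\prod_{i=1}^n\epsilon_i(\xi;x)\ =\ \vol(\xi)\ \geq\ \prod_{i=1}^n\epsilon^{\rm loc}_i(\xi;x)\ \geq\ \prod_{i=1}^n\epsilon_i(\xi;x)\ ,
\]
so all three products are equal; since $\epsilon^{\rm loc}_i(\xi;x)\geq\epsilon_i(\xi;x)>0$ for every $i$ (positivity by Lemma~\ref{lem:infsuccessiveproperties}.(1)), the equality of the products forces $\epsilon^{\rm loc}_i(\xi;x)=\epsilon_i(\xi;x)$ for all $1\leq i\leq n$. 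The one genuinely delicate step is $(1)\Rightarrow(2)$: all the difficulty is concentrated in verifying that the $n+1$ vertices of $\Sigma_0$ are honest vertices of $\Sigma$, and there the work is carried out by the edge description in Theorem~\ref{thm:successiveminimaviaslicewidths} together with the outer bound $\sinob{x}{\xi}\subseteq\simplex_{\mu(\xi;x)}$ of Theorem~\ref{thm:firstbounds}; everything else is volume bookkeeping.
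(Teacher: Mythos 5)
Your proof is correct, and it follows the paper's own route for most of the statement: $(2)\Leftrightarrow(3)$ via Theorem \ref{thm:lowerbounds}.(3), and the deduction of $\epsilon_i(\xi;x)=\epsilon^{\rm loc}_i(\xi;x)$ by squeezing $\prod_i\epsilon_i^{\rm loc}(\xi;x)$ between $\vol(\xi)=\prod_i\epsilon_i(\xi;x)$ and $\prod_i\epsilon_i(\xi;x)$ are exactly the paper's arguments. The only divergence is in $(1)\Rightarrow(2)$. The paper first uses the sandwich $\simplex(\epsilon_1,\ldots,\epsilon_n)\subseteq\sinob{x}{\xi}\subseteq\simplex_{\mu}$ to conclude that a simplicial $\sinob{x}{\xi}$ must be a coordinate simplex $\simplex(t_1,\ldots,t_n)$ with $t_i\in[\epsilon_i,\mu]$ (implicitly: the tangent cone of the simplex at its vertex $\bf 0$ is pinched between $\bb R^n_+$ and a simplicial cone containing all coordinate rays, so its edges at $\bf 0$ are the coordinate axes), and then invokes the maximality statement of Theorem \ref{thm:successiveminimaviaslicewidths} to force $t_i=\epsilon_i$. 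You instead identify all $n+1$ vertices of the simplex directly: $\bf 0$ and $\mu{\bf e}_n$ as extreme points inherited from $\simplex_\mu$, and $\epsilon_i{\bf e}_i$ ($i\leq n-1$) as endpoints of edges. Both finishes are sound and of comparable length; yours trades the tangent-cone observation for the edge statement. One small correction: the fact that the segment from the origin to $\epsilon_i(\xi;x)({\bf e}_1+{\bf e}_{i+1})$ is an edge of $\inob{x}{\xi}$ is not the last assertion of Theorem \ref{thm:successiveminimaviaslicewidths} (which is the "largest simplex" claim, insufficient by itself for your purpose); it is stated in Theorem \ref{thm:mintroslicebounds}.(3) and follows from $w_{i+1}(t)<t$ for $t>\epsilon_i(\xi;x)$ together with the supporting hyperplane $\nu_{i+1}=\nu_1$ of $\inob{x}{\xi}$. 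With the citation fixed, the argument is complete.
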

\noindent We will see several examples of this phenomenon in Section $6$.
\begin{proof}
The equivalence of $(2)$ and $(3)$ is proved in Theorem \ref{thm:lowerbounds}. Moreover, we have the inclusions 
\[
\simplex(\epsilon_1,\ldots,\epsilon_{n})\subseteq\sinob{x}{\xi}\subseteq\simplex\,_{\mu}
\]
from Theorem \ref{thm:lowerbounds} and Theorem \ref{thm:firstbounds} respectively. They imply that $\sinob{x}{\xi}$ is simplicial if and only if it is of form $\simplex(t_1,\ldots,t_{n})$ for some $t_i\in[\epsilon_i,\mu]$. 
The equivalence of $(1)$ and $(2)$ follows from Theorem \ref{thm:successiveminimaviaslicewidths}.
	
The last statement is a consequence of the inequality $\vol(\xi)\geq \prod_{i=1}^n\epsilon^{\rm loc}_i(\xi;x)$ of \cite[Lemma 3.2]{AI} and from $\epsilon^{\rm loc}_i(\xi;x)\geq\epsilon_{i}$ for all $1\leq i\leq n$ in Lemma \ref{lem:successiveproperties}.$(3)$.
\end{proof}

\subsection{Bounds on generic iNObodies at very general points}
Recall that a property of points of $X$ is \emph{very general} if it holds away from a countable union of proper closed subvarieties.

An important feature when working at very general points is the ability to differentiate sections. Using it we extend the Borel-fixed property from the slices of the tilted body to the entire straightened up body. As result we find a global polytopal upper bound on the generic straightened up iNObody in terms of infinitesimal successive minima.
In order to do so, we first discuss differentiation techniques at very general points, and more precisely the effect on first nonvanishing jets of sections.

Let $X$ be a complex projective variety of dimension $n$, and $L$ a big divisor on $X$. Let ${\rm pr}_1, {\rm pr}_2:X\times X\to X$  be the two projections. 
Let $x\in X$ be a very general point such that the natural morphisms 
\[
{\rm pr}_{1*}(\cal I_{\Delta}^p(q\cdot {\rm pr}_2^*L))(x)\to H^0(X,\frak m_x^p(qL))\]
from the fiber at $x$ of the sheaves on the left are isomorphisms for all integers $p$ and $q$. This is where very generality is used in arguments concerning differentiation of sections.

Fix a nonzero section $s\in |qL|$ with ${\rm ord}_x(s)=p$. Let $U\subseteq X$ be an affine Zariski open neighborhood of $x$. Then $s$ admits a (nonunique) lift $\widetilde s\in H^0(U\otimes X,\cal I_{\Delta}^p(q\cdot{\rm pr}_2^*L))$.
For $u\in U$, denote $X_u={\rm pr}_1^{-1}\{u\}=\{u\}\times X$, and $s_u=\widetilde s|_{X_u}$. Since $s_x=s$ has order $p$ at $x$ by assumption and ${\rm ord}_{\Delta}(\widetilde s)\geq p$, we deduce that the latter inequality is an equality and furthermore ${\rm ord}_u(s_u)=p$ for general $u\in U$.

Any differential operator $D$ on $U$ acts on $H^0(U\times X,q\cdot{\rm pr}_2^*L)$. The idea is that the transition functions of the line bundle ${\rm pr}_2^*L^{\otimes q}$ are constant with respect to the first $U$ factor.
For details see \cite[Section 2]{EKL95} (or \cite[Section 5.2.B]{Laz04}).
To be more precise, let $(u_1,\ldots,u_n)$ be local analytic coordinates around a fixed very general $u_0$ on $U$, and denote by $(x_1,\ldots,x_n)$ the same local coordinates thought of in the second factor $X$ in $U\times X$. Locally analytically around $(u_0,u_0)$ we have that $\cal I_{\Delta}=(x_1-u_1,\ldots,x_n-u_n)$, and
	\[
\widetilde s\ =\ \sum_{|b|=p}(x-u)^bP_b(u)+R(x,u) \ ,
\] 
where $P_b(u)$ are holomorphic, and $R(x,u)\in\cal I_{\Delta}^{p+1}$.  We denoted here by $(x-u)^b\deq (x_1-u_1)^{b_1}\cdots(x_n-u_n)^{b_n}$ and $|b|\deq\sum_{i=1}^nb_i$. Note that the locus where $P_b(u)$ are not simultaneously zero is the locus where ${\rm ord}_us_u=p$, thus it is nonempty and Zariski open. Consequently, the $p$-th jet of $s_{u_0}$ at $x=u_0$,
\[
J_{u_0}^p(s_{u_0})\ = \ \sum_{|b|=p}P_b(u_0)(x-u_0)^b\in H^0(E_{u_0},\cal O(p)) 
\] 
is the first nonvanishing jet of $s_{u_0}$. Here the $x_i-u_{0i}$ are seen as homogeneous coordinates in the exceptional divisor $E_{u_0}$ resulting from blowing-up $u_0$.
On $U$ we consider an arbitrary differential operator $D$ of order at most $m$. In our local analytic coordinates around $u_0$ we can write it as
\[
D\ = \ \sum_{|v|=m}c_{v}(u)\partial_u^v+B(u),
\] 
where $c_{v}(u)$ are holomorphic, $\partial_u^v\deq \frac{\partial^m}{\partial u_1^{v_1}\cdots\partial u_n^{v_n}}$, and $B(u)$ has order at most $m-1$. Then
\[
D\widetilde s\ = \ \sum\nolimits_{|b|=p,|v|=m}P_b(u)c_{v}(u)\partial_u^v(x-u)^b+Q(x,u),
\]
where $Q(x,u)\in\cal I_{\Delta}^{p-m+1}$. In particular we recover that $D\widetilde s\in H^0(U\times X,\cal I^{p-m}_{\Delta}(q\cdot{\rm pr}_2^*L))$. The corresponding $p-m$ jet of $D\widetilde s|_{X_{u_0}}$ at $x=u_0\in U$ is 
\begin{align*} \sum\nolimits_{|b|=p,|v|=m}P_b(u_0)c_{v}(u_0)\partial_u^v|_{u=u_0}(x-u)^b&=
	(-1)^m\sum\nolimits_{|b|=p,|v|=m}P_b(u_0)c_{v}(u_0)\partial_x^v|_{x=u_0}(x-u_0)^b\\
	&=(-1)^m(\sum\nolimits_{|v|=m}c_{v}(u_0)\partial_x^v|_{x=u_0})\sum\nolimits_{|b|=p}P_b(u_0)(x-u_0)^b\\
	&=(-1)^m(\sum\nolimits_{|v|=m}c_{v}(u_0)\partial_x^v|_{x=u_0})J^p_{u_0}(s_{u_0})
\end{align*} 
On the right, $\partial_x^{v}|_{x=u_0}:H^0(E_{u_0},\cal O(p))\to H^0(E_{u_0},\cal O(p-m))$ are seen as linear transformations. The index $x=u_0$ marks that we work over $u_0$.

Since the lift $\widetilde s$ of $s$ is non-unique, a priori we do not have a well-defined action of the operator on sections of $qL$. Our calculation above shows that the situation improves when we consider the first nonvanishing jet of $s$. More precisely we proved the following:

\begin{proposition}[Differential operators acting on jets]\label{prop:differentiatejets}
	With notation as above, let $x_0\in X$ be a very general point and $D$ a differential operator of order at most $m$ defined around $x_0$. If $s\in |qL|$ is nonzero and ${\rm ord}_{x_0}(s)=p$, then for every lift $\widetilde s$ of $s$ to a section of $\cal I_{\Delta}^p(q\cdot{\rm pr}_2^*L)$ defined around $\{x_0\}\times X$, we have \[J_{x_0}^{p-m}((D\widetilde s)|_{\{x_0\}\times X})=(-1)^mD_{m,x_0}(J_{x_0}^ps)\ ,\]
	where $D_{m,x_0}=\sum_{|v|=m}c_v(x_0)\partial^v_x|_{x=x_0}$ denotes the homogeneous order $m$ part of $D$ at $x_0$. If this $p-m$ jet is nonzero, it is the first nonvanishing jet of $(D\widetilde s)|_{X_{x_0}}$. In particular $(D\widetilde s)|_{X_{x_0}}$ is a section of $|qL|$ with order exactly $p-m$ at $x_0$.
\end{proposition}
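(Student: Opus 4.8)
The plan is to read the statement off the local computation carried out just above, once two points are pinned down: that the auxiliary differential operator genuinely acts on sections of $q\cdot{\rm pr}_2^*L$ and lowers the order of vanishing along $\Delta$ by at most its own order, and that the resulting jet formula is independent of the chosen lift $\widetilde s$. The non-uniqueness of $\widetilde s$ is precisely why the bare expression ``$D$ applied to $s$'' has no intrinsic meaning, so establishing lift-independence of the right-hand side is what upgrades the identity to a well-posed statement about $J^p_{x_0}s$ alone.

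Concretely, first I would fix local analytic coordinates $(u_1,\ldots,u_n)$ around $x_0$ on $U$ and use the same symbols $(x_1,\ldots,x_n)$ on the second factor, so that $\cal I_\Delta=(x_1-u_1,\ldots,x_n-u_n)$ near $(x_0,x_0)$, and write an arbitrary lift in the normal form $\widetilde s=\sum_{|b|=p}(x-u)^bP_b(u)+R$ with $R\in\cal I_\Delta^{p+1}$, exactly as displayed above; here ${\rm ord}_\Delta\widetilde s=p$ precisely because ${\rm ord}_{x_0}s=p$. Next I would invoke, citing \cite[Section 2]{EKL95} (see also \cite[Section 5.2.B]{Laz04}), that a differential operator $D$ in the $U$-variables of order $\le m$ acts on $H^0(U\times X,q\cdot{\rm pr}_2^*L)$ because the transition functions of ${\rm pr}_2^*L$ are locally constant in the $U$-direction, and that such a $D$ maps $\cal I_\Delta^p(q\cdot{\rm pr}_2^*L)$ into $\cal I_\Delta^{p-m}(q\cdot{\rm pr}_2^*L)$, since one application of a first-order operator drops the vanishing order along $\Delta$ by at most one. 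Writing $D=\sum_{|v|=m}c_v(u)\partial_u^v+B$ with ${\rm ord}\,B\le m-1$, the only algebraic input is the identity $\partial_u^v(x-u)^b=(-1)^{|v|}\partial_x^v(x-u)^b$ for monomials in $x-u$. Restricting $D\widetilde s$ to the slice $X_{x_0}=\{x_0\}\times X$, extracting its degree-$(p-m)$ homogeneous piece at $x=x_0$, and discarding the contributions of $B$ and of $R$, which land in strictly higher order along $\Delta$, this collapses the expansion above to
\[
J_{x_0}^{p-m}\bigl((D\widetilde s)|_{X_{x_0}}\bigr)=(-1)^m\Bigl(\sum_{|v|=m}c_v(x_0)\partial_x^v|_{x=x_0}\Bigr)\Bigl(\sum_{|b|=p}P_b(x_0)(x-x_0)^b\Bigr)=(-1)^m D_{m,x_0}\bigl(J_{x_0}^p s\bigr),
\]
using that $\sum_{|b|=p}P_b(x_0)(x-x_0)^b=J_{x_0}^p(s_{x_0})$ and that $s_{x_0}=\widetilde s|_{X_{x_0}}=s$. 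Since the right-hand side visibly depends only on $J_{x_0}^p s$ and on the principal symbol $D_{m,x_0}$, it is independent of $\widetilde s$.

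Finally, to close the argument I would observe that $D\widetilde s\in\cal I_\Delta^{p-m}(q\cdot{\rm pr}_2^*L)$ already forces ${\rm ord}_{x_0}\bigl((D\widetilde s)|_{X_{x_0}}\bigr)\ge p-m$, so whenever the degree-$(p-m)$ jet computed above is nonzero the order is exactly $p-m$ and that jet is the first nonvanishing one; moreover, by the very-general hypothesis on $x_0$ the pushforward ${\rm pr}_{1*}\bigl(\cal I_\Delta^{p-m}(q\cdot{\rm pr}_2^*L)\bigr)$ has fiber $H^0(X,\frak m_{x_0}^{p-m}(qL))$ at $x_0$, so $(D\widetilde s)|_{X_{x_0}}$ is a genuine section of $|qL|$ of order $p-m$ at $x_0$. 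The only step that requires real care, as opposed to bookkeeping, is the well-definedness and order-lowering property of the $U$-directional operator on ${\rm pr}_2^*L$; once that is granted, the proposition follows from the monomial identity $\partial_u\leftrightarrow-\partial_x$ and from tracking homogeneous degrees.
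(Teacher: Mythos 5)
Your proposal is correct and follows essentially the same route as the paper: the paper's proof is precisely the local computation preceding the statement, based on the normal form $\widetilde s=\sum_{|b|=p}(x-u)^bP_b(u)+R$ with $R\in\cal I_{\Delta}^{p+1}$, the identity $\partial_u^v(x-u)^b=(-1)^{|v|}\partial_x^v(x-u)^b$, and the observation that the contributions of the lower-order part $B$ and of $R$ land in $\cal I_{\Delta}^{p-m+1}$. Your added emphasis on lift-independence of the right-hand side is a useful clarification but not a departure from the paper's argument.
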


The idea of differentiation at very general points with control over how the multiplicity drops was used in the literature also for applications such as the lemma below.

\begin{lemma}\label{lem:AIverygeneral}
	Let $X$ be a complex projective variety and $L$ be a big divisor on $X$. Let $x\in X$ be a very general point. Let $Z$ be a non-trivial component of $\pi\big(\Bplus(L_t)\big)$ through $x$ for some fixed $t>0$. If $s\in H^0(X,L^{\otimes q})$ satisfies ${\rm ord}_x(s)\geq p$, then ${\rm ord}_Z(s)\geq p-qt$.
\end{lemma}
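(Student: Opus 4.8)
The plan is to argue by contradiction, differentiating $s$ at the very general point $x$ in the spirit of Ein--K\"uchle--Lazarsfeld, and feeding the resulting section into the base-locus comparison Lemmas~\ref{lem:Bray} and~\ref{lem:B-ray}. First I would reduce to $\ord_x(s)=p$ (replace $p$ by $\ord_x(s)$) and suppose, for contradiction, that $e:=\ord_Z(s)<p-qt$; then $t<\tfrac{p-e}{q}$ and $e<p$. Since $L$ is big and $x$ is very general we have $x\notin\Bplus(L)$, hence also $x\notin{\bf B}(L)$; the differentiation-at-$x$ set-up preceding Proposition~\ref{prop:differentiatejets} applies; and---crucially---$Z$ is not a fixed subvariety of $X$ (it contains the very general point $x$), so the members through a very general point sweep out a covering family $\{Z_u\}$ of proper subvarieties of $X$, obtained from the relative augmented base locus over $X\times X$ exactly as in the proof of Lemma~\ref{lem:tauverygeneralconstant}. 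Write $Z=Z_x$ and let $\overline Z\subset\overline X$ be its strict transform; since $\dim Z\ge1$ we have $\overline Z\not\subseteq E$, and $\overline Z$ is a component of $\Bplus(L_t)$.

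The heart of the matter is to produce a section $s'\in H^0(X,qL)$ with $\ord_x(s')\ge p-e$ and $\ord_Z(s')=0$. To do so I would fix an affine neighbourhood $U$ of $x$ carrying the total family $\cal Z\subset U\times X$ (so $\cal Z\cap(\{u\}\times X)=Z_u$) together with a lift $\widetilde s\in H^0\!\big(U\times X,\ \cal I_{\Delta}^{p}(q\cdot{\rm pr}_2^{*}L)\big)$ of $s$. Because the $Z_u$ genuinely move, a general first-factor differential operator $D$ on $U$, of order equal to the vanishing order of $\widetilde s$ along $\cal Z$ (which is $\le e$), is transverse to $\cal Z$: then $D\widetilde s$ does not vanish identically along $\cal Z$, while it still lies in $\cal I_{\Delta}^{p-e}(q\cdot{\rm pr}_2^{*}L)$ by the jet computation of Proposition~\ref{prop:differentiatejets} carried out in the $U$-directions. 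Restricting to $\{x\}\times X\simeq X$ yields the desired $s'=(D\widetilde s)|_{\{x\}\times X}\in H^0(X,\frak m_x^{p-e}(qL))$, and its order along $Z=Z_x$ is $0$ for general $D$---the very generality of $x$ being what guarantees that restricting to the fibre over $x$ does not raise the order along $Z$.

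Granting $s'$, the contradiction is formal. On $\overline X$ the section $s'$ determines a section of $q\pi^{*}L-(p-e)E=qL_{(p-e)/q}$ whose zero divisor does not contain $\overline Z$ (since $\ord_Z(s')=0$ and $\overline Z\not\subseteq E$), so $\overline Z\not\subseteq{\bf B}(L_{(p-e)/q})$. On the other hand $\overline Z$ is irreducible with $\overline Z\subseteq\Bplus(L_t)$, and $x\notin\Bplus(L)$ forces $\overline Z\not\subseteq\pi^{-1}\Bplus(L)$; so Lemma~\ref{lem:B-ray}.(3) gives $\overline Z\subseteq{\bf B}(L_{t'})$ for every rational $t'>t$ sufficiently close to $t$. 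Picking such a $t'$ with $t<t'\le\tfrac{p-e}{q}$ (possible as $t<\tfrac{p-e}{q}$), and using that $({\bf B}(L_s))_{s\in\QQ_+}$ is increasing by Lemma~\ref{lem:Bray} (because $x\notin{\bf B}(L)$), we get $\overline Z\subseteq{\bf B}(L_{t'})\subseteq{\bf B}(L_{(p-e)/q})$---a contradiction. Hence $\ord_Z(s)\ge p-qt$.

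The step I expect to be the real obstacle is the differentiation, and inside it the control of $\ord_Z(s')$: one must verify that after differentiating transversally to the moving family $\cal Z$ and restricting to the fibre over $x$, the order of vanishing along $Z$ actually drops to $0$. This is precisely where the very generality of $x$ is essential---equivalently, where one uses that $Z$ genuinely moves, so that its normal directions are visible among global deformations---and it requires the usual spreading-out care; the remaining steps are routine bookkeeping with the base-locus lemmas.
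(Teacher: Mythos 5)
The paper does not actually prove this lemma: it cites \cite[Lemma 2.18]{AI} and \cite[Lemma 1.3]{Nak05}, remarking only that Corollary \ref{cor:B+B} translates those statements to the blow-up. Your proposal reconstructs exactly that differentiation argument, and your base-locus bookkeeping at the end (irreducibility of $\overline Z$, Lemma \ref{lem:B-ray}.(3), monotonicity from Lemma \ref{lem:Bray}) is correct. However, the crux --- which you rightly flag as the real obstacle --- has a genuine gap as written. You claim that after applying a general first-factor operator $D$ of order $m=\ord_{\cal Z}(\widetilde s)\leq e$, the restriction $(D\widetilde s)|_{\{x\}\times X}$ has order $0$ along $Z=Z_x$, ``the very generality of $x$'' guaranteeing this. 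The identity $\ord_{Z_u}\bigl((D\widetilde s)|_{X_u}\bigr)=\ord_{\cal Z}(D\widetilde s)=0$ does hold for $u$ in a dense open subset of $U$, but that open subset depends on $\widetilde s$, hence on the arbitrary section $s$. Since $s$ ranges over an uncountable family, these conditions cannot be absorbed into the countably many proper closed subsets excluded when $x$ is chosen: $x$ is fixed \emph{before} $s$ is given. So very generality of $x$ does not justify the claim at the fiber over $x$, and the ``formal contradiction'' you run there is not available.

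The repair is standard and worth making explicit. The inequality $m=\ord_{\cal Z}(\widetilde s)\leq \ord_{Z_x}(\widetilde s|_{X_x})=e$ needs no genericity (restriction to a fiber can only increase the order along the intersection). Then derive the numerical inequality at a \emph{general} parameter $u$ rather than at $x$: for $u$ in the dense open locus where $(D\widetilde s)|_{X_u}$ has order $\geq p-m$ at $u$ and order $0$ along $Z_u$, and where $Z_u$ is still a component of the corresponding augmented base locus (true for general $u$ by the spreading-out construction, as in Lemma \ref{lem:tauverygeneralconstant}), your concluding argument yields $(p-m)/q\leq t$, i.e.\ $m\geq p-qt$, whence $e\geq m\geq p-qt$. (Equivalently, one reduces to the generic member of $|\frak m_x^p(qL)|$, which computes $\min_s\ord_{Z_x}(s)$ and spreads out over a single dense open independent of $s$.) One more point: for the transversality of $D$ to $\cal Z$ you should invoke not merely that the $Z_u$ ``genuinely move'' but that ${\rm pr}_2:\cal Z\to X$ is dominant --- which you do establish via $u\in Z_u$ --- since this is what makes the first-factor tangent directions surject onto the normal space of $\cal Z$ at a general point, so that each general first-order first-factor derivative drops $\ord_{\cal Z}$ by exactly one.
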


\noindent Through Corollary \ref{cor:B+B}, this statement is proved in \cite[Lemma 2.18]{AI} and was outlined previously in \cite[Lemma 1.3]{Nak05}. On the blow up $\overline X$ Lemma \ref{lem:AIverygeneral} is saying that if $s\in|q\pi^*L-pE|$, then ${\rm ord}_{\overline Z}s\geq p-qt$, for any component $\overline Z$ of ${\bf B}_+(L_{t})$ meeting $E$ properly. Equivalently, ${\rm mult}_{\overline Z}|L_s|_{\bb Q}\geq s-t$ for all rational $s\geq t$. 

\begin{remark}
Applying Proposition \ref{prop:differentiatejets}, one can show an infinitesimal version of Lemma \ref{lem:AIverygeneral} on the behaviour of the multiplicity function along subvarieties of $E$ and when restricting sections to $E$. For abelian varieties a similar statement appeared in \cite[Proposition 3.3]{Loz18} with a different proof. 
\end{remark}

We turn our attention to the infinitesimal picture at very general points. 

\begin{theorem}\label{thm:upperbound}
	Let $X$ be a normal projective complex variety of dimension $n$, and $\xi\in N^1(X)_{\bb R}$ a big class. Let $x\in X$ be a very general point and denote by $\epsilon_i\deq\epsilon_i(\xi)$, for $\inob{X}{\xi}\deq\inob{x}{\xi}$ and $\sinob{X}{\xi}\deq\sinob{x}{\xi}$. Then
	
	\begin{enumerate}
		\item $\sinob{X}{\xi}$ is Borel-fixed.
		\smallskip
		
		\item $\simplex(\epsilon_1,\ldots,\epsilon_{n})\subseteq\sinob{X}{\xi}\subseteq\polytope(\epsilon_1,\ldots,\epsilon_{n})\subseteq\prod_{i=1}^n[0,\epsilon_i]$.
		\smallskip
		
		\item $\prod_{i=1}^n[0,\epsilon_i]$ is the smallest box that contains $\sinob{X}{\xi}$.
		\smallskip
		
		\item $\vol(\xi)\leq n!\cdot \prod_{i=1}^{n}\epsilon_i$.
	\end{enumerate}
\end{theorem}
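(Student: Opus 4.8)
The plan is to deduce parts (2)--(4) formally from part (1) together with facts already established, so the substance lies in part (1). First I would reduce to the case $\xi=L$ a big Cartier divisor, since $\sinob{x}{\cdot}$ and $\epsilon_i(\cdot;x)$ are homogeneous, continuous numerical invariants on ${\rm Big}_x(X)$; the very generality of $x$ may then be taken to include that $x$ is smooth, that $x\notin{\bf B}_+(L)$ (so $\inob{x}{L}$, $\sinob{x}{L}$ and the $\epsilon_i(L;x)$ are the generic ones of Definition \ref{def:inob}), and that the isomorphisms ${\rm pr}_{1*}(\cal I_{\Delta}^p(q\cdot{\rm pr}_2^*L))(x)\to H^0(X,\frak m_x^p(qL))$ required for differentiation hold for all $p,q$. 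Recalling that the straightening $S$ of \eqref{eq:introS} is exactly the homogenization map, $\sinob{x}{L}$ is the closure of the set of all $\tfrac1m{\bf a}$ where ${\bf a}\in\bb Z^n_+$ is the exponent vector of the lowest lexicographic monomial of the first nonvanishing jet $J^p_x(s)$ of some $s\in|mL|$, with $p={\rm ord}_x s=|{\bf a}|$.

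For (1), I would split the Borel moves of $\sinob{x}{L}\subset\bb R^n_+$ into two kinds. The moves $i=1,\ldots,n-1$, sending $(\alpha_1,\ldots,\alpha_n)$ to $(\alpha_1,\ldots,\alpha_{i-1},0,\alpha_i+\alpha_{i+1},\alpha_{i+2},\ldots,\alpha_n)$, preserve the coordinate sum, hence act within the constant-sum slices of $\sinob{x}{L}$; such a slice $\{\textstyle\sum_j\alpha_j=t\}\cap\sinob{x}{L}$ is the image of the vertical slice $\inob{x}{L}_{\nu_1=t}$ under the graph map $(\nu_2,\ldots,\nu_n)\mapsto(\nu_2,\ldots,\nu_n,t-\sum_{j\geq2}\nu_j)$, and a short coordinate check matches the generating moves ${\bf f}_1,\ldots,{\bf f}_{n-1}$ of that slice with the moves $i=1,\ldots,n-1$ of $\sinob{x}{L}$; since the vertical slices are Borel-fixed by Corollary \ref{cor:slicesborel}, these moves hold. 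The remaining move $i=n$, $(\alpha_1,\ldots,\alpha_n)\mapsto(\alpha_1,\ldots,\alpha_{n-1},0)$, lowers the coordinate sum and is the one requiring differentiation. Working in a very general analytic coordinate system $x_1,\ldots,x_n$ at $x$ adapted to a very general infinitesimal flag, so that $x_n$ carries the ``free'' exponent in lowest lexicographic monomials on $E$, I take $s\in|mL|$ with $J^p_x(s)=F$ of degree $p$ whose lowest lexicographic monomial $x_1^{b_1}\cdots x_n^{b_n}$ has $b_n\geq1$, and apply Proposition \ref{prop:differentiatejets} with the order-one operator $D=\partial_{x_n}$: it produces $s'\in|mL|$ of order exactly $p-1$ at $x$ with $J^{p-1}_x(s')=-\partial_{x_n}F$. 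Since $\partial_{x_n}$ is injective on monomials divisible by $x_n$ and preserves their lexicographic order, the lowest lexicographic monomial of $\partial_{x_n}F$ is $x_1^{b_1}\cdots x_{n-1}^{b_{n-1}}x_n^{b_n-1}$, so $\tfrac1m(b_1,\ldots,b_{n-1},b_n-1)\in\sinob{x}{L}$; iterating $b_n$ times pushes the last coordinate to $0$, and passing to closures (the rational valuative points being dense) yields the move $i=n$. Combined with the slicewise moves this shows $\sinob{X}{\xi}$ is Borel-fixed.

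Granting (1), parts (2)--(4) follow quickly. As $\sinob{X}{\xi}$ is a nonempty compact Borel-fixed convex body, Proposition \ref{prop:borelfixedshapesproperties} (applied in $\bb R^n$) gives nondecreasing widths $w_1\leq\ldots\leq w_n$ with $w_i=\max\{a\mid a{\bf e}_i\in\sinob{X}{\xi}\}$ and $\simplex(w_1,\ldots,w_n)\subseteq\sinob{X}{\xi}\subseteq\polytope(w_1,\ldots,w_n)$, and $\polytope(w_1,\ldots,w_n)\subseteq\prod_{i=1}^n[0,w_i]$ by Lemma \ref{lem:boundingpolytopes}(3). Applying $S^{-1}$, the point $a{\bf e}_i$ corresponds to $a({\bf e}_1+{\bf e}_{i+1})\in\inob{x}{\xi}$ for $i<n$ and to $a{\bf e}_1\in\inob{x}{\xi}$ for $i=n$; hence $w_i=\epsilon_i(\xi;x)$ for $i<n$ by the vertex characterization in Theorem \ref{thm:successiveminimaviaslicewidths}, and $w_n=\mu(\xi;x)=\epsilon_n(\xi;x)$ by Lemma \ref{lem:seshadriwidth}(1), $\mu$ being the (attained, by compactness) $\nu_1$-width of $\inob{x}{\xi}$. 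This is exactly (2); part (3) follows since any box $\prod_i[0,c_i]$ containing $\sinob{X}{\xi}$ has $c_i\geq w_i=\epsilon_i$; and (4) follows from $\vol(\xi)=n!\cdot{\rm vol}_{\bb R^n}(\inob{x}{\xi})=n!\cdot{\rm vol}_{\bb R^n}(\sinob{x}{\xi})$ (Theorem \ref{thm:slices}, together with $S$ being volume-preserving) and $\sinob{X}{\xi}\subseteq\prod_i[0,\epsilon_i]$.

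The main obstacle will be part (1), specifically the move $i=n$: one must certify that differentiating a first nonvanishing jet once, with respect to a single very general coordinate, both drops the order at $x$ by exactly one and shifts the lowest lexicographic exponent vector by exactly $-{\bf e}_n$. The order drop is the content of Proposition \ref{prop:differentiatejets} once the resulting jet is seen to be nonzero (ensured by $b_n\geq1$ in characteristic $0$), and the exponent shift is an elementary property of $\partial_{x_n}$; the delicate point is harmonizing the two genericity hypotheses --- very generality of $x$, needed to differentiate sections, and very generality of the infinitesimal flag, needed so that $\inob{x}{\xi}$ is the generic iNObody and its vertical slices are Borel-fixed --- and checking carefully that the slicewise moves genuinely exhaust the Borel directions $1,\ldots,n-1$.
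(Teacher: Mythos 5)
Your proposal is correct and follows essentially the same route as the paper's proof: the Borel moves $i=1,\ldots,n-1$ are handled by the Borel-fixed property of the vertical slices (Corollary \ref{cor:slicesborel}), the move $i=n$ by differentiating the first nonvanishing jet via Proposition \ref{prop:differentiatejets} with an order-one operator whose symbol is $\partial_{x_n}$, and parts (2)--(4) are deduced formally from (1) together with Proposition \ref{prop:borelfixedshapesproperties}, Lemma \ref{lem:boundingpolytopes}, Theorem \ref{thm:successiveminimaviaslicewidths} and the volume formula. Your extra verifications (that $\partial_{x_n}$ shifts the lowest lexicographic exponent by exactly $-{\bf e}_n$ and that the slicewise moves exhaust the directions $1,\ldots,n-1$) are correct and only make explicit what the paper leaves implicit.
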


\noindent Recall that infinitesimal successive minima are constant due to Lemma \ref{lem:tauverygeneralconstant} and the same can be said about our generic iNobodies by \cite[Theorem 5.1]{LM09}. 
Ambro--Ito \cite[Lemma 3.4 and Theorem 3.6]{AI} prove a slightly weaker variant of (4). In  Example \ref{ex:verygeneralpointnecessary} we see that asking $x$ to be very general is necessary. \cite[Remark 3.7]{AI} discuss how $(4)$ is strict in dimension $n\geq 2$, but asymptotically sharp in special examples. Example \ref{ex:upperboundsharp} shows that the second inclusion in $(2)$ is sometimes sharp even when the first is not.

\begin{proof}  Parts $(2)$, $(3)$ and $(4)$ follow from $(1)$ together with Proposition \ref{prop:borelfixedshapesproperties}, Lemma \ref{lem:boundingpolytopes}, Theorem \ref{thm:successiveminimaviaslicewidths} and Theorem \ref{thm:slices}. It remains to show $(1)$. 

By the usual techniques, we may assume that $\xi$ is the class of a Cartier divisor $L$. Consider a point $(\alpha_1,\ldots,\alpha_n)\in\sinob{x}{L}$, or equivalently after tilting to $(\alpha_1+\ldots+\alpha_n,\alpha_1,\ldots,\alpha_{n-1})\in\inob{x}{L}$.
	We want to prove that $(\alpha_1,\ldots,\alpha_{i-1},0,\alpha_i+\alpha_{i+1},\alpha_{i+2},\ldots,\alpha_n)\in\sinob{x}{L}$ for $i\leq n-1$, and $(\alpha_1,\ldots,\alpha_{n-1},0)\in\sinob{x}{L}$. The first is equivalent to $(\alpha_1+\ldots+\alpha_n,\alpha_2,\ldots,\alpha_{i-1},0,\alpha_i+\alpha_{i+1},\alpha_{i+2},\ldots,\alpha_{n-1})\in\inob{x}{\xi}$. This is in the same vertical slice as $(\alpha_1+\ldots+\alpha_n,\alpha_2,\ldots,\alpha_{n-1})$. In this case the conclusion is a consequence of the Borel-fixed shape of the vertical slices from Corollary \ref{cor:borelone} and Theorem \ref{thm:slices} and does not need the assumption that $x$ is very general.
	It remains to prove that $(\alpha_1,\ldots,\alpha_{n-1},0)\in\sinob{x}{L}$.
	
	It is sufficient to assume that $(\alpha_1,\ldots,\alpha_n)$ is the deglex valuation of a section $s\in|L|$ with $\alpha_n>0$. We seek to prove that  $(\alpha_1,\ldots,\alpha_{n-1},\alpha_n-1)\in\sinob{x}{L}$.
	Let $(x_1,\ldots,x_n)$ be a local analytic coordinate system around $x$. For readability, denote the very general point $x$ by $x_0$.
	There exists a regular differential operator $D$ of order at most $1$ defined Zariski locally around $x_0$ such that $D_{1,x_0}=\frac{\partial}{\partial x_n}\big|_{x=x_0}$ in the notation of Proposition \ref{prop:differentiatejets}. So, differentiation by $D$ it produces a section $\widehat s\in|L|$ with valuation $(\alpha_1,\ldots,\alpha_{n-1},\alpha_n-1)$ as needed.
\end{proof}

\begin{remark}\label{rmk:tiltedshapes} We often found it more convenient to state our results for $\sinob{x}{\xi}$ since it is closer to standard simplices. The more geometric ``tilted'' version $\inob{x}{\xi}=S^{-1}\sinob{x}{\xi}$ is bounded:
	\begin{itemize}
		\item Below in the normal case by \[\inverted(\epsilon_1,\ldots,\epsilon_n)\deq S^{-1}\simplex(\epsilon_1,\ldots,\epsilon_n)\ .\] This is the simplex with vertices ${\bf 0}$, $\epsilon_n\cdot{\bf e}_1$, and $\epsilon_i\cdot({\bf e}_1+{\bf e}_{i+1})$ for $1\leq i\leq n-1$.
		When $\epsilon_i=t$ for all $i$, this is the \emph{inverted simplex} $\Delta_t^{-1}$ of \cite{KL17}, whose shape could be represented as $\invertsimplex_t$.
		\smallskip
		
		\item For a very general point $x\in X$ define 
		\[\invertpoly(\epsilon_1,\ldots,\epsilon_n)\deq S^{-1}\polytope(\epsilon_1,\ldots,\epsilon_n)\ ,\]
	described by the equations $\nu_2+\ldots+\nu_{n}\leq\nu_1\leq\epsilon_n$ and $\nu_2+\ldots+\nu_{i+1}\leq\epsilon_i$ and $\nu_i\geq 0$ for all $1\leq i\leq n-1$.
	\end{itemize}
The symbols $\inverted,\invertsimplex,\invertpoly$ are again representative for the 2-dimensional pictures.
The Borel-fixed property is another reason why we prefer to work with $\sinob{x}{\xi}$. Under $S^{-1}$ this property changes. In particular, if $x$ is very general, then $\inob{x}{\xi}$ satisfies the following
\[(\nu_1,\ldots,\nu_n)\in\inob{x}{\xi}\Longrightarrow\begin{cases}(\nu_1,\ldots,\nu_{i-1},0,\nu_i+\nu_{i+1},\nu_{i+2},\ldots,\nu_n)\in \inob{x}{\xi}&\text{, if }2\leq i\leq n-1\\ 
	(\nu_1,\ldots,\nu_{n-1},0)\in\inob{x}{\xi}&\text{ , if }i=n\\
(\nu_2+\ldots+\nu_n,\nu_2,\ldots,\nu_n)\in\inob{x}{\xi}&\text{, corresponding to }i=1 \end{cases}\ .
	\]
For the coordinates $\nu_2,\ldots,\nu_n$, we see Borel-fixed conditions, but in the first coordinate we see a different condition. In retrospect, this explains why the vertical slices of $\inob{x}{\xi}$ are Borel-fixed, but $\inob{x}{\xi}$ itself is not. Note that Theorem \ref{thm:mintroslicebounds} held for arbitrary smooth points outside ${\bf B}_+(\xi)$, not necessarily very general.
\end{remark}

\begin{corollary}[Convex geometric perspectives on infinitesimal successive minima at very general points] Under the assumptions and notation of Theorem \ref{thm:upperbound}, for each $i=1,\ldots ,n$ we can read the corresponding infinitesimal width $\epsilon_i=\epsilon_i(\xi;x)$ from $\inob{X}{\xi}$ in any of the following equivalent ways:
	\begin{enumerate}[(1)]
	\item Along ${\bf e}_1+{\bf e}_{i+1}$: Largest $t>0$ such that $t\cdot({\bf e}_1+{\bf e}_{i+1})\in\inob{X}{\xi}$;
\smallskip

		\item Largest $\nu_{i+1}$ coordinate: $i+1$-st global width of $\inob{X}{\xi}$, i.e., largest $w_{i+1}(t;\xi,x)$.
\smallskip

	\item Variational: Largest $t>0$ such that $w_{i+1}(t;\xi,x)=t$;
	\end{enumerate}
\end{corollary}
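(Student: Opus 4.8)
The plan is to deduce the corollary from two results already in hand, Theorem \ref{thm:successiveminimaviaslicewidths} and Theorem \ref{thm:upperbound}, so that almost nothing new has to be proved. First I would dispatch descriptions (1) and (3). For $1\le i\le n-1$, Theorem \ref{thm:successiveminimaviaslicewidths} already says, for \emph{any} smooth $x\notin\Bplus(\xi)$,
\[
\epsilon_i(\xi;x)\ =\ \max\{t\in[0,\mu(\xi;x)]\ \mid\ w_{i+1}(t)=t\}\ =\ \max\{t>0\ \mid\ t\cdot({\bf e}_1+{\bf e}_{i+1})\in\inob{x}{\xi}\}\ .
\]
So (1) and (3) require nothing new and in fact hold without the very general hypothesis; only (2) genuinely uses it.

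For (2) I would invoke Theorem \ref{thm:upperbound}: since $x$ is very general, $\sinob{X}{\xi}$ is Borel-fixed and its width in the $\alpha_i$ coordinate is $\epsilon_i$. The one piece of bookkeeping is to transport this across the straightening $S$ of \eqref{eq:introS}. Because $(\nu_1,\ldots,\nu_n)\mapsto(\alpha_1,\ldots,\alpha_n)=(\nu_2,\ldots,\nu_n,\nu_1-\nu_2-\ldots-\nu_n)$, the $\alpha_i$-coordinate of $\sinob{X}{\xi}$ is literally the $\nu_{i+1}$-coordinate of $\inob{X}{\xi}$ for $1\le i\le n-1$; hence the largest value of $\nu_{i+1}$ attained on $\inob{X}{\xi}$ equals $\epsilon_i$. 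Attainment (rather than just a supremum) follows from the inclusion $\simplex(\epsilon_1,\ldots,\epsilon_n)\subseteq\sinob{X}{\xi}$ of Theorem \ref{thm:upperbound}(2), whose vertex $\epsilon_i{\bf e}_i$ corresponds under $S^{-1}$ to a point of $\inob{X}{\xi}$ with $\nu_{i+1}=\epsilon_i$. Finally, since by definition $w_{i+1}(t;\xi,x)$ is the largest $\nu_{i+1}$-coordinate occurring in the slice $\inob{X}{\xi}_{\nu_1=t}$, taking the supremum over $t\in(0,\mu(\xi;x))$ recovers exactly the global $\nu_{i+1}$-width of $\inob{X}{\xi}$: the two phrasings inside (2) coincide essentially by definition, and both equal $\epsilon_i$.

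The boundary case $i=n$ I would handle separately: here $\epsilon_n=\mu(\xi;x)$ by Lemma \ref{lem:seshadriwidth}(1), which is simultaneously the $\nu_1$-width of $\inob{X}{\xi}$ and the $\alpha_n$-width of $\sinob{X}{\xi}$ (again equal to $\epsilon_n$ by Theorem \ref{thm:upperbound}), so with the conventions ${\bf e}_{n+1}:={\bf 0}$ and $w_{n+1}:=$ ``the $\alpha_n$-width'' used elsewhere in the paper all three descriptions collapse to the single statement $\epsilon_n=\mu$. I do not expect any real obstacle: once Theorems \ref{thm:successiveminimaviaslicewidths} and \ref{thm:upperbound} are available, the only points requiring care are the index shift induced by $S$ (it is $\alpha_i\leftrightarrow\nu_{i+1}$, not $\alpha_i\leftrightarrow\nu_i$) and checking that the global $\nu_{i+1}$-width is genuinely attained — both handled as above. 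The conceptual content, that very generality is precisely what upgrades the slice-level statements (1)/(3) to the global statement (2), is already packaged inside Theorem \ref{thm:upperbound}.
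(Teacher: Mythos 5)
Your proposal is correct and matches the paper's intent exactly: the paper offers no separate proof of this corollary, treating it as an immediate consequence of Theorem \ref{thm:upperbound} (for the global-width description (2)) and Theorem \ref{thm:successiveminimaviaslicewidths} (for descriptions (1) and (3)), which is precisely your decomposition, including the observation that only (2) uses very generality and the index bookkeeping $\alpha_i\leftrightarrow\nu_{i+1}$ under the straightening $S$. Your separate treatment of the degenerate case $i=n$ (where all three readings collapse to $\epsilon_n=\mu(\xi;x)$) is a reasonable way to handle a convention the paper leaves implicit.
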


\noindent Example \ref{ex:verygeneralpointnecessary} shows that the interpretation $(2)$ which is the new input coming from Theorem \ref{thm:upperbound} may fail when $x$ is a special point, thus the very general assumption is not just an artifact of our proof.

\section{Examples}\label{s:examples}
In this section we discuss the full description of generic iNObodies in certain examples. On one hand, they will show the variation of these convex sets when passing from an arbitrary base point to a very general one. On the other hand, we see the multitude of convex shapes that appear as generic iNobodies.
\subsection{Simplicial examples}

\begin{lemma}\label{lem:simplicial}
Let $X$ be normal projective variety of dimension $n$, and let $x\in X$ be a smooth point. Let $\xi\in{\rm Big}_x(X)$. If
	$\epsilon_1(\xi;x)=\ldots=\epsilon_{n}(\xi;x)$,
	then $\sinob{x}{\xi}=\simplex\,_{\tau}$, where $\tau=\sqrt[n]{\vol(\xi)}$.
\end{lemma}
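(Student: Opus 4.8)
The plan is to combine the lower bound from Theorem \ref{thm:lowerbounds}.(2) with the upper bound from Theorem \ref{thm:firstbounds}, exploiting the equality of all the successive minima. Write $\epsilon\deq\epsilon_1(\xi;x)=\ldots=\epsilon_n(\xi;x)$. By Theorem \ref{thm:lowerbounds}.(2) we have $\simplex(\epsilon,\ldots,\epsilon)\subseteq\sinob{x}{\xi}$, and by definition $\simplex(\epsilon,\ldots,\epsilon)=\simplex\,_\epsilon$. For the reverse inclusion, note that by Lemma \ref{lem:seshadriwidth}.(1) we have $\mu(\xi;x)=\epsilon_n(\xi;x)=\epsilon$, so Theorem \ref{thm:firstbounds} gives $\sinob{x}{\xi}\subseteq\simplex\,_{\mu(\xi;x)}=\simplex\,_\epsilon$. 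Hence $\sinob{x}{\xi}=\simplex\,_\epsilon$.

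It remains to identify $\epsilon$ with $\sqrt[n]{\vol(\xi)}$. Since $S$ is volume-preserving, ${\rm vol}_{\RR^n}(\sinob{x}{\xi})={\rm vol}_{\RR^n}(\inob{x}{\xi})$, and by Theorem \ref{thm:slices}.(2) this equals $\vol(\xi)/n!$. On the other hand ${\rm vol}_{\RR^n}(\simplex\,_\epsilon)=\epsilon^n/n!$. Equating the two gives $\epsilon^n=\vol(\xi)$, i.e., $\epsilon=\sqrt[n]{\vol(\xi)}$, which is $\tau$ as defined in the statement. This completes the argument.

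The main (and essentially only) subtlety is that Theorem \ref{thm:firstbounds} is stated for $\sinob{Y_\bullet}{\xi}$ for every infinitesimal flag, hence in particular for the generic one $\sinob{x}{\xi}$, so no separate justification is needed there; similarly Theorem \ref{thm:lowerbounds} requires only that $X$ be normal, which is in our hypotheses. One should also remark that $\xi\in{\rm Big}_x(X)$ guarantees $\epsilon>0$ by Lemma \ref{lem:infsuccessiveproperties}.(1), so $\tau$ is a well-defined positive real number and $\simplex\,_\epsilon$ is genuinely $n$-dimensional; this is consistent with $\vol(\xi)>0$. No delicate estimates or B\'ezout-type arguments are needed: the whole point is that the hypothesis $\epsilon_1=\ldots=\epsilon_n$ collapses the lower and upper bounding simplices of Theorems \ref{thm:lowerbounds} and \ref{thm:firstbounds} to the same simplex.
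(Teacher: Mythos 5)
Your proof is correct and follows essentially the same route as the paper's: both arguments sandwich $\sinob{x}{\xi}$ between the lower-bound simplex of Theorem \ref{thm:lowerbounds} and the width $\mu(\xi;x)=\epsilon_n(\xi;x)$. The only cosmetic difference is that you establish the equality of convex bodies first (via the $\simplex\,_{\mu(\xi;x)}$ upper bound of Theorem \ref{thm:firstbounds}) and identify $\epsilon$ with $\sqrt[n]{\vol(\xi)}$ afterwards by a volume computation, whereas the paper first traps $\sqrt[n]{\vol(\xi)}$ between $\epsilon_1(\xi;x)$ and $\epsilon_n(\xi;x)$ and then invokes the volume-equality criterion of Theorem \ref{thm:lowerbounds}.(3).
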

\begin{proof}
We have $\epsilon_1(\xi;x)=\epsilon(||\xi||;x)\leq\sqrt[n]{\vol(\xi)}\leq \mu(\xi;x)=\epsilon_{n}(\xi;x)$, whence $\vol(\xi)=\prod_{i=1}^{n}\epsilon_i(\xi;x)$. The conclusion follows from Theorem \ref{thm:lowerbounds}.$(3)$.
\end{proof}

\begin{example}
	Let $C$ be a smooth projective curve, let $n\geq 1$ and $X\deq C_n={\rm Sym}^nC$. For arbitrary $c_1\in C$, let $L_n$ be the divisor $c_1+C_{n-1}\subset C_n$. For every $x\in X$ we have
	\[\sinob{x}{L_n}=\simplex\,_1\ \subseteq \ \RR_+^n \ .\]
\begin{proof} The divisor $L_n$ is ample. Furthermore ${L_n}|_{L_{n}}\equiv L_{n-1}$ and $(L_n^n)=1$ by \cite[Theorem 2]{Sch63}.
	By Lemma \ref{lem:simplicial} it is sufficient to prove that $\epsilon(L_n;x)\geq 1$ for the classical Seshadri constant.
	Let $T$ be an irreducible curve through $x=c_1+\ldots+c_n\in C_n=X$.
	If $T$ is not contained in $L_n=c_1+C_{n-1}$, then $(L_n\cdot T)\geq\mult_xL_n\cdot\mult_xT=\mult_xT$, thus $\frac{L_n\cdot T}{\mult_xT}\geq 1$. If $T$ is contained in $L_n\simeq C_{n-1}$, then 
	\[
	\frac{L_n\cdot T}{\mult_x T}\geq\epsilon(L|_{L_n};x)=\epsilon(L_{n-1};c_2+\ldots+c_n) \ .
	\] The latter is 1 by induction. The base case $n=1$ is trivial.
\end{proof} 
\end{example}

\begin{example}
	Let $X$ be a smooth complex projective surface of Picard rank 1 and let $x\in X$ be a point. If $\xi$ is an ample $\bb R$-divisor class on $X$, then \[\sinob{x}{\xi}=\simplex(\epsilon(\xi;x),\mu(\xi;x))\ \subseteq \ \RR^2_+ \ .\]
It is sufficient to check that $\epsilon(\xi;x)\cdot\mu(\xi;x)=(\xi^2)$. This follows from Kleiman duality. \qed
\end{example}

\begin{example}
	Let $X\subset\bb P^{n+1}$ be a smooth quadric hypersurface and let $L=\cal O_{\bb P^{n+1}}(1)|_X$. Then  \[\sinob{x}{L}=\simplex(1,1,\ldots,1,2) \ ,\]
for all $x\in X$. In particular, this applies to the Grassmann variety ${\rm Gr}(2,4)\subset\bb P^5$. 
	\begin{proof}Let $x\in X$. 
		We have $\vol(L)=\deg X=2$. Since $L$ is very ample, $\epsilon(L;x)\geq 1$ for all $x$.
		The intersection $Q_x\deq{\bf T}_xX\cap X$ in the projective tangent space ${\bf T}_xX\simeq\bb P^n$ is a quadric cone with vertex at $x$, and a divisor in $|L|$ with multiplicity 2 at $x$ in $X$. Thus $\mu(L;x)\geq 2$. 
		Conclude by Theorem \ref{thm:lowerbounds}.(3).
	\end{proof} 
\end{example}

Next we give a simplicial example in arbitrary dimension where all $\epsilon_i(L;x)$ are distinct.

\begin{example}\label{ex:productcurves}
	Let $X=C_1\times\ldots\times C_n$ be a product of smooth projective curves polarized by $L=D_1\boxtimes\ldots\boxtimes D_n$ where the $D_i$ have degree $1$ on $C_i$. Then for every $x\in X$ we have
	\[\sinob{x}{L}=\simplex(1,2,\ldots,n)\ .\]
It is somewhat surprising that the genera of the curves play no part in the computation of the iNObody. But the entire picture can be derived from the blow-up of $\PP^{n-1}$ at $n$ linear general points. This is a toric variety and  we use this idea in \cite{FL25b} to compute the same example. This toric approach explains the geometric interpretation of the family of graded linear series $A_{\bullet}^t$ from Theorem \ref{thm:slices}, computing the vertical slices of the generic iNobody. This is a very different approach than the one explained here, which uses the connection between infinitesimal successive minima and embedded simplices in our convex sets.

	\begin{proof}[Proof of Example \ref{ex:productcurves}]
	 We claim that $\epsilon_{i}(L;x)=i$ for all $i$. Assuming this, since $\vol(L)=n!$ would equal $\prod_{i=1}^{n}\epsilon_i(L;x)$, we conclude by Theorem \ref{thm:lowerbounds}.$(3)$.
	Let $[n]\deq\{1,\ldots,n\}$. For a subset $I\subset[n]$, denote by $X_I\simeq\bigtimes_{i\in I}C_i$ the fiber through $x$ of ${\rm pr}:X\to\bigtimes_{j\not\in I}C_j$. Denote by $\overline X_I$ their strict transforms in $\overline X$.
	
	For the claim, it is sufficient to prove that if $t$ is rational and $m$ an integer with $0\leq m<t<m+1$, then the numerical base locus ${\bf B}_{\rm num}(L_t)$ is the union $\bigcup_{|I|=m}\overline X_I$. 
	In particular it has no exceptional components.
	By Corollary \ref{cor:B+B}.(1) and (3), the successive minima $\epsilon_i(\xi;x)$ and $\epsilon_i^{\rm loc}(\xi;x)$ are determined by the numerical base loci ${\bf B}_{\rm num}(\xi_t)$. 
	We deduce that $\epsilon_{i}(L;x)=\epsilon^{\rm loc}_i(L;x)=i$ for all $1\leq i\leq n$.
	
	With an inductive argument, one can prove that a linear combination of the divisors $\overline X_{[n]\setminus\{i\}}$ and the exceptional $E$ is pseudoeffective if and only if the coefficients are all nonnegative. It follows in particular that $\mu(L;x)=n$ with $L_n$ being represented by $\sum_{i=1}^n\overline X_{[n]\setminus\{i\}}$. With the appropriate convention, this takes care of the case $m=n$. We run a descending induction on $m$.
	
	Assume $m<n$, thus $L_t$ is big, and that the description of ${\bf B}_{\rm num}(L_{t+1})$ is known. The restriction of $L_t$ to $\overline X_I$ with $|I|=m$ is $L'_t$ for $L'=\boxtimes_{i\in I}L_i$, still of $L_t$ ``type''.
	By the previous paragraph, it is not pseudo-effective, hence $\overline X_I\subseteq{\bf B}_{\rm num}(L_t)$. We have ${\bf B}_{\rm num}(L_t)\subset{\bf B}_{\rm num}(L_{t+1})$, and by assumption ${\bf B}_{\rm num}(L_t)=\bigcup_{|J|=m+1}\overline X_J$. To conclude ${\bf B}_{\rm num}(L_t)=\bigcup_{|I|=m}\overline X_I$, by symmetry it is sufficient to prove that if $x=(c_1,\ldots,c_n)$, and $x\neq y=(b_1,\ldots,b_m,b_{m+1},c_{m+2},\ldots,c_n)\in X_{[m+1]}\setminus\bigcup_{I\subset[m+1],\ |I|=m} X_{I}$ for some $b_i\neq c_i\in C_i$, then $y\not\in{\bf B}_{\rm num}(L_t)$. By abuse we identify $y$ with its preimage in $\overline X_{[m+1]}$. Equivalently we want an effective divisor numerically equivalent to $L$, with multiplicity $t$ at $x$, and that does not contain $y$. Such an example is the divisor 
	\[
	\sum\nolimits_{i=1}^mX_{[n]\setminus\{i\}}+(t-m)\cdotp X_{[n]\setminus\{m+1\}}+(1-t+m)\cdotp X'_{m+1}+\sum\nolimits_{j=m+2}^nX'_j \ ,
	\]
	where for $j\geq m+1$ the $X'_j$ is a fiber of ${\rm pr}:X\to C_j$ that does not contain $x$ or $y$.\end{proof} 
\end{example}
In comparison to the previous example, the next two examples show how a natural question about genus 3 curves has different answers depending on whether the curve is hyperelliptic or not.

\begin{example}[Jacobians of smooth plane quartics]
	Let $C$ be a non-hyperelliptic genus 3 curve. Let $J={\rm Jac}\, C={\rm Pic}^0(C)$ be its Jacobian, principally polarized by the theta divisor $L\deq\theta$. Then  \[\sinob{x}{\theta}=\simplex\bigl(\frac{12}7,\frac 74,2\bigr)\]
	for every $x\in J$.
\begin{proof}[Sketch of proof]Since $J$ is homogeneous, the results are independent of $x$. We will assume $x=o$ is the origin of $J$, but otherwise suppress it from notation.  	
	We want to prove that $\epsilon_1(L)=\frac{12}7$, $\epsilon_2(L)=\frac 74$ and $\epsilon_3(L)=2$. Their product is $\frac{12}7\cdotp\frac 74\cdotp 2=6=3!=(\theta^3)=\vol(L)$, so Theorem \ref{thm:lowerbounds} would conclude the proof. 
	
	The computation of the Seshadri constant and of the width are known, cf.,~ \cite{BS01} and respectively \cite{Ful21}\footnote{A conjectural picture for the infinitesimal width and its relationship to the behaviour of the multiplicity function for theta divisors is attempted in \cite{Loz24}.}. We outline the argument below, because we need the constructions for the computation of $\epsilon_2(L;o)$.
	Inside $J$ we have the following important subvarieties through $o$:
	
	\noindent$(1)$. The curve $R\deq\{\cal O_C(x-x')\ \mid \ \exists\ y\in C,\ x+x'+2y\in|K_C|\}$, where for every $y\in C\subset\bb P^2$ the projective tangent line ${\bf T}_yC$ meets $C$ again in two points $x,x'$. Bitangent lines show that $o\in R$. Numerically $R\equiv 16\theta^2$. Furthermore $\mult_oR=56$, corresponding to the 28 bitangents. 
	The curve $R$ realizes the minimum of $\frac{\theta\cdot C'}{\mult_oC'}$ among all curves $C'$ through $o$. In particular \[\epsilon_1(\theta)=\epsilon(\theta)=\epsilon^{\rm loc}_1(\theta)=\frac{12}7.\]
	
	\noindent $(2)$. The difference surface $D\deq C-C=\{\cal O_C(x-y)\ \mid\ x,y\in C\}$. 	It has class $D\equiv2\theta$ and multiplicity $4$ at $o$, the degree of the canonical embedding of $C$ seen as $\overline D\cap E\subset E$. We have $\overline D\simeq C\times C$ and one verifies that $\overline D|_{\overline D}$ is not pseudo-effective, hence $\overline D$ is a negative surface in the $\overline D=N_{\sigma}(\overline D)$ sense of the Nakayama $\sigma$-Zariski decomposition \cite{Nak04}. In particular
	\[\epsilon_3(\theta)=\epsilon^{\rm loc}_3(\theta)=\mu(\theta)=2.\]
	
	\noindent$(3)$.	The surface $T\deq\{\omega_C^{\vee}(2x+2y)\ \mid\ x,y\in C\}$. that is essentially the image of the theta divisor under the doubling map on $J$. It has class $T\equiv16\theta$ and ${\rm mult}_oT=28$, corresponding again to when $x,y$ are on a bitangent. Since $\overline T$ is irreducible and its numerical class is a positive combination of $\overline D$ and $E$, the class $\overline T\equiv 16\pi^*\theta-28E=16L_{\frac 74}$ is movable. For $t>\frac 74$, one checks that $L_t|_{\overline D}$ is not pseudo-effective. In fact $\overline T|_{\overline D}\equiv 2\overline R$ is an extremal ray in the pseudo-effective cone of $\overline D\simeq C\times C$ since $\overline R$ is a negative curve in $\overline D$. It follows that $\overline T$ is in the boundary of the movable cone of $\overline J$. 
	
	With similar methods one checks that for $\frac 74\leq t\leq 2$, the $\sigma$-Zariski decomposition of $L_t$ is a convex combination of $L_{\frac 74}$ and $L_2$, with $L_2$ supporting the negative part. Then $\overline D\subseteq{\bf B}_-(L_t)\subseteq{\bf B}_+(L_t)$ for $t>\frac 74$. For any $0<\delta<\frac{12}7$, the class $L_{\delta}$ is ample. We can write any $L_t$ with $\delta\leq t\leq 2$ as convex combination of $L_{\epsilon}$ and $L_2$. We deduce that ${\bf B}_+(L_t)\subseteq\overline D$ for all $0<t<2$. Thus ${\bf B}_+(L_t)=\overline D$ for $\frac 74<t<2$.
	
	If $t\geq\frac{12}7$, then $L_t\cdot\overline R\leq 0$, and so $\overline R\subseteq{\bf B}_+(L_t)$ for all $t\geq\frac{12}7$ and $\overline R\subseteq{\bf B}_-(L_t)$ for all $t>\frac{12}7$. 
	We know ${\bf B}_-(L_{\frac 74})\subseteq{\bf B}(L_{\frac 74})\subseteq\overline T$, and also ${\bf B}_-(L_{\frac 74})\subseteq{\bf B}_+(L_{\frac 74})\subseteq\overline D$. Thus ${\bf B}_-(L_{\frac 74})\subseteq\overline T\cap\overline D=\overline R$, giving ${\bf B}_-(L_{\frac 74})=\overline R$. For all $\frac{12}7<t<\frac 74$ we must also have ${\bf B}_-(L_t)=\overline R$. By Lemma \ref{lem:B-ray}, we conclude that \[\epsilon_2(L)=\epsilon^{\rm loc}_2(L)=\frac 74.\]
This finishes the proof.	
\end{proof} 
The geometry sketched above is detailed in \cite{FL25c} where $\inob{o}{\theta}$ is computed via Fujita--Zariski decompositions of some birational modifications of the blow-up of the origin of the Jacobian. Consequently, the family of graded linear series $A_{\bullet}^t$ from Theorem \ref{thm:slices}, computing the vertical slices of our generic iNObody, can be asymptotically realized as a complete graded linear series on the blow-up of the exceptional divisor $E\simeq \PP^2$ along the $56$ bitangents points, and again in one infinitely near point over each.
 This geometric realization of $A_{\bullet}^t$ falls in line with the Example \ref{ex:productcurves}, as explained in \cite{FL25b}. This provides a full characterization of the hypersurfaces that asymptotically appear as the tangent cone of a pluritheta divisor.
\end{example}

\subsection{Nonsimplicial examples}
 The three-dimensional hyperelliptic Jacobian is surprisingly much harder to tackle. There are few phenomena that take place here. On one hand, on the exceptional ray there will be base loci that will have entire irreducible components contained in the exceptional divisor. In particular, the infinitesimal successive minima will be distinct than there local counterpart from \cite{AI}. On the other hand, this forces our generic iNObody to be nonsimplicial. Finally, the Zariski decompositions that we study in \cite{FL25c} on some birational modification of the Jacobian suggests that the family of graded algebras $A_{\bullet}^t$ from Theorem \ref{thm:slices} may not have a geometric realization as complete linear series on some birational modification of the exceptional divisor.
 
\begin{example}[Hyperlliptic Jacobian threefolds]\label{ex:hyper3}
	Let $C$ be a hyperelliptic curve of genus 3. Let $J$, $o$, and $L=\theta$ as in the previous example. Then $\inob{o}{\theta}$ is not simplicial, and $\epsilon_2(L)<\epsilon^{\rm loc}_2(L)$. 

	\begin{proof}[Sketch of proof] 
Let $m:J\to J$ be the multiplication by $2$ map. Let $p_1,\ldots,p_8$ be the Weierstrass points of $C$, i.e. the ramifications of the hyperelliptic pencil. We have the following subvarieties of $J$ through the origin $o$:
	\smallskip
	
	\noindent $(1)$. $F\deq\{\cal O_C(2x-2p_1)\ \mid\ x\in C\}=m(C)$ where $C\subset J$ is the Abel--Jacobi embedding $x\mapsto\cal O_C(x-p_1)$. We have $F\equiv2\theta^2$ and $\mult_oF=8$, corresponding to the 8 Weierstrass points. As in \cite{BS01}, the curve $F$, is the Seshadri curve for $L$, meaning it determines the Seshadri constant \[\epsilon_1(L)=\epsilon^{\rm loc}_1(L)=\epsilon(L)=\frac 32.\]
	
	\noindent$(2)$. Let $D=C-C$ as for non-hyperelliptic Jacobian. In the hyperelliptic case  $\mult_oD=2$ and $D=C+C-\tau\equiv \theta$, where $\tau$ is the class of the hyperelliptic pencil.
	After blow-up, $\overline D\simeq C_2\deq{\rm Sym}^2C$, and \[\Gamma\deq\overline D\cap E\] 
	is the rational curve $\{x+x'\ \mid\ x\in C\}$ in $C_2$, where $x'$ is the hyperelliptic conjugate. In $E$, this curve is a conic, the canonical image of $C$ in $\bb P^2$. The strict transform $\overline F$ is the diagonal in $C_2$. The classes $\overline F$ and $\Gamma$ are extremal in $\Eff(C_2)$ and one computes that $\overline D|_{\overline D}\equiv\frac 12\overline F$. Consequently, one can prove that $\overline D$ is not big. See also \cite{Ful21}. Therefore
	\[\epsilon_3(L)=\epsilon^{\rm loc}_3(L)=\mu(L)=2.\]
	\noindent$(3)$. The surface $T\deq m(D)$ is defined as in the non-hyperelliptic case, but this time $T\equiv 16\theta$, and $\mult_oT=30$, corresponding to the $28$ lines through pairs of distinct $p_i$, and the conic $\Gamma\subset E$. We prove in \cite{FL25c} that the strict transform $\overline T$ is in the boundary of $\Mov(\overline J)$. From this, as in the non-hyperelliptic case, one proves that ${\bf B}_+(L_t)=\overline D$ for $\frac{15}8\leq t<2$ whereas $\dim{\bf B}_+(L_t)=1$ for $\frac 32\leq t<\frac{15}8$. We deduce that 
	\[\epsilon^{\rm loc}_2(L)=\frac{15}8.\]
	Then $\prod_{i=1}^3\epsilon^{\rm loc}_i(L)=\frac 32\cdotp\frac{15}8\cdotp 2=\frac{45}8<6=\vol(L)$. Thus $\inob{o}{L}$ is not simplicial by Corollary \ref{cor:simplicialcriterion}.
	
	To study $\epsilon_2(L)$, let $\overline{m}:\overline J'\to\overline J$ be the base change of $m$. Thus $\overline J'$ is the blow-up of $J$ in the 64 points of $2$-torsion. By abuse, denote by $\pi:\overline J'\to J$ the blow-up map, and by $\overline D'\subset\overline J'$ the strict transform so that $\overline T=\overline m(\overline D')$. The surface $D$ contains only 29 of the 64 points of torsion. Seen in ${\rm Pic}^2(C)$, these points of $D=C+C$ are the 28 points $\cal O_C(p_i+p_j)$ with $i\neq j$, and $\tau$ the class of the hyperelliptic pencil. Note that $\mult_{\tau}D=2$. After the blow-up of only $\tau\in {\rm Pic}^2(C)$, as before, the strict transform of $D$ is isomorphic to $C_2$. Thus $\overline D'$ is the further blow-up of $C_2$ in the $28$ points $p_i+p_j$ with $i<j$.
	
	The strict transforms of the curves $C+p_i\subset{\rm Pic}^2(C)$ are disjoint in $\overline J'$ and contained in $\overline D'$. Each of them is mapped by $\overline m$ isomorphically onto $\overline F$. Let 
	\[R\deq\sum\nolimits_{i=1}^8\overline{C+p_i}.\]
	The curves $\overline F$ and $\Gamma$ in $C_2\simeq{\rm Bl}_{\tau}D$ do not pass through any of the 28 points $p_i+p_j$ with $i\neq j$. Denote their strict transforms in $\overline D'$ by $\overline F'$ and $\Gamma'$. We have the intersection relations
	\begin{center}
		\begin{tabular}{|c|r|r|r|}
			\hline 
			& $\Gamma'$& $\overline F'$& $R$\\
			\hline\hline 
			$\Gamma'$& $-2$&$8$&$8$\\
			\hline 
			$\overline F'$& $8$&$-8$&$16$\\
			\hline 
			$R$& $8$& $16$& $-48$\\
			\hline 
		\end{tabular} 
	\end{center}
	Using them, one computes that 
	\[\overline{m}^*\overline T|_{\overline D'}\equiv 2(16\Gamma'+\overline F'+3R)+(6\Gamma'+9R)\]
	is a Zariski decomposition, whence $\Gamma'\cup R={\bf B}_-(\overline{m}^*\overline T|_{\overline D'})\subseteq {\bf B}_-(\overline{m}^*\overline T)=\overline{m}^{-1}{\bf B}_-(\overline T)$. We deduce that $\Gamma\cup\overline F\subseteq{\bf B}_-(L_{\frac{15}8})$ on $J$.
	From $L_{\frac 32}\cdot\overline F=0$, we find $\overline F\subset{\bf B}_+(L_t)$ for all $t\geq\frac 32$. By running the Zariski decomposition algorithm on $\overline{m}^*{L_t}|_{\overline D'}$ for $\frac 32<t\leq\frac{15}8$, one shows that $\overline R$ appears in all these decompositions, and then that $\Gamma'$ is present precisely when $\frac{24}{13}<t\leq \frac{15}8$. Thus the exceptional $\Gamma$ appears in ${\bf B}_-(L_t)$ for all $\frac{24}{13}<t\leq \frac{15}8$, and is therefore a component of ${\bf B}_+(L_{\frac{24}{13}})$. These imply that 
	\[
	\epsilon_2(L)\leq\frac{24}{13}<\frac{15}8=\epsilon^{\rm loc}_2(L) \
	\]
 as claimed. The first inequality is actually an equality and it follows from the geometric descriptions sketched above as detailed in \cite{FL25c}. For example, if $\rho:\widetilde J\to \overline J$ denotes the blow-up of $\overline F$ with exceptional divisor $\widetilde E$, then $P_{\sigma}(\rho^*L_t)=\rho^*L_t-(\frac 43t-2)\widetilde E$ is nef (and $N_{\sigma}(\rho^*L_t)$ is supported on $F$) for all $\frac 32\leq t\leq\frac{24}{13}$. From these, one can show that ${\bf B}_-(L_t)\subseteq\overline F$ for $\frac 32\leq t\leq\frac{24}{13}$ and the desired equality.  
\end{proof}
\end{example}

Easier non-simplicial examples can be found on surfaces. Moreover they can be used to show that the very general assumption on the point of the blow-up is necessary for both the upper bound on the volume, and for the upper bound on the iNObody in Theorem \ref{thm:upperbound}.

%

\begin{example}\label{ex:verygeneralpointnecessary}
	Let $X$ be a smooth projective surface, $x\in X$, and $\xi\in{\rm Big}_x(X)$. From \cite[Theorem 6.4]{LM09} it follows that $\inob{x}{\xi}$ can be computed and it is the area under the graph of 
	\[
	[0,\mu(\xi;x)]\to\bb R_+:t\mapsto t-(N(\xi_t)\cdot E) \ ,
	\]
	where $N(\xi_t)$ is the negative part in the Zariski decomposition of $\xi_t$ on $\overline X$.
	
	With this data in hand, let $X={\rm Bl}_p\bb P^2$ with exceptional divisor $F$. Let $x$ be any point in $F$, so not a very general point of $X$. Let $\ell\subset X$ be the strict transform of the unique line through $p$ in $\bb P^2$ such that $\{x\}=\ell\cap F$. Let $\pi:\overline X\to X$ be the blow-up of $x$ with exceptional divisor $E$. We have the intersection pairing
	\begin{center}
		\begin{tabular}{|c|r|r|r|}
			\hline & $\overline\ell $& $\overline F$ & $E$\\ 
			\hline\hline 
			$\overline \ell$& $-1$&$0$&$1$\\
			\hline 
			$\overline F$& $0$& $-2$& $1$\\
			\hline 			$E$& $1$&$1$&$-1$\\
			\hline 
		\end{tabular} 	
	\end{center}
	where $\overline\ell$ and $\overline F$ denote the strict transforms.
	Let
	\[L=(u+v)\ell+uF\]
	on $X$ where $u,v>0$. Then $L$ is ample, $\vol(L)=(L^2)=u^2+2uv$ and $\pi^*L=(u+v)\overline\ell+u\overline F+(2u+v)G$. Using that $\overline X$ is toric, one computes that if $t\geq 0$, then $L_t$ is effective if and only if $t\leq 2u+v$ and furthermore nef if and only if $t\leq\min\{u,v\}$.
	Assume $u\geq v$. Then $\epsilon(L;x)=v$ and $\mu(L;x)=2u+v$. The polytope $\invertpoly(v,2u+v)$ (see Remark \ref{rmk:tiltedshapes}) is the right trapeze $\textup{convex hull}\bigl((0,0),(2u+v,0),(2u+v,v),(v,v)\bigr)$ which has normalized area $4uv+v^2$. When $u>(1+\sqrt 2)v$, this is less than $u^2+2uv= \vol(L)=(L^2)=2\cdot {\rm vol}_{\bb R^2}\inob{x}{L}$, thus the iNObody cannot be contained in $\invertpoly(v,2u+v)$. In fact, one can compute
	\[\inob{x}{L}=\textup{convex hull}\bigl((0,0),(2u+v,0),(u,\frac 12(u+v)),(v,v)\bigr).\]
	This is represented in Figure \ref{fig:outofbounds} where the shaded area is the iNObody for $u=3$ and $v=1$ and the dashed segments are the expected upper bound (after tilting).\qed
	\begin{figure}
	\begin{tikzpicture}
	\draw [->](0,0) -- (7.5,0) node[right]{$\nu_1$};
	\draw [->](0,0) -- (0,2.5) node[above]{$\nu_2$};
	\draw (0,0) node[below]{$(0,0)$};
	\draw (7,0) node[below]{$(\mu,0)$};
	\draw (1,1) node[above]{$(\epsilon,\epsilon)$};
	\draw [thick, fill=gray, opacity=0.4] (0,0) --(7,0) -- (3,2) -- (1,1) -- cycle;
	\draw [dashed] (1,1)--(7,1) node[above]{$(\mu,\epsilon)$}--(7,0);
\end{tikzpicture}\centering\caption{Failure of upper bounds at special points even for very general flags}\label{fig:outofbounds} 
\end{figure} 
\end{example}


The upper bound on $\sinob{X}{L}$ in Theorem \ref{thm:upperbound} can be sharp.
We have seen this already in the cases where $\sinob{X}{L}=\simplex\,_t$ when the lower bound is also sharp. The next example, inspired by the previous one on surfaces, is a case where the upper bound is sharp, while the lower bound is not.

\begin{example}\label{ex:upperboundsharp}
	Let $X={\rm Bl}_p\bb P^n$ with exceptional divisor $F$. Let $x\in X\setminus F$. Let $H$ be the class of the pullback of a general hyperplane in $\bb P^n$ to $X$.  Then $L=aH-F$ is ample if $a>1$. For $x\in X\setminus F$ one can compute that $\epsilon(L;x)=a-1$ with the unique Seshadri curve for $L$ through $x$ being the strict transform of the line through $p$ and the image of $x$ in $\bb P^n$. Furthermore $\mu(L;x)=a$, and the base locus of $L_a$ on $\overline X$ is the strict transform of the Seshadri curve. In particular, we deduce that $\epsilon_i(L;x)=a$ for $2\leq i\leq n$. We have 
	\[
	n!\cdot {\rm vol}_{\bb R^n}(\polytope(a-1,a,a,\ldots,a))=a^n-1=(L^n)=\vol(L) \ ,
	\] 
	as $\polytope(a-1,a,\ldots,a)$ is $\simplex\,_a$ with the $(a-1){\bf e}_1+\simplex\,_1$ tip cut off. By Theorem \ref{thm:upperbound} we get then
	\[\sinob{x}{L}=\polytope(a-1,a,\ldots,a)\ .\]
	for any $x\in X\setminus F$.\qed
\end{example}

More non-simplicial examples are computed in \cite{FL25} for box-product polarizations on product threefolds like $C\times\bb P^2$ or $C\times J$, where $J$ is a Jacobian surface. 


\section{Applications}
In this section we provide three applications of the theory developed so far. The first one eyes a particular case of the Ein--Lazarsfeld conjecture on the lower bound of the Seshadri constant for ample line bundles at very general points. 
Next, we identify an interval where a restricted volume function is nonincreasing.
Finally we provide some interesting connections between certain invariants of the iNObody of a big divisor class and the Seshadri constant of an associated curve class. 

\subsection{Bounds on Seshadri constants} 
Inspired by \cite{CN14,KL18,Bal23}, we prove in any dimension a particular case of the Ein--Lazarsfeld conjecture.
\begin{proposition}
	Let $X$ be a normal complex projective variety of dimension $n$. Let $L$ be an ample line bundle on $X$. Denote $\epsilon\deq\epsilon(L;1)=\epsilon(L;x)$ for $x$ very general.
	Assume that for $x$ very general we have that ${\bf B}_+(L_{\epsilon})$ contains a prime divisor $Z$ in $E\simeq\bb P^{n-1}$. Then $\epsilon\geq 1$.
\end{proposition}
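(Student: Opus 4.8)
The idea is to transfer the divisorial component $Z\subseteq E$ of $\Bplus(L_\epsilon)$ to a prime divisor of $X$ through $x$ by a Nakamaye-type argument, read off its Seshadri quotient, and then invoke a very-general-point estimate of Ein--Lazarsfeld type. First I would dispose of a degenerate case: we may assume $\epsilon<\sqrt[n]{(L^n)}$, since otherwise $(L_\epsilon^n)=(L^n)-\epsilon^n\le 0$, so $L_\epsilon=\pi^*L-\epsilon E$ is not big, $\Bplus(L_\epsilon)=\overline X$, and the only irreducible component of $\Bplus(L_\epsilon)$ is $\overline X$ itself, contrary to hypothesis. Since $\epsilon=\epsilon(L;x)=\epsilon_1(L;x)$ is the Seshadri constant (Lemma \ref{lem:seshadriwidth}) and $L_t$ is nef for $0\le t\le\epsilon$, the class $L_\epsilon$ is nef and big; passing to a resolution of $\overline X$ that is an isomorphism near $E$ (harmless for $\epsilon$, for the base loci near $E$ by \cite{BBP13}, and for the intersection numbers used below), Nakamaye's theorem identifies $\Bplus(L_\epsilon)$ with the null locus $\Null(L_\epsilon)=\bigcup\{V\subseteq\overline X\ \text{irreducible}\mid (L_\epsilon^{\dim V}\cdot V)=0\}$, a finite union.

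Next I would locate the component carrying $Z$. Since $Z\subseteq E\simeq\PP^{n-1}$ is a hypersurface of some degree $d\ge 1$ and $L_\epsilon|_E=\mathcal O_E(\epsilon)$, one has $(L_\epsilon^{n-2}\cdot Z)=d\,\epsilon^{n-2}>0$, so $Z$ itself is not a maximal component of $\Null(L_\epsilon)$; being irreducible, it lies in one, say $V$, with $\dim V>\dim Z=n-2$. As $(L_\epsilon^n)=(L^n)-\epsilon^n>0$ and $(L_\epsilon^{n-1}\cdot E)=\epsilon^{n-1}>0$, we get $V\ne\overline X$ and $V\ne E$, so $V$ is the strict transform $\overline W$ of a prime divisor $W\subseteq X$ through $x$, with $(L_\epsilon^{n-1}\cdot\overline W)=0$. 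Writing $\overline W=\pi^*W-mE$ with $m=\mult_x W\ge 1$, and using that $(\pi^*\alpha_1\cdots\pi^*\alpha_r\cdot E^{n-r})=0$ for $1\le r\le n-1$ while $(E^n)=(-1)^{n-1}$, the identity $\big((\pi^*L-\epsilon E)^{n-1}\cdot(\pi^*W-mE)\big)=0$ expands to $(L^{n-1}\cdot W)=m\,\epsilon^{n-1}$, i.e.
\[
\epsilon^{\,n-1}=\frac{(L^{n-1}\cdot W)}{\mult_x W}.
\]
(Incidentally, the fact that $L_t$ is nef for $t<\epsilon$ forces each slice $\inob{x}{L}_{\nu_1=t}$ to equal $\simplex\,_t$, so $\epsilon_1(L;x)=\dots=\epsilon_{n-1}(L;x)=\epsilon$ by Theorem \ref{thm:successiveminimaviaslicewidths}, matching the role of $\epsilon_{n-1}$ highlighted in \cite{CN14}; this is not needed for the proof.)

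It then remains to prove $(L^{n-1}\cdot W)\ge\mult_x W$, and I expect this to be the crux. Because $x$ is very general and $W\ni x$ has $\mult_x W=m$, the prime divisor $W$ varies in an algebraic family covering $X$ whose generic member has a point of multiplicity $m$, with $[W]$, $m$, and $(L^{n-1}\cdot W)$ constant along it. The inequality $(L^{n-1}\cdot W)\ge m$ is precisely the divisorial analogue, at a very general point, of the Ein--Lazarsfeld bound on Seshadri constants; I would prove it by the differentiation technique, spreading the family over a base $T$, restricting to a general curve $B\subseteq T$ so that the multiplicity-$m$ points move, and differentiating the corresponding sections of powers of $L$ along the $T$-directions via Proposition \ref{prop:differentiatejets} (in the spirit of \cite{EKL95,CN14,Bal23}) to bound the decay of multiplicity and force $(L^{n-1}\cdot W)\ge m$. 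Two subtleties to watch: this cannot be reduced to the surface case, since restricting $L$ to a complete-intersection surface through $x$ inflates the self-intersection and weakens the resulting bound, so a genuinely $n$-dimensional argument is required; and one must run the standard ``very general in $X$ versus very general in the family'' bookkeeping so that the differentiation estimate applies at the given point $x$. Granting $(L^{n-1}\cdot W)\ge\mult_x W$, the displayed formula gives $\epsilon^{n-1}\ge 1$, hence $\epsilon\ge 1$.
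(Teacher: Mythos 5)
Your reduction is correct and coincides with the first half of the paper's proof: after disposing of the non-big case, Nakamaye's theorem shows $Z$ is not itself a null component (because $L_\epsilon|_E$ is ample), so it sits inside a divisorial component $\overline W=\pi^*W-mE$ of $\Bplus(L_\epsilon)$ with $(L_\epsilon^{n-1}\cdot\overline W)=0$, and the intersection computation $\epsilon^{n-1}=(L^{n-1}\cdot W)/\mult_xW$ is right. (A small quibble: in your degenerate case, if $L_\epsilon$ is not big then $\Bplus(L_\epsilon)=\overline X$ still \emph{contains} $Z$ as a subset, so there is no contradiction with the hypothesis; but the conclusion $\epsilon\geq\sqrt[n]{(L^n)}\geq 1$ holds anyway, so nothing is lost.)

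The genuine gap is exactly at the step you flag as the crux: the inequality $(L^{n-1}\cdot W)\geq\mult_xW$ for a prime divisor through a very general point is asserted, not proved, and it is not a routine output of the differentiation technique. On surfaces this is Ein--Lazarsfeld's theorem, and even there differentiation alone does not suffice: one needs the family bound $(C^2)\geq m(m-1)$ \emph{combined with} the Hodge index theorem. In dimension $n\geq 3$ the analogous combination (a lower bound on $(L^{n-2}\cdot W^2)$ plus a Khovanskii--Teissier inequality) is not available off the shelf, and proving your inequality in general would establish a divisorial case of the Ein--Lazarsfeld conjecture going beyond what the proposition asserts. The paper instead closes the argument by exploiting the specific identity $(L^{n-1}\cdot W)=d\,\epsilon^{n-1}$ with $d=\mult_xW$: if $d=1$, integrality of $(L^{n-1}\cdot W)$ gives $\epsilon\geq 1$ at once; if $d\geq 2$ and $\epsilon<1$, integrality forces $\epsilon\leq\sqrt[n-1]{(d-1)/d}$, while Lemma \ref{lem:AIverygeneral} (the very-general-point differentiation statement) gives $\mult_{\overline W}\|L_t\|\geq t-\epsilon$, pseudo-effectivity of the restriction to $E$ then yields $t\geq d(t-\epsilon)$, and the resulting constraint on the slices of $\inob{x}{L}$ bounds $\vol(L)$ from above and produces $\epsilon\geq\sqrt[n]{(d-1)/d}$ --- a contradiction. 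So differentiation does enter, but only to squeeze the Newton--Okounkov body, not to prove your divisorial inequality directly; to complete your proof you would need either a genuine argument for $(L^{n-1}\cdot W)\geq\mult_xW$ or a switch to an integrality-plus-volume argument of this kind.
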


\begin{proof}
	By definition, $L_{\epsilon}$ is nef and not ample.
	If $L_{\epsilon}$ is not big, then $\epsilon_i(L)=\epsilon$ for all $i$. Thus, $\epsilon=\epsilon_{n}(L)=\mu(L)\geq\sqrt[n]{(L^n)}\geq 1$, where $\mu\deq\mu(L)$ is the common value of $\mu(L;x)$ at a very general point $x$.
	
	Let $L_{\epsilon}$ be big now, i.e. ${\bf B}_+(L_{\epsilon})\neq\overline X$. Since $L$ is nef and $L_{\epsilon}|_E$ is ample, then by \cite[Theorem 10.3.4]{Laz04} $Z$ cannot be a component of ${\bf B}_+(L_{\epsilon})$. So, ${\bf B}_+(L_{\epsilon})$ contains a component $\overline Y$, through $Z$, coming from a prime divisor $Y\subset X$. Further this implies that $(L_{\epsilon}^{n-1}\cdot\overline Y)=0$. So, if $W\deq \overline Y|_E$ is of degree $d\deq \mult_xY$, then $(L^{n-1}\cdot Y)=d\cdot\epsilon^{n-1}$. In particular, when $d=1$, $Y$ is smooth at $x$ and by definition $\epsilon\geq 1$. 
	
	Assume $d\geq 2$ and suppose $\epsilon<1$. As $(L^{n-1}\cdot Y)$ is an integer, then this implies $\epsilon^{n-1}\leq\frac{d-1}d$, i.e.
	  \begin{equation}\label{eq:specialseshadriup}\epsilon\leq\sqrt[n-1]{\frac{d-1}d}\ .\end{equation} 
	From Lemma \ref{lem:AIverygeneral}, we deduce $m_t\deq\mult_{\overline Y}||L_t||\geq t-\epsilon$ for all rational $t\in[\epsilon,\mu)$. This is where the very general condition on $x$ is used. Then $(L_t-m_t\overline Y)|_E=tH-m_tW$ is (pseudo)effective, thus
	$t\geq m_t\cdot d\geq d(t-\epsilon)$, and in particular
	\[\mu\leq \frac d{d-1}\epsilon\ .\]
	As $\epsilon_1(L)=\ldots=\epsilon_{n-1}(L)=\epsilon$ by assumptions, then for any 
	 $t\in[\epsilon,\mu)$ and $(t,\nu_2,\ldots,\nu_n)\in\inob{X}{L}$, we necessarily have
	\[\nu_2+\ldots+\nu_n\leq t-m_t\cdot d\leq t-d(t-\epsilon) \ .\]
	This is because if $t$ is rational, then $m_t\cdot W$ is in the fixed part of $|L_t|_{\bb Q}\big|_E$ and by choosing $Y_n\not\in W$, this fixed part does not affect $\nu_{Y_{\bullet}}(D)$ for any $D\in|L_t|_{\bb Q}$.
	Then 
	\[(L^n)\leq\epsilon^n+n!\cdotp\int_{\epsilon}^\mu\frac 1{(n-1)!}(t-d(t-\epsilon))^{n-1}dt=\epsilon^n+\frac 1{d-1}(\epsilon^n-(d\epsilon-(d-1)\mu)^n)\ .\]
	The RHS is at most $\frac d{d-1}\epsilon^n$, thus
	$\epsilon\geq\sqrt[n]{\frac{d-1}d}$, which contradicts \eqref{eq:specialseshadriup}.
\end{proof}

\subsection{Monotonicity of a restricted volume function}
If $Y\subset X$ is a prime divisor on a projective variety of dimension $n$, and $L$ is a Cartier divisor, then the \emph{restricted volume} of $L$ to $Y$ is
\[\vol_{X|Y}(L)\deq\lim_{m\to\infty}\frac{\dim_{\bb C} {\rm Im}\bigl(H^0(X,mL)\to H^0(Y,mL|_Y)\bigr)}{m^{n-1}/(n-1)!}\ .\]
This is a $(n-1)$-homogeneous numerical invariant of $L$. 
By \cite[Theorem A]{ELMNP09} it extends naturally in continuous fashion to those classes in $N^1(X)_{\bb R}$ without $Y$ in their augmented base locus.  

With the assumptions of Theorem \ref{thm:mintrosuccessjets}, in this section we study the restricted volume function
\[t\mapsto \vol_{\overline X|E}(\xi_t)\ .\]
It follows from the result of \cite{LM09} in Theorem \ref{thm:slices} that  
\[\vol_{\overline X|Y}(\xi_t)=(n-1)!\cdot\vol_{\bb R^{n-1}}\inob{x}{\xi}_{\nu_1=t}\ \]
for all $t\in(0,\mu(\xi;x))$.
This invites a convex geometric study of the function above.
For example the slices are the simplex $\simplex\,_t$ for $t\in[0,\epsilon_1(\xi;x)]$ by \eqref{eq:introKL17}, and in particular the restricted volume function agrees with $t^{n-1}$ on this interval. When $x$ is very general, we can also control the behavior of this function towards the opposite end of the interval $(0,\mu(\xi;x))$.

\begin{proposition}
	Let $X$ be a normal complex projective variety of dimension $n$ and $\xi\in N^1(X)_{\bb R}$ a big class. If $x\in X$ is very general and $E\subset\overline X={\rm Bl}_xX$ the exceptional divisor, then the function
	\[t\mapsto{\rm vol}_{\overline X|E}(\xi_t)=(n-1)!\cdot\vol_{\bb R^{n-1}}\inob{x}{\xi}_{\nu_1=t}\]
	is nonincreasing on the interval $[\epsilon_{n-1}(\xi;x),\mu(\xi;x))$. In particular $t\mapsto\vol(\xi_t)$ is convex on this interval.
\end{proposition}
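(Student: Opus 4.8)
The plan is to reduce the proposition to the single assertion that the restricted volume function $g(t)\deq\vol_{\overline X|E}(\xi_t)=(n-1)!\cdot\vol_{\bb R^{n-1}}\inob{x}{\xi}_{\nu_1=t}$ is nonincreasing on $[\epsilon_{n-1}(\xi;x),\mu(\xi;x))$. Granting this, the convexity of $t\mapsto\vol(\xi_t)$ is immediate: one has $\vol(\xi_a)=n!\cdot\vol_{\bb R^n}\!\big(\inob{x}{\xi}\cap\{\nu_1\ge a\}\big)$ via the translation identity $\inob{x}{\xi_a}=\big(\inob{x}{\xi}\cap\{\nu_1\ge a\}\big)-a\cdot{\bf e}_1$ and Theorem \ref{thm:slices}; slicing and Fubini then give $\vol(\xi_a)=n\int_a^{\mu(\xi;x)}g(t)\,dt$, so $\vol(\xi_a)$, being $n$ times the integral of a nonnegative nonincreasing function, is convex on $[\epsilon_{n-1}(\xi;x),\mu(\xi;x))$.

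For the monotonicity of $g$, I would exploit two features of the generic body at a very general point. \emph{(i) A box constraint.} By Theorem \ref{thm:upperbound}.(2), $\sinob{x}{\xi}\subseteq\polytope(\epsilon_1,\ldots,\epsilon_n)$ with $\epsilon_i=\epsilon_i(\xi;x)$; the defining inequality of $\polytope$ with index $n-1$, pulled back through the straightening $S$ (under which the first $n-1$ straightened coordinates are $\nu_2,\ldots,\nu_n$), reads $\nu_2+\cdots+\nu_n\le\epsilon_{n-1}(\xi;x)$ for every $(\nu_1,\ldots,\nu_n)\in\inob{x}{\xi}$. \emph{(ii) A translation inside the body.} By the Borel-type property of the tilted body at a very general point recorded in Remark \ref{rmk:tiltedshapes} (the ``$i=1$'' line), $(\nu_1,\ldots,\nu_n)\in\inob{x}{\xi}$ implies $(\nu_2+\cdots+\nu_n,\nu_2,\ldots,\nu_n)\in\inob{x}{\xi}$; by convexity of $\inob{x}{\xi}$ the whole segment between these two points lies in the body, so $(\nu_1-s,\nu_2,\ldots,\nu_n)\in\inob{x}{\xi}$ for all $0\le s\le \nu_1-(\nu_2+\cdots+\nu_n)$.

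Combining (i) and (ii), I would argue as follows. Fix $\epsilon_{n-1}(\xi;x)\le t'\le t<\mu(\xi;x)$ and put $s\deq t-t'$. If $(t,\nu_2,\ldots,\nu_n)\in\inob{x}{\xi}$, then by (i) we get $\nu_2+\cdots+\nu_n\le\epsilon_{n-1}(\xi;x)\le t'$, hence $s=t-t'\le t-(\nu_2+\cdots+\nu_n)$, so (ii) yields $(t',\nu_2,\ldots,\nu_n)\in\inob{x}{\xi}$. Identifying each slice $\inob{x}{\xi}_{\nu_1=c}$ with a subset of $\bb R^{n-1}_+$ via the coordinates $\nu_2,\ldots,\nu_n$, this shows $\inob{x}{\xi}_{\nu_1=t}\subseteq\inob{x}{\xi}_{\nu_1=t'}$, whence $\vol_{\bb R^{n-1}}\inob{x}{\xi}_{\nu_1=t}\le\vol_{\bb R^{n-1}}\inob{x}{\xi}_{\nu_1=t'}$, i.e.\ $g(t)\le g(t')$, which is the desired monotonicity.

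The genuinely substantial input here is the polytopal upper bound of Theorem \ref{thm:upperbound}, whose proof is where the very-general hypothesis on $x$ is really used (through differentiation of jets); the rest is convexity and Fubini, and I expect no further obstacle. That the hypothesis on $x$ cannot be weakened is confirmed by Example \ref{ex:verygeneralpointnecessary}, where at a special point the slice volumes strictly increase past $\epsilon_{n-1}(\xi;x)$ (there the box constraint (i) already fails), so both the monotonicity and the convexity statements genuinely require $x$ very general.
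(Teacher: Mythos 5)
Your argument is correct and is essentially the paper's proof: the paper establishes the same slice-inclusion by translating points of a higher slice in the $-{\bf e}_n$ direction of the straightened (Borel-fixed) body, using the defining inequality $\alpha_1+\cdots+\alpha_{n-1}\le w_{n-1}$ of the bounding polytope from Theorem \ref{thm:upperbound} — exactly your steps (i) and (ii) transported through $S$. The only cosmetic differences are that the paper phrases the monotonicity abstractly for an arbitrary Borel-fixed convex body and deduces convexity by citing $\frac{d}{dt}\vol(\xi_t)=-n\cdot\vol_{\overline X|E}(\xi_t)$ from \cite{LM09} rather than via your (equivalent) Fubini computation.
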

\noindent See also \cite[Lemma 2.4]{CN14}. The result fails if $x$ is not very general by Example \ref{ex:verygeneralpointnecessary}. 
\begin{proof} 
	By Theorem \ref{thm:upperbound}, $S\cdot\inob{x}{\xi}=\sinob{x}{\xi}$ is Borel-fixed, where $S$ is as in \eqref{eq:introS}.
	More generally, we prove that if $P\subset\bb R^n_+$ is a Borel-fixed convex body, then $t\mapsto {\rm vol}_{\bb R^{n-1}}((S^{-1}P)_{\nu_1=t})$ is nonincreasing for $t\in [w_{n-1}(P),w_n(P))=[w_n(S^{-1}P),w_1(S^{-1}P))$. Modulo a constant scaling factor, this is the same as the function $t\mapsto{\rm vol}_{\bb R^{n-1}}(P_{\alpha_1+\ldots+\alpha_n=t})$.
	
	For the proof, we claim that if $t<t+\delta$ are in the interval above, then the vertical translation by $-\delta{\bf e}_n$ is a well-defined (and clearly injective) map $P_{\alpha_1+\ldots+\alpha_n=t+\delta}\to  P_{\alpha_1+\ldots+\alpha_n=t}$. It is sufficient to observe that $P_{\alpha_1+\ldots+\alpha_n=t+\delta}-\delta{\bf e}_n$ is fully contained in the upper half space $\alpha_n>0$. This follows from the defining inequality $\alpha_1+\ldots+\alpha_{n-1}\leq w_{n-1}(P)$ of $\polytope(w_1(P),\ldots,w_n(P))\supseteq P$.
	
	For the last statement, use that $-n\cdot\vol_{\overline X|E}(\xi_t)=\frac{d}{dt}\vol(\xi_t)$ by \cite[Corollary C]{LM09}.
\end{proof} 

\subsection{Seshadri constants for curves}
Let $X$ be an $n$-dimensional complex smooth projective variety. A curve class $C\in \textup{N}_1(X)$ is \textit{movable}, if it is the push-forward by a birational modification of a complete intersection of $(n-1)$ ample classes, see \cite[Section 11.4.C]{Laz04} for more detailed definition and properties. This concept can be extended to $\RR$-classes and the set of movable curve classes forms a cone $\Mov_1(X)\subseteq \textup{N}_1(X)_{\RR}$. By \cite{BDPP13}, this cone is the dual of the cone of pseudoeffective Cartier divisor classes. 

Lehamnn--Xiao \cite{LX19} show that for every curve class $C$ in the interior of $\Mov_1(X)$ there is a unique divisor class $P(C)$ in the interior of the cone of movable divisors of $X$ such that 
\[C=\langle P(C)^{n-1}\rangle\ ,\]
where the right side is the positive product introduced by Boucksom in \cite{Bou02}, and further exploited in \cite{BFJ09,BDPP13}.
When $D$ is big and nef, $\langle D^{n-1}\rangle$ is the usual self-intersection class $(D^{n-1})$. 
Devey \cite{Dev22} brings convex geometry into this picture, and defines Newton--Okounkov bodies for $C$ by using the class $P(C)$.

Given the above, and our work for divisors, it is reasonable to expect that starting with a class $\xi\in{\rm Big}_x(X)$, we should be able to read invariants of the class $C=\langle\xi^{n-1}\rangle=\langle P_{\sigma}(\xi)^{n-1}\rangle$ from $\inob{x}{\xi}=\inob{x}{P_{\sigma}(\xi)}$. One such invariant is the Seshadri constant for a movable curve class introduced by the first named author in 
\cite{Ful21}. For a movable curve class $C\in{\rm Mov}_1(X)$ and a point $x\in X$, it is defined as
\[\epsilon(C;x)\deq\inf_D\frac{C\cdotp D}{\mult_xD}\]
as $D$ ranges through effective Cartier divisors containing $x$ in its support. When $x$ is smooth, then
\[\epsilon(C;x)=\sup\{s\geq 0\ \mid\ \pi^*C-s\ell\in\Mov_1(\overline X)\}\ ,\]
where $\ell$ is the class of a line in $\bb P^{n-1}\simeq E\subset X$. \cite{Ful21} proves analogues of the Seshadri ampleness criterion, jet separation, and describes the locus $\{x\in X\ \mid\ \epsilon(C;x)=0\}$.

Our goal is to provide strong bounds for the Seshadri constant of the curve $\langle\xi^{n-1}\rangle$ with respect to the normalized volume of a slice of the straightened up body $\sinob{x}{\xi}$. As we shall see afterwards, the last lower bound is an equality in many cases, but not always.

\begin{theorem}\label{thm:curveseshadribounds}
	Let $X$ be a complex projective manifold of dimension $n$, let $x\in X$ and $\xi\in {\rm Big}_x(X)$. Then
\begin{enumerate}[(1)]
	\item $\prod_{i=1}^{n-1}\epsilon_i(\xi;x)\leq (n-1)!\cdot {\rm vol}_{\bb R^{n-1}}\sinob{x}{\xi}_{\alpha_n=0}\leq\frac{\vol(\xi)}{\mu(\xi;x)}$ ,
\smallskip

\item $\prod\nolimits_{i=1}^{n-1}\epsilon^{\rm loc}_i(\xi;x)\leq\epsilon(\langle \xi^{n-1}\rangle;x)\leq\frac{\vol(\xi)}{\mu(\xi;x)}$ ,
\smallskip

\item If $x$ is very general, then 
	\begin{equation}\label{eq:curveseshadribound} (n-1)!\cdot {\rm vol}_{\bb R^{n-1}}\sinob{x}{\xi}_{\alpha_n=0}\leq\epsilon(\langle \xi^{n-1}\rangle;x)\ .\end{equation} 
\end{enumerate}
\end{theorem}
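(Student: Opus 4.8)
The plan is to establish \eqref{eq:curveseshadribound} by relating the slice $\sinob{x}{\xi}_{\alpha_n=0}$ of the straightened body to a restricted volume on the blow-up, and then bounding the Seshadri constant of the curve class $\langle\xi^{n-1}\rangle$ from below via movability of pullbacks minus a multiple of the line class. First I would reduce to the case where $\xi$ is the class of a big Cartier divisor $L$, using homogeneity and continuity of all the quantities involved (the straightened iNObody, the infinitesimal successive minima, the Seshadri constant of the associated curve class, and the positive product), together with Lemma \ref{lem:zariskidecompositionsuccessiveminima} to replace $L$ by its positive part $P_\sigma(L)$, so that $\langle L^{n-1}\rangle=\langle P_\sigma(L)^{n-1}\rangle$ and the iNObody is unchanged. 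I may further assume $L$ is movable, even ample after a small perturbation absorbed at the end by continuity.

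The geometric heart is to interpret $(n-1)!\cdot{\rm vol}_{\bb R^{n-1}}\sinob{x}{\xi}_{\alpha_n=0}$. Under the straightening $S$ of \eqref{eq:introS}, the hyperplane $\alpha_n=0$ pulls back to $\{\nu_1=\nu_2+\ldots+\nu_n\}$, i.e.\ the ``lowest'' face of $\inob{x}{\xi}$ in the $\nu_1$ direction relative to the other coordinates. Geometrically, a valuation vector $\nu_{Y_\bullet}(D)=(p,\nu_2,\ldots,\nu_n)$ with $p=\nu_2+\ldots+\nu_n$ corresponds to a section whose first nonvanishing jet $J^p_x(s)$, viewed on $E\simeq\PP^{n-1}$, has a flag valuation that exhausts its degree — equivalently the tangent cone divisor $D_1$ passes through the flag with maximal order along every step, so $D_1$ is, up to the flag data, as degenerate as possible. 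The point I want to make is that this slice computes (a piece of) the restricted volume of $L_t$ along $E$ in the limit as $t\to\mu(\xi;x)$ from the appropriate side, or more precisely that the full-dimensional body $\inob{x}{\xi}$ ``stretched over'' this bottom face has volume controlled by $\epsilon(\langle\xi^{n-1}\rangle;x)$. The cleanest route: take a very general infinitesimal flag $Y_\bullet$ and use that for rational $t$ the divisors $D$ with $\nu_1(D)=t$ and $\nu_2(D)+\ldots+\nu_n(D)=t$ are exactly the ones forcing $\pi^*L-tE$ to be ``used up'' along $E$; then a Fujita-type approximation of $\pi^*L$ together with the interpretation of $\epsilon(C;x)$ as $\sup\{s\ge 0\mid \pi^*C-s\ell\in\Mov_1(\overline X)\}$ converts the inequality into the statement that the curve class $\langle(\pi^*L)^{n-1}\rangle-\big((n-1)!\cdot{\rm vol}_{\bb R^{n-1}}\sinob{x}{\xi}_{\alpha_n=0}\big)\ell$ is movable on $\overline X$.

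To carry this out concretely I would differentiate sections at the very general point $x$, exactly as in Proposition \ref{prop:differentiatejets} and Lemma \ref{lem:AIverygeneral}: given an effective divisor numerically equivalent to $mL$ with large multiplicity at $x$, repeated differentiation produces, on $E\simeq\PP^{n-1}$, a family of hypersurfaces whose intersection numbers with a line $\ell\subset E$ are controlled by the width of the relevant slice. Pairing $\pi^*L-s\ell$ against a general movable curve and using the Borel-fixed description of $\inob{x}{\xi}$ at very general points (Theorem \ref{thm:upperbound}) to identify the face $\alpha_n=0$ with the projection of the body forgetting the last straightened coordinate, one obtains that the push-forward to $X$ of a general complete intersection of $n-1$ copies of an ample approximation of $\pi^*L$, which contains the $\ell$-class with coefficient at least $(n-1)!\cdot{\rm vol}_{\bb R^{n-1}}\sinob{x}{\xi}_{\alpha_n=0}$, remains movable; this gives the bound. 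I expect the main obstacle to be precisely the identification of the slice volume ${\rm vol}_{\bb R^{n-1}}\sinob{x}{\xi}_{\alpha_n=0}$ with the correct intersection-theoretic quantity on $\overline X$ — that is, proving that the ``bottom face'' of the iNObody computes the relevant restricted positive product along $E$ rather than merely bounding it; handling the positive product $\langle\cdot\rangle$ (as opposed to an honest self-intersection) and the fact that $\pi^*L$ is never ample will require the Fujita approximation and continuity argument to be done carefully, and the very general hypothesis on $x$ enters exactly to license the differentiation that makes the slice volume attainable rather than just an upper bound.
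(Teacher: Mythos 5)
Your proposal addresses only part (3); parts (1) and (2) are not touched. In particular, the lower bound $\prod_{i=1}^{n-1}\epsilon_i^{\rm loc}(\xi;x)\leq\epsilon(\langle\xi^{n-1}\rangle;x)$ requires a separate intersection-theoretic argument (in the semiample case one builds divisors $D_1,\ldots,D_{n-1}$ of large multiplicity at $x$ meeting a given prime divisor $Z$ properly at $x$ and applies positivity of local intersection multiplicities), and the upper bound in (2) comes from pairing the limit-of-movable class $\pi^*C-\epsilon(C;x)\ell$ against the pseudoeffective class $\pi^*\xi-\mu(\xi;x)E$ together with $\langle\xi^{n-1}\rangle\cdot\xi=\vol(\xi)$. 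None of this appears in your write-up.

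For part (3) itself, your reduction scaffolding (replace $\xi$ by $P_{\sigma}(\xi)$, pass to big and semiample Fujita approximations, use $\epsilon(C;x)=\sup\{s\geq 0\mid\pi^*C-s\ell\in\Mov_1(\overline X)\}$) matches the paper's, and your movability formulation is equivalent by duality to testing $C\cdot D\geq v\cdot\mult_xD$ for every effective divisor $D$ through $x$, where $v=(n-1)!\cdot\vol_{\bb R^{n-1}}\sinob{x}{\xi}_{\alpha_n=0}$. But that single inequality is the entire content of the statement, and you explicitly defer it as ``the main obstacle'' without resolving it. The paper's mechanism is neither differentiation nor an identification of the slice with a restricted positive product along $E$: one chooses local coordinates so that $\nu_{\rm deglex}(D)=(0,\ldots,0,M)$ with $M=\mult_xD$, observes that the span $V_m$ of sections of $mH$ whose deglex valuation has last coordinate $<M$ meets $H^0(X,mH-D)$ only in $0$ and hence injects into $H^0(D,mH|_D)$, and then counts $\#\Gamma_{m,\alpha_n<M}\approx M\cdot m^{n-1}\cdot\vol_{\bb R^{n-1}}\sinob{x}{H}_{\alpha_n=0}$ using the Borel-fixed property of the valuation sets. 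This is exactly where very generality of $x$ enters: it guarantees Borel-fixedness of the whole straightened body (Theorem \ref{thm:upperbound}), so the slices $\alpha_n=0,1,\ldots,M-1$ have comparable cardinality and $\sinob{x}{\xi}_{\alpha_n=0}$ coincides with the projection of the body. Your sketch contains neither the injectivity observation nor the lattice-point count, and the role you assign to differentiation (producing hypersurfaces in $E$ with controlled intersection against a line) does not yield the needed inequality. As written, the proposal is a strategy outline with the decisive step missing.
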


\begin{remark}
	For all examples in Section \ref{s:examples}, except for Example \ref{ex:hyper3}, the inequality \eqref{eq:curveseshadribound} is an equality. More generally,
	\begin{enumerate}[(i)]
		\item If $X$ is a surface then both terms are equal to the moving Seshadri constant of $\xi$, when $x\not\in{\bf B}_+(\xi)$.
		\smallskip
		
		\item If $\sinob{x}{\xi}$ is simplicial, then all inequalities in Theorem \ref{thm:curveseshadribounds} are equalities by Corollary \ref{cor:simplicialcriterion}. 
	\end{enumerate}
The very general assumption on $x$ is not needed in these cases. We do not know if \eqref{eq:curveseshadribound} holds without this assumption since we use the Borel-fixed property of $\sinob{x}{\xi}$ in our proof. But a natural variant of \eqref{eq:curveseshadribound} that we describe below does not always hold at special points.

When $x$ is very general, the Borel-fixed property of $\sinob{x}{\xi}$ implies that $\sinob{x}{\xi}_{\alpha_n=0}={\rm pr}(\inob{x}{\xi})$, where ${\rm pr}$ is the projection onto the $\nu_1=0$ hyperplane. At special points, the inequality $(n-1)!\cdot\vol_{\bb R^{n-1}}{\rm pr}(\inob{x}{\xi})\leq\epsilon(\langle\xi^{n-1}\rangle;x)$ may fail.
See Example \ref{ex:verygeneralpointnecessary}.
\end{remark}

However \eqref{eq:curveseshadribound} may be strict:

\begin{example}
	We go back to Example \ref{ex:hyper3}, where  $C$ is a hyperelliptic curve of genus 3 and $X={\rm Jac}(C)$ with its natural $\theta$ polarization. We may assume that $x$ is the origin.
	
	The surface $S=C-C$ computes $\mu(\theta;x)=2$. By Theorem \ref{thm:curveseshadribounds}, we have $\epsilon((\theta^2);x)\leq\frac{(\theta^3)}{\mu(\theta;x)}=3$. As $\theta-\frac 32E$ is nef, the class $\theta^2-3\ell\equiv \overline S\cdot(\theta-\frac 32E)$ intersects nonnegatively any prime divisor on $\overline X$ not $\overline S$. By direct verification, it has 0 intersection with $\overline S$. Thus, \cite{BDPP13} yields $\epsilon((\theta^2);x)\geq 3$, and equality holds.
	
	If \eqref{eq:curveseshadribound} were an equality, the upper bound in Theorem \ref{thm:curveseshadribounds}.(1) forces  $\sinob{x}{\theta}$ to be a pyramid from $2{\bf e}_3$ over the slice $\sinob{x}{\theta}_{\alpha_3=0}$. We show that this is not the case. With the notation in Example \ref{ex:hyper3}, we know that $P_{\sigma}(\rho^*L_t)=\rho^*L_t-(\frac 43t-2)F$ is nef for $t=\frac{24}{13}$. In \cite{FL25c} we show that the vertical slice $\inob{x}{\theta}_{\nu_1=\frac{24}{13}}$ of the tilted body is a NObody on the surface $\widetilde E$, the strict transform of $E$ via $\rho$. 
	The surface $\widetilde E$ is the blow-up of $\bb P^2$ in 8 points on a conic, the image of the Weierstrass points in the image of the canonical map. The divisors on $\widetilde E$ we work with are $tH-\frac 6{13}F|_{\widetilde E}=(\rho^*L_t-(\frac 43\cdot\frac{24}{13}-2)F)|_{\widetilde E}$, where $H$ is the pullback of a line on $\bb P^2$ and $F|_{\widetilde E}$ is the sum of the 8 exceptional lines. The flag for the NObody is the strict transform of a general linear flag in $\bb P^2$. 	
	
	After an easy computation, the NObody is equal to $\simplex(\frac{12}{13},\frac{24}{13})$. After straightening to $\sinob{x}{\theta}$, we deduce that $(\frac{12}{13},0,\frac{12}{13})\in\sinob{x}{\theta}$. The ray from $2{\bf e}_3$ through $(\frac{12}{13},0,\frac{12}{13})$ meets the slice $\alpha_3=0$ in $(\frac{12}7,0,0)$. As $\frac{12}7>\frac 32=w_1(\sinob{x}{\theta})$, and so $(\frac{12}7,0,0)\not\in\sinob{x}{\theta}$. Thus, $\sinob{x}{\theta}$ cannot be a pyramid.\qed 
\end{example}

\noindent More examples will be given in \cite{FL25} of threefolds polarized by box products where \eqref{eq:curveseshadribound} is strict even when $\epsilon_i(L;x)=\epsilon_i^{\rm loc}(L;x)$ for all $i$.

\begin{proof}[Proof of Theorem \ref{thm:curveseshadribounds}]
$(1)$ The left side follows from Theorem \ref{thm:introreadsuccessive} and the right side from the convexity of $\sinob{x}{\xi}$ using that $\mu(\xi;x)$ is the $\alpha_n$-width for all $x$.
\smallskip

For $(2)$ and $(3)$ we first treat first the case when $\xi$ is additionally nef and Cartier. These showcase the main ideas. The passage to $\xi$ being only big is a technical application of careful Fujita approximations.

$(2)$.	Let $C\deq\langle \xi^{n-1}\rangle$. The class $\pi^*C-\epsilon(C;x)\cdotp\ell$ is a limit of movable classes, which implies that $(\pi^*C-\epsilon(C;x)\cdotp\ell)\cdot(\pi^*\xi-\mu(\xi;x)\cdotp E)\geq 0$. The right side inequality will follow whenever we show $C\cdot \xi=\langle \xi^{n-1}\rangle\cdot \xi=\langle \xi^{n}\rangle=\vol(\xi)$. For this note that the last two equalities are \cite[Corollary 3.6 and Theorem 3.1]{BFJ09}. They generalize known results on surfaces: If $L=P+N$ is the Zariski decomposition of a big divisor on a surface, then $P=\langle L\rangle$ and $\vol L=P^2=P\cdot L$.

For the left side inequality, assume that $\xi$ is represented by a big and semiample Cartier divisor $H$. Fix $Z$ a prime divisor on $X$ through $x$. 
Let $0<t_i<\epsilon_i^{\rm loc}(H;x)$ be rational numbers close to $\epsilon_i^{\rm loc}(H;x)$. 
By Definition \ref{def:aiminima} there exists an effective $\bb Q$-divisor $D_{n-1}\equiv H$ with $\mult_xD_{n-1}\geq t_{n-1}$ without $Z$ in its support. Inductively we construct $D_1,\ldots,D_{n-1}$ such that $\mult_xD_i\geq t_i$ and such that $x$ is an isolated component of $D_1\cap\ldots\cap D_{n-1}\cap Z$.
We claim that
\[(H^{n-1}\cdot Z)\geq t_1\cdot\ldots\cdot t_{n-1}\cdot\mult_xZ\ ,\]
and so $\epsilon(H^{n-1};x)\geq t_1\cdot\ldots\cdot t_{n-1}$. The conclusion follows by letting the $t_i$ approach $\epsilon_i^{\rm loc}(H;x)$. For the claim, since $H$ is semiample, by \cite[Example 12.2.7]{Ful98} all components of $D_1\cap\ldots\cap D_{n-1}\cap Z$ have nonnegative contribution to the intersection number. This is even true of the components of unexpected dimension. By construction no such component passes through $x$. It follows that 
\[
(H^{n-1}\cdot Z)\geq i(x,H^{n-1}\cdot Z;X)\geq \prod\nolimits_{i=1}^{n-1}\mult_xD_i\cdot\mult_xZ\geq t_1\cdot\ldots\cdot t_{n-1}\cdot\mult_xZ \ .
\]
The second inequality uses \cite[p233]{Ful98}. A similar argument appears in \cite[Lemma 3.2]{AI}.

We now consider the case when $\xi\in{\rm Big}_x(X)$ is arbitrary. 
The idea is to reduce to the previous case by a continuity argument using Fujita approximations on blow-ups of $X$.
By \cite[Proposition 3.7]{Leh13} as in \cite[Theorem 6.22]{FL17}  or \cite[Proposition 3.3]{LX19}, there exist:
\begin{itemize}
	\item a very ample divisor $G$ on $X$,
	\item a sequence of smooth birational models $\rho_m:X_m\to X$ that are each sequences of blow-ups with smooth centers supported in ${\bf B}_+(\xi)$, and
	\item big and semiample $\bb Q$-divisors $B_m$ on $X_m$ that are Fujita approximations for $\xi$. More precisely, we have  $P_{\sigma}(\rho_m^*\xi)-\frac 1m\rho_m^*G\leq_{x_m} B_m\leq_{x_m} P_{\sigma}(\rho_m^*\xi)$, where $P_{\sigma}(D)$ denotes the positive part of the Nakayama $\sigma$-Zariski decomposition \cite{Nak04} of the $\bb R$-divisor $D$, where $x_m\deq\rho_m^{-1}\{x\}$, and the notation $D\leq_xD'$ means that $D'-D$ is represented by an effective $\bb R$-divisor without $x$ in its support.
\end{itemize} 	
The invariants $\epsilon^{\rm loc}_i(\xi;x)$,	 $\vol(\xi)=\langle\xi^n\rangle$, and $\langle \xi^{n-1}\rangle$ do not change if we replace $\xi$ by $P_{\sigma}(\xi)$. The first is by the analogue of Lemma \ref{lem:zariskidecompositionsuccessiveminima} for the Ambro--Ito minima, which shows further that $P_{\sigma}(\xi)\in{\rm Big}_x(X)$. The other two are by the definition of the positive product in terms of Fujita approximations. More precisely it is because if $N$ is a nef Weil $b$-divisor over $X$, and $L$ is big on $X$, then $N\leq L$ as $b$-divisors if and only if $N\leq P(L)$, where $P(L)$ is the positive part Weil $b$-divisor whose incarnation in every smooth birational model $\rho:X'\to X$ is $P_{\sigma}(\rho^*L)$. Hence we can assume $\xi\in {\rm Big}_x(X)$ to be movable.

For $m$ sufficiently large, $\xi-\frac 1mG\in{\rm Big}_x(X)$ and $\rho_m^*(L-\frac 1mG)$ and $\rho_m^*L$ are in ${\rm Big}_{x_m}(X_m)$.
We have 
\[
(P_{\sigma}(\rho_m^*\xi)-\rho_m^*\frac 1mG)-P_{\sigma}(\rho_m^*(\xi-\frac 1mG))=N_{\sigma}(\rho_m^*(\xi-\frac 1mG))-N_{\sigma}(\rho_m^*\xi) .
\]
Since $\rho_m^*G$ is movable, the latter difference is an effective divisor. Its support is contained in the support of the first negative part, in particular in ${\bf B}_-(\rho_m^*(\xi-\frac 1mG))$, thus it does not contain $x_m$. The upshot is
\[P_{\sigma}(\rho_m^*(\xi-\frac 1mG))\leq_{x_m}B_m\leq_{x_m}P_{\sigma}(\rho_m^*\xi)\ .\]
Using Lemma \ref{lem:zariskidecompositionsuccessiveminima} and its proof together with the birational invariance of successive minima, this implies $\epsilon^{\rm loc}_i(\xi-\frac 1mG;x)\leq\epsilon^{\rm loc}_i(B_m;x_m)\leq\epsilon^{\rm loc}_i(\xi;x)$. Thus, $\lim_{m\to\infty}\epsilon^{\rm loc}_i(B_m;x_m)=\epsilon^{\rm loc}_i(\xi;x)$. Moreover, $B_m$ are Fujita approximations, i.e. $\lim_{m\to\infty}\vol(B_m)=\vol(\xi)$.
As in the proof of \cite[Theorem 6.22]{FL17}, we get
\[\rho_m^*\langle(\xi-\frac 1mG)^{n-1}\rangle\leq (B_m^{n-1})\leq \rho_m^*\langle\xi^{n-1}\rangle\ ,\]
where the inequalities signify that the differences are pseudoeffective curve classes on $X_m$. 
After pushforward, this implies $\langle(\xi-\frac 1mG)^{n-1}\rangle\leq\rho_{m*}(B_m^{n-1})\leq\langle\xi^{n-1}\rangle$.

Let $A$ be an ample divisor on $X$. There exists a norm $||\cdot||$ on $N_1(X)$ such that its restriction to the Mori cone $\Eff_1(X)$ is multiplication by $A$. Then $||\rho_{m*}(B_m^{n-1})-\langle\xi^{n-1}\rangle||\leq ||\langle\xi^{n-1}\rangle-\langle(\xi-\frac 1mG)^{n-1}\rangle||$. 
Since by \cite[Proposition 2.9]{BFJ09} the positive intersection product is continuous on the big cone, we deduce that
$\lim_{m\to\infty}\rho_{m*}(B_m^{n-1})=\langle\xi^{n-1}\rangle$.
Via the blow-up definition of Seshadri constants, we easily see that $\epsilon(\cdot;x)$ is upper semicontinuous on $\overline{\Mov}_1(X)$. 
Then
\begin{align*}
	\epsilon(C;x)&\geq\limsup_{m\to\infty}\epsilon(\rho_{m*}(B_m^{n-1});x)&\text{by upper semicontinuity}\\
	&\geq\limsup_{m\to\infty}\epsilon((B_m^{n-1});x_m) &\text{by \cite[Lemma 4.3]{Ful21}}\\
	&\geq\limsup_{m\to\infty}\epsilon^{\rm loc}_1(B_m;x_m)\cdot\ldots\cdot\epsilon^{\rm loc}_{n-1}(B_m;x_m)&\text{by the big and semiample case}\\
	&=\epsilon_1^{\rm loc}(\xi;x)\cdot\ldots\cdot\epsilon_{n-1}^{\rm loc}(\xi;x)&\text{ proved above.} 
\end{align*} 

$(3)$ Similar to $(2)$ we start by assuming first that $\xi$ is represented by a big and semiample Cartier divisor $H$. Fix $D$ an effective divisor on $X$ with multiplicity $M\geq 1$ at $x$. It is sufficient to prove that 
\[
M\cdot {\rm Vol}_{\bb R^{n-1}}\sinob{x}{H}_{\alpha_n=0} \ \leq \ {(H^{n-1}\cdot D)} .
\]
Let $x_1,\ldots,x_n$ be local coordinates around $x$ such that $D$ admits a local equation with nonzero coefficient for $x_n^M$. Since locally it is the smallest degree lexicographic monomial, then $\nu_{\rm deglex}(D)=(0,\ldots,0,M)$. Note, that any general set of coordinates will do.	
	For $m\geq 1$, denote 
	\[\Gamma_{m}\deq\nu_{\rm deglex}(H^0(X,mH)\setminus\{{\bf 0}\})\subset\bb N^n\ .\]
	Observe that $\nu_{\rm deglex}$ sends $H^0(X,mH-D)\subset H^0(X,mH)$ inside $\Gamma_{m,\alpha_n\geq M}$.
	For every $\alpha\in\Gamma_{m,\alpha_n<M}$ choose one $s_{\alpha}\in H^0(X,mH)$ with $\nu_{\rm deglex}(s_{\alpha})=\alpha$. Since their valuations are distinct, the $s_{\alpha}$ are linearly independent. Let $V_m$ be their span in $H^0(X,mH)$. Then $\nu_{\rm deglex}(V_m\setminus\{{\bf 0}\})=\Gamma_{m,\alpha_n<M}$ and $V_m\cap H^0(X,mH-D)={\bf 0}$. In particular, the natural composition $V_m\to H^0(X,mH)\to H^0(D,mH|_D)$ is injective, thus 
	\[\#\Gamma_{m,\alpha_n<M}=\dim V_m\leq h^0(D,mH|_D)\approx \frac{m^{n-1}}{(n-1)!}\cdot (H^{n-1}\cdot D)\ .\]
	To conclude, it is sufficient to prove that $\frac{\#\Gamma_{m,\alpha_n<M}}{m^{m-1}}$ approximates $M\cdot {\rm vol}_{\bb R^{n-1}}(\sinob{x}{H}_{\alpha_n=0})$.

	By the proof of Theorem \ref{thm:upperbound}, the $\Gamma_m$ are Borel-fixed discrete sets in $\bb N^n$. So the slice $\Gamma_{m,\alpha_n=0}$ identifies with the projection ${\rm pr}(\Gamma_m)$ onto the first $n-1$ coordinates and remains Borel-fixed. Consider the semigroup $\Gamma_{\alpha_n=0}=\bigcup_{m\geq 0}\Gamma_{m,\alpha_n=0}\times\{m\}$ and $\Delta_0\subset\bb R^{n-1}_+$ its associated convex body, the intersection of the closure of the convex span of $\Gamma_{\alpha_n=0}$ in $\bb R^{n}_+$ with $\bb R^{n-1}\times\{1\}$. From the projection and compactness perspective of $\sinob{x}{H}$, we have $\Delta_0={\rm pr}(\sinob{x}{H})=\sinob{x}{H}_{\alpha_n=0}$, with the last identification being due to the Borel-fixed property for $\sinob{x}{H}$ in Theorem \ref{thm:upperbound}.

	As $\Gamma_{\alpha_n=0}$ is a projection of $\Gamma=\bigcup_{m\geq 0}\Gamma_m\times\{m\}$,  it verifies the conditions \cite[(2.3)--(2.5)]{LM09}, by \cite[Lemma 2.2]{LM09}. Consequently, we have asymptotically that  $\#\Gamma_{m,\alpha_n=0}\approx{m^{n-1}}{\rm vol}_{\bb R^{n-1}}\sinob{x}{H}_{\alpha_n=0}$, by \cite[Proposition 2.1]{LM09}. Since $\Gamma_m$ is Borel-fixed, it implies the inequalities
	\[
	 M\cdot\#\Gamma_{m,\alpha_{n-1}\geq M,\alpha_n=0} \ \leq \ \#\Gamma_{m,\alpha_n<M} \ \leq \ M\cdot\#\Gamma_{m,\alpha_n=0} \ ,
	\]
	where the first is obtained by going in the $-{\bf e}_{n-1}+{\bf e}_n$ direction and the second in the $-{\bf e}_n$ direction.	The conclusion follows as $\#\Gamma_{m,\alpha_{n-1}<M,\alpha_n=0}\leq M\cdot\#\Gamma_{m,\alpha_{n-1}=\alpha_n=0}\in O(m^{n-2})$.

		For the general case consider $\xi\in N^1(X)_{\bb R}$ be an arbitrary big class. 		Let $x\not\in{\bf B}_+(\xi)$ and construct $G$, $\rho_m$, $x_m$ and $B_m$ as in $(2)$. Note that once they have all been constructed, then we can replace $x$ by a very general point so that the $x_m$ are also very general, and the inequalities $\leq_{x_m}$ stay the same. Then

\[
\epsilon(C;x) \ \geq \ \limsup_{m\to\infty}\epsilon((B_m^{n-1});x_m) \ \geq \ (n-1)!\cdot \limsup_{m\to\infty}{\rm vol}_{\bb R^{n-1}}\sinob{x_m}{B_m}_{\alpha_n=0} \ ,
\]
where the first inequality is obtained as in $(2)$, and the second by the big and semiample case.
	
It remains to show that the latter sequence converges to ${\rm vol}_{\bb R^{n-1}}\sinob{x}{\xi}_{\alpha_n=0}$. For this observe that the $B_m$ are Fujita approximations of $\xi$ from below. Then for volume reasons $\sinob{x}{\xi}$ is the closure of $\bigcup_{m\gg 1}\sinob{x_m}{B_m}$. The volume of the  slice $\alpha_n=0$ is in general only expected to be upper-semicontinuous, however in this case the $\sinob{x_m}{B_m}$ are also Borel-fixed, and so the volume of the slice identifies with the volume of the projection of the body on the $\alpha_n=0$ hyperplane. This is a continuous operation. 
\end{proof}

\end{document}